\let\Sec\S
\newcommand{\A}{{\mathbb A}}
\newcommand{\Z}{{\mathbb Z}}
\renewcommand{\S}{{\mathbb S}}
\newcommand{\R}{{\mathbb R}}
\newcommand{\C}{{\mathbb C}}
\newcommand{\D}{{\mathbb D}}
\newcommand{\N}{{\mathbb N}}
\def\Res{{\,\rm Res}}
\def\Re{{\rm Re}}
\def\Id{{\rm Id}}
\def\ii{{\rm i}}
\def\xx{{\bf x}}
\def\yy{{\bf y}}
\def\sl{\mathfrak{sl}}
\def\su{\mathfrak{su}}
\def\tr{{\rm trace}}
\def\Area{{\rm Area}}
\def\SL{{\rm SL}}
\def\SU{{\rm SU}}
\def\z{\overline{z}}
\renewcommand{\matrix}[1]{\left(\begin{array}{cc} #1\end{array}\right)}
\newcommand{\minimatrix}[1]{\left(\begin{smallmatrix}#1\end{smallmatrix}\right)}
\newcommand{\wt}[1]{\widetilde{#1}}
\newcommand{\wh}[1]{\widehat{#1}}
\newcommand{\cal}[1]{{\mathcal #1}}
\newcommand{\other}[1]{\widefrown{#1}}
\theoremstyle{plain}
\newtheorem{theorem}{Theorem}
\newtheorem{lemma}{Lemma}
\newtheorem{proposition}[lemma]{Proposition}
\newtheorem{remark}[lemma]{Remark}
\newtheorem{corollary}[lemma]{Corollary}
\newtheorem{definition}[lemma]{Definition}
\DeclareFontFamily{U}{mathx}{\hyphenchar\font45}
\DeclareFontShape{U}{mathx}{m}{n}{
      <5> <6> <7> <8> <9> <10>
      <10.95> <12> <14.4> <17.28> <20.74> <24.88>
      mathx10
      }{}
\DeclareSymbolFont{mathx}{U}{mathx}{m}{n}
\DeclareMathAccent{\widecheck}{0}{mathx}{"71}
\DeclareMathAccent{\widetilde}{0}{mathx}{"72}
\DeclareMathAccent{\widebar}{0}{mathx}{"73}
\DeclareMathAccent{\widevec}{0}{mathx}{"74}
\DeclareMathAccent{\widehat}{0}{mathx}{"70}
\DeclareMathAccent{\widefrown}{0}{mathx}{"75}
\DeclareMathAccent{\chinesehat}{0}{mathx}{"69}
\def\Res{{\,\rm Res}}
\def\Re{{\rm Re}}
\def\Id{{\rm Id}}
\def\ii{{\rm i}}
\def\xx{{\bf x}}
\def\sl{\mathfrak{sl}}
\def\su{\mathfrak{su}}
\def\tr{{\rm trace}}
\def\Area{{\rm Area}}
\def\SL{{\rm SL}}
\def\SU{{\rm SU}}
\def\z{\overline{z}}
\renewcommand{\matrix}[1]{\left(\begin{array}{cc} #1\end{array}\right)}
\newcommand{\Li}{\operatorname{Li}}
\newcommand{\sgn}{\operatorname{sgn}}
\newcommand{\abs}[1]{|#1|}
\newcommand{\wc}[1]{\widecheck{#1}}
\newcommand{\cv}[1]{\underline{#1}}
\let\@wraptoccontribs\wraptoccontribs\makeatother
\newcommand{\appauthor}{}
\begin{document}

\title[Complete families of embedded high genus CMC surfaces in the 3-sphere ]{Complete families of embedded high genus CMC surfaces in the 3-sphere 
}

 \author{Lynn Heller}
 \author{Sebastian Heller}
 \author{Martin Traizet}

 \contrib[With an appendix by]{Steven Charlton}
 
\noindent
\address{Institut f\"ur Differentialgeometrie\\
Welfengarten 1\\
30167 Hannover\\
Germany} 
\email{lynn.heller@math.uni-hannover.de}
 \noindent 
\address{Institut f\"ur Differentialgeometrie\\
Welfengarten 1\\
30167 Hannover\\
Germany} 
\email{seb.heller@gmail.com}
\noindent
\address{Institut Denis Poisson, CNRS UMR 7350 \\
Facult\'e des Sciences et Techniques \\
Universit\'e de Tours\\France }
\email{martin.traizet@univ-tours.fr }

\address{{\normalfont \bf Appendix A \newline} Fachbereich Mathematik (AZ), Universit\"at Hamburg, Bundesstra\textup{\ss}e 55, 20146 Hamburg, Germany}
\email{steven.charlton@uni-hamburg.de}

\begin{abstract}
For every $g \gg 1$, we show the existence of a complete and smooth family  of closed constant mean curvature surfaces $f_\varphi^g,$ $ \varphi \in [0, \tfrac{\pi}{2}],$ in the round $3$-sphere deforming the Lawson surface $\xi_{1, g}$ to a doubly covered geodesic 2-sphere with monotonically increasing Willmore energy. 
To construct these we use an implicit function theorem argument in the parameter $t= \tfrac{1}{2(g+1)}$.  
This allows us to give an iterative algorithm to compute the power series expansion of the DPW potential and area of $f_\varphi^g$  at $t= 0$ explicitly.  In particular,  we obtain for large genus Lawson surfaces $\xi_{1,g}$ 
a scheme to explicitly compute the coefficients of the power series in $t$ in terms of multiple polylogarithms. Remarkably, the third order coefficient of the area expansion is  identified with $\tfrac{9}{4}\zeta(3),$ where $\zeta$ is the Riemann $\zeta$ function (while the first and second order term were shown to be $\log(2)$  and $0$ respectively in \cite{HHT}).
\end{abstract}

\thanks{The first and second author are supported by the  {\em Deutsche Forschungsgemeinschaft} within the priority program {\em Geometry at Infinity}.}
\thanks{MT is supported by the French ANR project Min-Max (ANR-19-CE40-0014).}
\thanks{SC was supported by DFG Eigene Stelle grant CH 2561/1-1, for Projektnummer 442093436.}
\maketitle

\begin{figure}[h]
\centering
\includegraphics[width=0.3\textwidth]{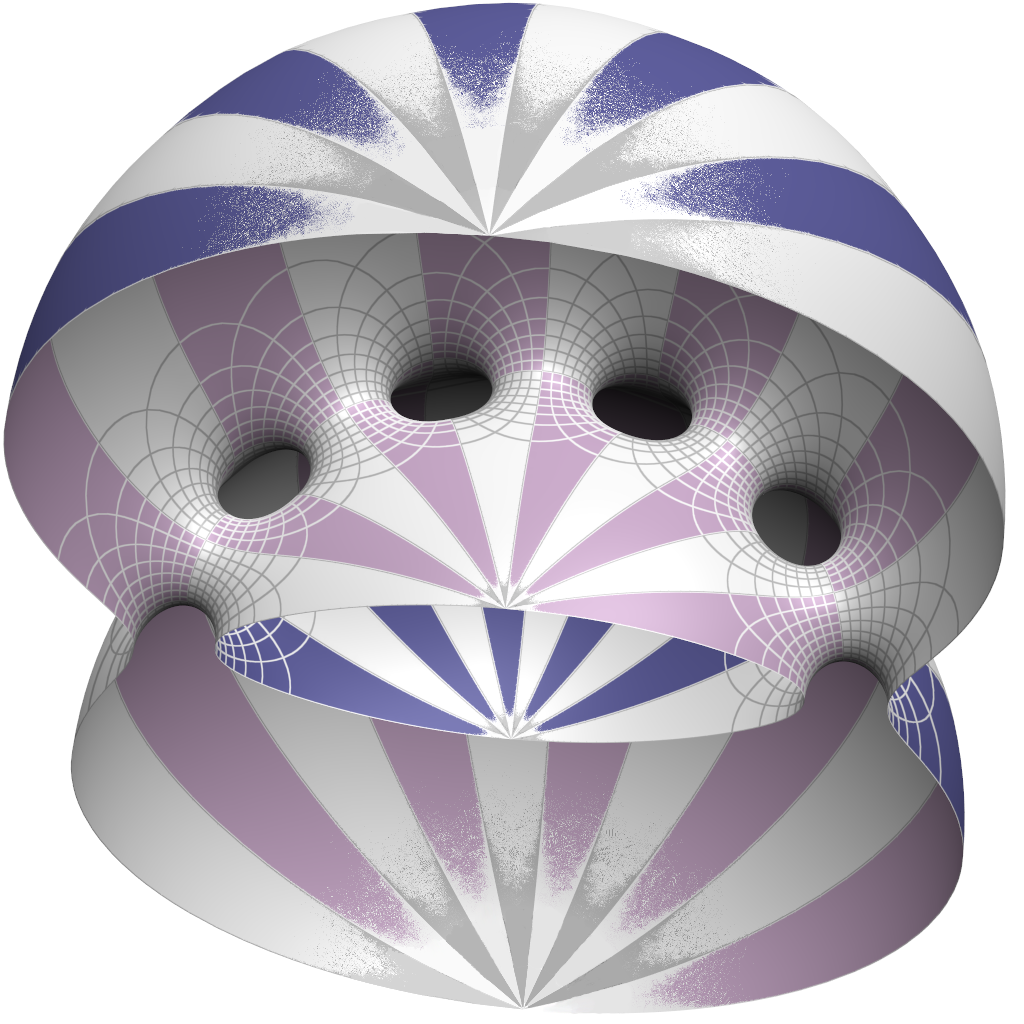}
\caption{
\footnotesize{Cutaway view of the Lawson surface of genus 9, which allude to the convergence to two perpendicularly intersecting 2-spheres for $g \rightarrow \infty$. Image by Nick Schmitt.
}}
\label{l9}
\end{figure}

\setcounter{tocdepth}{1}
\tableofcontents

\section*{Introduction}\label{sec:intro}
 
Minimal surfaces, and more generally constant mean curvature (CMC) surfaces, in three-dimensional space forms have been the object of intensive research for centuries. Local properties of these surfaces are completely understood via (generalizations of) the Weierstra\ss $\;$ representation, which gives explicit parametrizations of the surface in terms of holomorphic data. Determining global properties, such as the topology, area or embeddedness of minimal or CMC surfaces remains very challenging. \\

Due to the maximum principle, the behaviour of solutions and their moduli space depends  crucially on the (constant) curvature of the ambient space. The most difficult case is the round $3$-sphere, where very few examples have been constructed. The first compact embedded minimal surfaces in the $3$-sphere of all genera were found by Lawson \cite{Lawson} using the solution of the Plateau problem with respect to a polygonal boundary curve which are then reflected and rotated along geodesics. Similarly, Karcher-Pinkall-Sterling \cite{KPS} have constructed compact minimal surfaces with platonic symmetries. The other class of closed minimal surfaces in $\S^3$ were constructed by Kapouleas via glueing equatorial 2-spheres with catenoidal handles. Though topology and embeddedness of these examples are known, other geometric properties, like area or stability index are difficult to determine. While the index of the Lawson surfaces $\xi_{1,g}$ were recently computed by Kapouleas and Wiygul \cite{KapWiy}, no area of any minimal surface of genus $g\geq2$ has been explicitly computed. \\

An alternate  approach to constructing minimal and CMC surfaces in space forms uses the integrable systems structure of harmonic maps. This can be interpreted as a global version of the Weierstra\ss $\;$ representation, which is often referred to as the DPW method \cite{DPW} in this context. For tori the integrable systems approach was pioneered by Hitchin \cite{Hitchin} and Pinkall-Sterling \cite{PS} around 1990, and Bobenko \cite{Bobenko} gave an explicit parametrization of all CMC tori in 3-dimensional space forms.
 \\

Consider a conformally parametrized minimal immersion $f$ from a compact Riemann surface $M_g$ of genus $g$ into the round $3$-sphere. Then $f$ is harmonic gives rise to a symmetry of the Gauss-Codazzi equations  inducing an associated $\S^1$-family of (isometric) minimal surfaces on the universal covering of $M_g$ with rotated Hopf differential. The gauge theoretic counterpart of this symmetry is manifested in an associated $\C^*$-family 
of flat $\text{SL}(2,\mathbb C)$-connections $\nabla^\lambda$  \cite{Hitchin} on the trivial $\C^2$-bundle over $M_g$ satisfying 
\begin{enumerate}\label{closingconditions}
\item[(i)] conformality:  $\nabla^\lambda=\lambda^{-1}\Phi+\nabla+\lambda \Psi$ for a nilpotent $\Phi\in \Omega^{1,0}(M_g,\mathfrak{sl}(2,\mathbb C)) ;$
\item[(ii)]  intrinsic closing: $\nabla^\lambda$ is unitary for all $\lambda\in\S^1,$ i.e., $\nabla$ is unitary and $\Psi=-\Phi^*$ with respect to the standard hermitian metric on $\underline{\C}^2$;
\item[(iii)] extrinsic closing: $\nabla^\lambda$ is trivial for $\lambda=\pm1.$
\end{enumerate}
The minimal surface can be reconstructed from the associated family of connections as the gauge between
$\nabla^{-1}$ and $\nabla^1.$ Constructing minimal surfaces is thus equivalent to writing down appropriate families of flat connections. CMC surfaces into space forms can be obtained from the associated families of the corresponding minimal surfaces (via Lawson correspondence) by requiring $\nabla^\lambda$ to be trivial at two other points $\lambda_1$ and $\lambda_2$ on the unit circle. Again the gauge between the connections $\nabla^{\lambda_1}$ and $\nabla^{\lambda_2}$ gives the immersion.  \\

The DPW method \cite{DPW} is a way to generate families of flat connections on $M_g$ from so-called {\em DPW potentials}, denoted by  $\eta = \eta^\lambda$ 
using loop group factorisation. In fact, $\eta^\lambda$ fixes the gauge class of the connections $\nabla^\lambda$ as
$$d+ \eta^\lambda \in [\nabla^\lambda].$$ On simply connected domains $\D$, all DPW potentials give rise to minimal surfaces from $\D$. Whenever the domain has non-trivial topology, finding DPW potentials satisfying  
conditions equivalent to (i)-(iii) becomes difficult.  The problem of finding DPW potentials that fulfill these conditions is referred to as Monodromy Problem.

Though successful in the case of tori, the first embedded and closed minimal surfaces of genus $g>1$ using DPW were only recently constructed in \cite{HHT}. This is due to the fact that in contrast to tori the fundamental group of a higher genus surface is non-abelian. A global version of DPW has been developed in \cite{He1, He2} under certain symmetry assumptions. The main challenge to actually construct higher genus minimal and CMC surfaces is to determine infinitely many parameters in the holomorphic ``Weierstra\ss-data'' -- referred to as spectral data. The idea to determine these missing parameters is to start at a well-understood surface, relax some closing conditions, and deform the known spectral data in a direction that changes the genus of the surface and such that the Monodromy Problem is solved for rational parameters.
These ideas were first implemented in \cite{HHS} to deform homogenous and 2-lobed Delaunay tori in direction of higher genus CMC surfaces giving rise to families of closed but branched CMC surfaces in the 3-sphere. A similar philosophy was independently pursued in \cite{nnoids} to find CMC spheres with Delaunay ends  in Euclidean space, and more generally, CMC surfaces close to a chain of spheres have been constructed in \cite{nodes}. Combining both approaches embedded minimal surfaces using the DPW approach were constructed in \cite{HHT}. In particular, we gave an alternate existence proof of the Lawson surfaces $\xi_{1,g,}$ by constructing a family $f^t$ of minimal surfaces for $t\sim0$ starting at two geodesic spheres intersecting at right angle and deform its DPW potential in direction of a Scherk surface such that $f^t= \xi_{1,g}$ at $t= \tfrac{1}{2(g+1)}$.  The main advantage of the DPW approach is that geometric properties of the surfaces can be explicitly derived from the spectral data. For example, the Taylor expansion of the Area of $\xi_{1,g}$ at $g=\infty$ is computed in \cite{HHT} to be 
\begin{equation}\label{Lawsonestimates}
\Area(\xi_{1,g})=8\pi \left(1-\frac{\log (2)}{2(g+1)}+ O\left(\frac{1}{(g+1)^3}\right)\right).
\end{equation}
The major improvement of the paper at hand is that we construct Fuchsian DPW potentials that incorporate all symmetries of the immersion.  In stark contrast to \cite{HHT} this new approach not only allows 
to obtain a complete family of CMC surfaces $f^g_\varphi$ deforming the Lawson surface $\xi_{1,g}$ but also allows for an iterative algorithm to compute the Taylor expansions of the DPW potential as well as the area of $\xi_{1,g}$ explicitly. 
Moreover, by an estimate of Li and Yau \cite{LiYau} surfaces $f:M \longrightarrow\S^3$ with Willmore energy 
\begin{equation*}
\mathcal W (f) = \int_{M_g} (H^2+1) dA,
\end{equation*}
below $8 \pi$ are automatically embedded. This is the property used to obtain embeddedness of the CMC surfaces $f^g_\varphi$ for $g\gg1$. 

 The outline of the paper is as follows.
Let  $M_g= M^g_\varphi$ be a Riemann surface admitting a $\Z_{g+1}$-symmetry, i.e., it has the structure of a totally branched $(g+1)$-fold covering of $\C P^1$ with four branch points $p_1, ..., p_4$ given by
$$p_1 = e^{-i \varphi}; \quad p_2 = e^{i \varphi}; \quad p_3 = -p_1; \quad p_4 = -p_2. $$
The conformal structure of $M^g_\varphi$ is determined by $\varphi \in (0, \tfrac{\pi}{2})$ which degenerates for $\varphi\to 0,\tfrac{\pi}{2}$ and satisfies $$M^g_{\varphi} \cong M^g_{\tfrac{\pi}{2}- \varphi}.$$
For $g\gg1$ we show the existence of a complete family of closed CMC surfaces connecting the Lawson surface $\xi_{1,g}$ and  the doubly covered geodesic sphere (for $\varphi\to 0,\tfrac{\pi}{2}$)  with $2g+2$ branch points. Combining the results by Kusner-Mazzeo-Pollack \cite{KMP} and Kapouleas and Wiygul \cite{KapWiy} the moduli space of CMC surfaces is $1$-dimensional at the Lawson surface $\xi_{1,g}$. Therefore, the families we construct here are in fact the only ones deforming $\xi_{1,g}$  and give the first global result on the structure of the moduli space of CMC surfaces of genus $g>1.$

Our two main theorems are the following:

\begin{theorem}(Existence)\label{MT}
For every $g \in \N$ sufficiently large,  there exists  a smooth family of conformal CMC embeddings $f^g_\varphi \colon M^g_\varphi \longrightarrow \S^3$  with parameter $\varphi \in (0, \tfrac{\pi}{2})$ satisfying
\begin{itemize}
\item for $\varphi \rightarrow 0, \tfrac{\pi}{2}$ the immersion $f^g_\varphi$ smoothly converges to a doubly covered geodesic 2-sphere with $2g+2$ branch points, i.e., the family $f^g_\varphi$ cannot be extended in the parameter $\varphi$ in the space of immersions;
\item $f^g_\varphi = f^g_{\tfrac{\pi}{2} - \varphi}$ up to  reparametrization and (orientation reversing) isometries of $\S^3$;
\item $f^g_{\tfrac{\pi}{4}}$ is the Lawson surface $\xi_{1,g}$ of genus $g$;
\item the (constant) mean curvature $H^g_\varphi$ of $f^g_\varphi$ is zero if and only if $\varphi =\tfrac{\pi}{4}$.
\end{itemize}
\end{theorem}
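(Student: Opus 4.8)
The plan is to reduce the construction of $f^g_\varphi$ to solving a Monodromy Problem for a family of Fuchsian DPW potentials on the four-punctured sphere $\C P^1\setminus\{p_1,\dots,p_4\}$, exploiting the $\Z_{g+1}$-covering symmetry together with the dihedral symmetry permuting the $p_j$ so that everything descends to the quotient $\C P^1$, and then to run an implicit function theorem in the parameter $t=\tfrac1{2(g+1)}$. First I would fix an ansatz for the potential $\eta^\lambda=\eta^\lambda_{t,\varphi}$ as a $\lambda$-dependent $\sl(2,\C)$-valued meromorphic $1$-form on $\C P^1$ with simple poles exactly at the branch points $p_j$ (possibly with an additional apparent singularity), whose residues and regular part are the unknowns, normalised to be compatible with every symmetry of the desired surface: the covering symmetry, the dihedral symmetry, and the reflection $\varphi\mapsto\tfrac\pi2-\varphi$. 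The intrinsic closing condition (ii) restricts the potential to the unitarisable locus, while the extrinsic closing conditions --- triviality of the monodromy of $d+\eta^\lambda$ at $\lambda=\pm1$ and at two further points $\lambda_1,\lambda_2\in\S^1$ whose positions encode the mean curvature --- become, after the symmetry reduction, a \emph{finite} analytic system in the unknown residues, because the symmetries collapse the monodromy representation to a small number of conjugacy classes.

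The next step is the analysis of the limit $t=0$. Although the surface itself degenerates as $g\to\infty$ (the covering degree $g+1$ blows up), the potential on the quotient has a well-defined limit, and at $t=0$ the Monodromy Problem should be solved by the data corresponding to two geodesic $2$-spheres meeting orthogonally, for which the $\lambda^{-1}$-term is an explicit Scherk-type form and the monodromies can be written down; this is where the constructions of \cite{HHT} and the global DPW set-up of \cite{He1, He2} (in the spirit of \cite{nnoids, nodes, HHS}) feed in. The crucial point is then to show that the differential of the reduced monodromy map with respect to the unknown residues, evaluated at the $t=0$ solution, is a linear isomorphism. Granted this, the implicit function theorem produces, for every $g$ sufficiently large, a solution $\eta^\lambda_{t,\varphi}$ depending real-analytically on $(t,\varphi)$, hence a smooth family in $\varphi$ for the admissible values $t=\tfrac1{2(g+1)}$. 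I expect this invertibility to be the main obstacle: one has to choose the correct Banach space of potentials, verify that the linearised operator is Fredholm of index zero, and compute its kernel. The Fuchsian ansatz incorporating all the symmetries is designed precisely to cut this down to an essentially explicit linear-algebra computation, but checking non-degeneracy, and separately controlling the collision of the branch data as $\varphi\to0,\tfrac\pi2$ so as to obtain the family on the whole open interval, is delicate.

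Finally I would read off the geometric assertions from the solution. The DPW reconstruction --- the gauge between $\nabla^{\lambda_1}$ and $\nabla^{\lambda_2}$ --- yields a conformal CMC map $f^g_\varphi\colon M^g_\varphi\to\S^3$; that it is an immersion follows, for $t$ small, by continuity from the $t=0$ model, since the $(1,0)$-part $\Phi$ stays nowhere zero. The identity $f^g_\varphi=f^g_{\pi/2-\varphi}$ is immediate from $M^g_\varphi\cong M^g_{\pi/2-\varphi}$ and the built-in symmetry of the potential; that $f^g_{\pi/4}$ is the Lawson surface $\xi_{1,g}$ follows because at $\varphi=\tfrac\pi4$ the extra symmetry forces the two trivialisation points to be $\pm1$, so $f^g_{\pi/4}$ is minimal and matches the surface built in \cite{HHT}, and by the one-dimensionality of the moduli space of CMC surfaces at $\xi_{1,g}$ from \cite{KMP, KapWiy} it must be $\xi_{1,g}$ itself; and $H^g_\varphi=0\iff\varphi=\tfrac\pi4$ because the mean curvature is an explicit function of the $\lambda$-values at which $\nabla^\lambda$ is trivialised, which depend monotonically on $\varphi$ and coincide with the minimal configuration exactly at the fixed point. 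For $\varphi\to0,\tfrac\pi2$ I would show the potentials converge to those of a doubly covered geodesic $2$-sphere with $2g+2$ branch points, so the family cannot be continued in the space of immersions there. Embeddedness for $g\gg1$ then comes from the Li--Yau inequality \cite{LiYau}: the Willmore energy $\mathcal W(f^g_\varphi)$, computed from the spectral data, is monotone in $\varphi$ with supremum the value $8\pi$ of the doubly covered sphere, so $\mathcal W(f^g_\varphi)<8\pi$ throughout, which forces $f^g_\varphi$ to be embedded.
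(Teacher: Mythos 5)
Your overall strategy coincides with the paper's: a symmetric Fuchsian ansatz on the four-punctured sphere, an implicit function theorem at $t=0$ around the two-orthogonal-spheres/Scherk datum, and Li--Yau for embeddedness. However, two steps that you flag only in passing are genuine gaps, and in both cases the missing ingredient is the same.

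First, the degeneration $\varphi\to 0,\tfrac{\pi}{2}$. To get a \emph{complete} family for a single large $g$ you need the existence interval in $t$ to be uniform in $\varphi$ over all of $[0,\tfrac{\pi}{2}]$; applying the implicit function theorem at each fixed $\varphi$ only gives $g_0(\varphi)\to\infty$ as the punctures collide. The obstruction is that the principal solution $\mathcal P$ of $d\Phi=\Phi\eta$ along the segment from $0$ to $1$ does \emph{not} extend smoothly to $\varphi=0$ (the endpoint runs into the colliding pair $p_1,p_4$); it extends only as a smooth function of $\varphi$ and $\varphi\log\varphi$, which one establishes by the ``opening nodes'' analysis (complexifying $\varphi$, conjugating by the inversion $z\mapsto 1+\varphi/(z-1)$, and invoking \cite[Theorem 5]{nodes}). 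One then runs the implicit function theorem in the auxiliary variable $\phi$ and specializes to $\phi=\varphi\log\varphi$. Without this your construction yields families only on compact subintervals of $(0,\tfrac{\pi}{2})$, and the ``cannot be extended'' assertion at the endpoints is not reached.

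Second, your argument for $H^g_\varphi=0\iff\varphi=\tfrac{\pi}{4}$ asserts that the Sym points ``depend monotonically on $\varphi$.'' That monotonicity is not automatic; it is exactly what must be proved. The paper does it by computing $\theta'(0,\varphi)=2\sin(2\varphi)\log(\tan\varphi)$, which shows $\partial_t H(0,\varphi)>0$ on $(0,\tfrac{\pi}{4})$, and then --- crucially --- by controlling the sign of $H$ near $\varphi=0$ and $\varphi=\tfrac{\pi}{4}$ using first-order Taylor expansions in the variables $(\varphi,\varphi\log\varphi)$, again relying on the regularity from the first point. The same issue recurs in your embeddedness argument: monotonicity of $\mathcal W(f^g_\varphi)$ in $\varphi$ is deduced in the paper from the constrained-Willmore Lagrange-multiplier identity (the pairing of the Hopf differential with $\partial_\varphi$ is nondegenerate as long as $H\neq 0$), so it \emph{presupposes} the sign statement for $H$ rather than providing it. As stated, your chain of implications (monotone Sym points $\Rightarrow$ $H=0$ only at $\tfrac{\pi}{4}$ $\Rightarrow$ $\mathcal W<8\pi$ $\Rightarrow$ embedded) has its first link unproved and uniformly so near the degenerate ends.
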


\begin{theorem}(Energy computation)\label{MT2}
Let  $f^g_\varphi \colon M^g_\varphi \longrightarrow \S^3$ be the  smooth family of conformal CMC embeddings constructed in Theorem \ref{MT}.
There exist an iterative algorithm to compute the DPW potential as well as the area and Willmore energy of $f^g_\varphi$ in term of multi-polylogarithms.

In particular we have
\begin{itemize}
\item the Willmore energy of $f^g_\varphi$ is strictly monotonically decreasing in $\varphi$ for $\varphi \in (0, \tfrac{\pi}{4})$ from $8 \pi$ to Area$(\xi_{1,g})= \mathcal W(\xi_{1,g})$  with Taylor expansion  at $g=\infty$ given by  

\begin{eqnarray}
\label{estimates}
\mathcal W(f_\varphi^g) &=& 8 \pi \left(1+  \left[ \cos(\varphi)^2 \log(\cos(\varphi)) + \sin(\varphi)^2\log(\sin(\varphi)) \right] \frac{1}{g+1}\right.\\
&+&
\left.\sin(2\varphi)^2\log(\tan(\varphi))^2\frac{1}{(g+1)^2}
+ O\left(\frac{1}{(g+1)^3}\right)\right)
\nonumber
\end{eqnarray}

\item furthermore, for $\varphi = \tfrac{\pi}{4}$

\begin{equation}\label{Area}\text{Area}(f_\varphi^g) =8 \pi \left(1 - \frac{\log(2)}{2 g+2} - \tfrac{9}{4}\zeta (3) \frac{1}{(2g+2)^3} + O\left(\frac{1}{(g+1)^5}\right)\right)
\end{equation}
where $\zeta$ is the Riemann $\zeta$-function.
\end{itemize}
\end{theorem}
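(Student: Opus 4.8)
The engine is the Fuchsian DPW potential $\eta=\eta^{\lambda}_{t,\varphi}$ on $\C P^{1}$ with regular singularities at $p_{1},\dots,p_{4}=\pm e^{\pm i\varphi}$ furnished by the proof of Theorem~\ref{MT}. It depends real-analytically on the deformation parameter $t=\tfrac{1}{2(g+1)}$, and the implicit function theorem in $t$ used there produces it as a power series $\eta=\sum_{k\ge 0}\eta_{k}\,t^{k}$ whose closing conditions (i)--(iii) hold order by order. The plan for the first assertion is to make this recursion algorithmic. At $t=0$ the potential $\eta_{0}$ and a trivialising gauge are explicit (the doubly covered geodesic two-sphere). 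Inductively, given $\eta_{0},\dots,\eta_{n-1}$ together with the gauges realising the closing conditions to order $t^{n-1}$, the Monodromy Problem at order $t^{n}$ is \emph{affine-linear} in the order-$n$ unknowns: its linear part is the invertible operator from the implicit function theorem argument, and its inhomogeneous term is a universal polynomial in $\eta_{0},\dots,\eta_{n-1}$ and in the iterated (Chen) integrals of these along the fixed cycles of $M^{g}_{\varphi}$. Solving it determines $\eta_{n}$ and the order-$n$ gauges.

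Since every singular point lies at $z=\pm e^{\pm i\varphi}$, all iterated integrals that occur are hyperlogarithms in the rational one-forms $\tfrac{dz}{z}$ and $\tfrac{dz}{z\mp e^{\pm i\varphi}}$, equivalently values of multiple polylogarithms at arguments built from $e^{\pm i\varphi}$; this algebra is closed under the shuffle product, under $z$-primitives, and under the specialisations (at the $p_{j}$, at $0$ and $1$) that arise, so the recursion never leaves it --- this is the iterative scheme of the theorem. To pass to the geometry, one inserts the expansion of $\eta$ into a formula expressing the area of a DPW surface through the $\lambda$-jet of its monodromy eigenvalues at the points of $\S^{1}$ where $\nabla^{\lambda}$ is trivial --- the mechanism behind \eqref{Lawsonestimates} in \cite{HHT} --- and uses that, $H^{g}_{\varphi}$ being constant, $\mathcal{W}(f^{g}_{\varphi})=(1+(H^{g}_{\varphi})^{2})\,\Area(f^{g}_{\varphi})$, with $H^{g}_{\varphi}$ and the trivial points $\lambda_{1},\lambda_{2}$ read off from the data already solved for in Theorem~\ref{MT}.

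Carrying the recursion to low order then gives the explicit expansions. At orders $t^{1}$ and $t^{2}$ the depth-$\le 2$ multiple polylogarithms in $e^{\pm i\varphi}$ simplify to $\cos^{2}\!\varphi\log\cos\varphi+\sin^{2}\!\varphi\log\sin\varphi$ and $\sin(2\varphi)^{2}\log(\tan\varphi)^{2}$, yielding \eqref{estimates} (consistent, at $\varphi=\tfrac{\pi}{4}$, with \eqref{Lawsonestimates}); at order $t^{3}$ with $\varphi=\tfrac{\pi}{4}$, where the $p_{j}$ are the primitive $8$th roots of unity and all iterated integrals collapse to multiple zeta values of weight $\le 3$, the particular combination produced by the algorithm reduces to $\tfrac{9}{4}\zeta(3)$, giving the $\tfrac{1}{(2g+2)^{3}}$-term of \eqref{Area}. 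For the endpoints, Theorem~\ref{MT} gives convergence of $f^{g}_{\varphi}$ to the doubly covered geodesic two-sphere as $\varphi\to 0,\tfrac{\pi}{2}$, whose Willmore energy is $2\cdot 4\pi=8\pi$, and $H^{g}_{\pi/4}=0$, so $\mathcal{W}(f^{g}_{\pi/4})=\Area(\xi_{1,g})$. Finally, strict monotonicity of $\varphi\mapsto\mathcal{W}(f^{g}_{\varphi})$ on $(0,\tfrac{\pi}{4})$ follows by differentiating the expansion: the $t^{1}$-coefficient has $\varphi$-derivative $\sin(2\varphi)\log(\tan\varphi)<0$ there, and for $g\gg 1$ the higher-order-in-$t$ terms and their $\varphi$-derivatives are uniformly dominated; the symmetry $f^{g}_{\varphi}=f^{g}_{\pi/2-\varphi}$ then places the minimum of $\mathcal{W}$ at $\varphi=\tfrac{\pi}{4}$.

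The main obstacle is the second part, in two forms. First one must show that the order-$n$ linearised problem is always solvable \emph{within} the multiple-polylogarithm algebra: this means tracking the pole structure of the one-forms generated at each step, performing the finitely many primitives and specialisations without leaving the algebra, and keeping everything uniform away from the degenerations at $\varphi=0,\tfrac{\pi}{2}$. Second --- and this is the genuinely hard, ``remarkable'' point, handled separately in Appendix~A --- the depth-$\le 3$ combination of multiple polylogarithms at $8$th roots of unity that the algorithm outputs at order $t^{3}$ for $\varphi=\tfrac{\pi}{4}$ must be collapsed, via shuffle, stuffle and distribution relations among (alternating) multiple zeta values, to the single value $\tfrac{9}{4}\zeta(3)$; identities of exactly this shape are subtle and account for the appendix.
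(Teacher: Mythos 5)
Your overall strategy coincides with the paper's: the potential is solved order by order in $t$, the order-$n$ Monodromy Problem is affine-linear with the invertible linearisation from the implicit function theorem (this is exactly Proposition \ref{ansatz:higher-order-derivatives}, where the unknowns $a^{(n)},b^{(n)},c^{(n)},r^{(n)}$ are shown to be Laurent polynomials in $\lambda$ determined by a finite-dimensional system whose inhomogeneous part is built from iterated integrals $\Omega_{i_1,\dots,i_\ell}(1)$ of the forms $\omega_a,\omega_b,\omega_c$), these integrals are multiple polylogarithms at powers of $e^{\ii\varphi}$, the area enters through the residue formula of \cite[Corollary 17]{HHT} giving $\operatorname{Area}=8\pi[1-r(b_0\cos\varphi-c_0\sin\varphi)]$ (Theorem \ref{thm:construction-fixed-varphi}), the Willmore energy is $(1+H^2)\operatorname{Area}$ with $H=\operatorname{cotan}\theta$, and the $\zeta(3)$ identification is delegated to Appendix \ref{appendixC}. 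Two small inaccuracies: the forms generated are only $\tfrac{dz}{z-p_k}$ (no $\tfrac{dz}{z}$), so only $\Li_{1,\dots,1}$ at roots of unity occur; and the order-$t^3$ output involves weight-$4$ iterated integrals (divided by powers of $\pi$), not weight $\le 3$. Neither affects the argument.

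The genuine gap is in your proof of strict monotonicity of $\varphi\mapsto\mathcal W(f^g_\varphi)$ on $(0,\tfrac{\pi}{4})$. You differentiate the $t$-expansion in $\varphi$ and claim the higher-order terms are ``uniformly dominated'' by the leading one, whose $\varphi$-derivative is $\propto t\sin(2\varphi)\log(\tan\varphi)$. But this leading derivative vanishes at both endpoints $\varphi=0$ and $\varphi=\tfrac{\pi}{4}$, so no uniform domination by an $O(t^2)$ remainder is possible there: for $|\varphi-\tfrac{\pi}{4}|\lesssim t$ the leading term is of the same size as the error, and near $\varphi=0$ the coefficients are only smooth in $\varphi$ and $\varphi\log\varphi$ (Section \ref{limitvarphi}), so the remainder is not even $C^1$ in $\varphi$ in the naive sense. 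Closing this requires a two-variable Taylor expansion at $(t,\tfrac{\pi}{4})$ exploiting the symmetry, and a separate expansion in $(\varphi,\varphi\log\varphi)$ at $(t,0)$ --- precisely the analysis the paper carries out in Proposition \ref{prop:H}, but for the mean curvature $H$ rather than for $\mathcal W$. The paper then avoids estimating $\partial_\varphi\mathcal W$ altogether: in Proposition \ref{prop:willmore} it uses that CMC surfaces are constrained Willmore critical points with Lagrange multiplier $\tfrac12 HQ$, that the pairing of $Q$ with $\partial_\varphi$ of the conformal class is non-degenerate, and hence that $\mathcal W$ is strictly monotone along the family wherever $H\neq 0$; combined with $H>0$ on $(0,\tfrac{\pi}{4})$ this gives the claim. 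Your route can in principle be made to work, but only after importing the same endpoint analysis, so as written the monotonicity step is incomplete.
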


\begin{remark}
The identification of the third order coefficient of the area expansion via $\zeta(3)$ is conducted by Steven Charlton in Appendix \ref{appendixC}.

\end{remark} 
An immediate corollary from the energy estimate is that the infimum Willmore energy in the conformal class of $f^g_\varphi$ is below $8 \pi$ for $g\gg1$. Therefore, by \cite{KuwertSchatzle, KuwertLi} as well as \cite{Riviere1, Riviere2} this infimum is attained at an embedding.
\begin{corollary}
For $g\gg1$ the infimum 
\begin{equation*}
\text{Inf}\;\{\mathcal W(f) \; | \; f \colon M^g_\varphi \longrightarrow \S^3, \text{conformal immersion}\}.
\end{equation*}
is attained at a smooth and conformal embedding. 
\end{corollary}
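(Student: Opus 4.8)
The plan is to deduce the corollary from the energy estimate of Theorem~\ref{MT2} together with the existence theory for Willmore minimizers in a fixed conformal class. Fix $g$ large as in Theorem~\ref{MT} and $\varphi\in(0,\tfrac{\pi}{2})$, and write
$$\beta^g_\varphi := \text{Inf}\{\mathcal W(f)\ :\ f\colon M^g_\varphi\to\S^3\ \text{conformal immersion}\}.$$
The first step is to observe that $\beta^g_\varphi<8\pi$. By the monotonicity assertion of Theorem~\ref{MT2}, $\mathcal W(f^g_\varphi)$ is strictly decreasing on $(0,\tfrac{\pi}{4}]$ from the limiting value $8\pi$ down to $\Area(\xi_{1,g})=\mathcal W(\xi_{1,g})$, and the symmetry $f^g_\varphi=f^g_{\pi/2-\varphi}$ extends this to all of $(0,\tfrac{\pi}{2})$; hence $\beta^g_\varphi\le\mathcal W(f^g_\varphi)<8\pi$. (Equivalently, the $\tfrac{1}{g+1}$-coefficient $8\pi\bigl(\cos^2\varphi\log\cos\varphi+\sin^2\varphi\log\sin\varphi\bigr)$ in the expansion \eqref{estimates} is strictly negative for $\varphi\in(0,\tfrac{\pi}{2})$, so $\mathcal W(f^g_\varphi)<8\pi$ for all large $g$.)

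Second, I would transfer the problem to $\R^3$ using the conformal invariance of the Willmore functional. A stereographic projection $\pi\colon\S^3\setminus\{q\}\to\R^3$ is a conformal diffeomorphism, and for any immersion $f$ whose image avoids $q$ one has $\int_M H^2_{\R^3}\,dA=\int_M(H^2_{\S^3}+1)\,dA=\mathcal W(f)$, since $\int(H^2+\bar K)$ — with $\bar K$ the ambient sectional curvature along the tangent plane — is invariant under ambient conformal diffeomorphisms of a space form. Because a surface never fills $\S^3$, every conformal immersion into $\S^3$ can be moved off a point and, composed with $\pi$, corresponds to a conformal immersion of $M^g_\varphi$ into $\R^3$ of the same energy, and conversely. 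Therefore $\beta^g_\varphi$ is also the infimum of $\int H^2$ over conformal immersions of the fixed Riemann surface $M^g_\varphi$ into $\R^3$, and this number is $<8\pi$.

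Third, I would invoke the regularity and existence theory for the conformally constrained Willmore problem: $M^g_\varphi$ is a closed Riemann surface of genus $g\ge1$ with non-degenerate conformal structure (as $\varphi\in(0,\tfrac{\pi}{2})$), and its Willmore infimum in $\R^3$ lies strictly below $8\pi$, hence below the threshold governing compactness of minimizing sequences of $W^{2,2}$-conformal immersions; by the results of Kuwert--Sch\"atzle and Kuwert--Li, respectively of Rivi\`ere \cite{KuwertSchatzle,KuwertLi,Riviere1,Riviere2}, a minimizing sequence then subconverges (no energy can escape into a bubble, which would cost an extra $\ge4\pi$, and the conformal class cannot degenerate since it is held fixed), yielding a smooth conformal immersion $F\colon M^g_\varphi\to\R^3$ with $\mathcal W(F)=\beta^g_\varphi$. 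Pulling $F$ back by $\pi$ gives a smooth conformal immersion $M^g_\varphi\to\S^3$ attaining the infimum in the corollary. Finally $\mathcal W(F)=\beta^g_\varphi<8\pi$, so the Li--Yau inequality \cite{LiYau} forces $F$ to be an embedding.

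The only substantive ingredient is the strict inequality $\beta^g_\varphi<8\pi$, which is exactly the output of Theorem~\ref{MT2}; the remaining work is bookkeeping — quoting the conformal invariance of $\mathcal W$ and the cited existence theorems. The point that most deserves care is verifying that $8\pi$ is the correct compactness threshold in this setting, i.e.\ that below $8\pi$ no energy is lost through bubbling or through degeneration of the conformal type, but this is standard once a competitor of energy $<8\pi$ is in hand.
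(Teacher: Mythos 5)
Your proposal is correct and follows essentially the same route as the paper: the energy estimate (Proposition \ref{prop:willmore} / Theorem \ref{MT2}) gives that the conformally constrained Willmore infimum is strictly below $8\pi$, the existence theory of \cite{KuwertSchatzle,KuwertLi,Riviere1,Riviere2} then produces a smooth conformal minimizer, and Li--Yau \cite{LiYau} forces it to be embedded. The only caveat is that your parenthetical first-order-coefficient argument yields $\mathcal W(f^g_\varphi)<8\pi$ only for fixed $\varphi$ and $g$ large (the coefficient degenerates as $\varphi\to 0,\tfrac{\pi}{2}$), whereas the uniform-in-$\varphi$ statement requires the monotonicity argument of Proposition \ref{prop:willmore}, which your main line of reasoning does correctly invoke.
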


The strategy to prove these two main theorems is as follows. Since the considered Riemann surface $M^g_\varphi$ is $\Z_{g+1}$-symmetric, i.e, admits a covering map $\pi\colon M^g_\varphi \longrightarrow \C P^1$ totally branched over four points $p_1, ..., p_4$, it suffices to show the existence of a DPW potential solving a Monodromy Problem on the 4-punctured sphere $\C P^1\setminus\{p_1, ..., p_4\}.$
We introduce the real parameter $t=\frac{1}{2g+2}$ and define a family of Fuchsian DPW potential
$\eta_{t,\varphi}^{\xx}$ on the 4-punctured sphere depending on $t$ and some auxiliary parameters $\xx$.
The Monodromy Problem is solved using the Implicit Function Theorem at $t=0$ (which corresponds to $g=\infty$) yielding
 a fundamental piece whose boundary lies on a wedge of angle 
$4\pi t$.
When $t=\frac{1}{2g+2}$, we can complete the surface by symmetry to obtain a closed surface of genus $g$: our conformal CMC immersion $f^g_{\varphi}:M^g_{\varphi}\to\S^3$.
Its area is $g+1$ times the area of the fundamental piece.
\begin{remark}
Let $\mathcal A^t_{\varphi}$ be the area of the fundamental piece divided by
$2t$, which agrees with the area of $f^g_{\varphi}$ when $t=\frac{1}{2g+2}$.
Then $\mathcal A^t_{\varphi}$ is a real analytic function of $t\sim 0$ and $\varphi \in (0, \tfrac{\pi}{2})$, as a consequence of the real analytic Implicit Function Theorem. Therefore, the Taylor expansions of the DPW potential and the area are in fact power series of real analytic functions at $t=0$. 
For $\varphi = \tfrac{\pi}{4}$, which corresponds to the Lawson surfaces $\xi_{1,g},$  existence and real analyticity of the corresponding DPW potentials $\eta_t$ is shown in \cite{HH3} for all $t \in [0, \tfrac{1}{4}]$ (till $g=1$). In particular, it is shown \cite[Theorem 1]{HH3} that $\mathcal A^t_{\varphi}$ depends real analytically on $t$ using the strict stability and uniqueness of its fundamental piece established in \cite{KapWiy}.
In fact, the numerics seems to suggest that the convergence radius $R$ of the area power series satifies $R\geq \tfrac{1}{4}$.
\end{remark}

In \cite{HHT} we have constructed DPW potentials of high genus Lawson surfaces $\xi_{1,g}$ by starting at two orthogonally intersecting geodesic spheres and deform it in direction of a Scherk surface with wing angle $\tfrac{\pi}{2}.$ Since the potential used in  \cite{HHT}  cannot be generalized to general wing angles (in a way that an implicit function theorem argument can be applied to obtain compact surfaces), we give in Section \ref{sec:tech} an alternate Ansatz using the same philosophy which allows the intersection angle and the wing angle to be $2\varphi$ with $\varphi \in (0, \tfrac{\pi}{2})$.

The first major advantage of these Fuchsian potentials $\eta_{t, \varphi}^{\xx(t)}$ is that the limiting behaviour for $\varphi\rightarrow 0$ (and $\varphi \rightarrow \tfrac{\pi}{2}$) can be well understood to obtain a uniform existence interval in $t$ for all $\varphi$ in Section \ref{limitvarphi}. This gives rise to complete families of CMC surfaces of genus $g \gg1$.  First order area and Willmore energy expansions are then computed in Section \ref{areaestimates}.

The second advantage of the new Fuchsian potential is  a stark simplification of these computations. 
This enables us
in the last Section \ref{HOD} to obtain an iterative algorithm to compute the coefficients of the DPW potential. Though at each step of the algorithm the to-be-computed derivatives are polynomials, i.e., only an invertible finite dimensional system  needs to be solved, carrying out the algorithm explicitly grows enormously in complexity with the order of the expansion. The main step is to compute certain iterated integrals which can be expressed in terms of multiple polylogarithms (MPL's) to identify the desired coefficients. The Appendix \ref{appendixC} by Steven Charlton relates the third order coefficient of the area expansion for the Lawson surfaces  with Ap\'ery's constant $\zeta(3)$ using properties of MPL's. That computing the Taylor coefficients becomes highly non-trivial with increasing order seems to foreshadow deeper interconnections between the solutions of our monodromy problems and some MPL identities.

Other directions of future research and conjectures includes:
\begin{itemize}
\item We conjecture the family of CMC surfaces $f^g_\varphi,$ for $ \varphi\in]0,\tfrac{\pi}{2}[$, to exist for every $g\geq1$. 
\item For $g$ fixed, we conjecture $f^g_\varphi$  to be minimizers of the Willmore energy in their respective conformal classes, or at least to be constrained Willmore stable, analogously to the family of 2-lobed Delaunay tori \cite{HHNd}.
\item Finding a generating function for the coefficients of Taylor expansions for DPW potential and area of $f^g_\varphi$ and estimate their convergence radius.
\item Finding an integrable systems structure or Lax pair representation for the deformation of the DPW potential in $t.$
\item Similar constructions should work for the general Lawson surfaces $\xi_{k,l}$ as well as the Karcher-Pinkall-Sterling surfaces. New minimal and CMC surfaces could be constructed using more complicated initial conditions.
\end{itemize}
\section{Preliminaries}\label{sec:prelimi}

The DPW method \cite{DPW} is a technique to parametrize minimal and CMC surfaces in space forms via holomorphic data using loop groups. We first set the basic definitions and the necessary notations here, for more details adapted to our approach see \cite{HHT}. \\
\subsection{Loop groups}
\label{loop-groups}
Let $G$ be a Lie group and $\mathfrak g$ its Lie algebra. Then the associated loop group is defined to be
 $$\Lambda G:= \{\text{ real analytic maps (loops) }\Phi\colon \S^1\longrightarrow G,\quad \lambda \longmapsto \Phi^\lambda\},$$
which is an infinite dimensional Frechet Lie group via pointwise multiplication. Its Lie algebra is given by
$$\Lambda \mathfrak{g}:=\{\text{ real analytic maps (loops) }\eta\colon \S^1\longrightarrow\mathfrak{g},\quad  \lambda \longmapsto \eta^\lambda\}.$$ 
Denote $\overline\D=\{\lambda\in\C\mid |\lambda|\leq1\}.$
 For a complex Lie group $G^\C$ we denote by
\[\Lambda_+G^\C=\{\Phi\in\Lambda G^\C\mid \Phi \text{ extends holomorphically to } \overline\D\}\]
 
the positive part of the loop group and similarly 
 \[\Lambda_+\mathfrak g^\C=\{\eta\in\Lambda \mathfrak g^\C\mid \eta \text{ extends holomorphically to } \overline\D\}\]
 is the positive part of its Lie algebra.\\

As in  \cite{nnoids, HHT} we consider the following functional spaces: 
For  $h\in L^2(\S^1,\mathbb C)$ consider its Fourier series
\[h=\sum_k h_k \lambda^k.\] Fix $\rho>1$ and define

\[\parallel h\parallel_{\rho}=\sum |h_k| \rho^{|k|}\leq\infty.\]
Let
 \[\mathcal W_{\rho}:=\{h\in  L^2(\S^1,\mathbb C)\mid \,\parallel h\parallel_{\rho}<\infty\}\] 
be the space of Fourier series being absolutely convergent  on the annulus 
$$\A_{\rho}=\{\lambda\in\C\mid \tfrac{1}{\rho} < |\lambda| <{\rho}\}.$$ 
 \begin{remark}
For arbitrary loop spaces $\mathcal H$,
$\mathcal H_{\rho}$ denotes the subspace of $\mathcal H$ of loops whose entries
are in $\mathcal W_{\rho}$. Then $\Lambda SL(2,\C)_{\rho}$, $\Lambda SU(2)_{\rho}$ and $\Lambda_+^{\R} SL(2,\C)_{\rho}$
are Banach Lie groups.
We will actually omit the subscript $\rho$ most of the time.
 \end{remark}

Moreover, let
\[\cal{W}^{\geq 0}_{\rho}:=\{h=\sum_k h_k \lambda^k\in \mathcal W_{\rho}\mid
h_k=0\; \;\;\forall  \;k<0\}\]
denote the space of those  loops $f\in L^2(\S^1,\mathbb C)$ that can be extended to a holomorphic function on the disk $\D_{\rho}=D(0,\rho)$.  Similarly, let
\[\cal{W}^{>0}_{\rho}:=\{h=\sum_k h_k \lambda^k\in \mathcal W_{\rho}\mid h_k=0\; \;\;\forall\; k\leq0\,\}\]
\[\cal{W}^{<0}_{\rho}:=\{h=\sum_k h_k \lambda^k\in \mathcal W_{\rho}\mid h_k=0\;\;\;\forall \;k\geq0\,\}\]
denote the positive and negative space, respectively.
Therefore we can decompose every $h\in \mathcal W_{\rho}$
$$h=h^+ + h^0+ h^-$$
into its positive and negative component $h^\pm \in W^{\gtrless 0}_{\rho}$, and a constant component $h^0 = h_0$.  \\

We define the star and conjugation involutions on $\mathcal W_{\rho}$ by
$$h^{*}(\lambda)=\overline{h(1/\overline{\lambda})}
\quad\mbox{ and }\quad \overline{h}(\lambda)=\overline{h(\overline{\lambda})}$$
and we denote by $\mathcal W_{\R,\rho}$ the space of functions $h\in\mathcal W_{\rho}$
with $h=\overline{h}$.\\

The following proposition will allow us to apply a variant of the implicit function theorem when the differential is not surjective by adding the parameters $(a_0, ..., a_{n-1}).$
\begin{proposition}
\label{Pro:decomposition}
Let $\mu_1,\cdots,\mu_n\in\D_{\rho}$. The operator
\[(a_0,\cdots,a_{n-1},g)\mapsto
\sum_{k=0}^{n-1}a_k\lambda^k + g(\lambda)\prod_{k=1}^n(\lambda-\mu_i)\]
is an isomorphism from $\C^n\times \mathcal W_{\rho}^{\geq 0}$ to $\mathcal W_{\rho}^{\geq 0}$.
If in addition $\mu_1,\cdots,\mu_n$ are real, the operator restricts to an isomorphism
from $\R^n\times \mathcal W^{\geq 0}_{\R,\rho}$ to $\mathcal W^{\geq 0}_{\R,\rho}$.
\end{proposition}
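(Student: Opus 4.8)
The plan is to exhibit an explicit two-sided inverse and check boundedness. Write $P(\lambda) = \prod_{k=1}^n(\lambda-\mu_k)$, a polynomial of degree $n$ with $P(\mu_k)=0$ and all roots in $\D_\rho$. Given $h \in \mathcal{W}_\rho^{\geq 0}$, the natural candidate for the inverse is: let the $a_k$ be the coefficients of the degree $\leq n-1$ polynomial remainder of $h$ upon ``division by $P$'', and let $g = (h - \sum_{k=0}^{n-1}a_k\lambda^k)/P$. The first thing I would make precise is that the obstruction-space projection $h \mapsto R_h := \sum_{k=0}^{n-1}a_k\lambda^k$ is well-defined and bounded on $\mathcal{W}_\rho^{\geq 0}$: since $h$ is holomorphic on $\D_\rho$, Lagrange/Hermite interpolation at the points $\mu_1,\dots,\mu_n$ (with multiplicities, in case of repeated $\mu_k$) produces the unique polynomial $R_h$ of degree $\leq n-1$ agreeing with $h$ to the appropriate order at each $\mu_k$; then $h - R_h$ vanishes at each $\mu_k$ (to the right order), so $h-R_h = P\cdot g$ with $g$ holomorphic on $\D_\rho$ by the removable singularity theorem. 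This shows surjectivity and gives a candidate inverse; injectivity is immediate since $\sum a_k\lambda^k = -g(\lambda)P(\lambda)$ with the left side of degree $\leq n-1$ and the right side vanishing at the $n$ points $\mu_k$ forces $g\equiv 0$ hence all $a_k=0$ (again counting multiplicities when the $\mu_k$ coincide).

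The substantive analytic point — and the step I expect to be the main obstacle — is showing that the inverse is \emph{bounded} for the norm $\|\cdot\|_\rho$, equivalently that $h \mapsto (a_0,\dots,a_{n-1})$ and $h \mapsto g$ are bounded linear maps $\mathcal{W}_\rho^{\geq 0} \to \C^n$ and $\mathcal{W}_\rho^{\geq 0} \to \mathcal{W}_\rho^{\geq 0}$. For the forward operator $(a_0,\dots,a_{n-1},g)\mapsto \sum a_k\lambda^k + gP$ this is clear: $\|\lambda^k\|_\rho = \rho^k$, and $\mathcal{W}_\rho$ is a Banach algebra under the norm $\|\cdot\|_\rho$ (since $\|fg\|_\rho \leq \|f\|_\rho\|g\|_\rho$ by the triangle inequality on Fourier coefficients), so $\|gP\|_\rho \leq \|g\|_\rho\|P\|_\rho$. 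For the inverse, the cleanest route is the open mapping theorem / bounded inverse theorem: I would verify that $\C^n \times \mathcal{W}_\rho^{\geq 0}$ and $\mathcal{W}_\rho^{\geq 0}$ are Banach spaces (the latter is a closed subspace of the Banach space $\mathcal{W}_\rho$, cut out by the conditions $h_k = 0$ for $k<0$, which are continuous), observe the forward map is a bounded linear bijection by the previous sentence and the interpolation argument, and conclude the inverse is automatically bounded. This sidesteps having to estimate the interpolation coefficients and the quotient $g$ by hand, though one could alternatively give direct Cauchy-estimate bounds on $a_k$ (via contour integrals of $h(\lambda)/\prod(\lambda-\mu_j)$ around $|\lambda|=r$ for some $1<r<\rho$) if an explicit operator norm were wanted.

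Finally, for the real statement: when all $\mu_k \in \R$, the interpolation polynomial $R_h$ has real coefficients whenever $h \in \mathcal{W}_{\R,\rho}^{\geq 0}$ (i.e. $h = \overline h$), because $\overline{R_h}$ interpolates $\overline h = h$ at the same real points with the same multiplicities, so by uniqueness $\overline{R_h} = R_h$; consequently $g = (h-R_h)/P$ also satisfies $g = \overline g$. Thus the bijection restricts to a bijection $\R^n \times \mathcal{W}_{\R,\rho}^{\geq 0} \to \mathcal{W}_{\R,\rho}^{\geq 0}$, and $\mathcal{W}_{\R,\rho}^{\geq 0}$ is again a closed (real) subspace of a Banach space, so the bounded inverse theorem applies verbatim. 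This completes the argument; the only real work is the boundedness of the inverse, which I would handle by the open mapping theorem as above.
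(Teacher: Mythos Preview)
Your argument has a genuine gap in the surjectivity step. You show that $g=(h-R_h)/P$ is holomorphic on the open disk $\D_\rho$ by the removable singularity theorem, and then assert ``this shows surjectivity''. But holomorphy on $\D_\rho$ is strictly weaker than membership in $\mathcal W_\rho^{\geq 0}$: the latter requires $\sum_{k\geq 0}|g_k|\rho^k<\infty$, whereas a generic function holomorphic on $\D_\rho$ only satisfies $\sum |g_k|r^k<\infty$ for each $r<\rho$. (For instance $\lambda\mapsto (\rho-\lambda)^{-1}$ is holomorphic on $\D_\rho$ but has infinite $\|\cdot\|_\rho$-norm.) So you have not yet produced a preimage in $\C^n\times\mathcal W_\rho^{\geq 0}$, and the open mapping theorem cannot be invoked until bijectivity is established. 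The ``soft'' route you propose therefore collapses to the same analytic question it was meant to avoid.

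The paper closes this gap by a direct estimate, carried out for $n=1$ and then iterated: writing $g(\lambda)=\sum_{k\geq 0}h_k(\lambda^k-\mu^k)/(\lambda-\mu)=\sum_k\sum_{j=0}^{k-1}h_k\lambda^j\mu^{k-1-j}$ and summing geometric series yields $\|g\|_\rho\leq \|h\|_\rho/(\rho-|\mu|)$, which simultaneously gives $g\in\mathcal W_\rho^{\geq 0}$ and the explicit operator bound. Your interpolation framework and treatment of the real case are fine, but to make the argument complete you need this (or an equivalent) quantitative estimate on the divided difference in the $\|\cdot\|_\rho$-norm; once you have it for a single linear factor, induction handles general $n$ more cleanly than juggling Hermite interpolation with multiplicities.
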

\begin{proof} We first prove the case $n=1$ and write $\mu=\mu_1$. Let $h\in\mathcal W^{\geq 0}_{\rho}$ and define
$$g(\lambda)=\frac{h(\lambda)-h(\mu)}{\lambda-\mu}.$$
We have
$$g(\lambda)=\sum_{k=0}^{\infty} h_k\frac{\lambda^k-\mu^k}{\lambda-\mu}=
\sum_{k=0}^{\infty}\sum_{j=0}^{k-1}h_k\lambda^j\mu^{k-1-j}$$
$$\|g\|_{\rho}\leq \sum_{k=0}^{\infty}\sum_{j=0}^{k-1} |h_k|\,\rho^j\,|\mu|^{k-1-j}
=\sum_{k=0}^{\infty}|h_k|\frac{\rho^k-|\mu|^k}{\rho-|\mu|}\leq
\frac{\|h\|_{\rho}}{\rho-|\mu|}.$$
Hence $g\in\mathcal W^{\geq 0}_{\rho}$ and $(h(\mu),g)$ is the (unique) pre-image of $h$.
This proves the proposition in the case $n=1$. The general case follows by induction and composition.
\end{proof}

\subsection{DPW approach}
A DPW potential on a Riemann surface $M$ is a closed (i.e. holomorphic) complex linear 1-form

\[\eta\in\Omega^{1,0}(M,\Lambda \mathfrak{sl}(2,\C))\]

with  
\[\lambda \eta\in \Omega^{1,0}(M,\Lambda_+\mathfrak{sl}(2,\C))\]
such that  its residue at $\lambda=0$ 
\[\eta_{-1}:=\text{Res}_{\lambda=0} (\eta)\]
is a nowhere vanishing and nilpotent 1-form.  For a given DPW potential $\eta$ the extended frame $\Phi$ is a solution of 

$$d_{M} \Phi =  \Phi \eta$$

with some initial value $\Phi (z_0)= \Phi_0\in\Lambda\SL(2,\C)$ for some $z_0 \in M$ fixed.
Consider an element of the first fundamental group $\gamma\in\pi_1(M,z_0)$ and let $\cal{M}(\Phi,\gamma)$ denote the monodromy of $\Phi$ with respect to $\gamma.$ The conditions for the DPW potential to give a well-defined minimal or CMC surface from $M$ into the round 3-sphere in terms of $\Phi$ are

\begin{equation} \label{monodromy-problem}
\left\{\begin{array}{l}
\cal{M}(\Phi,\gamma)\in \Lambda SU(2)\\
\cal{M}(\Phi,\gamma)|_{\lambda=\lambda_1}=\cal{M}(\Phi,\gamma)_{\lambda=\lambda_2}=\pm\Id_2, \quad \text{for all }\gamma\in\pi_1(M,z_0)\end{array}\right.\end{equation}

where $\lambda_1\neq\lambda_2\in\mathbb S^1.$ 
We refer to these conditions in \eqref{monodromy-problem} as the {\em Monodromy Problem}. 
The Iwasawa decomposition is the splitting of $\Phi$ into a unitary and a positive factor, i.e.,

$$\Phi = F B$$

with $F \in \Lambda \SU (2)$ and $B \in \Lambda_+\SL(2,\C)$. This splitting becomes unique if we require 

$$B^{\lambda=0} \in \mathcal B$$ with
\[\mathcal B=\{B\in\SL(2,\C)\mid B \text{ is upper triangular with positive diagonal entries}\}.\]

Applying the Iwasawa decomposition pointwise to the extended frame on the universal covering $\widetilde M$ of the Riemann surface $M$
yiels a smooth unitary factor $F$.
Then conformal immersion of constant mean curvature $H = i  \tfrac{\lambda_1 + \lambda_2}{\lambda_1- \lambda_2}$ can be reconstructed from the unitary factor $F$ by the Sym-Bobenko formula

$$f = F^{\lambda = \lambda_1} \left(F^{\lambda = \lambda_2} \right)^{-1}\colon\widetilde M\to\mathbb S^3.$$ 
Provided the Monodromy Problem \eqref{monodromy-problem} is satisfied, $f$ is well-defined on $M$.
 
\begin{remark} 
The Iwasawa decomposition is a smooth diffeomorphism between the Lie groups
$\Lambda SL(2,\C)_{\rho}$ and $\Lambda SU(2)_{\rho}\times\Lambda_+^{\R} SL(2,\C)_{\rho}$
(see  Theorem 5 in \cite{minoids}).
\end{remark}
 
In this paper we restrict ourselves to genus $g$ Riemann surfaces $M^g_\varphi$ with a $\Z_{g+1}$-symmetry. To be more concrete, $M^g_\varphi$ admits a $(g+1)$-fold covering
$$\pi \colon M^g_\varphi \longrightarrow \C P^1$$
totally branched over four points
\begin{equation}\label{realvarphipj}p_1=e^{i\varphi},\quad p_2=-e^{-i\varphi},\quad p_3=-e^{i\varphi},\quad p_4=e^{-i\varphi}.\end{equation}

The Riemann surface structure of $M^g_\varphi$ is determined by the algebraic equation
$$y^{g+1} = \frac{(z-p_1)(z-p_3)}{(z-p_2)(z-p_4)}.$$

Rather than showing the existence of DPW potentials on $M^g_\varphi$, we consider DPW potentials $\eta_{t, \varphi} $ on
$$\Sigma = \Sigma_\varphi:= \C P^1  \setminus \{p_1, ..., p_4\}.$$

\begin{remark}
On a compact Riemann surface
there exists no  DPW potential solving the Monodromy Problem without singularities (e.g. poles).
To obtain compact CMC surfaces we require the necessary singularities to be apparent, i.e., removable by suitable local gauge transformation.
\end{remark}
\begin{remark}\label{biholo}
For $\varphi$ fixed let $\wt \varphi := \tfrac{\pi}{2} -\varphi.$ Then the resulting Riemann surfaces $\Sigma_\varphi$ and $\Sigma_{\wt \varphi}$ are biholomorphic to each other by $z \mapsto i z.$ The order of the singular points points are hereby permuted:
$$(p_1(\varphi), p_2(\varphi), p_3(\varphi), p_4(\varphi)) \longmapsto (p_2(\wt \varphi), p_3(\wt \varphi), p_4(\wt \varphi), p_1(\wt \varphi)).$$\\
\end{remark}

On the $4$-punctured sphere there exist a particular choice of DPW potentials such that $\eta^\lambda$ gives rise to a Fuchsian system for all $\lambda \in \overline\D^*_{\rho}\subset\C^*$. We will refer to these potentials as Fuchsian potentials in the following.
\subsection{Fuchsian Systems}$\;$\\
A \SL$(2, \C)$ Fuchsian system on the 4-punctured sphere $\Sigma$ is a holomorphic connection on the trivial $\C^2$-bundle over $\Sigma$ of the form $\nabla= d+ \xi$ 
$$\xi = \sum_{k=1}^4 A_k \frac{dz}{z-p_k},$$
where $A_k \in \mathfrak {sl}(2, \C)$ and ${\displaystyle \sum_{k=1}^4 A_k= 0}$ to avoid a further singularity at $z= \infty.$  Two Fuchsian systems are equivalent (when fixing the punctures $p_k$), if there exist an invertible matrix $G$ such that $\tilde A_k = G^{-1}A_k G.$
Due to its form, the curvature of $\nabla$ is automatically zero, and we can consider the associated monodromy 
 representation of the first fundamental group $\pi_1(\Sigma)$. 
 This depends by construction on the initial value of the fundamental solution at the base point and is therefore only well defined up to an overall conjugation. Via the monodromy representation the space of these (irreducible) Fuchsian systems
is biholomorphic to an open dense subset of the space of (irreducible) representations of the first fundamental group of $\Sigma$  to $\mathrm{SL}(2,\C)$ (both modulo conjugation). Of particular interest for the construction of CMC surfaces
are  Fuchsian systems admitting a unitary monodromy representation. 
 
 \begin{definition}
 A \SL$(2, \C)$ Fuchsian system is called unitarizable if there exist a hermitian metric $h$  on $\underline{\C}^2\to\Sigma$ such that the connection $d+\xi$ is unitary with respect to $h.$
 \end{definition}
  
The analogous definition on the representation side is
\begin{definition}
A monodromy representation is unitarizable if it lies in the conjugacy class of a unitary representation.
\end{definition}

The space of representations of $\pi_1(\Sigma,z_0)$ modulo conjugation
 can be parametrized using so-called Fricke coordinates. Let $M_1,\dots,M_4$ denote the monodromies of $\Phi$ along $\gamma_k$ (a simply closed loop around one singularity $p_k$) and define
\[s_k:=\tr (M_k); \quad \quad s_{k,l}=\tr (M_kM_l).\]
Since $M_k \in \SL(2, \C), $ the trace $s_k$ determines the eigenvalues of $M_k, $ i.e., the conjugacy class of the local monodromy $M_k.$ For the symmetric case considered in this paper we have $$s_1= s_2=s_3 = s_4.$$
\begin{proposition}\label{Pro:Friecke}
Consider a $\SL(2,\C)$-representation on the $4$-punctured sphere $\Sigma$. Let  $s=s_1=\dots =s_4$ and  
let $u=s_{1,2}$, $v=s_{2,3}$, $w=s_{1,3}.$
Then the following algebraic equation holds
\begin{equation}\label{goldpol}u^2+v^2+w^2+u\,v\,w-2 s^2(u+v+w)+4(s^2-1)+s^4=0.\end{equation}
When satisfying \eqref{goldpol} the parameters $s$ and $u,v,w$ together determine a monodromy representation $\rho \colon \pi_1(\Sigma) \rightarrow$ SL$(2, \C)$ from the first fundamental group of $\Sigma$ into SL$(2, \C)$. For generic parameters, this representation is unique up to conjugation.
\end{proposition}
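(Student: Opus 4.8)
The plan is to realize the $4$-punctured sphere's fundamental group via generators $\gamma_1,\dots,\gamma_4$ around the punctures subject to the single relation $\gamma_1\gamma_2\gamma_3\gamma_4 = 1$, so that a representation is a choice of $M_1,\dots,M_4 \in \SL(2,\C)$ with $M_1M_2M_3M_4 = \Id$. Modulo overall conjugation this is a $3\cdot 3 - 3 = 6$-dimensional space; the seven trace functions $s_1,\dots,s_4$ together with $u = s_{1,2}$, $v = s_{2,3}$, $w = s_{1,3} = s_{4,2}$ (using the relation to identify $M_1M_3 = (M_2 M_4)^{-1}$ up to trace) therefore cannot be independent, and \eqref{goldpol} is the expected one relation. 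Concretely I would first recall the classical $\SL(2,\C)$ trace identities: $\tr(AB) + \tr(AB^{-1}) = \tr(A)\tr(B)$ and the Cayley–Hamilton consequence $\tr(ABC) + \tr(ACB) = \tr(A)\tr(BC) + \tr(B)\tr(AC) + \tr(C)\tr(AB) - \tr(A)\tr(B)\tr(C)$. Applying these to the word $M_1M_2M_3M_4 = \Id$ — for instance by expanding $\tr(M_1M_2M_3M_4) = \tr(\Id) = 2$ and also $\tr((M_1M_2)(M_3M_4)) = \tr((M_1M_2)(M_1M_2)^{-1}\cdot\text{correction})$ — produces a polynomial relation among $s_1,s_2,s_3,s_4,u,v,w$. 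Specializing $s_1=s_2=s_3=s_4=s$ should collapse this to exactly \eqref{goldpol}; I would verify the constant and coefficient bookkeeping by checking degenerate cases (e.g.\ $M_k = \pm\Id$, giving $s = \pm 2$, $u=v=w=2$).

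Next, for the converse — that $s$ and $(u,v,w)$ satisfying \eqref{goldpol} determine a representation uniquely up to conjugation for generic parameters — I would use the standard normal form for an irreducible $\SL(2,\C)$-triple. Put $M_1$ and $M_2$ in the form
\[
M_1 = \begin{pmatrix} \ast & \ast \\ 0 & \ast \end{pmatrix},\qquad
M_2 = \begin{pmatrix} \ast & 0 \\ \ast & \ast \end{pmatrix},
\]
with the diagonal entries pinned down (up to the Weyl reflection) by $s_1 = s$, $s_2 = s$, and the off-diagonal entries normalized by the remaining conjugation freedom; then $\tr(M_1 M_2) = u$ fixes the last degree of freedom in the pair $(M_1,M_2)$. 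Now $M_3$ and $M_4 = (M_1M_2M_3)^{-1}$ must have trace $s$, and $\tr(M_2 M_3) = v$, $\tr(M_1 M_3) = w$ are three affine-linear conditions on the four entries of $M_3$ together with $\det M_3 = 1$; one shows that for generic $(s,u,v,w)$ on the hypersurface \eqref{goldpol} this system has a unique solution $M_3$ (the relation \eqref{goldpol} being precisely the compatibility/solvability condition, and $\tr M_4 = s$ then following automatically). Irreducibility is generic, and on the irreducible locus the stabilizer of a representation under conjugation is just $\pm\Id$, which is what gives uniqueness up to conjugation.

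The main obstacle I anticipate is not any single step but the bookkeeping in the first part: getting \eqref{goldpol} with the correct signs and the exact constant $4(s^2-1)+s^4$ out of the general Fricke relation for $M_1M_2M_3M_4=\Id$ requires care, since the general relation (with all four $s_k$ distinct) is longer and one must substitute $s_k \equiv s$ correctly and also handle the identification of $w$ — whether one takes $w = \tr(M_1M_3)$ or $\tr(M_2M_4)$, which agree by the relation but only after another trace identity. A clean way to sidestep some of this is to cite the classical result (Vogt, Fricke–Klein; see also Goldman's work on the character variety of the four-holed sphere) that the $\SL(2,\C)$-character variety of $\Sigma$ is the hypersurface in $\C^7$ cut out by exactly this type of equation, and then merely perform the symmetric specialization. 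The second (converse) part is then essentially the statement that this hypersurface is, generically, a geometric quotient — which again is classical — so the only genuinely new content is the symmetric reduction, and I would present the proof as that reduction plus citations for the general theory.
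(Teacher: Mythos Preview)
Your proposal is correct and, in fact, goes further than the paper itself: the paper's proof consists entirely of the sentence ``This is a classical result of Vogt and Fricke, for a modern presentation see \cite{Gold}.'' Your final suggestion---to cite the classical Vogt--Fricke--Goldman result and merely perform the symmetric specialization $s_1=\cdots=s_4=s$---is exactly what the paper does, and your additional sketch of the trace-identity derivation and the normal-form argument for generic uniqueness is sound extra detail that the paper omits.
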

\begin{proof}
This is a classical result of Vogt and Fricke, for a modern presentation see \cite{Gold}.
\end{proof}

We will use the following observation:
\begin{lemma}
\label{Lem:discrim}
The polynomial in \eqref{goldpol} is quadratic in $w$ and its discriminant is given by
\begin{equation}\label{discriminant}
(u-2)(v-2)((u+2)(v+2)-4 s^2).
\end{equation}
\end{lemma}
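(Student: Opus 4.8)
The plan is to treat the left-hand side of \eqref{goldpol} as a polynomial in the single variable $w$, with $u$, $v$, $s$ regarded as parameters, and read off its coefficients. Collecting terms by powers of $w$, the expression becomes
\[
w^2 + (uv - 2s^2)\,w + \bigl(u^2 + v^2 - 2s^2(u+v) + s^4 + 4s^2 - 4\bigr),
\]
which is manifestly quadratic in $w$ with leading coefficient $1$; this already gives the first assertion.

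For the second assertion I would compute the discriminant $b^2 - 4c$ of this monic quadratic, namely
\[
(uv - 2s^2)^2 - 4\bigl(u^2 + v^2 - 2s^2(u+v) + s^4 + 4s^2 - 4\bigr),
\]
expand it to
\[
u^2v^2 - 4s^2uv - 4u^2 - 4v^2 + 8s^2u + 8s^2v - 16s^2 + 16,
\]
and then verify that this coincides with the claimed factorization \eqref{discriminant} by expanding the latter. A tidy way to organize the check is to set $A = uv + 4$ and $B = 2(u+v)$, so that $(u-2)(v-2) = A - B$ and $(u+2)(v+2) - 4s^2 = A + B - 4s^2$; the product then collapses to $A^2 - B^2 - 4s^2(A - B)$, and re-substituting $A$ and $B$ reproduces the expanded discriminant above term by term.

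\textbf{Where the difficulty lies.} There is essentially no obstacle: the statement is a bare polynomial identity in $\mathbb{Z}[u,v,s]$, provable by a few lines of expansion (or a one-line computer algebra check). The only mildly nontrivial point is guessing the factored form in advance, and even that is naturally motivated: the value $u = 2$ forces the local monodromy $M_1M_2$ to be parabolic or trivial, so the two roots for $w$ should collide, whence $(u - 2)$ divides the discriminant; by the symmetry $u \leftrightarrow v$ so does $(v - 2)$, and the remaining cofactor must be linear in each of $u$, $v$ and affine in $s^2$, with coefficients then pinned down by matching a handful of monomials.
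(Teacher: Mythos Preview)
Your proof is correct. The paper itself treats this lemma as a bare computational observation and provides no proof at all, so your direct expansion and verification is exactly the intended (and only reasonable) argument; the heuristic remark about why $(u-2)$ and $(v-2)$ should divide the discriminant is a nice bonus but not needed for the formal proof.
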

 
Via the 
monodromy representation we obtain new coordinates on the space of irreducible Fuchsian systems modulo conjugation.
The advantage of these coordinates is that the condition whether the Fuchsian system $\eta$ is unitarizable  can be explicitly expressed in terms of the traces $s_k$ and $s_{k,l}$.
A standard result is that an irreducible representation of $\pi_1(M)$
into $\mathrm{SL}(2,\C)$ is unitarizable if and only if 
$s_k \in(-2,2)$ for $k=1,\dots4$
and
$s_{k,l}\in[-2,2]$ for $(k,l)\in\{(1,2),(2,3),(1,3)\}.$
In the same vein, we obtain under the previous symmetry assumptions the following Lemma.
\begin{lemma}\label{Lem:irreducibility-traces}
Let $\rho\colon \pi_1(\Sigma) \rightarrow $SL$(2,\C)$ be a representation of the first fundamental group of the 4-punctured sphere
into $\mathrm{SL}(2,\C)$ determined  by
$M_1,\dots,M_4\in\mathrm{SL}(2,\C)$ satisfying  
$$M_1M_2M_3M_4=\Id; \quad M_3=D^{-1}M_1D; \quad \text{and} \quad M_4=D^{-1}M_2D, $$ with $D=\text{diag}(\ii,-\ii)$.
If moreover 
\begin{equation}\begin{split}
&\tr(M_1)=\tr(M_2)=s\in(-2,2)\subset \R,\\
&s_{1,2},\,s_{2,3}\in(-2,2) \subset \R,\\
&s_{1,3},\,s_{2,4}\in\R\end{split}\end{equation}
then $\rho$ is unitarizable by a diagonal matrix.
\end{lemma}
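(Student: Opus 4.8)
The plan is to produce a $\rho$-invariant positive definite Hermitian form that is \emph{diagonal}. Since $D=\mathrm{diag}(\ii,-\ii)$ is unitary and commutes with every diagonal matrix, a diagonal positive definite Hermitian $H$ with $M_{1}^{\ast}HM_{1}=M_{2}^{\ast}HM_{2}=H$ automatically satisfies $M_{3}^{\ast}HM_{3}=M_{4}^{\ast}HM_{4}=H$ as well (because $M_{3}=D^{-1}M_{1}D$, $M_{4}=D^{-1}M_{2}D$), so such an $H$ exhibits $\rho$ as unitarizable by the diagonal matrix $H^{1/2}$. The guiding principle is that the $\Z_{2}$-symmetry $M_{k}\mapsto D^{-1}M_{k}D$ of the four punctures makes the invariant form equivariant under conjugation by $D$, and since for irreducible $\rho$ the invariant Hermitian form is unique up to a real scalar, it must commute with $D$ and hence be diagonal. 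Observe also that conjugation by $D$ merely permutes $\{M_{1},M_{2},M_{3},M_{4}\}$ (as $D^{2}=-\Id$ is central), so it normalizes the image of $\rho$; this is used repeatedly below.

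\emph{Irreducible case.} The first task is to upgrade the hypothesis $s_{1,3}\in\R$ to $s_{1,3}\in[-2,2]$. By Proposition \ref{Pro:Friecke}, $s_{1,3}$ is a root of \eqref{goldpol} read as a monic quadratic $f(w)=w^{2}-(2s^{2}-uv)w+C$ in $w$, where $u=s_{1,2}$, $v=s_{2,3}$; by Lemma \ref{Lem:discrim} its discriminant is $(u-2)(v-2)\big((u+2)(v+2)-4s^{2}\big)$. As $u,v\in(-2,2)$ forces $(u-2)(v-2)>0$ and $f$ has the real root $s_{1,3}$, the discriminant is $\ge 0$, whence $(u+2)(v+2)\ge 4s^{2}$ and both roots are real. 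A short computation then gives $f(2)=(u+v-s^{2})^{2}\ge0$, $f(-2)=(u-v)^{2}+s^{2}\big(s^{2}+8-2(u+v)\big)\ge0$ (the bracket is positive since $u+v<4$), and that the vertex $\tfrac12(2s^{2}-uv)$ lies in $[-2,2]$ (the bound $\le 2$ reduces to $(2-u)(2-v)\ge0$ given $(u+2)(v+2)\ge4s^{2}$, and $\ge-2$ to $uv<4\le2s^{2}+4$); hence both roots of $f$ lie in $[-2,2]$. In particular $s_{1,3}\in[-2,2]$, and by $s_{1,3}+s_{2,4}=2s^{2}-s_{1,2}s_{2,3}$ (the sum of the two roots, which is also a Cayley--Hamilton consequence of $M_{1}M_{2}M_{3}M_{4}=\Id$) so is $s_{2,4}$. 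Together with $s_{k}=s\in(-2,2)$ and $s_{1,2},s_{2,3}\in(-2,2)$, the standard unitarizability criterion recalled above then produces a $\rho$-invariant positive definite Hermitian form $H_{0}$, unique up to a positive real scalar by irreducibility. Finally, $DH_{0}D^{-1}$ is again a $\rho$-invariant positive definite Hermitian form --- for $g$ in the image of $\rho$ one has $g^{\ast}(DH_{0}D^{-1})g=D\,h^{\ast}H_{0}h\,D^{-1}$ with $h:=D^{-1}gD$ in the image of $\rho$, and $h^{\ast}H_{0}h=H_{0}$ --- so $DH_{0}D^{-1}=\kappa H_{0}$ with $\kappa>0$, and comparing determinants forces $\kappa=1$; equivalently $H_{0}$ commutes with $D$ and is diagonal. (Positivity is exactly what rules out $\kappa=-1$, which would make $H_{0}$ anti-diagonal of signature $(1,1)$ --- the $\SL(2,\R)$ alternative excluded by unitarizability.)

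\emph{Reducible case.} If $\rho$ preserves a line $L$, then it also preserves $DL$. If $L=DL$, then $L$ is an eigenline of $D$, so all $M_{k}$ are simultaneously triangular; since $D$-conjugation fixes the diagonal entries, $(M_{3})_{11}=(M_{1})_{11}$ and $(M_{4})_{11}=(M_{2})_{11}$, and combined with $(M_{k})_{11}(M_{k})_{22}=1$ the relation $M_{1}M_{2}M_{3}M_{4}=\Id$ forces $\big((M_{1})_{11}(M_{2})_{11}\big)^{2}=1$, hence $s_{1,2}=\pm2$, contradicting $s_{1,2}\in(-2,2)$. Therefore $L\ne DL$, so $\rho$ is diagonalized by $P=[\,v_{1}\mid v_{2}\,]$ with $v_{1}=(\alpha,\beta)$ spanning $L$ (necessarily $\alpha,\beta\ne0$) and $v_{2}=Dv_{1}$ spanning $DL$; thus $\rho\cong\chi\oplus\chi^{-1}$, and since $\chi(\gamma_{k})+\chi(\gamma_{k})^{-1}=s\in(-2,2)$ the character $\chi$, which factors through $\pi_{1}(\Sigma)^{\mathrm{ab}}\cong\Z^{3}$ generated by $[\gamma_{1}],[\gamma_{2}],[\gamma_{3}]$, takes values on the unit circle. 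Hence $H:=(PP^{\ast})^{-1}$ is a $\rho$-invariant positive definite Hermitian form, and the choice $v_{2}=Dv_{1}$ makes it diagonal, since a one-line $2\times2$ computation gives $PP^{\ast}=2\,\mathrm{diag}(|\alpha|^{2},|\beta|^{2})$.

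The step I expect to be the real obstacle is extracting \emph{diagonal} from mere unitarizability in the irreducible case; it becomes transparent only because the $\Z_{2}$-symmetry renders the (essentially unique) invariant Hermitian form $D$-equivariant. The only genuinely computational ingredient is the elementary estimate forcing $s_{1,3},s_{2,4}\in[-2,2]$, which uses nothing beyond Proposition \ref{Pro:Friecke} and Lemma \ref{Lem:discrim}.
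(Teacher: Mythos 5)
Your proposal follows a genuinely different route from the paper's, and most of it is correct, but it hinges on one assertion that the paper states without proof. The paper's argument is a direct matrix computation: it writes $M_1=XY^{-1}$, $M_2=YD^{-1}X^{-1}D$, normalizes $X,Y$ by diagonal conjugation, and reads off from $s_{1,2},s_{2,3}\in(-2,2)$, $s_1=s_2\in(-2,2)$ and $s_{1,3}-s_{2,4}\in\R$ exactly the reality conditions forcing $X,Y\in\SU(2)$; this is not merely stylistic, since the explicit $X,Y$ are reused later (Remark \ref{Rem:XY}, where $X=\Phi(+\infty)$ and $Y=\Phi(\ii\infty)$), and the computation is what actually separates the $\SU(2)$ case from the $\mathrm{SU}(1,1)$ alternative. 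Your route instead (i) upgrades $s_{1,3},s_{2,4}\in\R$ to $s_{1,3},s_{2,4}\in[-2,2]$ via the discriminant of \eqref{goldpol} --- this is correct, and your inequality $(u+2)(v+2)\ge 4s^2$ is essentially the computation the paper performs later inside Lemma \ref{lem:monodromyproblemequiv}; (ii) invokes the ``standard result'' quoted before the Lemma to obtain a positive definite invariant form; (iii) extracts diagonality from uniqueness of the invariant form plus $D$-equivariance. Steps (i) and (iii) are sound, and your explicit treatment of the reducible case (which the paper's computation handles implicitly, never using irreducibility) is a genuine bonus.

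The gap is step (ii). The criterion you rely on --- irreducible, $s_k\in(-2,2)$, $s_{1,2},s_{2,3},s_{1,3}\in[-2,2]$ implies unitarizable --- is asserted in the paper with no proof or reference, and statements of exactly this shape fail for fewer punctures: for the $3$-punctured sphere, all traces lying in $(-2,2)$ does not imply unitarizability (one also needs $\tr[M_1,M_2]\le 2$, the spherical triangle inequality), because irreducible $\SL(2,\R)$ representations with elliptic generators likewise have all traces in $[-2,2]$. Your own bound $(u+2)(v+2)\ge 4s^2$ is precisely what excludes the $\SL(2,\R)$ branch for the subgroups $\langle M_1,M_2\rangle$ and $\langle M_2,M_3\rangle$, which already shows that the hypotheses encode positivity information beyond ``traces in $[-2,2]$''; but you never verify that the full representation, rather than these rank-two subgroups, carries a positive-definite invariant form. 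That verification is the entire content of the paper's computation, and outsourcing it to the unproven black box is where your proof is incomplete. If you supply a precise reference for the four-punctured-sphere unitarizability criterion (e.g.\ the Benedetto--Goldman description of the $\SU(2)$ locus of the relative character variety), the remainder of your argument goes through and gives an attractive, more conceptual proof of diagonality.
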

\begin{proof} First we claim that there exists two matrices
\[X=\begin{pmatrix} a& b\\c & d\end{pmatrix}\in\mathrm{SL}(2,\C)\quad Y=\begin{pmatrix} x&y\\ z& w\end{pmatrix}\in\mathrm{SL}(2,\C)\]
such that 
\begin{equation}
\label{eq:XY}
M_1=X Y^{-1},\quad M_2 =Y D^{-1}X^{-1}D.
\end{equation}
Indeed, we have
\[\Id=M_1M_2M_3M_4=-(M_1M_2D^{-1})^2,\]
hence $M_1M_2D^{-1}$ has eigenvalues $\pm\ii$ and is diagonal up to conjugation  by some $X \in \SL(2, \C),$ i.e.,
$M_1M_2D^{-1}=XD^{-1}X^{-1}$. Then choosing $Y=M_1^{-1}X$ shows that \eqref{eq:XY} holds.
\begin{remark}
\label{Rem:XY}
We will apply the Lemma in the proof of Proposition \ref{Pro:monoPQ}. With the notations used there, we will have $X=\Phi(+\infty)$ and $Y=\Phi(\ii\infty)$.
\end{remark}
A computation gives
$$s_{1,2}=2(ad+bc)\quad\text{ and }\quad s_{2,3}=2(xw+yz).$$
If $ad=0$ then $s_{1,2}=-2$. If $bc=0$ then $s_{1,2}=2$.
Since $s_{1,2}\in(-2,2)$, $a,b,c,d$ are all non-zero.
Multiplying $X$ and $Y$ with a diagonal matrix $D_1$ from the left and another diagonal matrix $D_2$ from the right, we may assume without loss of generality that
\[a=d\quad \text{and}\quad c=-b.\]
(This normalisation does not change the symmetry, and the monodromies are conjugate to the initial ones by $D_1,$ so their traces do not change).
Then
\begin{equation}\label{detX}
\det(X)=a^2+b^2=1\end{equation}
and
\begin{equation}\label{s12}
s_{1,2}=2a^2-2 b^2=4a^2-2.
\end{equation}
By assumption, $s_{1,2}\in(-2,2)$ from which we get $a\in(-1,1)\subset\R$. Thus, \eqref{detX} implies $b\in(-1,1)\subset\R$.

A computation gives
\[s_1=a(x+w)+b(y-z)\quad\text{ and }\quad s_2=a(x+w)-b(y-z).\]
Since $s_1=s_2$ and $b\neq 0$,
we have $y=z$. Then
\begin{equation}\label{detY}
\det(Y)=xw-y^2=1\end{equation}
and
\begin{equation}\label{s23}
s_{2,3}=2xw+2y^2=2+4y^2.
\end{equation}
From $s_{2,3}\in(-2,2)$ we get $y\in i\R$ with $|y|<1.$
%
We compute
\[\begin{split}
s_{1,3}&=4 ab (x-w)y+a^2(w^2+x^2-2y^2)+2b^2(xw+y^2)\\
s_{2,4}&=-4 ab (x-w)y+a^2(w^2+x^2-2y^2)+2b^2(xw+y^2),\\
\end{split}\]
so since $s_{1,3}-s_{2,4}\in\R$,
\[4 ab (x-w)y\in\R\]
and because $ab\in\R^*,$ $y\in i\R^*$
we have
\[x-w\in i\R.\]
From $s_1=a(x+w)\in\R$ we obtain $x+w\in\R$.
This gives
\[w=\bar x.\]
Therefore
$X,Y\in\mathrm{SU}(2)$
and 
\[M_k\in\mathrm{SU}(2)\quad\forall k=1,\dots,4\]
(keeping in mind that we have normalized  $X,Y$ by diagonal conjugation).
\end{proof}


\section{DPW potentials for CMC surfaces of high genus}\label{sec:tech}
 
In \cite{HHT} we constructed  Lawson surfaces of high genus using an ansatz for the DPW potential which had an apparent singularity at $z= \infty$. This 
ansatz cannot be generalized to arbitrary $\varphi, $ as the proof 
 of $z= \infty$ being an apparent singularity relies on an additional symmetry of the surfaces for $\varphi = \tfrac{\pi}{4}.$ Therefore, we adjust the ansatz used in \cite{HHT} to Fuchsian DPW potentials $\eta_{t, \varphi}$ on the $4$-punctured sphere $\Sigma$. Geometrically speaking we start for
$\varphi \in (0, \tfrac{\pi}{2})$ fixed and $t=0$ with the trivial DPW potential $\eta_{0, \varphi}=0$ which we interpret as the DPW potential of a point representing the dual surface of two geodesic spheres intersecting at angle $\varphi.$ This potential is then deformed in direction of a Scherk surface with wing angle $2\varphi.$ Note that for $\varphi \rightarrow  0$ or $\varphi \rightarrow \tfrac{\pi}{2}$, the Scherk surface converges to a Catenoid after a suitable blow-up. We will first state our ansatz and the involved  parameters in detail and then shortly explain how to obtain the geometric interpretation.\\ 
\subsection{The potential}\label{ssec:thepotential}$\;$\\
For real $t\sim 0$ and $\varphi \in (0, \tfrac{\pi}{2})$ consider on the Riemann surface $\Sigma$ the DPW potentials of the form
\begin{equation}\label{thegeneraleta}\eta_{t, \varphi}=r \cdot t{\sum_{k=1}^4 }A_k(\lambda)\frac{dz}{z-p_k},\end{equation}
where $r\sim 1$ is a real parameter and the matrices $A_k \in \Lambda\mathfrak{sl}(2, \C)_{\rho}$ are defined below, so that $\eta_{t,\varphi}$ is a DPW potential with the right symmetries. At $t=0$ the potential $\eta_{0, \varphi} = 0$ is trivial and the resulting surface degenerates to a point. 
By the choice of $p_k$ in \eqref{realvarphipj}, the Riemann surface $\Sigma$ has three symmetries given by
$$\delta(z)=-z,  \quad \tau(z)= \tfrac{1}{z}, \quad  \text{ and }\quad  \sigma(z) = \bar z.$$
We require the CMC surface $f$ corresponding to a potential of the form $\eta_{t, \varphi}$ to be compatible with these symmetries, i.e, $f$ should be equivariant with respect to $\delta,$ $\tau,$ $\sigma$ and the isometries of the round 3-sphere.
A  useful condition (compare with the proof of Theorem \ref{thm:construction-fixed-varphi}) on our potential to obtain an equivariant immersion is to require
\begin{itemize}
\item $\eta_{t,\varphi}$ is $\delta$-symmetric:
$$\delta^*\eta_{t, \varphi}=D^{-1}\eta_{t, \varphi}D \quad \text{ with }\quad D=\matrix{i&0\\0&{-i}};$$

\item $\eta_{t,\varphi}$ is $\tau$-symmetric:
$$\tau^*\eta_{t,\varphi} = C^{-1}\eta_{t, \varphi}C\quad \text{ with }\quad C=\matrix{0&i\\i& 0 };$$
\item $\eta_{t,\varphi}$ is $\sigma$-symmetric:
$$\sigma^*\overline{\eta_{t,\varphi}}:= \sigma^*\overline{ \eta_{t,\varphi}(., \overline\lambda)}=  \eta_{t,\varphi}(. , \lambda).$$
\end{itemize}

This is equivalent to
\begin{equation}\label{symmetries}
\begin{split}
&A_{j+2}(\lambda)=D^{-1}A_j({\lambda})D\quad\mbox{ for $1\leq j\leq 2$,  }\\ 
&A_4(\lambda)=C^{-1}A_1(\lambda) C\quad \text{and}\quad A_3(\lambda)=C^{-1}A_2(\lambda)C\\
&A_4(\lambda)=\overline{A_1(\bar\lambda)} \quad\text{and}\quad
A_3(\lambda)=\overline{A_2(\bar\lambda)}.
\end{split}
\end{equation}

In particular, all residues $A_k$ are determined by $A_1$. We write
$$A_1=\begin{pmatrix}\ii a & b+\ii c\\ b-\ii c& -\ii a\end{pmatrix}$$
where $a,b,c\in \mathcal W_{\R,\rho}$. The other residues are then given by

\begin{equation*}
\begin{split}
A_2&=\begin{pmatrix}-\ii a & -b+\ii c\\ -b-\ii c& \ii a\end{pmatrix}\\
A_3&=\begin{pmatrix}\ii a & -b-\ii c\\ -b+\ii c& -\ii a\end{pmatrix}\\
A_4&=\begin{pmatrix}-\ii a & b-\ii c\\ b+\ii c& \ii a\end{pmatrix}.
\end{split}
\end{equation*}

In particular,
\[\sum_{k=0}^4A_k=0,\] i.e.,
the potential is regular at $z= \infty$.  While the symmetries $\delta$ and $\tau$ give rise to symmetries of the Fuchsian system $\eta_{t,\varphi}(\lambda)$
 for every individual  $\lambda\in\C^*,$ the $\sigma$ symmetry relates two different Fuchsian systems in the family $\lambda\mapsto\eta_{t,\varphi}(\lambda).$ The symmetry $\sigma$ indicates that the Sym-points, which we will denote by $\lambda_1,\lambda_2 \in \S^1$ in the following, should be complex conjugate. Hence, our ansatz for the Sym-points is
\[\lambda_1(\theta) :=e^{ i\theta}\quad \text{ and }\quad  \lambda_2(\theta):=e^{-i\theta}\]
for some parameter $\theta\in\R.$

\subsection{The Monodromy Problem}$\;$\\
 Let $\Phi_{t,\varphi}$ be the solution of the Cauchy Problem

\begin{equation}\label{eqn-sol}
d_\Sigma \Phi_{t,\varphi} = \Phi_{t,\varphi} \eta_{t,\varphi}, \quad \text{ with initial condition } \quad \Phi_{t,\varphi}(z=0)=\Id.
\end{equation}
Let $\gamma_1,\cdots,\gamma_4$ be generators of the fundamental group
$\pi_1(\Sigma,0)$, with
$\gamma_k$ enclosing the singularity $p_k$ with
\[\gamma_1\circ\gamma_2\circ\gamma_3\circ\gamma_4=1.\] Let
$M_k(t)=\cal{M}(\Phi_{t,\varphi},\gamma_k)$ be the monodromy of $\Phi_{t,\varphi}$ along $\gamma_k$,  where $\Phi_{t,\varphi}$ solves \eqref{eqn-sol}. 
Following \cite{HHT}, the goal is to solve the following Monodromy Problem

\begin{equation}
\label{monodromy-problem2}
\left\{\begin{array}{l}
(i) \quad\;\forall k\;M_k(t)\in\Lambda SU(2)\\ 
(ii) \quad\;\forall k,\;M_k(t) \mbox{ has eigenvalues $e^{\pm 2\pi\ii t}$}\\
(iii) \quad\exists \theta\in\R: \;\forall k,\;M_k(t)|_{\lambda=\lambda_1(\theta), \lambda_2(\theta)}\mbox{ is diagonal}
\end{array}\right..
\end{equation}

If the problem \eqref{monodromy-problem2} is solved,
the potential pulls back for $t=\frac{1}{2(g+1)}$ to $M^g_\varphi$ and gives rise to a potential with apparent singularities at $\pi^{-1}\{p_1,\dots,p_4\}$
solving the Monodromy Problem \eqref{monodromy-problem}. This yields the desired closed CMC surface $f \colon M^g_\varphi \rightarrow \S^3$. 
Since we impose all symmetries $\sigma,\tau,\delta$ on the Fuchsian potentials, it is necessary to only require the monodromy  to be unitarizable rather than unitary along $\lambda \in \S^1$, i.e., the corresponding monodromy representation lies only in the conjugacy class of a unitary representation. 
In doing so, only the traces of certain monodromies need to be controlled by Lemma \ref{Lem:irreducibility-traces}.\\

For convenience, we define $M(t):=M_1(t)$. At $t=0$, as $\eta_{t,\varphi} = 0,$ the extended frame 
\begin{equation}\label{PhiC}
\Phi_{0}(z):= \Phi_{0,\varphi}(z) = \matrix{1&0\\0 &1}\end{equation}
is the identity, which has trivial monodromy. 
Following \cite[Proposition 8]{nnoids} we compute for $p_1=e^{i\varphi}$
\begin{equation}\label{mondiff}
M'(0)=\int_{\gamma_1}\Phi_0\frac{\partial \eta_{t, \varphi}}{\partial t}|_{t=0}\Phi_0^{-1}
=2\pi\ii \,r\Res_{p_1}\left [A_1\frac{dz}{z-p_1} \right]
=2\pi\ii \,r A_1.
\end{equation}

\subsubsection{Central value}$\;$\\
The Monodromy Problem will be solved by applying the implicit function theorem
at $t=0$ and at a central value of the parameters, denoted with an underscore.
We choose the following central value for the parameters $a,b,c,r$ and $\theta$:
\begin{equation}
\label{central1}
\begin{split}
\cv{a}(\lambda)&=\tfrac{1}{2}(\lambda^{-1}-\lambda)\\
\cv{b}(\lambda)&=-\tfrac{1}{2}\sin(\varphi)(\lambda^{-1}+\lambda)\\
\cv{c}(\lambda)&=-\tfrac{1}{2}\cos(\varphi)(\lambda^{-1}+\lambda)\\
\cv{r}&=1\\
\cv{\theta}&=\tfrac{\pi}{2}.
\end{split}
\end{equation}
These values are chosen so that the Monodromy Problem \ref{monodromy-problem2} is solved to the first order in $t$. To see this,
let
$$\eta'=r\sum_{j=1}^4 \cv{A}_j\frac{dz}{z-p_j}=\sum_{k=-1}^{\infty}\eta'_k\lambda^k$$
be the first order derivative of the potential with respect to $t$ at $t=0$.
Then we need to require $\eta'_{-1}$ to  be nilpotent, i.e., $\det(\eta'_{-1})\equiv 0,$ which gives
$$\det(\Res_{p_1}\eta'_{-1})=\cv{a}_{-1}^2-\cv{b}_{-1}^2-\cv{c}_{-1}^2=0.$$
Provided this holds true, we have
$$\det(\eta'_{-1})=
\frac{-16 \,\cv{r}^2\left(\cos(\varphi)^2\cv{b}_{-1}^2-\sin(\varphi)^2\cv{c}_{-1}^2\right)dz^2}{z^4-2\cos(2\varphi)z^2+1}.$$
So up to scale and signs we need
$$\cv{a}_{-1}=\tfrac{1}{2},\quad
\cv{b}_{-1}=-\tfrac{1}{2}\sin(\varphi)\quad\text{and}\quad
\cv{c}_{-1}=-\tfrac{1}{2}\cos(\varphi).$$
(The reason for the $\frac{1}{2}$ will be clear in a moment).
Using 
\eqref{mondiff} and remembering that all coefficients are real this yields
$$M'_1(0)\in\Lambda\su(2)\Leftrightarrow i \cv{A}_1\in\Lambda\su(2)
\Leftrightarrow \left\{\begin{array}{l}
\cv{a}=-\cv{a}^*\\
\cv{b}=\cv{b}^*\\
\cv{c}=\cv{c}^*\end{array}\right.
\Leftrightarrow
\left\{\begin{array}{l}
\cv{a}(\lambda)=\cv{a}_{-1}(\lambda^{-1}-\lambda)\\
\cv{b}(\lambda)=\cv{b}_{-1}(\lambda^{-1}+\lambda)+\cv{b}_0\\
\cv{c}(\lambda)=\cv{c}_{-1}(\lambda^{-1}+\lambda)+\cv{c}_0\end{array}\right.$$
$$M'_1(0)\mid_{\lambda=\pm i}\text{ diagonal }\Leftrightarrow
\cv{b}_0=\cv{c}_0=0.$$
Then $\det(\cv{A}_1)=-1$, so
for $M'_1(0)$ to have eigenvalues $\pm 2\pi$ we need $\cv{r}=1.$ To conclude, we have shown that the central value of the parameters is uniquely determined up to signs. Moreover, with this central value, the blowup $t^{-1}(f^{t}_\varphi-\Id)$ will
converge in the tangent space of $\S^3$ at $\Id$ to a minimal surface with Weierstra\ss $\;$ data
$$g=\frac{i}{z},\qquad \omega=\frac{-8 i \sin(2\varphi) z dz}{z^4-2\cos(2\varphi) z^2+1},$$
which corresponds to a Scherk surface of wing angle 2$\varphi$ and period $4\pi$.
To see this, first use the gauge $\minimatrix{1&-1/z\\z&0}$ to transform $\eta_{t,\varphi}$ into a potential whose $\lambda^{-1}$ part is strictly upper-triangular, and then use \cite[Theorem 2]{HHT}.

\subsubsection{Parameters}$\;$\\
We take the parameters $a,b,c$ of the form
$$a=\cv{a}+\wt a,\quad
b=\cv{b}+\wt b,\quad\text{and}\quad
c=\cv{b}+\wt c$$
where $\cv{a},\cv{b},\cv{c}$ are given by Equation \eqref{central1} and $\wt a,\wt b,\wt c$ are functions of $\lambda$ in the functional space
$\mathcal W_{\R,\rho}^{\geq 0}$ in a neighborhood of $0$.
Recall that the exponent $\geq 0$ means that these functions have only non-negative powers of $\lambda$ in their Fourier expansion, so all the $\lambda^{-1}$- terms of the potential
are contained in the central values $\cv{a}$, $\cv{b}$ and $\cv{c}$.

Let $\xx = (\wt{a}, \wt{b}, \wt{c}, r,\theta)$ denote the parameter vector. When emphasizing the dependence of  the potential $\eta_{t, \varphi}$ on the parameter vector we write $\eta^\xx_{t,\varphi}$. The central value of the parameter vector $\xx$ is
$\underline \xx=(0,0,0,1,\tfrac{\pi}{2})$.
\subsection{New Fricke-type coordinates}$\;$\\
Consider a DPW potential $\eta=\eta_{t, \varphi}$
of the form \eqref{thegeneraleta} which is compatible with the symmetries $\delta,$ $\tau,$ and $\sigma.$
Let $\Phi$ be the solution of $d\Phi=\Phi\eta$ with $\Phi(z=0)=\Id$
defined on a simply connected domain $U$ with $ \D_1\cup\{1,i\}\subset U.$
Denote
\begin{equation}\label{PQ}
{\mathcal P}:=\Phi(z=1)\quad\text{and}\quad {\mathcal Q}:=\Phi(z=\ii)
\end{equation}
which give the principal solution of the extended frame along the straight lines from $z=0$ to $z=1$ and $z=\ii$, respectively.

 \begin{figure}[h]
\centering
\includegraphics[width=0.75\textwidth]{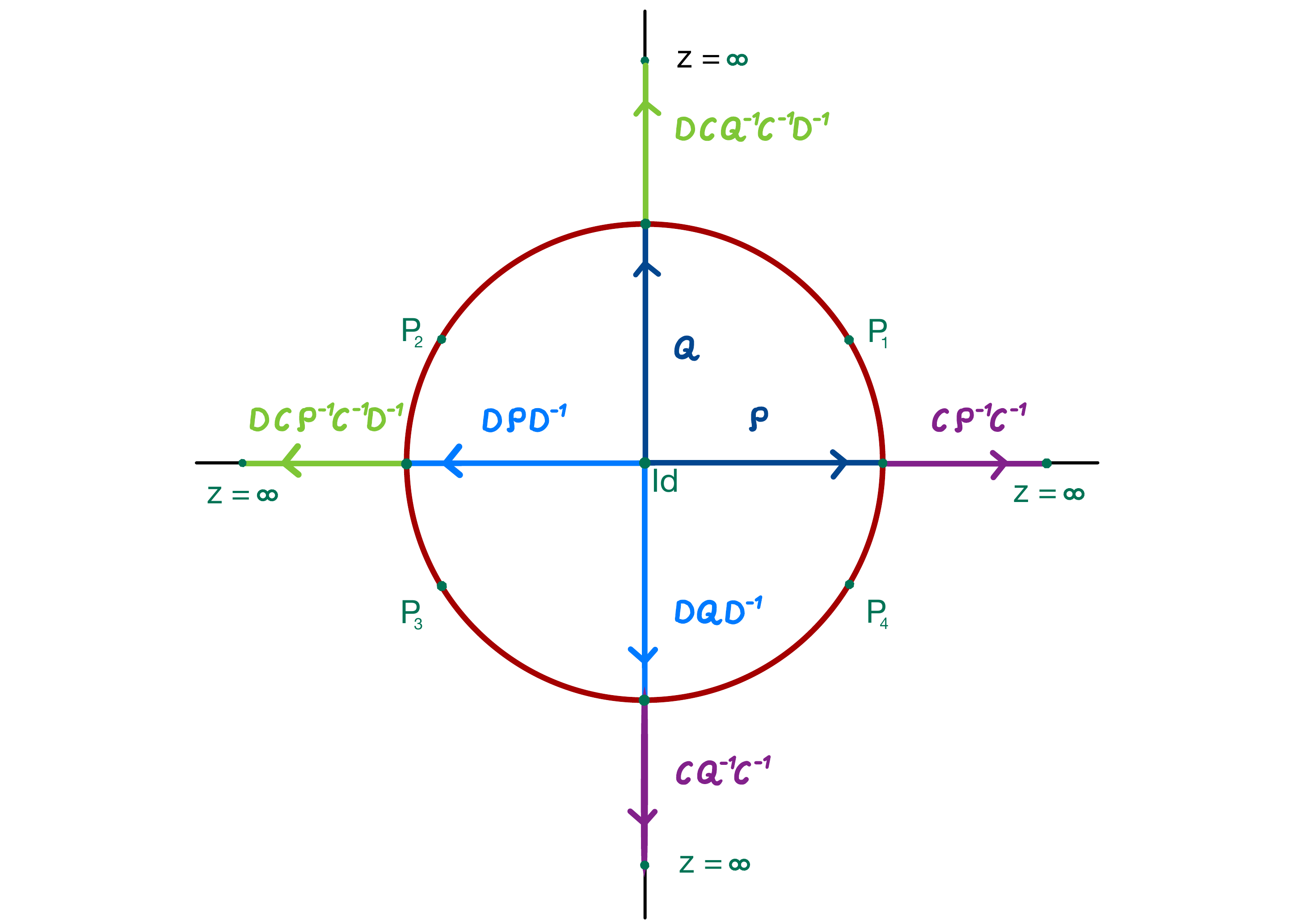}
\hspace{0.25cm}
\caption{
\footnotesize The figure shows a cartoon of $4$-punctured sphere $\Sigma$ with punctures $p_1, ..., p_4,$ where the points $z= \infty$ need to be identified.    The initial value of $\Phi$ at $z=0$ is $\Id$. The principal solution $\Phi$ along the dark blue curves are given by $\mathcal P$ and $\mathcal Q$ respectively. The principal solution along the light blue curves are given by the principal solution along the dark blue curves together with the symmetry $\delta.$ The $x$-axis and $y$-axis each consists of a dark blue, light blue, green and purple line segment, and on $\Sigma$ each axis is homotopic to a curve around two punctures $p_1, p_2$ and $p_2, p_3,$ respectively.
}
\label{fig:mon}
\end{figure}

\begin{proposition}\label{Pro:monoPQ}
The monodromy $M_1M_2$ of $\Phi$ along a simply closed curve around the two punctures $p_1$ and $p_2$ of $\Sigma$
(starting at $z=0$) depends only on $\mathcal P$ and is given by
\[M_1M_2=-(\mathcal PC \mathcal P^{-1}D)^2.\]
The monodromy $M_2M_3$ around $p_2$ and $p_3$ depends only on $\mathcal Q$ and is given by
\[M_2M_3= -(\mathcal QDC\mathcal Q^{-1}D)^2.\]
\end{proposition}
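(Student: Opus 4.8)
The plan is to realise $M_1M_2$ as the monodromy of $\Phi$ along the $x$-axis of $\C P^1$ (the circle through $0,1,\infty,-1$, which by Figure~\ref{fig:mon} represents a loop around $p_1,p_2$ based at $0$) and to evaluate it by folding that circle onto the arc $[0,1]$ via the two symmetries $\tau(z)=1/z$ and $\delta(z)=-z$, both of which preserve the $x$-axis. Concretely, write the loop as $\gamma=\gamma_+\cdot\gamma_-$ with $\gamma_+\colon 0\to\infty$ along the positive reals and $\gamma_-\colon\infty\to0$ along the negative reals; then $\gamma_-=\delta\circ\overline{\gamma_+}$, and $\gamma_+$ splits further as $[0,1]$ followed by $[1,\infty]=\tau\circ\overline{[0,1]}$.

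The key tool is the transformation law of $\eta$-parallel transport under a symmetry: if $\kappa^*\eta=G^{-1}\eta\,G$ for a constant matrix $G$, then $P_{\kappa\circ c}=G^{-1}P_c\,G$ for every path $c$, because $\Phi\circ\kappa$ solves $d(\Phi\circ\kappa)=(\Phi\circ\kappa)\,G^{-1}\eta\,G$ and parallel transport is gauge covariant. Using this with $(\kappa,G)=(\tau,C)$, together with $\Phi(0)=\Id$, $\Phi(1)=\mathcal P$ and the regularity of $\eta$ at $\infty$ (here $\sum_k A_k=0$ is used), I would first obtain
\[\Phi(\infty)=P_{\gamma_+}=P_{[0,1]}\,P_{\tau\circ\overline{[0,1]}}=\mathcal P\,C^{-1}\mathcal P^{-1}C,\]
and then, using $(\kappa,G)=(\delta,D)$ and $\delta$'s fixed point $0$,
\[M_1M_2=P_{\gamma}=P_{\gamma_+}\,D^{-1}P_{\overline{\gamma_+}}\,D=\Phi(\infty)\,D^{-1}\Phi(\infty)^{-1}D.\]
(This exhibits $\Phi(+\infty)$ as the conjugating matrix $X$ of Lemma~\ref{Lem:irreducibility-traces}, cf.\ Remark~\ref{Rem:XY}.) Substituting the formula for $\Phi(\infty)$ and simplifying with $C^2=D^2=-\Id$ and $CDC=D$ (i.e.\ $C$ and $D$ anticommute) collapses the right-hand side to $-(\mathcal P\,C\,\mathcal P^{-1}D)^2$. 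The statement for $M_2M_3$ then follows by the same argument along the $y$-axis (the circle through $0,i,\infty,-i$, a loop around $p_2,p_3$): the role of $\tau$ is played by the involution $\rho(z)=-1/z=\tau\circ\delta$, which preserves the $y$-axis and fixes $z=i$, with $\rho^*\eta=(DC)^{-1}\eta(DC)$, so that $\mathcal Q=\Phi(i)$ replaces $\mathcal P$ and $DC$ replaces $C$; since $(DC)D(DC)=D$ as well, the identical manipulation gives $M_2M_3=-(\mathcal Q\,DC\,\mathcal Q^{-1}D)^2$.

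What requires care is not this short algebra but the analytic bookkeeping: one must keep each arc of the two coordinate axes inside a simply connected neighbourhood avoiding $p_1,\dots,p_4$ so that the transport maps $P_c$ and the symmetry identities are well defined, check that $\Phi$ extends holomorphically across $z=\infty$, and — most importantly — note that the constant appearing in the $\delta$-relation is trivial precisely because it is pinned at the fixed point $z=0$, where $\Phi=\Id$ (for $\tau$ and $\rho$ the fixed points are $z=1$ and $z=i$, which is why $\mathcal P$ and $\mathcal Q$ enter the final formulas). Finally one should verify the orientation/labelling conventions so that the monodromies come out as $M_1M_2$ and $M_2M_3$ in this order, rather than reversed or conjugated, which is fixed by $\gamma_1\gamma_2\gamma_3\gamma_4=1$ together with Figure~\ref{fig:mon}.
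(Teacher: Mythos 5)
Your proposal is correct and follows essentially the same route as the paper: both fold the real (resp.\ imaginary) axis onto the segment $[0,1]$ (resp.\ $[0,\ii]$) using the $\tau$- and $\delta$-symmetries of $\eta$, identify the resulting transports with $\Phi(\pm\infty)$ and $\Phi(\pm\ii\infty)$, and simplify with $DC=-CD$, $C^2=D^2=-\Id$. Your use of the composite involution $\rho=\tau\circ\delta$ fixing $z=\ii$ for the second identity, and of $C^{-1}=-C$, $D^{-1}=-D$ in place of the paper's $C$, $D$, are only cosmetic repackagings of the same computation.
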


\begin{proof}
Denote $\Phi(+\infty)$, $\Phi(\ii\infty)$, $\Phi(-\infty)$ and $\Phi(-\ii\infty)$
the value of $\Phi$ at $\infty$ obtained by analytic continuation along
the positive real axis, the positive imaginary axis, the negative real axis and the negative imaginary axis, respectively. 
(see Figure \ref{fig:mon}).
Using the $\delta$ and $\tau$-symmetries, we obtain
\[\begin{split}
\Phi(+\infty)&=\mathcal PC\mathcal P^{-1}C^{-1}\\
\Phi(\ii\infty)&=\mathcal QDC\mathcal Q^{-1}C^{-1}D^{-1}\\
\Phi(-\infty)&=D\mathcal PC\mathcal P^{-1}C^{-1}D^{-1}\\
\Phi(-\ii\infty)&=DQD^{-1}CQ^{-1}C^{-1}.
\end{split}\]
This gives
\[\begin{split} M_1M_2&=\Phi(+\infty)\Phi(-\infty)^{-1}=\mathcal PC \mathcal P^{-1}C^{-1}DC\mathcal P C^{-1}\mathcal P^{-1}D^{-1}\\
M_2M_3&=\mathcal QDC\mathcal Q^{-1}C^{-1}D^{-1}C\mathcal QC^{-1}D\mathcal Q^{-1}D^{-1}
\end{split}\]
which simplify to the results of Proposition \ref{Pro:monoPQ} using
$DC=-CD$ and $C^2=D^2=-\Id$.
\end{proof}

\begin{proposition}\label{Pro:trace-square}
With the notations of Lemma \ref{Lem:irreducibility-traces}, we have
\[u= s_{1,2}=s_{3,4}=2-4\left(\mathcal P_{11}\,\mathcal P_{21}-\mathcal P_{12}\,\mathcal P_{22}\right)^2  \quad \text{and} \quad v= s_{1,4}=s_{2,3}=2+4\left(\mathcal Q_{11}\,\mathcal Q_{21}+\mathcal Q_{12}\,\mathcal Q_{22}\right)^2.\]
\end{proposition}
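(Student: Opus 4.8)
The plan is to combine Proposition~\ref{Pro:monoPQ} with the explicit normalization of the matrices $X,Y$ carried out in the proof of Lemma~\ref{Lem:irreducibility-traces}. First I would invoke Remark~\ref{Rem:XY}: applying Lemma~\ref{Lem:irreducibility-traces} to the present situation we have $X=\Phi(+\infty)$ and $Y=\Phi(\ii\infty)$, and Proposition~\ref{Pro:monoPQ} together with the identity $\Id=M_1M_2M_3M_4=-(M_1M_2D^{-1})^2$ tells us that $M_1M_2D^{-1}$ is conjugate to $D^{-1}$ by $X=\mathcal PC\mathcal P^{-1}C^{-1}$, and similarly $Y=M_1^{-1}X$. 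The point is that $s_{1,2}=\tr(M_1M_2)$ and $s_{2,3}=\tr(M_2M_3)$ can be read off from the entries of $X$ and $Y$ — indeed the proof of Lemma~\ref{Lem:irreducibility-traces} already records $s_{1,2}=2(ad+bc)$ and $s_{2,3}=2(xw+yz)$ in terms of the matrix entries, and after the diagonal normalization $a=d$, $c=-b$, $y=z$ these become $s_{1,2}=2a^2-2b^2$ and $s_{2,3}=2xw+2y^2$, which via $\det X=a^2+b^2=1$ and $\det Y=xw-y^2=1$ rewrite as $s_{1,2}=4a^2-2$ and $s_{2,3}=2+4y^2$.

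So the main task is the bookkeeping: express $a,b$ (the normalized entries of $X=\Phi(+\infty)=\mathcal PC\mathcal P^{-1}C^{-1}$) and $x,y,w$ (the normalized entries of $Y=\Phi(\ii\infty)$) directly in terms of the entries of $\mathcal P$ and $\mathcal Q$. I would compute $\mathcal PC\mathcal P^{-1}C^{-1}$ explicitly using $C=\minimatrix{0&i\\i&0}$; since $\mathcal P\in\SL(2,\C)$, $\mathcal P^{-1}=\minimatrix{\mathcal P_{22}&-\mathcal P_{12}\\-\mathcal P_{21}&\mathcal P_{11}}$, and one gets a $2\times2$ matrix whose off-diagonal-type combination is governed by $\mathcal P_{11}\mathcal P_{21}-\mathcal P_{12}\mathcal P_{22}$. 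Matching this against the normalized form $\minimatrix{a&b\\-b&a}$ — which is legitimate because the normalization is by diagonal conjugation and does not affect the traces $s_{k,l}$ — yields $b^2$ (hence $a^2=1-b^2$) as an explicit multiple of $(\mathcal P_{11}\mathcal P_{21}-\mathcal P_{12}\mathcal P_{22})^2$, and then $s_{1,2}=2a^2-2b^2=2-4b^2=2-4(\mathcal P_{11}\mathcal P_{21}-\mathcal P_{12}\mathcal P_{22})^2$. The same computation applied to $Y=\Phi(\ii\infty)=\mathcal QDC\mathcal Q^{-1}C^{-1}D^{-1}$, with $D=\minimatrix{i&0\\0&-i}$ and using $DC=-CD$, $C^2=D^2=-\Id$, produces $y^2$ as a multiple of $(\mathcal Q_{11}\mathcal Q_{21}+\mathcal Q_{12}\mathcal Q_{22})^2$ (the sign flip relative to the $\mathcal P$ case coming from the extra $D$'s), giving $s_{2,3}=2+4y^2=2+4(\mathcal Q_{11}\mathcal Q_{21}+\mathcal Q_{12}\mathcal Q_{22})^2$. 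Finally, the equalities $s_{1,2}=s_{3,4}$ and $s_{1,4}=s_{2,3}$ follow from the $\delta$- and $\tau$-symmetries of the representation (the relations $M_3=D^{-1}M_1D$, $M_4=D^{-1}M_2D$ of Lemma~\ref{Lem:irreducibility-traces} give $\tr(M_3M_4)=\tr(M_1M_2)$, and the $\tau$-symmetry gives $\tr(M_1M_4)=\tr(M_2M_3)$).

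The step I expect to be the main obstacle is pinning down the correct numerical constant ($4$) and, more subtly, verifying that the scalar combinations $\mathcal P_{11}\mathcal P_{21}-\mathcal P_{12}\mathcal P_{22}$ and $\mathcal Q_{11}\mathcal Q_{21}+\mathcal Q_{12}\mathcal Q_{22}$ are exactly the invariants that survive the diagonal normalization — i.e. checking that conjugating $\mathcal P$ (resp.\ $\mathcal Q$) by a diagonal matrix, which is the freedom used in Lemma~\ref{Lem:irreducibility-traces} to arrange $a=d,\ c=-b$, rescales these combinations only by a unit-modulus-irrelevant factor that drops out once we square. Concretely, replacing $\mathcal P$ by $\mathcal P D_1$ with $D_1=\diag(\mu,\mu^{-1})$ sends $\mathcal P_{11}\mathcal P_{21}\mapsto \mu^2\mathcal P_{11}\mathcal P_{21}$ and $\mathcal P_{12}\mathcal P_{22}\mapsto\mu^{-2}\mathcal P_{12}\mathcal P_{22}$, so it is not literally invariant; the resolution is that the normalization is instead fixed by requiring $X=\mathcal PC\mathcal P^{-1}C^{-1}$ itself (not $\mathcal P$) to have the symmetric form, and one must check that this requirement is automatically met — or met after an allowed conjugation — thanks to the $\sigma$-symmetry forcing the relevant entries to be real or imaginary as in the second half of the proof of Lemma~\ref{Lem:irreducibility-traces}. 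Once that compatibility is in hand the rest is a short explicit $2\times2$ computation.
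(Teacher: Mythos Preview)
Your approach does ultimately work, but it takes a detour through Lemma~\ref{Lem:irreducibility-traces} that is unnecessary, and the ``main obstacle'' you identify is largely self-inflicted. The paper's proof is a direct computation: Proposition~\ref{Pro:monoPQ} gives $M_1M_2=-(\mathcal PC\mathcal P^{-1}D)^2$, so one simply writes out the $2\times 2$ matrix $\mathcal PC\mathcal P^{-1}D$, squares it, and takes the trace. Using $\det\mathcal P=1$ the result drops out in three lines; there is no need to invoke $X=\Phi(+\infty)$, the decomposition $M_1=XY^{-1}$, or any normalization.

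Regarding your route: the formula $s_{1,2}=2(ad+bc)$ from the proof of Lemma~\ref{Lem:irreducibility-traces} holds for the \emph{un-normalized} $X$ (it is stated there before any diagonal conjugation), so you never need to arrange $a=d$. If you compute $X=\mathcal PC\mathcal P^{-1}C^{-1}$ directly you find $X_{12}=-(\mathcal P_{11}\mathcal P_{21}-\mathcal P_{12}\mathcal P_{22})$ and $X_{21}=\mathcal P_{11}\mathcal P_{21}-\mathcal P_{12}\mathcal P_{22}$, so $c=-b$ holds automatically and $bc=-(\mathcal P_{11}\mathcal P_{21}-\mathcal P_{12}\mathcal P_{22})^2$. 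Then $\det X=ad-bc=1$ gives $ad=1+bc$, hence $s_{1,2}=2(ad+bc)=2+4bc=2-4(\mathcal P_{11}\mathcal P_{21}-\mathcal P_{12}\mathcal P_{22})^2$. The whole normalization discussion, and your concern about whether the scalar combinations survive diagonal conjugation, can be dropped. So your proposal is correct in spirit but over-engineered: the paper's direct trace computation via $\mathcal PC\mathcal P^{-1}D$ is shorter and avoids the detour entirely.
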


\begin{proof}
Write
\[\mathcal P=\begin{pmatrix}\mathcal P_{11}&\mathcal P_{12}\\ \mathcal P_{21}&\mathcal P_{22}\end{pmatrix}.\]
Then
\[\mathcal P C\mathcal P^{-1}D=\begin{pmatrix}
\mathcal P_{11}\mathcal P_{21}-\mathcal P_{12}\mathcal P_{22} &
\mathcal P_{11}^2-\mathcal P_{12}^2\\
\mathcal P_{21}^2-\mathcal P_{22}^2&
\mathcal P_{11}\mathcal P_{21}-\mathcal P_{12}\mathcal P_{22}
\end{pmatrix}\]
and
\begin{eqnarray*}
\tr\left((\mathcal PC\mathcal P^{-1}D)^2\right)&=&
4\mathcal P_{11}^2\mathcal P_{21}^2+4\mathcal P_{12}^2\mathcal P_{22}^2-2\mathcal P_{12}^2\mathcal P_{21}^2-2\mathcal P_{11}^2\mathcal P_{22}^2-4\mathcal P_{11}\mathcal P_{12}\mathcal P_{21}\mathcal P_{22}\\
&=&4(\mathcal P_{11}\mathcal P_{21}-\mathcal P_{12}\mathcal P_{22})^2-2(\mathcal P_{11}\mathcal P_{22}-\mathcal P_{12}\mathcal P_{21})^2.
\end{eqnarray*}
Hence by Proposition \ref{Pro:monoPQ}
$$u=\tr(M_1M_2)=2-4(\mathcal P_{11}\mathcal P_{21}-\mathcal P_{12}\mathcal P_{22})^2.$$
The proof for $v$ is analogous.
\end{proof}

Instead of using the Fricke-coordinates for the moduli space of representations of $\pi_1(\Sigma,0)$ given by $s$ and $u,v,w$, we switch to coordinates determined by $\mathcal P$ and $\mathcal Q.$ Up to  constants these new coordinates are given by the square root of the holomorphic coordinates  $u,v.$ Remarkably, these new coordinates remain holomorphic by Proposition \ref{Pro:trace-square}. We define
\begin{equation}\label{pq}
\mathfrak p=\mathcal P_{11}\,\mathcal P_{21}-\mathcal P_{12}\,\mathcal P_{22}
\quad\text{ and }\quad
\mathfrak q=\ii\left(\mathcal Q_{11}\,\mathcal Q_{21}+\mathcal Q_{12}\,\mathcal Q_{22}\right).\end{equation}
By Proposition \ref{Pro:trace-square} we have
\begin{equation}
u=2-4\mathfrak p^2\quad\text{and}\quad v=2-4\mathfrak q^2.
\end{equation}
 In view of Lemma \ref{Lem:irreducibility-traces},
we want that $\mathfrak p(\lambda)$ and $\mathfrak q(\lambda)$ are real for
$\lambda\in\S^1$, which can be rewritten using the $*$ operator defined in Section \ref{loop-groups} as $\mathfrak p=\mathfrak p^*$ and
$\mathfrak q=\mathfrak q^*$. These equations are solved in Section \ref{sec:implicit}.
\begin{lemma}
\label{Lem:realsymPQ}
We have $\bar{\mathfrak p}=\mathfrak p$ and $\bar{\mathfrak q}=\mathfrak q$, which means that
\[\overline{\mathfrak p(\bar\lambda)}=\mathfrak p(\lambda)\quad\text{and}\quad \overline{\mathfrak q(\bar\lambda)}=\mathfrak q(\lambda)\]
\end{lemma}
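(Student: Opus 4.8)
The plan is to transfer each symmetry imposed on the potential $\eta_{t,\varphi}$ to an identity for its primitive $\Phi$ (the solution of $d\Phi=\Phi\eta_{t,\varphi}$ with $\Phi(z=0)=\Id$) via the uniqueness theorem for linear ODE, and then to read off the claimed relations by evaluating at $z=1$ and $z=\ii$. Write $\eta_{t,\varphi}=A(z,\lambda)\,dz$; since $\eta_{t,\varphi}$ is of type $(1,0)$, $\Phi$ is holomorphic in $z$. Unwinding the $\sigma$-symmetry $\sigma^{*}\overline{\eta_{t,\varphi}(\cdot,\bar\lambda)}=\eta_{t,\varphi}(\cdot,\lambda)$ with $\sigma(z)=\bar z$ gives the pointwise identity $\overline{A(\bar z,\bar\lambda)}=A(z,\lambda)$. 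Setting $\Psi(z,\lambda):=\overline{\Phi(\bar z,\bar\lambda)}$, a Wirtinger chain-rule computation shows $\Psi$ is holomorphic in $z$ and satisfies $\partial_{z}\Psi=\Psi\cdot\overline{A(\bar z,\bar\lambda)}=\Psi\cdot A(z,\lambda)$ with $\Psi(0,\lambda)=\overline{\Id}=\Id$, so by uniqueness $\Psi=\Phi$, i.e. $\overline{\Phi(\bar z,\bar\lambda)}=\Phi(z,\lambda)$ on a conjugation-symmetric, puncture-free, simply connected neighbourhood of $[0,1]\cup[0,\ii]$. Evaluating at $z=1$ gives $\overline{\mathcal P_{ij}(\bar\lambda)}=\mathcal P_{ij}(\lambda)$ for all entries; since $\mathfrak p=\mathcal P_{11}\mathcal P_{21}-\mathcal P_{12}\mathcal P_{22}$ has real coefficients as a polynomial in the $\mathcal P_{ij}$, this yields $\overline{\mathfrak p(\bar\lambda)}=\mathfrak p(\lambda)$, which is $\bar{\mathfrak p}=\mathfrak p$.

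For $\mathfrak q$, the point $z=\ii$ is not fixed by $z\mapsto\bar z$, so I would combine the above with the $\delta$-symmetry $\delta^{*}\eta_{t,\varphi}=D^{-1}\eta_{t,\varphi}D$ ($\delta(z)=-z$, $D=\operatorname{diag}(\ii,-\ii)$), which unwinds to $A(-z,\lambda)=-D^{-1}A(z,\lambda)D$. Running the same uniqueness argument for $z\mapsto D^{-1}\Phi(-z,\lambda)D$ and using $D^{2}=-\Id$ gives $\Phi(-z,\lambda)=D\,\Phi(z,\lambda)\,D^{-1}$. Combining the two lifted identities, at $z=\ii$ one obtains $\mathcal Q(\lambda)=\overline{\Phi(-\ii,\bar\lambda)}=\overline{D\,\mathcal Q(\bar\lambda)\,D^{-1}}=D^{-1}\,\overline{\mathcal Q(\bar\lambda)}\,D$ (using $\overline D=D^{-1}$); conjugation by the diagonal $D$ fixes the diagonal entries and changes the sign of the off-diagonal ones, so $\overline{\mathcal Q_{11}(\bar\lambda)}=\mathcal Q_{11}(\lambda)$, $\overline{\mathcal Q_{22}(\bar\lambda)}=\mathcal Q_{22}(\lambda)$, $\overline{\mathcal Q_{12}(\bar\lambda)}=-\mathcal Q_{12}(\lambda)$, $\overline{\mathcal Q_{21}(\bar\lambda)}=-\mathcal Q_{21}(\lambda)$. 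Hence $\overline{\mathfrak q(\bar\lambda)}=\overline{\ii}\,\big(\overline{\mathcal Q_{11}(\bar\lambda)}\,\overline{\mathcal Q_{21}(\bar\lambda)}+\overline{\mathcal Q_{12}(\bar\lambda)}\,\overline{\mathcal Q_{22}(\bar\lambda)}\big)=-\ii\,\big(-\mathcal Q_{11}\mathcal Q_{21}-\mathcal Q_{12}\mathcal Q_{22}\big)=\mathfrak q(\lambda)$, the prefactor $\ii$ in the definition of $\mathfrak q$ being exactly what absorbs the sign changes.

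The computations are routine; the two points needing care are (i) the Wirtinger-type chain rule when differentiating $\overline{\Phi(\bar z,\bar\lambda)}$ and $D^{-1}\Phi(-z,\lambda)D$, which must be checked to reproduce the holomorphic ODE $\partial_{z}(\,\cdot\,)=(\,\cdot\,)A(z,\lambda)$ so that uniqueness applies, and (ii) the choice of the simply connected domains on which the lifted identities hold, which must be invariant under $z\mapsto\bar z$ (respectively $z\mapsto -z$) and avoid $p_{1},\dots,p_{4}$ — possible since the segments $[0,1]$, $[0,\ii]$, $[0,-\ii]$ are fixed by the relevant involutions, lie in $U$, and contain no $p_{k}$ for $\varphi\in(0,\tfrac{\pi}{2})$. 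An alternative for $\mathfrak q$ is to use the single combined symmetry $z\mapsto-\bar z$, which already fixes $\ii$; using the $\tau$-symmetry instead works too, but then one must track the value of $\Phi$ at $\infty$ as in Proposition~\ref{Pro:monoPQ}, so the route via $\delta$ is cleaner.
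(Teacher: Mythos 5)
Your proposal is correct and follows essentially the same route as the paper: the paper's proof simply asserts the two identities $\overline{\mathcal P(\bar\lambda)}=\mathcal P(\lambda)$ and $\overline{\mathcal Q(\bar\lambda)}=D^{-1}\mathcal Q(\lambda)D$ and reads off the lemma, while you supply the (correct) derivation of these via uniqueness of the Cauchy problem, including the point the paper glosses over — that the identity for $\mathcal Q$ requires combining the $\sigma$-symmetry with the $\delta$-symmetry because $z=\ii$ is not fixed by conjugation. Your sign bookkeeping ($\overline D=D^{-1}=-D$, conjugation by $D$ negating off-diagonal entries, and the prefactor $\ii$ in $\mathfrak q$ absorbing the signs) is accurate.
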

\begin{proof} The result follows from the $\sigma-$symmetry, as
\[\overline{\mathcal P(\bar\lambda)}=\mathcal P(\lambda)\quad\text{ and }\quad
\overline{\mathcal Q(\bar\lambda)}=D^{-1}\mathcal Q(\lambda)D.\]
\end{proof}

\begin{proposition}
\label{Pro:pq}
At $t=0$, the derivatives of $\mathfrak p$ and $\mathfrak q$ with respect to $t$ are given by
$$\mathfrak p'(0)=2\pi r c\quad\text{ and }\quad
\mathfrak q'(0)=2\pi r b.$$
\end{proposition}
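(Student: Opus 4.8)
The plan is to compute the first-order $t$-derivative of $\Phi$ along the relevant paths and then differentiate the defining formulas \eqref{pq} for $\mathfrak p$ and $\mathfrak q$ at $t=0$. Since $\eta_{0,\varphi}=0$, equation \eqref{PhiC} gives $\Phi_{0,\varphi}\equiv\Id$ on all of the simply connected domain $U$, so in particular $\mathcal P|_{t=0}=\Id$ and $\mathcal Q|_{t=0}=\Id$, i.e. $\mathcal P_{11}=\mathcal P_{22}=1$, $\mathcal P_{12}=\mathcal P_{21}=0$ at $t=0$ (and similarly for $\mathcal Q$). Hence both $\mathfrak p$ and $\mathfrak q$ vanish at $t=0$, and their derivatives will only involve the off-diagonal entries $\partial_t\mathcal P_{21}|_{t=0}$, $\partial_t\mathcal P_{12}|_{t=0}$ (resp. for $\mathcal Q$); the diagonal entries drop out of the Leibniz expansion because they are multiplied by the vanishing off-diagonal entries.

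First I would establish the variation-of-constants formula for $\partial_t\Phi$ at $t=0$: from $d\Phi=\Phi\,\eta_{t,\varphi}$ with $\Phi(0)=\Id$, differentiating in $t$ at $t=0$ and using $\Phi_{0,\varphi}\equiv\Id$ gives $\partial_t\Phi|_{t=0}(z)=\int_0^z \frac{\partial\eta_{t,\varphi}}{\partial t}\big|_{t=0}$, the integral taken along the straight segment from $0$ to $z$ in $U$ (this is exactly the same computation as in \eqref{mondiff}, but along a path rather than a loop). With $\frac{\partial\eta_{t,\varphi}}{\partial t}|_{t=0}=\eta'=r\sum_{j=1}^4\cv A_j\frac{dz}{z-p_j}$ and the central values \eqref{central1}, the entries of $\partial_t\mathcal P|_{t=0}$ and $\partial_t\mathcal Q|_{t=0}$ are the path integrals $r\int_0^1\cv A_j\frac{dz}{z-p_j}$ and $r\int_0^{\ii}\cv A_j\frac{dz}{z-p_j}$ summed over $j$. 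I would then plug into the Leibniz rule: $\mathfrak p'(0)=\partial_t(\mathcal P_{11}\mathcal P_{21})|_0-\partial_t(\mathcal P_{12}\mathcal P_{22})|_0=\partial_t\mathcal P_{21}|_0-\partial_t\mathcal P_{12}|_0$ (using the $t=0$ values of the diagonal entries $=1$ and off-diagonal $=0$), which picks out precisely the lower-left minus upper-right entry of $\int_0^1\eta'$. Similarly $\mathfrak q'(0)=\ii\bigl(\partial_t\mathcal Q_{21}|_0+\partial_t\mathcal Q_{12}|_0\bigr)$.

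The computational heart is then to evaluate the integral $\int_0^1\eta'$ and $\int_0^{\ii}\eta'$ entrywise. Using the explicit matrices $\cv A_1,\dots,\cv A_4$ written in terms of $\cv a,\cv b,\cv c$ from Section \ref{ssec:thepotential}, the $(2,1)$ and $(1,2)$ entries of $\eta'$ are combinations of $\cv b\pm\ii\cv c$ times the various $\frac{dz}{z-p_k}$; by the symmetry relations \eqref{symmetries} among the $A_k$ the relevant combination collapses. Along the real segment $[0,1]$ one should find that the $\cv c$-contributions cancel in the $\mathcal P_{21}-\mathcal P_{12}$ combination and the $\cv b$-contributions likewise cancel, while along $[0,\ii]$ the roles swap — this is where the factor $2\pi$ and the specific appearance of $c$ (resp. $b$) rather than $b$ (resp. $c$) comes out. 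Concretely one needs $\sum_k \pm\int_0^1\frac{dz}{z-p_k}$ and $\sum_k\pm\int_0^{\ii}\frac{dz}{z-p_k}$ to evaluate to $\pm 2\pi\ii$ (a winding/logarithm computation using $p_1=e^{\ii\varphi},p_2=-e^{-\ii\varphi},p_3=-e^{\ii\varphi},p_4=e^{-\ii\varphi}$ and $\widetilde p_j=1/p_j$ symmetry), which after multiplication by the $\ii$'s in the matrix entries and the $\ii$ in the definition of $\mathfrak q$ yields the clean real answers $\mathfrak p'(0)=2\pi r c$ and $\mathfrak q'(0)=2\pi r b$. I expect the main obstacle to be bookkeeping the signs and the $\ii$-factors so that the four logarithmic integrals combine to exactly $2\pi\ii$ (equivalently, that the straight segments $[0,1]$ and $[0,\ii]$ are each homotopic rel endpoints, in $U$, to half-loops enclosing the appropriate pair of punctures, as indicated in Figure \ref{fig:mon}); once the geometry of which punctures lie on which side of each segment is pinned down, the rest is the routine Leibniz expansion above.
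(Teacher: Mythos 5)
Your proposal follows essentially the same route as the paper: since $\Phi_{0,\varphi}\equiv\Id$, the Leibniz rule reduces $\mathfrak p'(0)$ to $\mathcal P_{21}'(0)-\mathcal P_{12}'(0)$ and $\mathfrak q'(0)$ to $\ii(\mathcal Q_{21}'(0)+\mathcal Q_{12}'(0))$, the derivative $\Phi'|_{t=0}(z)=\int_0^z\eta'|_{t=0}$ is computed by variation of constants, and the elementary logarithmic integrals along $[0,1]$ and $[0,\ii]$ give the factors of $\pi$ --- exactly the paper's argument, which merely organizes the bookkeeping through the forms $\omega_a,\omega_b,\omega_c$ and matrices $\mathfrak m_a,\mathfrak m_b,\mathfrak m_c$ so that the cancellations are read off from $\mathfrak m_a$ being diagonal, $\mathfrak m_b$ symmetric and $\mathfrak m_c$ antisymmetric. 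One slip to correct: in the combination $\mathcal P_{21}'-\mathcal P_{12}'$ it is the $\cv a$- and $\cv b$-contributions that drop out while the $\cv c$-contribution survives as $-2\ii\,r\cv c\,\Omega_c(1)=2\pi r\cv c$ (your sketch says both the $\cv b$- and $\cv c$-contributions cancel, which contradicts the final answer you then state correctly).
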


\begin{proof} By differentiation of $d_{\Sigma}\Phi_t=\Phi_t\eta_t$, we obtain, since $\Phi_0=\Id,$
\begin{equation}\label{eq:dphiprime}d_{\Sigma}\Phi'\mid_{t=0}=\eta'\mid_{t=0}
=r \sum_{k=1}^4 A_k\frac{dz}{z-p_k}.\end{equation}
To compute the integral of $\eta'$, we introduce the $1$-forms 
\begin{equation}\label{omegaabc}
\begin{split}
\omega_a = \left (\frac{1}{z-p_1} - \frac{1}{z-p_2}+\frac{1}{z-p_3}- \frac{1}{z-p_4}\right)dz\\
\omega_b = \left (\frac{1}{z-p_1} - \frac{1}{z-p_2}-\frac{1}{z-p_3} + \frac{1}{z-p_4}\right)dz\\
\omega_c = \left (\frac{1}{z-p_1} +\frac{1}{z-p_2}-\frac{1}{z-p_3}- \frac{1}{z-p_4}\right)dz\\
 \end{split}
 \end{equation}
 and matrices
 \begin{equation}\label{mabc}
 \mathfrak m_a=\begin{pmatrix} i&0\\0&-i\end{pmatrix}\quad
\mathfrak m_b=\begin{pmatrix} 0&1\\1&0\end{pmatrix}\quad
\mathfrak m_c=\begin{pmatrix} 0&i\\-i&0\end{pmatrix}.
 \end{equation}
 
 Then the DPW potential $\eta_t$ has the form 
 $$\eta_t 
 =r \cdot t  (a\,\omega_a \mathfrak m_a +  b\,\omega_b \mathfrak m_b +  c\, \omega_c \mathfrak m_c).$$
Let
 \begin{equation}\label{Omegaabc}
\Omega_a(z)= \int_0^z \omega_a\qquad
\Omega_b(z) =\int_0^z \omega_b\qquad
\Omega_c(z) = \int_0^z \omega_c,
 \end{equation}
where the integrals are computed on the segment from $0$ to $z$.
Then
\begin{equation*}
\begin{split}
\mathcal P'(0)&=ra\,\Omega_a(1)\mathfrak m_a + rb\,\Omega_b(1)\mathfrak m_b + rc\,\Omega_c(1)\mathfrak m_c\\
\mathcal Q'(0)&=ra\,\Omega_a(\ii)\mathfrak m_a + rb\,\Omega_b(\ii)\mathfrak m_b + rc\,\Omega_c(\ii)\mathfrak m_c
\end{split}
\end{equation*}
Elementary computations give 
\begin{equation}\label{eq:Omega1}
\begin{split}
\Omega_a(1)=\ii(\pi-2\varphi), \quad 
\Omega_b(1)=\log\left(\frac{1-\cos(\varphi)}{1+\cos(\varphi)}\right),\quad
\Omega_c(1)=\ii \pi
\end{split}
\end{equation}
\begin{equation}\label{eq:OmegaI}
\begin{split}
\Omega_a(\ii)=-2\ii\varphi,\quad
\Omega_b(\ii)=-\ii\pi,\quad
\Omega_c(\ii)=\log\left(\frac{1-\sin(\varphi)}{1+\sin(\varphi)}\right).
\end{split}
\end{equation}
Hence
\begin{equation*}
\begin{split}
\mathfrak p'(0)&=\mathcal P_{21}'(0)-\mathcal P_{12}'(0)=-2\ii rc\,\Omega_c(1)=2\pi rc\\
\mathfrak q'(0)&=\ii(\mathcal Q_{21}'(0)+\mathcal Q_{12}'(0))=2\ii rb\,\Omega_b(\ii)=
2\pi rb.
\end{split}
\end{equation*}
\end{proof}
\begin{remark}
The use of the coordinates  $(\mathfrak p,\mathfrak q)$ should be compared with \cite[Section 6]{Gold97}.\end{remark}
\subsection{Solving the Monodromy Problem  for $t\neq 0$}\label{sec:implicit} $\;$\\
If $f$ is an analytic functions of $t$ in a neighborhood of $0$, we denote by $\wh{f}$ the analytic function
\[\wh{f}(t)=\left\{\begin{array}{ll}
\frac{f(t)-f(0)}{t}\quad&\text{ if } t\neq 0\\
$\;$\\
f'(0)&\text{ if } t=0.\end{array}\right.\]

Consider the analytic functions $\wh{\mathfrak p}$ and $\wh{\mathfrak q}$ and  let
\begin{equation}\label{calFG}
\begin{split}
\mathcal K&=\det(r A_1)\\
\mathcal F&=\wh{\mathfrak p}-\wh{\mathfrak p}^*\\
\mathcal G&=\wh{\mathfrak q}-\wh{\mathfrak q}^*\\
\mathcal H_1&=\wh{\mathfrak p}\mid_{\lambda=e^{i\theta}}\\
\mathcal H_2&=\wh{\mathfrak q}\mid_{\lambda=e^{i\theta}}.\\
\end{split}
\end{equation}
Thanks to Lemma \ref{Lem:irreducibility-traces}, the Monodromy Problem
\eqref{monodromy-problem2} can be reformulated as
\begin{equation}\label{monodromy-problem3}
\left\{\begin{array}{l}
(i)\quad\;\;\mathcal F=\mathcal G= 0\\ 
(ii)\quad\;\mathcal K=-1.\\
(iii)\quad\exists \theta\in(0,\pi):\;\mathcal H_1=\mathcal H_2=0.
\end{array}\right.
\end{equation}
\begin{lemma}\label{lem:monodromyproblemequiv}
For $t\sim 0$ small enough and $\xx$ close to $\cv{\xx}$, if $(t,\xx)$
solves Problem \eqref{monodromy-problem3}, then $\Phi_{t,\varphi}^{\xx}$ solves the Monodromy Problem \eqref{monodromy-problem2} for a suitable choice of the initial condition $\Phi_{t,\varphi}^{\xx}\mid_{z=0}=\Phi_0\in\Lambda\SL(2,\C)$.
 Moreover, the initial condition $\Phi_0$ is diagonal.
\end{lemma}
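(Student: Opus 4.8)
\emph{Strategy.} The plan is to deduce the genuine Monodromy Problem \eqref{monodromy-problem2} from the reformulated conditions \eqref{monodromy-problem3} by feeding the monodromies of $\Phi_{t,\varphi}^{\xx}$ (computed with $\Phi_{t,\varphi}^{\xx}\mid_{z=0}=\Id$) into Lemma~\ref{Lem:irreducibility-traces}: the ``suitable'' initial condition $\Phi_0$ will be precisely the diagonal matrix that the lemma produces to unitarize this representation. The case $t=0$ is trivial ($\Phi_{t,\varphi}^{\xx}\equiv\Id$, $\Phi_0=\Id$), so I assume $t\neq 0$ small.

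\emph{Checking the hypotheses of Lemma~\ref{Lem:irreducibility-traces} at each $\lambda\in\S^1$.} First I would record what the symmetries give for free. Since $\delta$ fixes $z=0$, uniqueness for \eqref{eqn-sol} together with the $\delta$-symmetry \eqref{symmetries} gives $\Phi_{t,\varphi}^{\xx}\circ\delta=D\,\Phi_{t,\varphi}^{\xx}\,D^{-1}$, hence $M_3=D^{-1}M_1D$, $M_4=D^{-1}M_2D$ (for the generators of Figure~\ref{fig:mon}), and $\gamma_1\circ\cdots\circ\gamma_4=1$ gives $M_1M_2M_3M_4=\Id$ — the structural hypotheses of the lemma. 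Next, condition \eqref{monodromy-problem3}(ii) says $\det(rA_1)=-1$, so the residue $rtA_k$ of $\eta_{t,\varphi}^{\xx}$ at $p_k$ (conjugate to $rtA_1$) has determinant $-t^2$ and eigenvalues $\pm t$; for $t$ small it is non-resonant, so $M_k$ is conjugate to $\exp(2\pi\ii\,rtA_k)$ and has eigenvalues $e^{\pm2\pi\ii t}$. This already gives \eqref{monodromy-problem2}(ii) and $s:=\tr M_k=2\cos(2\pi t)\in(-2,2)\subset\R$. For condition \eqref{monodromy-problem3}(i): because $\mathcal P\mid_{t=0}=\mathcal Q\mid_{t=0}=\Id$ one has $\mathfrak p\mid_{t=0}=\mathfrak q\mid_{t=0}=0$, so $\wh{\mathfrak p}=\mathfrak p/t$, $\wh{\mathfrak q}=\mathfrak q/t$, and $\mathcal F=\mathcal G=0$ means $\mathfrak p=\mathfrak p^*$, $\mathfrak q=\mathfrak q^*$; combined with $\bar{\mathfrak p}=\mathfrak p$, $\bar{\mathfrak q}=\mathfrak q$ from Lemma~\ref{Lem:realsymPQ} this forces $\mathfrak p(\lambda),\mathfrak q(\lambda)\in\R$ on $\S^1$, so by Proposition~\ref{Pro:trace-square} $u=s_{1,2}=2-4\mathfrak p^2$ and $v=s_{2,3}=2-4\mathfrak q^2$ are real and $\le2$ on $\S^1$ (with $>-2$ since $|\mathfrak p|,|\mathfrak q|<1$ for $t$ small), the bound $\le2$ being strict exactly where $\mathfrak p$, resp.\ $\mathfrak q$, does not vanish. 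Finally the $\sigma$-symmetry \eqref{symmetries}, exactly as in the proof of Lemma~\ref{Lem:realsymPQ}, is what yields the reality on $\S^1$ of the remaining coordinates $s_{1,3},s_{2,4}$ (equivalently $s_{1,3}-s_{2,4}$): with $u,v,s$ real these are the two roots of \eqref{goldpol} viewed as a quadratic in $w$, and one checks, for $t$ small, that the discriminant \eqref{discriminant} is $\ge0$.

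\emph{The Sym points and the loop $\Phi_0$.} It then remains to exclude extra zeros of $\mathfrak p$, resp.\ $\mathfrak q$, on $\S^1$. Writing $\mathfrak p=t\,\wh{\mathfrak p}$ with $\wh{\mathfrak p}\mid_{t=0}=2\pi rc$, $\wh{\mathfrak q}\mid_{t=0}=2\pi rb$ (Proposition~\ref{Pro:pq}) and using the central values \eqref{central1}, the functions $\cv b,\cv c$ restrict on $\S^1$ to nonzero multiples of $\cos(\arg\lambda)$, which has exactly the two simple zeros $\lambda=\pm\ii$; hence for $t$ small and $\xx$ near $\cv{\xx}$ each of $\mathfrak p,\mathfrak q$ has precisely two simple zeros on $\S^1$, near $\pm\ii$, which by \eqref{monodromy-problem3}(iii) are $\lambda=e^{\pm\ii\theta}$ ($\theta$ near $\cv\theta=\tfrac\pi2$). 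So $u,v\in(-2,2)$ strictly on $\S^1\setminus\{e^{\pm\ii\theta}\}$, where Lemma~\ref{Lem:irreducibility-traces} now applies and unitarizes $\{M_k(\lambda)\}$ by a diagonal $G(\lambda)$. By Remark~\ref{Rem:XY}, $G$ is built algebraically — with $\det G=1$ and a branch of square roots pinned down at the central configuration — from the entries of $X=\Phi_{t,\varphi}^{\xx}(+\infty)$, which is real analytic in $\lambda$; at $\lambda=e^{\pm\ii\theta}$ one has $\mathfrak p=\mathfrak q=0$, so $M_1M_2=M_2M_3=\Id$ by Proposition~\ref{Pro:monoPQ} and the matrix identity in the proof of Proposition~\ref{Pro:trace-square}, forcing $M_2=M_1^{-1}$ and $M_3=M_1$; comparing with $M_3=D^{-1}M_1D$ gives $M_1D=DM_1$, so $M_1$, and hence every $M_k$, is diagonal with eigenvalues on $\S^1$, i.e.\ already in $\mathrm{SU}(2)$. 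Thus the unitarizer extends analytically across the Sym points (and across $\lambda=\pm1$), so $G\in\Lambda\mathrm{SL}(2,\C)_\rho$ is diagonal with $GM_kG^{-1}\in\Lambda\mathrm{SU}(2)_\rho$. Setting $\Phi_0:=G$, the frame $\wt\Phi:=G\,\Phi_{t,\varphi}^{\xx}$ solves the same Cauchy problem with $\wt\Phi\mid_{z=0}=\Phi_0$, and its monodromies $\wt M_k=GM_kG^{-1}$ satisfy \eqref{monodromy-problem2}: $(i)$ $\wt M_k\in\Lambda\mathrm{SU}(2)$; $(ii)$ eigenvalues $e^{\pm2\pi\ii t}$; $(iii)$ at $\lambda=\lambda_{1,2}(\theta)=e^{\pm\ii\theta}$ both $G$ and $M_k$ are diagonal, so $\wt M_k$ is diagonal. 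Since $\Phi_0=G$ is diagonal, the lemma follows.

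\emph{Main obstacle.} The substance is concentrated in the last two paragraphs: one must show the hypotheses of Lemma~\ref{Lem:irreducibility-traces} hold at \emph{every} $\lambda\in\S^1$ — the strict bounds $u,v<2$ off the Sym points, extracted from the first-order-in-$t$ expansions of Proposition~\ref{Pro:pq}, and the reality of $s_{1,3}-s_{2,4}$, which via \eqref{discriminant} becomes a sign estimate that degenerates to leading order at $\lambda=\pm1$ — and one must verify that the pointwise unitarizations genuinely assemble into a single element of $\Lambda\mathrm{SL}(2,\C)_\rho$ that is diagonal at the Sym points, despite Lemma~\ref{Lem:irreducibility-traces} not applying directly at the finitely many points $\lambda\in\{\pm1,e^{\pm\ii\theta}\}$.
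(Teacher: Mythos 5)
Your strategy coincides with the paper's: reduce \eqref{monodromy-problem2} to Lemma \ref{Lem:irreducibility-traces}, verify its trace hypotheses pointwise on $\S^1$, and take $\Phi_0$ to be the diagonal unitarizer. The steps you do carry out (eigenvalues $e^{\pm2\pi\ii t}$ from $\mathcal K=-1$; reality of $u,v$ from $\mathcal F=\mathcal G=0$ and Lemma \ref{Lem:realsymPQ}; locating the zeros of $\mathfrak p,\mathfrak q$ at $e^{\pm\ii\theta}$ via the perturbation of $\cv b,\cv c$ and condition (iii); diagonality of the $M_k$ at the Sym points) are correct, and your route to the last point via $M_1M_2=M_2M_3=\Id$ and the $\delta$-symmetry is a valid variant of the paper's direct computation of $\Phi(+\infty)$ and $\Phi(\ii\infty)$.

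However, the two items you defer to the ``main obstacle'' paragraph are genuine gaps, and in both cases the missing ingredient is a specific idea rather than a routine estimate. First, for the reality of $s_{1,3},s_{2,4}$ you write that ``one checks, for $t$ small, that the discriminant \eqref{discriminant} is $\ge 0$''; as you yourself observe, the leading-order expression $\Delta\approx 16\,\mathfrak p^2\mathfrak q^2\cdot 64\pi^2 t^2\sin^2(\arg\lambda)$ vanishes at $\lambda=\pm1$, so no perturbative sign estimate closes this. The paper's argument is not an estimate at all: since $s_{1,3}$ is a single-valued holomorphic function of $\lambda$ near $\S^1$ and solves the quadratic \eqref{goldpol}, $\sqrt{\Delta}=\pm(2s_{1,3}+uv-2s^2)$ is itself holomorphic, so every zero of $\Delta$ has even order and the real function $\Delta|_{\S^1}$ cannot change sign; evaluating at a Sym point, where $u=v=2$ gives $\Delta=16-16\cos^2(2\pi t)>0$, then forces $\Delta\ge0$ everywhere. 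Second, passing from pointwise unitarizability (off the finitely many exceptional points $e^{\pm\ii\theta}$) to a single diagonal loop $\Phi_0\in\Lambda\SL(2,\C)_{\rho}$ is exactly the content of \cite[Theorem 9.1]{SKKR}, which the paper cites; your sketch of reconstructing $G$ analytically from $X=\Phi(+\infty)$ via Remark \ref{Rem:XY} would require verifying that the normalizations in the proof of Lemma \ref{Lem:irreducibility-traces} (the diagonal conjugations and the square-root branches) can be chosen to depend analytically on $\lambda$ and to extend across the Sym points and $\lambda=\pm1$, which is precisely the nontrivial content you would be re-proving. With these two points supplied, your argument matches the paper's.
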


\begin{proof}
We use Lemma \ref{Lem:irreducibility-traces}.
Since $\mathcal K=-1$, $rA_1$ has eigenvalues $\pm 1$. Hence $M_1$ has eigenvalues $e^{\pm 2\pi\ii t}$, so $s=\tr(M_1)=2\cos(2\pi t)\in(-2,2)$.
The condition $\mathcal F=\mathcal G=0$ gives that $\wh{\mathfrak p}$ and
$\wh{\mathfrak q}$ are real on the unit circle.
At $(t,\xx)=(0,\cv{\xx})$, we have by Proposition \ref{Pro:pq}
\[\wh{\mathfrak p}\mid_{(t,\xx)=(0,\cv{\xx})}=2\pi\cv{c}=-\pi\cos(\varphi)(\lambda^{-1}+\lambda)\]
\[\wh{\mathfrak q}\mid_{(t,\xx)=(0,\cv{\xx})}=2\pi\cv{b}=-\pi\sin(\varphi)(\lambda^{-1}+\lambda)\]
with simple zeros at $\pm\ii$.
Hence for $(t,\xx)$ close to $(0,\xx)$, $\wh{\mathfrak p}$ and $\wh{\mathfrak q}$ both have two simple zeros in the annulus $\A_{\rho}$.
The condition $\mathcal H_1=\mathcal H_2=0$ gives that $\wh{\mathfrak p}$
and $\wh{\mathfrak q}$ both have two zeros at $e^{i\theta}$ and $e^{-i\theta}$
by symmetry. Hence they have no further zeros in $\A_{\rho}$.
By Proposition \ref{Pro:trace-square}, for $t\neq 0$ small enough, $s_{1,2}$ and $s_{2,3}$ lie in the interval $(-2,2)$ along the unit circle, except at the two Sym points where they are equal to $2$.
  
In order to obtain that the corresponding monodromy representation is unitarizable
it remains to show that the traces $s_{1,3}$ and $s_{2,4}$ are real
on the unit circle.
Recall that $w=s_{1,3}$ is a solution of the quadratic equation $E(u,v,w)=0$
given by \eqref{goldpol},
and $\tilde w=s_{2,4}$ is a solution of the same equation. Indeed, by cyclic permutation we have
$E(s_{2,3},s_{3,4},s_{2,4})=0$ and by symmetry we have $s_{1,2}=s_{3,4}.$ So since the polynomial $E$ is symmetric, $E(s_{1,2},s_{2,3},s_{2,4})=0$.

Let $\Delta$ be the discriminant of \eqref{goldpol}. Note that $\Delta$ is real on the unit
circle $\S^1$. Since $s_{1,3}$ is a well-defined holomorphic function of $\lambda$ in a neighborhood of $\S^1$, $\sqrt{\Delta}$ is a well-defined holomorphic function of $\lambda$.
Hence its zeros must have even order, so $\Delta$ does not change sign on the unit circle.
At $\lambda=e^{\ii\theta}$ we have $u=v=2$ so $\Delta=16-16\cos^2(2\pi t)>0$.
Hence $\Delta\geq 0$ on $\S^1$ and the solutions $s_{1,3}$ and $s_{2,4}$ are real.
By Lemma \ref{Lem:irreducibility-traces}, the monodromy representation is pointwise unitarizable on the unit circle except at a finite number of points (the Sym-points),
and the unitarizer is diagonal.
By  \cite[Theorem 9.1]{SKKR}, the monodromy is globally unitarizable on the unit circle
with a diagonal unitarizer $\Phi_0\in\Lambda SL(2,\C)$.

It remains to prove that the monodromy representation is diagonal at the Sym points
$\lambda=e^{\pm\ii \theta}$.
With the notations of Proposition \ref{Pro:monoPQ}, we compute
\[\Phi(+\infty)=\mathcal PC\mathcal P^{-1}C^{-1}=\begin{pmatrix}\mathcal P_{11}^2-\mathcal P_{12}^2&-\mathfrak p\\\mathfrak p&\mathcal P_{22}^2-\mathcal P_{21}^2\end{pmatrix}\]
\[\Phi(\ii\infty)=\mathcal Q DC\mathcal Q^{-1}C^{-1}D^{-1}=\begin{pmatrix}\mathcal Q_{11}^2+\mathcal Q_{12}^2&-\ii\mathfrak q\\ -\ii\mathfrak q& \mathcal Q_{22}^2+\mathcal Q_{21}^2\end{pmatrix}.\]
Hence both are diagonal at the Sym points.
This implies that $\Phi(-\infty)$ and $\Phi(-\ii\infty)$ are diagonal, hence all $M_k$
are diagonal at the Sym points.
\end{proof}

\begin{lemma}\label{Lem:realFGK}
The following identities hold
\[\mathcal F=-\mathcal F^* \quad \text{and}\quad \mathcal G=-\mathcal G^*,\]
\[\overline{\mathcal F}=\mathcal F \quad \text{and}\quad \overline{\mathcal G}=\mathcal G.\]
\[\overline{\mathcal K}=\mathcal K.\]
 In particular, the equations $\mathcal F^+=0$ and  $\mathcal F= 0$ (respectively $\mathcal G^+= 0$ and $\mathcal G= 0$) are equivalent. 

\end{lemma}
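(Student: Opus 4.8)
The plan is to reduce all three identities to the elementary algebra of the two involutions $h\mapsto h^{*}$ and $h\mapsto\overline h$ on $\mathcal W_{\rho}$, using crucially that the difference-quotient operator $f\mapsto\wh f$ only involves the real-linear operations ``subtract $f(0)$'' and ``divide by the real number $t$'', and therefore commutes with any operation acting on the $\lambda$-variable alone. Concretely, since $t\in\R$ one checks directly from $h^{*}(\lambda)=\overline{h(1/\overline\lambda)}$ that $\wh{\mathfrak p}^{*}=\wh{\mathfrak p^{*}}$, and from $\overline h(\lambda)=\overline{h(\overline\lambda)}$ that $\overline{\wh{\mathfrak p}}=\wh{\overline{\mathfrak p}}$, and likewise for $\mathfrak q$; one also records the two purely formal facts $(h^{*})^{*}=h$ and $\overline{h^{*}}=(\overline h)^{*}$.

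With these in hand the first line is immediate: $\mathcal F^{*}=(\wh{\mathfrak p}-\wh{\mathfrak p}^{*})^{*}=\wh{\mathfrak p}^{*}-\wh{\mathfrak p}=-\mathcal F$, and the same computation gives $\mathcal G=-\mathcal G^{*}$. For the second line I would invoke Lemma \ref{Lem:realsymPQ}, which says $\overline{\mathfrak p}=\mathfrak p$ and $\overline{\mathfrak q}=\mathfrak q$; applying $\wh{\,\cdot\,}$ gives $\overline{\wh{\mathfrak p}}=\wh{\mathfrak p}$, and combining with $\overline{h^{*}}=(\overline h)^{*}$ gives $\overline{\wh{\mathfrak p}^{*}}=\wh{\mathfrak p}^{*}$, hence $\overline{\mathcal F}=\mathcal F$ and, identically, $\overline{\mathcal G}=\mathcal G$. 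For $\mathcal K$ I would simply compute from the explicit form of $A_{1}$ that $\mathcal K=\det(rA_{1})=r^{2}(a^{2}-b^{2}-c^{2})$; since $r\in\R$, $a,b,c\in\mathcal W_{\R,\rho}$, and $\overline{\,\cdot\,}$ is multiplicative, $\overline{\mathcal K}=\mathcal K$ follows at once.

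For the final assertion I would pass to Fourier coefficients: $\overline{\mathcal F}=\mathcal F$ forces every $\mathcal F_{k}$ to be real, and then $\mathcal F=-\mathcal F^{*}$ reads $\mathcal F_{k}=-\mathcal F_{-k}$ for all $k$. In particular $\mathcal F^{0}=\mathcal F_{0}=0$, and $\mathcal F^{-}$ is the reflection $\lambda^{k}\mapsto-\lambda^{-k}$ of $\mathcal F^{+}$, so $\mathcal F^{+}=0$ implies $\mathcal F^{-}=0$ and thus $\mathcal F=0$; the reverse implication is trivial, and the argument for $\mathcal G$ is verbatim the same. I do not expect a genuine obstacle here: the only point that needs a moment's care is the commutation of $\wh{\,\cdot\,}$ (which touches only the $t$-dependence, $\R$-linearly) with the $\C$-antilinear involutions $*$ and $\overline{\,\cdot\,}$ (which touch only the $\lambda$-dependence), and this is precisely where the hypothesis $t\in\R$ enters.
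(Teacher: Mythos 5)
Your proposal is correct and follows essentially the same route as the paper, whose proof is just the one-line remark that everything follows from the symmetries and from Lemma \ref{Lem:realsymPQ}; you have simply made explicit the formal involution algebra ($(h^*)^*=h$, $\overline{h^*}=(\overline h)^*$, commutation of $\wh{\,\cdot\,}$ with $*$ and $\overline{\,\cdot\,}$ for real $t$), the reduction of $\overline{\mathcal K}=\mathcal K$ to $\mathcal K=r^2(a^2-b^2-c^2)$ with $r,a,b,c$ real, and the Fourier-coefficient argument $\mathcal F_k=-\mathcal F_{-k}$ for the equivalence $\mathcal F^+=0\Leftrightarrow\mathcal F=0$. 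All steps check out, including the observation that reality of the coefficients is genuinely needed to kill the constant term $\mathcal F_0$.
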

\begin{proof}
These properties directly  follow from the symmetries $\delta$, $\tau$ and $\sigma$ and the resulting equations for $\mathfrak p$ and  $\mathfrak q$ in Lemma \ref{Lem:realsymPQ}.
\end{proof}

\begin{proposition}\label{Fuchsianpotential} Let $\varphi\in(0,\tfrac{\pi}{2}).$
For $t\sim0$ there is a unique family $\eta_{t,\varphi}^{\xx(t)}$ of Fuchsian DPW potentials of the form \eqref{thegeneraleta} solving the Monodromy Problem \eqref{monodromy-problem3} with $\xx(0) = \underline{\xx}$.
\end{proposition}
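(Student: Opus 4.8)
The plan is to prove Proposition~\ref{Fuchsianpotential} by applying the (real analytic) Implicit Function Theorem to the map
\[
\mathcal{N}(t,\xx) := \bigl(\mathcal F,\,\mathcal G,\,\mathcal K+1,\,\mathcal H_1,\,\mathcal H_2\bigr)
\]
at the point $(t,\xx)=(0,\cv{\xx})$, where $\mathcal F,\mathcal G,\mathcal K,\mathcal H_1,\mathcal H_2$ are the functions defined in \eqref{calFG}. First I would verify that $\mathcal{N}$ takes values in an appropriate Banach space and depends real-analytically on $(t,\xx)$; this is a consequence of the analytic dependence of $\Phi_{t,\varphi}^{\xx}$ on parameters and of the functional calculus on $\mathcal W_{\R,\rho}$ set up in Section~\ref{loop-groups}. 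By the choice of central values \eqref{central1} and the computation in the ``Central value'' subsection, $\mathcal{N}(0,\cv{\xx})=0$, so the IFT is applicable provided the partial differential $D_{\xx}\mathcal{N}(0,\cv{\xx})$ is an isomorphism. The key technical point, as in \cite{HHT,nnoids}, is that because $\mathcal F$ and $\mathcal G$ are odd under $*$ and real (Lemma~\ref{Lem:realFGK}), the equations $\mathcal F=0$ and $\mathcal G=0$ are equivalent to $\mathcal F^+=0$, $\mathcal G^+=0$; likewise $\mathcal K$ is real. So the target space is really $\mathcal W_{\R,\rho}^{>0}\times\mathcal W_{\R,\rho}^{>0}\times\R\times\C\times\C$ (or its real form), matching the dimensions of the parameter vector $\xx=(\wt a,\wt b,\wt c,r,\theta)\in\mathcal W_{\R,\rho}^{\geq 0}\times\mathcal W_{\R,\rho}^{\geq 0}\times\mathcal W_{\R,\rho}^{\geq 0}\times\R\times\R$.

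Next I would compute $D_{\xx}\mathcal{N}(0,\cv{\xx})$. Using Proposition~\ref{Pro:pq}, at $t=0$ one has $\mathfrak p'(0)=2\pi r c$ and $\mathfrak q'(0)=2\pi r b$, so the linearizations of $\wh{\mathfrak p}$ and $\wh{\mathfrak q}$ at $(0,\cv{\xx})$ in the directions $\wt a,\wt b,\wt c$ are explicitly $\wt c\mapsto 2\pi\wt c$ and $\wt b\mapsto 2\pi\wt b$ respectively (with the $r$ and $\theta$ directions contributing only lower-order, already-known multiples of $\cv b$, $\cv c$). Hence:
\begin{itemize}
\item $D_{\xx}\mathcal F$ in the $\wt c$-direction is $\wt c\mapsto 2\pi(\wt c-\wt c^*)$; restricted to $\wt c\in\mathcal W_{\R,\rho}^{\geq 0}$ this hits every element of $\mathcal W_{\R,\rho}^{>0}$ bijectively (an element of $\mathcal W^{\geq 0}$ and its $*$-conjugate determine each other on the strictly positive/negative parts, and the constant term of $\wt c$ is free to be absorbed or handled by Proposition~\ref{Pro:decomposition} as a residual parameter); similarly for $D_{\xx}\mathcal G$ in $\wt b$.
\item $D_{\xx}\mathcal K$ in the $r$-direction: since $\mathcal K=\det(rA_1)=r^2\det(A_1)$ and $\det(\cv A_1)=-1$, $\partial_r\mathcal K=2r\det(A_1)=-2\neq 0$ at the central value, covering the $\R$-factor.
\item $D_{\xx}\mathcal H_1$ in the $\theta$-direction: $\mathcal H_1=\wh{\mathfrak p}|_{\lambda=e^{i\theta}}$, and at the central value $\wh{\mathfrak p}=-\pi\cos\varphi(\lambda^{-1}+\lambda)$ has a \emph{simple} zero at $\lambda=e^{i\pi/2}=\ii$, so $\partial_\theta\mathcal H_1\neq 0$ there; similarly $\partial_\theta\mathcal H_2\neq 0$ since $\wh{\mathfrak q}=-\pi\sin\varphi(\lambda^{-1}+\lambda)$ also has a simple zero at $\ii$. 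One must check these two ``$\theta$'' equations are compatible — the point is that a single $\theta$ makes both $\wh{\mathfrak p}$ and $\wh{\mathfrak q}$ vanish at $e^{i\theta}$ at the central value, and a Schur-complement / block-triangular argument decouples the $\theta$-equation from the rest.
\end{itemize}
Assembling these, $D_{\xx}\mathcal{N}(0,\cv{\xx})$ is block upper-triangular with invertible diagonal blocks (the $2\pi(\mathrm{id}-{}^*)$ operators on the $\wt b,\wt c$ slots, the scalar $-2$ on the $r$ slot, and the invertible $2\times 2$ evaluation-at-$e^{i\theta}$ block on the $\theta$ slot, with $\wt a$ free since none of the equations at first order depends on $\wt a$ — which forces me to be careful: the $\wt a$-direction must be pinned by a further condition). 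Here is where Proposition~\ref{Pro:decomposition} enters: the genuine unknowns are split as ``finitely many scalar parameters plus a function divisible by $\prod(\lambda-\mu_i)$,'' so that the non-surjectivity coming from the $\wt a$-slot (and from constant terms) is absorbed into the extra scalars $a_0,\dots,a_{n-1}$, and the combined operator becomes an isomorphism onto $\mathcal W_{\R,\rho}^{\geq 0}$-type targets. This is exactly the role advertised for Proposition~\ref{Pro:decomposition} just before its statement.

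The IFT then yields a unique real-analytic family $\xx(t)$ with $\xx(0)=\cv{\xx}$ solving $\mathcal{N}(t,\xx(t))=0$, i.e.\ solving \eqref{monodromy-problem3}, for $t$ in a neighborhood of $0$; uniqueness of the family (for $t\sim 0$, $\xx$ near $\cv{\xx}$) is part of the IFT conclusion. I expect the main obstacle to be the bookkeeping around the $\wt a$-parameter and the constant Fourier modes: the first-order monodromy derivative \eqref{mondiff} is $2\pi\ii r A_1$, whose diagonal entry involves $a$, so $\wt a$ does affect $\mathcal K$ and the Sym-point conditions at higher order but not transversally at first order in the naive setup — reconciling this requires either choosing the right finite set of nodes $\mu_i$ in Proposition~\ref{Pro:decomposition} (the zeros of $\wh{\mathfrak p}$, $\wh{\mathfrak q}$, and the points $\pm\ii$) so that the linearized operator becomes onto, or equivalently re-parametrizing so that $\wt a$ is determined by the nilpotency/closing constraints. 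Once the linearized operator is identified as an isomorphism between the correctly-chosen Banach spaces, the rest is a direct invocation of the real-analytic IFT, and real-analyticity of $t\mapsto\xx(t)$ (hence of the potential $\eta_{t,\varphi}^{\xx(t)}$, and subsequently of the area, as noted in the Remark following Theorem~\ref{MT2}) comes for free.
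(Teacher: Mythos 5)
Your overall strategy is the paper's: apply the implicit function theorem to the system \eqref{monodromy-problem3} at $(t,\xx)=(0,\cv{\xx})$, using the first-order formulas of Proposition \ref{Pro:pq}, the reduction $\mathcal F=0\Leftrightarrow\mathcal F^+=0$ from Lemma \ref{Lem:realFGK}, and Proposition \ref{Pro:decomposition} to repair non-surjectivity. However, your verification that the linearized operator is an isomorphism contains a genuine gap in the parameter-to-equation matching, which you partly acknowledge but do not resolve. First, the ``invertible $2\times 2$ evaluation-at-$e^{i\theta}$ block on the $\theta$ slot'' cannot work: $\theta$ is a single real parameter and cannot invert onto the two conditions $\mathcal H_1=\mathcal H_2=0$. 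In the paper these two conditions are solved by the \emph{constant Fourier modes} of $\wt c$ and $\wt b$: since $d\mathcal H_1=2\pi\cos(\varphi)\,d\theta+2\pi\, d\wt c(\ii)$ and $d\mathcal H_2=2\pi\sin(\varphi)\,d\theta+2\pi\, d\wt b(\ii)$, the map $(\wt b,\wt c)\mapsto(\mathcal F^+,\mathcal G^+,\mathcal H_1,\mathcal H_2)$ is already an isomorphism from $(\mathcal W^{\geq 0}_{\R})^2$ onto $(\mathcal W^+_{\R})^2\times\R^2$, with $\theta$ left over as a free parameter at this stage.

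Second, $\mathcal K=-1$ is a functional equation in $\lambda$ (with $\lambda\mathcal K\in\mathcal W^{\geq 0}_{\R}$), not the single scalar condition your $r$-slot accounts for. The paper's resolution, after substituting the induced variations $d\wt b=-\sin(\varphi)\,d\theta$ and $d\wt c=-\cos(\varphi)\,d\theta$ from the first step, is
$$d\mathcal K=(\lambda^{-1}-\lambda)\,d\wt a-(\lambda^{-1}+\lambda)\,d\theta-2\,dr,$$
and one applies Proposition \ref{Pro:decomposition} with nodes $(\mu_1,\mu_2)=(1,-1)$ --- the zeros of the coefficient $\lambda^{-1}-\lambda$ of $d\wt a$ --- to split $\mathcal K=\lambda^{-1}\mathcal K_{-1}+\mathcal K_0+(\lambda^{-1}-\lambda)\wc{\mathcal K}$; then $d\mathcal K_{-1}=-2\,d\theta$, $d\mathcal K_0=-2\,dr$, $d\wc{\mathcal K}=d\wt a+d\theta$, which is visibly an automorphism of $\R^2\times\mathcal W^{\geq 0}_{\R}$. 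So $\theta$ is consumed by the $\lambda^{-1}$-coefficient of $\mathcal K$, not by the Sym-point conditions, and the correct nodes are $\pm1$, not $\pm\ii$ or the zeros of $\wh{\mathfrak p}$, $\wh{\mathfrak q}$ as you guessed. Your proposal flags the $\wt a$-bookkeeping as ``the main obstacle'' but leaves it open; since this is precisely the content of the proof, the argument as written is incomplete. The fix is the two-step nested IFT above: first solve for $(\wt b,\wt c)$ from $(\mathcal F^+,\mathcal G^+,\mathcal H_1,\mathcal H_2)$, then for $(\wt a,r,\theta)$ from the three components of $\mathcal K=-1$.
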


\begin{proof}
By Proposition \ref{Pro:pq}, we have at $t=0$
\[\begin{split}
\wh{\mathfrak p}\mid_{t=0}&=2\pi rc=2\pi r(\cv{c}+\wt c)\\
\wh{\mathfrak q}\mid_{t=0}&=2\pi r b =2\pi r(\cv{b}+\wt{b})\\
\mathcal F\mid_{t=0}&=2\pi r(\wt c-\wt c^*)\\
\mathcal G\mid_{t=0}&=2\pi r(\wt b-\wt b^*)\\
\mathcal H_1\mid_{t=0}&=2\pi r\left(\cv{c}(e^{\ii\theta})+\wt{c}(e^{\ii\theta})\right)
=2\pi r\left(-\cos(\varphi)\cos(\theta)+\wt{c}(e^{\ii\theta})\right)\\
\mathcal H_2\mid_{t=0}&=2\pi r\left(\cv{b}(e^{\ii\theta})+\wt{b}(e^{\ii\theta})\right)
=2\pi r\left(-\sin(\varphi)\cos(\theta)+\wt{b}(e^{\ii\theta})\right).
\end{split}\]
So the differential of $(\mathcal F^+,\mathcal G^+,\mathcal H_1,\mathcal H_2)$ with respect to $\xx$ at $(t,\xx)=(0,\cv{\xx})$ is given by
\[\begin{split}
d\mathcal F^+&=2\pi d \wt c^+\\
d\mathcal G^+&=2\pi d \wt b^+\\
d\mathcal H_1&=2\pi\cos(\varphi)d\theta+2\pi d\wt c(\ii)\\
d\mathcal H_2&=2\pi\sin(\varphi)d\theta+2\pi d\wt b(\ii).
\end{split}\]
The partial derivative  with respect to $(\wt b,\wt c)$ is clearly an
isomorphism from $(\mathcal W^{\geq 0}_{\R})^2$ to $(\mathcal W^+_{\R})^2\times\R^2$.
Therefore, there exist by the implicit function theorem for $(t,\widetilde{a},r,\theta)$ in a neighborhood of $(0,0,1,\pi/2)$ unique values of $\wt b$ and $\wt c$
in $\mathcal W^{\geq 0}_{\R}$ solving $\mathcal F^+=\mathcal G^+=0$ and
$\mathcal H_1=\mathcal H_2=0$.
Moreover, the differential of $\wt b$ and $\wt c$ with respect to the remaining parameters
$(\wt a,\theta)$ is given by
$$d\wt b=d\wt b^0=-\sin(\varphi)d\theta\quad\text{ and }\quad
d\wt c=d\wt c^0 = -\cos(\varphi) d\theta.$$
Then the differential of $\mathcal K$ with respect to $(\wt a,r,\theta)$ at
$(t,\wt a,r,\theta)=(0,0,1,\pi/2)$ is
$$d\mathcal K=2\cv{a}\,d\wt{a}-2\cv{b}\,d\wt{b}-2\cv{c}\,d\wt{c}-2dr
=(\lambda^{-1}-\lambda)d\wt{a}-(\lambda^{-1}+\lambda)d\theta -2dr.$$
Observe that $\lambda\mathcal K\in\mathcal W^{\geq 0}_{\R}$.
Using Proposition \ref{Pro:decomposition} with $(\mu_1,\mu_2)=(1,-1)$, we decompose $\mathcal K$ as
$$\mathcal K=\lambda^{-1}\mathcal K_{-1}+
\lambda^0 \mathcal K_0
+(\lambda^{-1}-\lambda) \wc{\mathcal K}$$
with $\wc{K}\in\mathcal W^{\geq 0}_{\R}$.
Then rewriting $d\mathcal K$ as
$$d\mathcal K=-2\lambda^{-1}d\theta-2 dr +(\lambda^{-1}-\lambda)(d\wt{a}+d\theta)$$
we obtain
\[\begin{split}
d\mathcal K_{-1}&=-2d\theta\\
d\mathcal K_0&=-2dr\\
d\wc{\mathcal K}&=d\wt{a}+d\theta.\end{split}\]
The partial differential of $(\mathcal K_{-1},\mathcal K_0,\wc{\mathcal K})$
with respect to $(r,\theta,\wt{a})$ is clearly an automorphism
of $\R^2\times\mathcal W^{\geq 0}_{\R}$.
By the implicit function theorem, for $t\sim 0$, there exists a unique 
$(\wt{a},r,\theta)\in\mathcal W^{\geq 0}_{\R}\times\R^2$ in a neighhorhood of
$(0,1,\pi/2)$ such that $\mathcal K=-1$.
\end{proof}

\begin{remark}
Since the differential of Monodromy Problem \eqref{monodromy-problem3} remains invertible for  $\varphi\rightarrow0$ and $\varphi\rightarrow\pi/2$,  the proof of the Proposition works for all $\varphi \in [0, \tfrac{\pi}{2}]$ and gives existence to a unique family of DPW potentials $\eta_t$ solving the Monodromy Problem \eqref{monodromy-problem3},
provided the functions $\mathcal F,$ $\mathcal G$, $\mathcal K$ and $\mathcal H_1,$ $\mathcal H_2$ extend smoothly to $\varphi \rightarrow 0$ and $\varphi\rightarrow\pi/2$.
We will prove that this is the case in Section \ref{limitvarphi}.
\end{remark}

\begin{proposition}$\,$\label{propo:symmetries}\begin{enumerate}
\item The solution $\xx(t)$ given by Proposition \ref{Fuchsianpotential} has the following
parity with respect to $t$:
\begin{equation*}
\begin{split}
&-a(-t)(-\lambda)=a(t)(\lambda),\quad
-b(-t)(-\lambda)=b(t)(\lambda),\quad
-c(-t)(\lambda)=c(t)(\lambda)\\
&r(-t)=r(t)
\quad\text{and}\quad
\pi-\theta(-t)=\theta(t).
\end{split}
\end{equation*}
\item As a function of $\varphi$, $\xx(t,\varphi)$ has the following symmetries:
\begin{equation*}
\begin{split}
&a(-t,\tfrac{\pi}{2}-\varphi)=a(t,\varphi),\quad
b(-t,\tfrac{\pi}{2}-\varphi)=c(t,\varphi),\quad
c(-t,\tfrac{\pi}{2}-\varphi)=b(t,\varphi)\\
&r(-t,\tfrac{\pi}{2}-\varphi)=r(t,\varphi)
\quad\text{and}\quad
\theta(-t,\tfrac{\pi}{2}-\varphi)=\theta(t,\varphi).
\end{split}
\end{equation*}
\end{enumerate}
In particular, $\theta(t,\tfrac{\pi}{4})=\tfrac{\pi}{2}$ for all $t$, so $\varphi=\tfrac{\pi}{4}$ yields a family of minimal surfaces.
\end{proposition}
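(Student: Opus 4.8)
The plan is to derive every identity in the proposition from the \emph{uniqueness} part of Proposition~\ref{Fuchsianpotential}, together with the real-analytic dependence of $\xx(t)$ on $t$ supplied by the analytic implicit function theorem. For each claimed symmetry I will produce, starting from the solution $\eta^{\xx(t)}_{t,\varphi}$ of Proposition~\ref{Fuchsianpotential}, a second Fuchsian DPW potential of the normalised form \eqref{thegeneraleta} which still satisfies the symmetry relations \eqref{symmetries} and still solves the Monodromy Problem \eqref{monodromy-problem3}; by uniqueness it must agree with the solution at the transformed parameter, and reading off how the parameters change gives the asserted formulas.

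For part (1), consider on $\Sigma_\varphi$ the operation that replaces $t$ by $-t$, performs the loop reparametrisation $\lambda\mapsto-\lambda$ (followed, if necessary, by the constant conjugation bringing the result back to the form \eqref{symmetries}), and replaces $\theta$ by $\pi-\theta$. Since the $\lambda$-independent $1$-forms $\omega_a,\omega_b,\omega_c$ and the constant matrices $\mathfrak m_a,\mathfrak m_b,\mathfrak m_c$ of \eqref{omegaabc}, \eqref{mabc} are untouched, this operation sends a potential of the form \eqref{thegeneraleta} to another one, with residue at $\lambda=0$ still nilpotent and with $\wt a,\wt b,\wt c$ again in $\mathcal W^{\geq 0}_{\R}$; on the level of $a,b,c$ it acts by $\lambda\mapsto-\lambda$ together with signs, which is exactly the claimed parity once the monodromy conditions are checked. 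The solution $\Phi$ of \eqref{eqn-sol} transforms by $\lambda\mapsto-\lambda$, hence so do $\mathcal P=\Phi(1)$, $\mathcal Q=\Phi(\ii)$ and, by \eqref{pq}, the functions $\mathfrak p,\mathfrak q$. Since $h\mapsto h(-\lambda)$ commutes with the $*$-involution, $\mathcal F=\wh{\mathfrak p}-\wh{\mathfrak p}^{*}$ and $\mathcal G=\wh{\mathfrak q}-\wh{\mathfrak q}^{*}$ are preserved (condition (i)); $\mathcal K=\det(rA_1)$ is preserved because $\det$ is insensitive to the sign of $\eta$ and to conjugation (condition (ii)); and the vanishing of $\wh{\mathfrak p},\wh{\mathfrak q}$ at $e^{\ii\theta}$ is carried to vanishing at $-e^{-\ii\theta}=e^{\ii(\pi-\theta)}$, which is condition (iii) for the new $\theta$. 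Uniqueness then gives $\xx(-t)$ in terms of $\xx(t)$, i.e. the displayed parity relations; in particular $\theta(-t,\varphi)=\pi-\theta(t,\varphi)$.

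For part (2), use instead the biholomorphism $z\mapsto\ii z$ of Remark~\ref{biholo}, which identifies $\Sigma_\varphi$ with $\Sigma_{\pi/2-\varphi}$, fixes $z=0$, cyclically permutes the punctures $p_1,\dots,p_4$, and carries the segment from $0$ to $1$ to the segment from $0$ to $\ii$. Pulling back $\eta^{\xx(t)}_{t,\varphi}$ along this map and following it by the appropriate constant conjugation --- and by the $t\mapsto-t$, $\lambda\mapsto-\lambda$ operation of part (1), which is what produces the $-t$ on the left-hand side and undoes the cyclic relabelling of the residues --- yields a potential on $\Sigma_{\pi/2-\varphi}$ of the form \eqref{thegeneraleta} with the symmetries \eqref{symmetries}. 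Because the biholomorphism swaps the real and imaginary axes through $0$, it exchanges $\mathcal P$ with $\mathcal Q$, hence $\mathfrak p$ with $\mathfrak q$ and correspondingly $b$ with $c$; concretely this is the already-recorded fact (compare \eqref{eq:Omega1}, \eqref{eq:OmegaI}) that $\Omega_b(1)$ evaluated at $\tfrac{\pi}{2}-\varphi$ equals $\Omega_c(\ii)$ evaluated at $\varphi$, and symmetrically with $b$ and $c$ interchanged. Consequently the Monodromy Problem \eqref{monodromy-problem3} for $\varphi$ is taken to the one for $\tfrac{\pi}{2}-\varphi$ with the roles of $b,c$ swapped, and uniqueness yields $a(-t,\tfrac{\pi}{2}-\varphi)=a(t,\varphi)$, $b(-t,\tfrac{\pi}{2}-\varphi)=c(t,\varphi)$, $c(-t,\tfrac{\pi}{2}-\varphi)=b(t,\varphi)$, $r(-t,\tfrac{\pi}{2}-\varphi)=r(t,\varphi)$ and $\theta(-t,\tfrac{\pi}{2}-\varphi)=\theta(t,\varphi)$.

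The final assertion is then immediate: at $\varphi=\tfrac{\pi}{4}$ part (2) gives $\theta(-t,\tfrac{\pi}{4})=\theta(t,\tfrac{\pi}{4})$ while part (1) gives $\theta(-t,\tfrac{\pi}{4})=\pi-\theta(t,\tfrac{\pi}{4})$, so $\theta(t,\tfrac{\pi}{4})=\tfrac{\pi}{2}$ for all $t$; then the Sym points are $\lambda_{1,2}=e^{\pm\ii\pi/2}=\pm\ii$, the mean curvature $H=\ii\tfrac{\lambda_1+\lambda_2}{\lambda_1-\lambda_2}$ vanishes, and $f^g_{\pi/4}$ (the Lawson surface $\xi_{1,g}$) is minimal. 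I expect the only genuine work to be in part (2): pinning down the constant conjugation and the sign/parity operation that, together with $z\mapsto\ii z$, restore the normalised Fuchsian form \eqref{thegeneraleta}, \eqref{symmetries} exactly, and checking that the transformed $\mathcal P,\mathcal Q$ indeed reproduce the functions $\mathcal F,\mathcal G,\mathcal K,\mathcal H_1,\mathcal H_2$ of \eqref{calFG} for the parameter $\tfrac{\pi}{2}-\varphi$.
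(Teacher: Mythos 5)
Your overall strategy is exactly the paper's: for each symmetry, exhibit a second solution of the Monodromy Problem \eqref{monodromy-problem3} obtained by transforming $\eta_{t,\varphi}^{\xx(t)}$, check that the transformed parameters are again of the normalised form \eqref{thegeneraleta}--\eqref{symmetries} and lie near the (invariant) central value, and invoke the uniqueness in the implicit function theorem. Part (1) is essentially the paper's argument; the only loose points are that no constant conjugation is in fact needed there, and that $\mathcal K$ is not literally preserved but satisfies $\mathcal K(\other{\xx})(\lambda)=\mathcal K(\xx)(-\lambda)$ (which suffices since the target value $-1$ is constant) --- and you should record explicitly that $-\cv{a}(-\lambda)=\cv{a}(\lambda)$, etc., so that the transformed parameters stay in the neighbourhood where uniqueness applies.

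In part (2), however, there is a concrete error in the mechanism you describe. You assert that one must compose the pullback by $\iota(z)=\ii z$ with the $(t,\lambda)\mapsto(-t,-\lambda)$ operation of part (1), and that this composition ``is what produces the $-t$ on the left-hand side and undoes the cyclic relabelling of the residues.'' Neither claim is right. The paper's computation shows that the pullback by $\iota$ alone, followed by conjugation with the constant diagonal matrix $S=\operatorname{diag}(e^{\ii\pi/4},e^{-\ii\pi/4})$, already yields the normalised potential at $(-t,\tfrac{\pi}{2}-\varphi)$ with parameters $(a,c,b,r,\theta)$: the sign responsible for $-t$ comes from $\iota^*(\other{\omega}_a,\other{\omega}_b,\other{\omega}_c)=(-\omega_a,-\omega_c,\omega_b)$ combined with $S^{-1}\mathfrak m_b S=-\mathfrak m_c$, $S^{-1}\mathfrak m_c S=\mathfrak m_b$, and the cyclic relabelling of the punctures is absorbed entirely by $S$. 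No $\lambda\mapsto-\lambda$ occurs. If you literally also apply the part-(1) flip, you land instead at $(+t,\tfrac{\pi}{2}-\varphi)$ with parameters $\bigl(-a(t,\varphi)(-\lambda),\,-c(t,\varphi)(-\lambda),\,-b(t,\varphi)(-\lambda),\,r,\,\pi-\theta\bigr)$ --- a relation that is equivalent to point (2) only after combining it once more with point (1), and is not the clean identity $b(-t,\tfrac{\pi}{2}-\varphi)=c(t,\varphi)$ you claim to read off. So either drop the part-(1) flip from the construction, or keep it and add the extra step of composing the resulting relation with point (1); as written, the two halves of your description are mutually inconsistent and the asserted conclusion does not follow from the stated operation. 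The final deduction of $\theta(t,\tfrac{\pi}{4})=\tfrac{\pi}{2}$ from (1) and (2) is correct.
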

\begin{proof}
Given the parameter $\xx$, consider the parameter $\other{\xx}$ defined by
$$\other{a}(\lambda)=-a(-\lambda),\quad
\other{b}(\lambda)=-b(-\lambda),\quad
\other{c}(\lambda)=-c(-\lambda),\quad
\other{r}=r\quad\text{and}\quad
\other{\theta}=\pi-\theta.$$
Note that $\other{\cv{\xx}}=\cv{\xx}$.
We then have by inspection
\begin{align*}
&\eta_{-t}^{\other{\xx}}(-\lambda)=\eta_{t}^{\xx}(\lambda)\\
&\Phi_{-t}^{\other{\xx}}(-\lambda)=\Phi_{t}^{\xx}(\lambda)\\
&\widehat{\mathfrak p}(-t,\other{\xx})(-\lambda)=-\widehat{\mathfrak p}(t,\xx)(\lambda)\\
&\mathcal F(-t,\other{\xx})(-\lambda)=-\mathcal F(t,\xx)(\lambda)\\
&\mathcal H_1(-t,\other{\xx})
=\widehat{\mathfrak p}(-t,\other{\xx})(e^{\ii\other{\theta}})
=-\widehat{\mathfrak p}(t,\xx)(-e^{\ii(\pi-\theta)})
=-\widehat{\mathfrak p}(t,\xx)(e^{-\ii\theta})
=-\overline{\mathcal H_1(t,\xx)}\\
&\mathcal K(\other{\xx})(\lambda)=\mathcal K(\xx)(-\lambda)
\end{align*}
and similar statements hold for $\widehat{\mathfrak q}$, $\mathcal G$ and $\mathcal H_2$.
Therefore, if $(t,\xx)$ solves the Monodromy Problem \eqref{monodromy-problem3} then $(-t,\other{\xx})$ also solves the Monodromy Problem.
By uniqueness in the implicit function theorem, $\other{\xx}(-t)=\xx(t)$ and
point 1 follows.
\medskip

To prove point (2), consider this time the parameter $(\other{\xx},\other{\varphi})$ defined by
$$\other{a}=a,\quad
\other{b}=c,\quad
\other{c}=b,\quad
\other{r}=r',\quad
\other{\theta}=\theta
\quad\text{and}\quad
\other{\varphi}=\tfrac{\pi}{2}-\varphi.$$
Note that $\cv{\other{\xx}}(\other{\varphi})=\cv{\xx}(\varphi)$.
Let $\other{p}_1$, $\other{p}_2$, $\other{p}_3$, $\other{p}_4$ be the punctures corresponding to the angle
$\other{\varphi}$ and $\other{\omega}_a$, $\other{\omega}_b$,
$\other{\omega}_c$ be the corresponding 1-forms.
Define $\iota(z)=\ii z$. Then $\iota(p_j)=\other{p}_{j+1}$, where the indices are considered mod 4. This gives
$$\iota^*(\other{\omega}_a,\other{\omega}_b,\other{\omega}_c)
=(-\omega_a,-\omega_c,\omega_b).$$
Therefore, we obtain for the pull-back of the potential under $\iota$
\begin{eqnarray*}
\iota^*\left(\eta_{-t,\other{\varphi}}^{\other{\xx}}\right)
&=&-t\begin{pmatrix} \ii a (-\omega_a)&
c(-\omega_c)+\ii b \omega_b\\
c(-\omega_c)-\ii b\omega_b& -\ii a(-\omega_a)\end{pmatrix}
=S^{-1}\eta_{t,\varphi}^{\xx}S
\end{eqnarray*}
with

$$S=\begin{pmatrix}e^{\ii\pi/4}&0\\0&e^{-\ii\pi/4}\end{pmatrix}.$$

The same holds for the extended frame

$$\iota^*\Phi_{-t,\other{\varphi}}^{\other{\xx}}=S^{-1}\Phi_{t,\varphi}^{\xx}S$$
resulting in 
$$\mathcal Q(-t,\other{\varphi},\other{\xx})=S^{-1} \mathcal P(t,\varphi,\xx)S,\qquad
\mathcal P(-t,\other{\varphi},\other{\xx})=S\mathcal Q(t,\varphi,\xx)S^{-1},$$
which gives
$$\mathfrak p(-t,\other{\varphi},\other{\xx})=-\mathfrak q(t,\varphi,\xx),\qquad
\mathfrak q(-t,\other{\varphi},\other{\xx})=-\mathfrak p(t,\varphi,\xx).$$

Consequently, if $(t,\varphi,\xx)$ solves the Monodromy Problem \eqref{monodromy-problem3} then $(-t,\other{\varphi},\other{\xx})$ solves the Monodromy Problem as well and point (2) also follows by the uniqueness in the implicit function theorem.
\end{proof}

\subsection{Building the surface}
We consider a genus $g\in\N$ large enough and set 
\begin{equation}
\label{eq:t}
t=\frac{1}{2g+2}.
\end{equation}
In this section, we prove that the lift of the potential $\eta_{t,\varphi}^{\xx(t,\varphi)}$ to
the compact Riemann surface $M_{\varphi}^g$ has apparent singularities and defines a closed CMC immersion.
More precisely:
\begin{theorem}\label{thm:construction-fixed-varphi}
Let $\varphi\in(0,\tfrac{\pi}{2})$ be fixed.
For every $g \gg1$ there exist a conformal immersion $f^g_\varphi \colon M^g_\varphi \longrightarrow \S^3$ of constant mean curvature such that, with $t$ given
by \eqref{eq:t}:
\begin{enumerate}
\item $f^g_{\varphi}$ has mean curvature
$H=\operatorname{cotan}(\theta(t,\varphi)).$
\item The area of $f^g_{\varphi}$ is given by
$$\operatorname{Area}(f^g_{\varphi})=8\pi\left[1-r(t,\varphi)\left(\cos(\varphi)b_0(t,\varphi)-\sin(\varphi)c_0(t,\varphi)\right)\right].$$
\item  $f^g_\varphi$ is equivariant with respect to the symmetries $\delta$, $\tau$ and $\sigma.$
\item $f^g_\varphi$ and $f^g_{\tfrac{\pi}{2} - \varphi}\circ\iota$ are congruent  by an orientation reversing isometry of $\S^3$, where $\iota$ is determined by $\iota(z)=\ii z$.
\item The image of $f^g_{\tfrac{\pi}{4}}$ is the Lawson minimal surface $\xi_{1,g}$ of genus $g$.
\end{enumerate}
\end{theorem}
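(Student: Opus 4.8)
The plan is to obtain Theorem \ref{thm:construction-fixed-varphi} as a geometric repackaging of Proposition \ref{Fuchsianpotential}. Fix $\varphi\in(0,\tfrac\pi2)$ and take $g$ large enough that $t=\tfrac1{2g+2}$ lies in the neighbourhood of $0$ supplied by Proposition \ref{Fuchsianpotential}; this produces the Fuchsian potential $\eta:=\eta_{t,\varphi}^{\xx(t)}$ on $\Sigma=\Sigma_\varphi$ solving the Monodromy Problem \eqref{monodromy-problem3}, and Lemma \ref{lem:monodromyproblemequiv} turns it into a solution $\Phi$ of $d\Phi=\Phi\eta$, with diagonal initial value at $z=0$, of the Monodromy Problem \eqref{monodromy-problem2}; in particular every local monodromy $M_k$ has eigenvalues $e^{\pm2\pi\ii t}$ and is diagonal at the Sym points $\lambda_{1,2}=e^{\pm\ii\theta(t,\varphi)}$, while $\det(rA_1)=\mathcal K=-1$.

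First I would pull $\eta$ back under the totally branched $(g+1)$-fold covering $\pi\colon M^g_\varphi\to\C P^1$: the pullback is singular only over $p_1,\dots,p_4$, the monodromy of $\pi^*\Phi$ around a preimage $\tilde p_k$ is $M_k^{\,g+1}$, and since $(e^{\pm2\pi\ii t})^{g+1}=e^{\pm\ii\pi}=-1$ we get $M_k^{\,g+1}=-\Id$. In a local coordinate $w$ with $z-p_k\sim w^{g+1}$ the residue of $\pi^*\eta$ at $\tilde p_k$ equals $(g+1)t\,rA_k=\tfrac r2 A_k$, whose eigenvalues are $\pm\tfrac12$ because $\det(rA_1)=-1$; the eigenvalue difference being the integer $1$ and the monodromy $\pm\Id$, the singularity is apparent and removable by a local holomorphic gauge transformation (the no-logarithm resonance argument, as in \cite{HHT}). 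The gauged potential is then an honest DPW potential on the compact surface $M^g_\varphi$ solving \eqref{monodromy-problem}, so the Iwasawa splitting $\Phi=FB$ on the universal cover followed by the Sym--Bobenko formula $f=F^{\lambda_1}(F^{\lambda_2})^{-1}$ gives a conformal CMC immersion $f^g_\varphi\colon M^g_\varphi\to\S^3$, well defined on $M^g_\varphi$; its mean curvature is $\ii\tfrac{\lambda_1+\lambda_2}{\lambda_1-\lambda_2}=\cot(\theta(t,\varphi))$, which is (1).

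The equivariance (3) I would read off from the $\delta$-, $\tau$- and $\sigma$-symmetries \eqref{symmetries} of $\eta$: transporting each equivariance through $d\Phi=\Phi\eta$, the Iwasawa splitting and Sym--Bobenko shows $f^g_\varphi$ intertwines $\delta,\tau,\sigma$ with isometries of $\S^3$, the one for $\sigma$ being orientation reversing since conjugating $\lambda$ exchanges $\lambda_1$ and $\lambda_2$. Item (4) I would get by combining the two parts of Proposition \ref{propo:symmetries}, which together relate, at the same value of $t$, the data $\xx(t,\tfrac\pi2-\varphi)$ to $\xx(t,\varphi)$ by $\lambda\mapsto-\lambda$, $b\leftrightarrow c$ and $\theta\mapsto\pi-\theta$, combined with the pullback of $\iota(z)=\ii z$ and a conjugation by $S=\operatorname{diag}(e^{\ii\pi/4},e^{-\ii\pi/4})$; at the level of extended frames and Sym--Bobenko this identifies $f^g_\varphi$ with $f^g_{\pi/2-\varphi}\circ\iota$ up to an isometry of $\S^3$ that reverses orientation because it exchanges the two Sym points. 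For (5), Proposition \ref{propo:symmetries} gives $\theta(t,\tfrac\pi4)=\tfrac\pi2$, so $f^g_{\pi/4}$ is a closed genus-$g$ minimal surface in $\S^3$ carrying the dihedral symmetries $\delta,\tau,\sigma$ and the $\Z_{g+1}$-action characteristic of $\xi_{1,g}$, with fundamental piece a minimal disk spanning the associated geodesic quadrilateral; uniqueness of that Plateau solution forces it to be $\xi_{1,g}$, argued exactly as in \cite{HHT}.

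Finally, for the area formula (2) I would use that $\operatorname{Area}(f^g_\varphi)$ equals $(g+1)$ times the area of a fundamental piece, whose boundary lies on a wedge of angle $4\pi t$, and compute the latter from the DPW data by the area formula for CMC surfaces in $\S^3$ used in \cite{HHT}: expressing the induced area element through $\eta$ and applying Stokes reduces it to boundary integrals along the geodesic arcs of the wedge, which are evaluated by the periods $\Omega_a(1),\Omega_b(1),\Omega_c(1),\Omega_a(\ii),\Omega_b(\ii),\Omega_c(\ii)$ recorded in \eqref{eq:Omega1}--\eqref{eq:OmegaI}; the outcome is $\operatorname{Area}(f^g_\varphi)=8\pi[\,1-r(\cos\varphi\,b_0-\sin\varphi\,c_0)\,]$, where $b_0,c_0$ are the constant Fourier coefficients of $b,c$. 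The hard part will be the two non-formal steps: verifying that the Fuchsian singularities of $\pi^*\eta$ are genuinely apparent and removable by a holomorphic gauge at the arithmetic value $t=\tfrac1{2g+2}$, so that the whole construction descends to the compact surface $M^g_\varphi$, and carrying out the area computation; the identification of $f^g_{\pi/4}$ with the Lawson surface also needs care, but can be imported from \cite{HHT}.
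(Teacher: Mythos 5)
Your overall architecture matches the paper's: pull $\eta_{t,\varphi}^{\xx(t)}$ back under the $(g+1)$-fold cover, verify the Monodromy Problem \eqref{monodromy-problem} on $M^g_\varphi$ via the congruence condition on lifted loops, remove the singularities at the $\widetilde p_k$, apply Sym--Bobenko, and deduce (4) and (5) from Proposition \ref{propo:symmetries} and the argument of \cite{HHT}. However, two of your steps do not go through as written. The first is the area formula (2). It is not obtained from the periods $\Omega_a(1),\dots,\Omega_c(\ii)$; those integrals enter only later, in the computation of the first-order $t$-derivatives of the parameters (Proposition \ref{prop:integrals}). The paper instead applies the residue formula of \cite[Corollary 17]{HHT}, $\operatorname{Area}(f)=4\pi\sum_j\operatorname{Res}_{q_j}\operatorname{trace}\bigl(\eta_{-1}G_{j,1}G_{j,0}^{-1}\bigr)$, a purely local computation at the four poles using the explicit gauges $G_j$ that render the singularities apparent; since these gauges involve $w^{\pm 1/2}$, the computation is carried out on a double cover of $M^g_\varphi$. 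The coefficients $b_0,c_0$ in the stated formula arise as the $\lambda^0$ Fourier coefficients of $b,c$ through the expansion of $k=(b+\ii c)/(\ii a-r^{-1})$, and there is no route to them through boundary integrals along the wedge. Relatedly, your monodromy argument ($M_k^{g+1}=-\Id$ is scalar, hence no logarithmic terms) does show that the singularity of the pulled-back \emph{connection} is apparent, but to conclude that $f^g_\varphi$ is a regular, unbranched immersion at $\widetilde p_k$ you also need the $\lambda^{-1}$-part of the gauged potential to be nowhere vanishing and nilpotent there; the paper checks this from the explicit gauge $G_1$ (nonzero at $t=0$, hence for small $t$), and these explicit gauges are in any case indispensable for the area computation.

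The second gap is $\tau$-equivariance. It cannot simply be ``transported through'' the construction, because $\tau(z)=1/z$ does not fix the base point $z=0$ where the initial condition $\Phi_0$ is imposed. One only obtains $C\tau^*\widetilde\Phi_t C^{-1}=M\widetilde\Phi_t$ for some factor $M$ determined by $\widetilde\Phi_t(\widetilde 1)$, and converting this into an isometry of $\S^3$ requires showing $M\in\Lambda SU(2)$. The paper does this via the uniqueness of the unitarizer modulo $\Lambda SU(2)$ (Lemma \ref{Lem:uniuni}, which in turn requires excluding dressing transformations by irreducibility of the monodromy away from the Sym points), followed by an Iwasawa-factorization argument yielding $DB=BD$. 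Your one-line treatment covers $\delta$ and $\sigma$ (which fix $z=0$) but omits this essential input for $\tau$; the same uniqueness-of-unitarizer input is also needed to make your argument for point (4) precise.
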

\begin{remark}
We will show in Proposition \ref{prop:willmore} that the Willmore energy of $f^g_{\varphi}$ is below $8\pi$ for large $g$, ensuring that $f^g_{\varphi}$ is embedded.
\end{remark}

\begin{proof}[Proof of Theorem \ref{thm:construction-fixed-varphi}]%
$\;$
\begin{itemize}
\item The Riemann surface $M^g_\varphi$ is given as an algebraic curve  by the equation
$$y^{g+1} = \frac{(z-p_1)(z-p_3)}{(z-p_2)(z-p_4)}.$$
Let $\pi\colon M^g_\varphi\to\C P^1$ denote the projection onto the $z-$plane, namely
$\pi(y,z)=z$.
Then $\pi$ is a $(g+1)$-sheeted covering totally branched over $p_1,\cdots,p_4$.
Let $\widetilde{p}_i=\pi^{-1}(p_i),$ $\widetilde{\eta}_t=\pi^*\eta_{t, \varphi}^{\xx(t,\varphi)}$ and
$\widetilde{\Phi}_t$ be the solution of $d\widetilde{\Phi}_t=\widetilde{\Phi}_t\widetilde{\eta}_t$ with initial condition $\widetilde{\Phi}_t(\widetilde{0})=\Phi_0(t,\varphi)$,
where $\pi(\widetilde{0})=0$ and
$\Phi_0$ is given by Lemma \ref{lem:monodromyproblemequiv}.
We first prove that $\widetilde{\Phi}_t$ solves the Monodromy Problem \eqref{monodromy-problem}. 
Let $\gamma\in\pi_1(M^g_\varphi\setminus\{\widetilde{p}_1,\cdots,\widetilde{p}_4\},\widetilde{0})$. We decompose $\pi\circ\gamma$ as
$$\pi\circ\gamma=\prod_{j=1}^l(\gamma_{i_j})^{k_j}$$
for some integers $l, k_1, ..., k_l \in \N$ satisfying
\begin{equation}
\label{eq:congruence}\sum_{j=1}^l (-1)^{i_j} k_j\equiv 0\mod (g+1).
\end{equation}
Equation \eqref{eq:congruence} comes from the condition that the closed curve $\pi \circ\gamma$ on the quotient is induced from a closed curve $\gamma$ on $M^g_\varphi,$ i.e., it stems from the monodromy of the covering map $\pi$,
see \cite[Section 4.2]{HHT}.
Then
$$\mathcal M(\widetilde{\Phi}_t,\gamma)=\prod_{j=1}^l M_{i_j}(t)^{k_j}.$$
and $\mathcal M(\widetilde{\Phi}_t,\gamma)\in\Lambda SU(2)$.
Moreover, at the Sym points, all $M_j$ are diagonal and commute with each other, with
$M_1=M_2^{-1}=M_3=M_4^{-1}.$
Hence $\mathcal M(\widetilde{\Phi}_t,\gamma)\mid_{\lambda=\lambda_1,\lambda_2}$ is diagonal and has eigenvalues
$$\exp\left(\pm 2\pi\ii t\sum_{j=1}^l (-1)^{i_j} k_j\right)=\pm 1$$
thanks to Equations \eqref{eq:t} and \eqref{eq:congruence}.
Hence the Monodromy Problem \eqref{monodromy-problem} is solved
so the Sym-Bobenko formula defines an immersion $f^g_\varphi$ on
$M^g_{\varphi}\setminus \{\widetilde{p}_1,\cdots,\widetilde{p}_4\}$ with mean curvature 
$$H^g_\varphi= \ii\frac{\lambda_1+\lambda_2}{\lambda_1-\lambda_2}
=\frac{e^{i \theta} + e^{-i\theta}}{e^{i \theta} - e^{-i\theta}}=\operatorname{cotan}(\theta(t,\varphi)).$$
\item Next we prove that $\widetilde{p}_1,\cdots,\widetilde{p}_4$ are apparent singularities.
Let $w$ be a local coordinate in a neighborhood of $\widetilde{p}_1$ such that
$w^{g+1}=z-p_1$. Then using Equation \eqref{eq:t}
$$\widetilde{\eta}_t=r\,t\,(g+1)\frac{dw}{w}A_1+O(w^g)dw
=\frac{r}{2}\begin{pmatrix}\ii a&b+\ii c\\b-\ii c&-\ii a\end{pmatrix}
\frac{dw}{w}+O(w^g)dw.$$

Consider the local gauge
$$G_1=\begin{pmatrix}k&0\\-1&k^{-1}\end{pmatrix}\begin{pmatrix}
w^{-1/2}&0\\0&w^{1/2}\end{pmatrix}
\quad\text{with}\quad
k=\frac{b+\ii c}{\ii a-r^{-1}}.$$
A computation gives
$$\widetilde{\eta}_t . G_1=\begin{pmatrix}0&\frac{(\ii a r-1)^2}{2 r(b+\ii c)}\\
-\frac{r(b+\ii c)(1+r^2(a^2-b^2-c^2))}{2(\ii a r- 1)^2 w^2}&0\end{pmatrix}
+G_1^{-1} O(w^g)G_1 dw$$
which is holomorphic at $w=0$ thanks to $r^2(a^2-b^2-c^2)=-1$.
Moreover,  the $\lambda^{-1}$ term of $\widetilde{\eta}_t . G_1$ is
non-zero at $t=0$, and therefore remains non-zero for $t$ small enough.
This ensures that $f^g_\varphi$ extends to a regular immersion at $\widetilde{p}_1$,
and at $\widetilde{p}_2$, $\widetilde{p}_3$ and $\widetilde{p}_4$ using similar gauges
(it suffices to multiply $a$, $b$, $c$ by the proper $\pm$ signs).

\begin{remark} At $t=0$, we have
$$k=e^{-\ii\varphi}\frac{\lambda+\ii}{\lambda-\ii}$$
so $G_1$ has poles at $\lambda=\pm\ii$. For small $t\neq 0$, $G_1$ may have poles in the unit $\lambda$-disk close to $\pm \ii$. Therefore, we need to apply the $r$-Iwasawa decomposition instead of the ordinary Iwasawa decomposition. This does not alter the corresponding immersion.
\end{remark}

\item By  \cite[Corollary 17]{HHT}, the area of a compact minimal or CMC immersion $f$ in the 3-sphere constructed from a meromorphic DPW potential $\eta$ is given
by
\begin{equation}\label{areaasresidue}
\operatorname{Area}(f)=4\pi\sum_{j=1}^n\operatorname{Res}_{q_j}\operatorname{trace}(\eta_{-1}G_{j,1} G_{j,0}^{-1}),\end{equation}
where $q_1,\dots,q_n$ are the poles of the potential $\eta=\sum_{k=-1}^{\infty}\eta_k\lambda^k$ and $G_j=\sum_{k=0}^\infty G_{j,k}\lambda^k$ is a local gauge defined near $q_j$ such that
$\eta.G_j$ is regular at $q_j$.\\
As our gauges involve square roots,
we need to work on a certain double covering where the gauges are well-defined. Hence,
in order to apply \cite[Corollary 17]{HHT} directly, we work on a double covering
\[\widehat M^g_\varphi\to  M^g_\varphi\]
branched at $\widehat{p}_1,\cdots,\widehat{p}_4$ and let $\widehat{\eta}_t$ be the pullback of $\widetilde{\eta}_t$ to $\widehat M^g_\varphi$.
Using $v=\sqrt{w}$ as local coordinate in a neighborhood of $\widehat{p}_1$, we have
$$\widehat{\eta}_{-1}=r\begin{pmatrix}\ii a_{-1}& b_{-1}+\ii c_{-1}\\
b_{-1}-\ii c_{-1}&-\ii a_{-1}\end{pmatrix}\frac{dv}{v}$$
$$G_{1,1}G_{1,0}^{-1}=\begin{pmatrix}k_1&0\\0&-k_1k_0^{-2}\end{pmatrix}
\begin{pmatrix}k_0^{-1}&0 \\ 1 &k_0\end{pmatrix}
=\frac{k_1}{k_0}\begin{pmatrix}1&0\\ -k_0^{-1}&-1\end{pmatrix}$$
$$k_0=\frac{b_{-1}+\ii c_{-1}}{\ii a_{-1}},\qquad
\frac{k_1}{k_0}=\frac{b_0+\ii c_0}{b_{-1}+\ii c_{-1}}-\frac{\ii a_0-r^{-1}}{\ii a_{-1}}$$
\begin{eqnarray*}
\operatorname{Res}_{\widehat{p}_1}\operatorname{trace}\left(\widehat{\eta}_{-1}G_{1,1} G_{1,0}^{-1}\right)&=&r\ii a_{-1}\frac{k_1}{k_0}=
1+r\left(-\ii a_0+\ii a_{-1}\frac{b_0+\ii c_0}{b_{-1}+\ii c_{-1}}\right)\\
&=&1+r\left(-\ii a_0-e^{\ii\varphi}(b_0+\ii c_0)\right).
\end{eqnarray*}
By substitution of $(a,b,c)$ with $(-a,-b,c)$, $(a,-b,-c)$ and $(-a,b,-c)$ respectively, we obtain
$$\operatorname{Res}_{\widehat{p}_2}\operatorname{trace}\left(\widehat{\eta}_{-1}G_{2,1}G_{2,0}^{-1}\right)=1+r\left(\ii a_0+e^{-\ii\varphi}(-b_0+\ii c_0)\right)$$
$$\operatorname{Res}_{\widehat{p}_3}\operatorname{trace}\left(\widehat{\eta}_{-1}G_{3,1}G_{3,0}^{-1}\right)=1+r\left(-\ii a_0-e^{\ii\varphi}(b_0+\ii c_0)\right)$$
$$\operatorname{Res}_{\widehat{p}_4}\operatorname{trace}\left(\widehat{\eta}_{-1}G_{4,1}G_{4,0}^{-1}\right)=1+r\left(\ii a_0+e^{-\ii\varphi}(-b_0+\ii c_0)\right).$$
Adding all four residues, we obtain, since we are working on a double cover,
$$2\operatorname{Area}(f^g_{\varphi})=16\pi\left(1-r(b_0\cos(\varphi)-c_0\sin(\varphi))\right).$$
\item That the surface is equivariant with respect to the symmetries $\delta$ and $\sigma$ follows by standard arguments from the fact that these symmetries fix the base point $z=0$ and that the unitarizer $\Phi_0$ commutes with $D$.
Understanding the symmetry $\tau$ is a little more involved.
Let $\pi:\widetilde{\Sigma}\to\C P^1\setminus\{p_1,\cdots,p_4\}$ be the universal cover,
$\widetilde{\eta}_t=\pi^*\eta_{t, \varphi}^{\xx(t,\varphi)}$ and
$\widetilde{\Phi}_t$ be the solution of $d\widetilde{\Phi}_t=\widetilde{\Phi}_t\widetilde{\eta}_t$ with initial condition $\widetilde{\Phi}_t(\widetilde{0})=\Phi_0(t,\varphi)$.
Fix $\widetilde{1}$ lying in the fiber $\pi^{-1}(1)$.
 Then
$$d(C\tau^*\widetilde{\Phi}_t C^{-1})=(C\tau^*\widetilde{\Phi}_t C^{-1}) (C\tau^* \widetilde{\eta}_tC^{-1})=(C\tau^*\widetilde{\Phi}_t C^{-1})\widetilde{\eta}_t.$$
Hence
$$C\tau^*\widetilde{\Phi}_t C^{-1}= M \widetilde{\Phi}_t\quad\text{with}\quad
M=C\widetilde{\Phi}_t(\widetilde{1})C^{-1}\widetilde{\Phi}_t(\widetilde{1})^{-1},$$
as both sides have the same value at $z=\widetilde{1},$ they are solutions to the same Cauchy Problem.
For any closed curve $\gamma$ we have
$$\mathcal M(C\tau^*\widetilde{\Phi}_tC^{-1},\gamma)=C\mathcal M(\widetilde{\Phi}_t,\tau\circ\gamma)C^{-1}\in\Lambda SU(2).$$
Since the unitarizer is unique modulo loops in SU(2), see Lemma \ref{Lem:uniuni} below, we conclude that $M\in\Lambda SU(2)$.
Observe that
$$C=PDP^{-1}\quad\text{with}\quad P=\frac{1}{\sqrt{2}}\begin{pmatrix}1&-1\\1&1\end{pmatrix}\in SU(2).$$
Let $(F,B)$ be the Iwasawa decomposition of $P^{-1}\widetilde{\Phi}_t(\widetilde{1})P.$
Then
$$M=C(PFBP^{-1})C^{-1}(PB^{-1}F^{-1}P^{-1})=(CPFD^{-1})(DBD^{-1}B^{-1})(F^{-1}P^{-1}).$$
Hence $DBD^{-1}B^{-1}\in\Lambda SU(2)$. On the other hand, $DBD^{-1}B^{-1}
\in\Lambda_+^{\R} SL(2,\C)$, so $DB=BD$.
Write
$$\widetilde{\Phi}_t(\widetilde{1})=UR\quad\text{with}\quad
U=PFP^{-1}\in\Lambda SU(2)\quad\text{and}\quad
R=PBP^{-1}.$$
From $DB=BD$ we conclude that $RC=CR$.
Up to an isometry of $\S^3$, we may assume that $F=I_2$. Then
$$\tau^*\widetilde{\Phi}_t=C^{-1}\widetilde{\Phi}_t C$$
yielding the isometry $(x_1,x_2,x_3,x_4)\to (x_1,-x_2,-x_3,x_4)$
of $\S^3$. The corresponding isometry of $f_{\varphi}^g$ is conjugated by $F$,
which depends on the choice of $\widetilde{1}$ in the fiber $\pi^{-1}(1)$ (so there are in fact $(g+1)$ such isometries).
\item To prove point (4), set $\widehat{\varphi}=\frac{\pi}{2}-\varphi$ and define $\other{\xx}(t)$ by
\begin{equation*}
\begin{split}
&\other{a}(t)(\lambda)=-a(t)(-\lambda),\quad
\other{b}(t)(\lambda)=-c(t)(-\lambda),\quad
\other{c}(t)(\lambda)=-b(t)(-\lambda)\\
&\other{r}(t)=r(t)\quad\text{and}\quad
\other{\theta}(t)=\pi-\theta(t).
\end{split}
\end{equation*}
By Proposition \ref{propo:symmetries}, we have
$$\iota^*\Phi_{t,\other{\varphi}}^{\other{\xx}(t)}(\lambda)=S^{-1}\Phi_{t,\varphi}^{\xx(t)}(-\lambda)S.$$
Again, by uniqueness of the unitarizer $\Phi_0(t,\varphi)$, we obtain
$$\Phi_0(t,\other{\varphi})(\lambda)=US^{-1}\Phi_0(t,\varphi)(-\lambda)S\quad\text{with}\quad U\in\Lambda SU(2).$$
This gives for the solution $\widetilde{\Phi}_{t,\varphi}$ with initial condition $\Phi_0(t,\varphi)$ at $z=\widetilde{0}$ in the universal cover, with $\wt \iota$  being the lift of the involution $\iota$ to $\widetilde{\Sigma}$ fixing $\wt 0$,
$$\widetilde{\iota}^*\widetilde{\Phi}_{t,\other{\varphi}}(\lambda)=US^{-1}\widetilde{\Phi}_{t,\varphi}(-\lambda)S.$$
We have 
$$\other{\lambda}_1(t)=e^{\ii\other{\theta}_1(t)}=e^{\ii(\pi-\theta_1(t))}=-\lambda_2(t)\quad\text{and}\quad \other{\lambda}_2(t)=-\lambda_1(t).$$
By the Sym-Bobenko formula,
$$\widetilde{\iota}^* f_{\other{\varphi}}^g=US^{-1}(f_{\varphi}^g)^{-1}S U^{-1}$$
so $\widetilde{\iota}^* f_{\other{\varphi}}^g$ and $f_{\varphi}^g$ are congruent
by an orientation-reversing isometry of $\S^3$.
Since $\other{\theta}=\pi-\theta$, we can see directly that they have opposite mean curvature.
\item For $\varphi = \tfrac{\pi}{4}$, the immersion $f^g_{\varphi}$ is minimal. Given its symmetries and the fact that its area is less than $8\pi$ (which we prove in Section \ref{areaestimates}), the same argument as in \cite[Theorem 5]{HHT} proves that it is Lawson surface $\xi_{1,g}$ for $g \gg 1$.
\end{itemize}
\end{proof}

\begin{lemma}\label{Lem:uniuni}
Consider  the DPW potential $\eta_t^{\xx(t)}$ provided by 
Proposition \ref{Fuchsianpotential}.
For $t\sim0$  small
the unitarizer $\Phi_0= \Phi(z=0)$ is unique up to multiplication with $\Lambda \mathrm{SU}(2)$ from the left. 
\end{lemma}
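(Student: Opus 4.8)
<br>

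The plan is to prove uniqueness of the unitarizer by combining irreducibility of the monodromy representation with the standard Iwasawa uniqueness, and to obtain irreducibility from a continuity/degeneration argument at $t=0$.

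First I would recall that the monodromy problem \eqref{monodromy-problem3} has been solved (Proposition \ref{Fuchsianpotential}), so $\Phi_0$ exists: there is a diagonal $\Phi_0\in\Lambda\SL(2,\C)$ conjugating the monodromy representation of $\Phi_{t,\varphi}^{\xx(t)}$ into $\Lambda\SU(2)$. Suppose $\Phi_0$ and $\Phi_0'$ are two such unitarizers. Then $G:=\Phi_0'\Phi_0^{-1}\in\Lambda\SL(2,\C)$ conjugates the unitary representation $\rho:=\Phi_0\,\mathcal M(\Phi,\,\cdot\,)\,\Phi_0^{-1}$ to another unitary representation $G\rho G^{-1}$. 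The claim is then reduced to: if an irreducible representation into $\SU(2)$ is conjugated into $\SU(2)$ by some $G\in\SL(2,\C)$, then $G\in\Lambda\SU(2)$ (pointwise in $\lambda$, hence as a loop). This is a classical fact: for a fixed $\lambda$, if $\rho_\lambda$ is irreducible and $G\rho_\lambda G^{-1}\subset\SU(2)$, then $G^*G$ commutes with the image of $\rho_\lambda$, so by Schur's lemma $G^*G$ is scalar, forcing $G\in\SU(2)$ up to a scalar which is absorbed into $\SL(2,\C)$. Since $G$ is a real-analytic loop and this holds for all $\lambda\in\S^1$ where $\rho_\lambda$ is irreducible, $G\in\Lambda\SU(2)$.

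The remaining point — and the real content — is to establish that the monodromy representation is \emph{irreducible} for $\lambda$ in a dense subset of $\S^1$, for $t\neq 0$ small. For this I would argue by continuity from $t=0$. At $t=0$ the representation is trivial, so this does not directly help; instead I would use the first-order data. By \eqref{mondiff}, $M'(0)=2\pi\ii\,r A_1$, and the central values \eqref{central1} show that at $(0,\cv\xx)$ the leading behaviour of the monodromies is governed by the matrices $A_k$, which generate an irreducible subalgebra for generic $\lambda$ (e.g.\ because $\cv a,\cv b,\cv c$ are nonzero and the $A_k$ are not simultaneously triangularizable — one checks this from the explicit forms of $A_1,\dots,A_4$, using that $C,D$ anti-commute). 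More concretely, reducibility of the monodromy representation is a closed condition cut out by the vanishing of a certain analytic function of $(t,\lambda)$ (for instance $\tr(M_kM_lM_k^{-1}M_l^{-1})-2$, or the commutator pairing), and one shows this function is not identically zero near $t=0$ by computing its leading-order term in $t$ from $M'(0)=2\pi\ii rA_1$ and the analogous derivatives; genericity in $\lambda$ then gives irreducibility on a dense subset of $\S^1$. Alternatively, and perhaps more cleanly, I would invoke the already-established fact inside the proof of Lemma \ref{lem:monodromyproblemequiv} that for $t\neq 0$ small the traces $s_{1,2}, s_{2,3}$ lie in $(-2,2)$ away from the Sym points; a representation with $s_{1,2}\notin\{\pm 2\}$ cannot have $M_1,M_2$ sharing a common eigenvector, hence is irreducible for such $\lambda$.

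I expect the main obstacle to be precisely this irreducibility step: making rigorous that the first-order deformation genuinely ``opens up'' the representation away from the reducible locus, uniformly enough in $\lambda$ on $\S^1$ and for all small $t\neq 0$. Once irreducibility is in hand on a dense set of $\lambda\in\S^1$, the Schur-lemma argument and the real-analyticity of the loop $G$ finish the proof routinely. I would also note that it suffices to have irreducibility at a single $\lambda_0$: if $G^*G$ is scalar at one point, it need not be at others, so one really does want a dense set — this is why I favour routing the argument through the interval condition $s_{1,2},s_{2,3}\in(-2,2)$ from Lemma \ref{lem:monodromyproblemequiv}, which already holds on all of $\S^1$ minus finitely many points.
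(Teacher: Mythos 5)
Your overall architecture is sound and genuinely different from the paper's. You reduce uniqueness to a pointwise Schur's lemma on the unit circle (if $\rho_\lambda$ and $G\rho_\lambda G^{-1}$ are both unitary and $\rho_\lambda$ is irreducible, then $G^*G$ commutes with the image, hence is a positive scalar of determinant one, so $G(\lambda)\in\mathrm{SU}(2)$), and then use density in $\S^1$ plus continuity of the loop $G$. The paper instead regards the two unitarizers as producing two associated families of flat connections on $M^g_\varphi$, both gauge equivalent to the pulled-back potential, and invokes the dichotomy of \cite[Theorem 7]{He2}: the families differ either by a dressing transformation or by a $\lambda$-independent $\mathrm{SU}(2)$ gauge, and a nontrivial dressing is excluded because it would require a reducible connection at some $\lambda_0$ in the open unit disk. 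Your route is more elementary and only needs irreducibility on a dense subset of $\S^1$ rather than on the disk; if the irreducibility input were correct, it would be a legitimate alternative proof.

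The gap is exactly where you anticipated it, and the fix you favour does not work. The claim that $s_{1,2}\notin\{\pm 2\}$ precludes a common eigenvector of $M_1,M_2$ is false: if $M_1,M_2\in\mathrm{SL}(2,\C)$ share an eigenvector, then $M_1M_2$ is triangular with diagonal $(\mu_1\mu_2,\mu_1^{-1}\mu_2^{-1})$, so the reducibility locus of the pair is $s_{1,2}\in\{2,\ s^2-2\}$ where $s=\tr(M_1)=\tr(M_2)$ (equivalently $\tr([M_1,M_2])=2$, i.e. $(s_{1,2}-2)(s_{1,2}-s^2+2)=0$). Here $s=2\cos(2\pi t)$, so the second reducible value is $s^2-2=2\cos(4\pi t)$, which lies well inside $(-2,2)$ for all small $t\neq 0$; the interval condition of Lemma \ref{lem:monodromyproblemequiv} therefore does not exclude reducibility. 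To close the argument you must additionally rule out $s_{1,2}=2\cos(4\pi t)$, i.e. $\mathfrak p=\pm\sin(2\pi t)$, and this genuinely requires the quantitative first-order information you set aside as the harder route: on $\S^1$ one has $\wh{\mathfrak p}=2\pi\cv{c}+O(t)$ with $|\cv{c}(\lambda)|\le\cos(\varphi)<1$, whence $|\mathfrak p|\le t\,(2\pi\cos(\varphi)+O(t))<\sin(2\pi t)$ for fixed $\varphi\in(0,\tfrac{\pi}{2})$ and $t$ small. With that supplement (irreducibility of the pair $(M_1,M_2)$ already forces irreducibility of the full representation), your proof goes through; note that the paper's own irreducibility assertion is likewise justified only by this first-order monodromy data, so in either approach that computation is the unavoidable core.
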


\begin{proof}
Assume that there is a second unitarizer, this gives rise to  two CMC surfaces
 on $M^g_\varphi$, with two associated families of flat connections
$\lambda\mapsto$ 
$\nabla^\lambda_1$ and $\lambda\mapsto$$\nabla^\lambda_2$. By construction, these two families are both gauge equivalent to $\pi^*\eta_{t, \varphi}^{\xx(t,\varphi)}$. 
Therefore, by \cite[Theorem 7]{He2} the two families either differ by a (non-trivial) dressing deformation or they are equal up to a $\lambda$-independent $\mathrm{SU}(2)$ gauge transformation. In the latter case the second unitarizer differs from the first only by a $\Lambda SU(2)$ element.\\

Moreover, a non-trivial dressing transformation exists if and only if  the family $\lambda\mapsto \nabla^\lambda_1$
(or equivalently the second family) contains a reducible connection $\nabla^{\lambda_0}_1$ for some $\lambda_0\in D(0,1).$ 
Using the information about the monodromies near $t=0$ (e.g. \eqref{mondiff} or  \cite[Proposition 8]{nnoids} and the central value \eqref{central1}) 
this can be excluded, as the monodromy
on $M^g_\varphi$ is irreducible for every $\lambda$ except at the Sym points  $\lambda_1,\lambda_2\in\mathbb S^1.$
\end{proof}

\section{Degenerating conformal type} \label{limitvarphi}
 
The aim in this section is to prove a uniform time interval for the existence of $\eta^{\xx(t, \varphi)}_{t,\varphi}$ given by  Proposition \ref{Fuchsianpotential} for all 
$\varphi\in(0,\tfrac{\pi}{2}).$
By Theorem \ref{thm:construction-fixed-varphi}, it suffices to study the behaviour for $\varphi\rightarrow0,$ where the singular points $p_1$ and $p_4$  as well as the singular points $p_2$ and $p_3$ coalesce at $\pm 1$, respectively. Since the differentials in the proof of Proposition \ref{Fuchsianpotential} do not depend on $\varphi$, we only need to show that  the maps $\mathcal F,$ $\mathcal G$, $\mathcal K$ and $\mathcal H_1,$ $\mathcal H_2$ remain smooth enough to apply a version of the implicit function theorem at $\varphi = 0$. For $\varphi\rightarrow 0$ the limit Riemann surface is given by $$\Sigma_0 = \C P^1\setminus\{\pm  1\}.$$
Hence the quantities corresponding to $\mathcal Q$, i.e., $\mathcal G$ and $\mathcal H_2,$ remain well-defined and depend smoothly on $\varphi$ in the $\varphi \rightarrow 0$ limit.
To be explicit, we have
\begin{equation}
\label{eq:eta-phi0}
\eta^{\xx}_{t, 0} = r t(A_1+A_4)\frac{dz}{z-1}+ rt(A_2+A_3)\frac{dz}{z+1}
=2rt\matrix{0&b\\b&0}\left(\frac{dz}{z-1}-\frac{dz}{z+1}\right)
\end{equation}
and
\begin{equation*}
\begin{split}
\Phi_{t,0}^{\xx}&=\exp\left(2rt\begin{pmatrix}0&b\\b&0\end{pmatrix}\log\left(\frac{1-z}{1+z}\right)\right)\\
\mathcal Q\mid_{\varphi=0}=\Phi_{t,0}^{\xx}(z=\ii)&=
\matrix{\cos(\pi rt b)&-\ii\sin(\pi rtb)\\-\ii\sin(\pi rtb)&\cos(\pi rtb)}\\
\mathfrak q\mid_{\varphi=0}&=\sin(2\pi r t b).
\end{split}
\end{equation*}
 
Applying the implicit function theorem to the equations $\mathcal G=0$ and $\mathcal H_2=0$ uniquely determines the parameter $\wt{b}\in \mathcal W_\R^{\geq0}$
as a smooth function of $(t,\varphi)$ in a neighborhood of $(0,0)$ and
the remaining parameters $(\wt{a},\wt{c},\theta)$.
Moreover, at $\varphi=0$, the solution is $b=0$ for all $t$.
Dealing with the limits of the remaining equations is more difficult, as $\mathcal P$ becomes singular.  
\subsection{The asymptotic of $\mathcal P$} $\;$\\
 Let $\wt \xx = (\wt a,\wt c, \theta)$ denote the vector with remaining parameters and let 
$\eta_{t,\varphi}^{\wt \xx}$ denote the corresponding Fuchsian DPW potential satisfying 
$$\mathcal G = 0 \quad \text{ and } \quad \mathcal H_2=0$$
for $t\sim 0$ and $\varphi \in[0, \tfrac{\pi}{2})$.

It turns out that $\mathcal P$ does not extend smoothly at $\varphi=0$, but
rather extends as a smooth function of $\varphi$ and $\varphi\log\varphi$, in the following sense:
\begin{definition}\cite{nodes}
 Let $f(x)$ be a function of the real variable $x\geq 0$. We say that f is a smooth function of
$x$ and $x\log x$ if there exists a smooth function of two variables $g(x,y)$ defined in a neighborhood of $(0,0)$
in $\R^2$ such that 
$$f(x) = g(x,x\log x) \quad  \text{ for } x > 0 \quad \text{ and } \quad f(0) = g(0,0).$$
 \end{definition}
 Note that a smooth function of $x$ and $x\log x$ is in general only continuous but not differentiable at $x=0$.

Let $\Gamma$ denote the straight line from $z=0$ to $z=1.$ For $\varphi =0$ the end point of the curve is singular and thus the principal solution    $\mathcal P$ along $\Gamma$ is no longer well-defined.
Fix some positive numbers $0<\varepsilon<\varepsilon_0<1$.
To study the behaviour of the principal solution for $\varphi \sim 0^+$ around $z=1$ we subdivide the curve $\Gamma$ into  
\[\Gamma=\Gamma_0\circ\Gamma_{1,\varphi}\circ\Gamma_{2,\varphi}\]
with
\begin{equation}
\begin{split}
\Gamma_0&\colon  s\mapsto (1-\varepsilon)s,\\
\Gamma_{1,\varphi}&\colon  s\mapsto (1-\varepsilon)(1-s)+(1-\tfrac{\varphi}{\varepsilon})s\\
\Gamma_{2,\varphi}&\colon s\mapsto 1+\tfrac{\varphi}{\varepsilon}(s-1).\,\\
\end{split}
\end{equation}
For a flat connection $d+\xi$ we denote $\chi_\gamma(\xi)$ the principal solution along $\gamma$, so 
$$\mathcal P(t,\varphi,\wt\xx)=\chi_{\Gamma_0}(\eta_{t,\varphi}^{\wt \xx})\,
\chi_{\Gamma_{1,\varphi}}(\eta_{t,\varphi}^{\wt \xx})\,
\chi_{\Gamma_{2,\varphi}}(\eta_{t,\varphi}^{\wt \xx})
.$$

The principal solution along $\Gamma_0$ is clearly a smooth function of $\varphi$ in a neighborhood of $0$, since the path $\Gamma_0$ is fixed and
$\eta_{t,\varphi}^{\wt\xx}$ depends smoothly on $\varphi$ on $\Gamma_0$.
It is more delicate for the paths $\Gamma_{1,\varphi}$ and $\Gamma_{2,\varphi}$:
we will see that the principal solution along $\Gamma_{1,\varphi}$ extends as a smooth function of $\varphi$ and $\varphi\log\varphi$ at $\varphi=0$, while the principal solution along $\Gamma_{2,\varphi}$
is a smooth function of $\varphi$.

\subsection*{Principal solution along $\Gamma_{2,\varphi}$}$\;$\\
To analyse the $\varphi\to 0$
limit of the principal solution along $\Gamma_{2,\varphi}$ we consider for $\varphi>0$ the diffeomorphism
\[\psi_\varphi\colon D(1;1)\to D(1;\tfrac{2\varphi}{\varepsilon}); \quad z\mapsto 1+\tfrac{2\varphi}{\varepsilon}(z-1),\]
where $D(1, R)$ is the disc of radius $R$ around $z=1.$ Then
\[\wt{\Gamma}_2:=(\psi_\varphi)^{-1}\circ\Gamma_{2,\varphi}\colon [0,1] \longrightarrow D(1,1); \quad \,s\longmapsto  \tfrac{1}{2}(1+s) \]
is independent of $\varphi$.
The pullback potential $\wt{\eta}_{t,\varphi}^{\wt{\xx}}=\psi_{\varphi}^*\eta_{t,\varphi}^{\wt{\xx}}$
has simple poles at $\psi_{\varphi}^{-1}(p_k)$ for $1\leq k\leq 4$.
We have
\begin{equation}
\begin{split}
&\lim_{\varphi\to 0}\psi_{\varphi}^{-1}(p_1(\varphi))=1+\tfrac{\varepsilon}{2}\ii,\quad
\lim_{\varphi\to 0}\psi_{\varphi}^{-1}(p_4(\varphi))=1-\tfrac{\varepsilon}{2}\ii,\\
&\lim_{\varphi\to 0}\psi_{\varphi}^{-1}(p_2(\varphi))=
\lim_{\varphi\to 0}\psi_{\varphi}^{-1}(p_3(\varphi))=\infty.
\end{split}
\end{equation}
So $\wt{\eta}_{t,\varphi}^{\wt{\xx}}$ extends smoothly at $\varphi=0$ with
$$\wt{\eta}_{t,0}^{\wt{\xx}}=r tA_1\frac{dz}{z-1-\tfrac{\varepsilon}{2}\ii}
+r tA_4\frac{dz}{z-1+\tfrac{\varepsilon}{2}\ii}.$$
Hence
$$\chi_{\Gamma_{2,\varphi}}(\eta_{t,\varphi}^{\wt{\xx}})=\chi_{\wt{\Gamma}_2}(\wt{\eta}_{t,\varphi}^{\wt{\xx}})$$
extends smoothly at $\varphi=0$.

\subsection*{Principal solution along $\Gamma_{1,\varphi}$}\label{subseubgamma0} $\;$\\
We apply \cite[Theorem 5]{nodes} which we restate here as Theorem \ref{theorem:philogphi} with adjusted notations.
To use this result, it is necessary to view $\varphi$ as a complex number.

\begin{remark}
Note that for $\varphi\in\C$ in a neighborhood of $0$, $\eta_{t,\varphi}^{\xx}$
is well defined and depends holomorphically on $\varphi$, although the poles
$\pm e^{\pm\ii\varphi}$ are not on the unit circle anymore breaking the symmetries.
Nevertheless,  the equations $\mathcal G=0$ and $\mathcal H_2=0$ can still be solved using the implicit function theorem: the solution $\wt{b}(t,\varphi,\wt{\xx})$ is in $\mathcal W^{\geq 0}$ instead of $\mathcal W^{\geq 0}_{\R}$ and depends holomorphically on $\varphi$.
\end{remark}

This time we consider for $\varphi \in D(0,\varepsilon_0^2)\subset\C$ the diffeomorphism 
$$\psi_\varphi \colon \mathcal A_\varphi \longrightarrow \mathcal A_\varphi, \quad z \longmapsto 1+ \frac{\varphi}{z-1}$$
where
$\mathcal A_{\varphi}$ is the annulus
\[\mathcal A_\varphi=\{z\in\C\mid  \tfrac{|\varphi|}{\varepsilon_0}<|z-1|<\varepsilon_0\}.\] 
Consider the change of parameter $\varphi= e^\omega$ with $\Re(\omega)<2\log\varepsilon_0$. 
Let $\beta_\omega$ denote the spiral curve from $z= 1-\varepsilon$ to $z = 1-\tfrac{\varphi}{\varepsilon}$ defined by
\[\beta_\omega \colon [0,1] \rightarrow \mathcal A_\varphi, \quad \beta_\omega(s) =1 -\varepsilon^{1-2s}e^{s \omega}.\]

Note that
\[\psi_{\varphi}\circ\beta_\omega(s)=\beta_{\omega}(1-s),\]
and for real $\omega$, the path $\beta_{\omega}$ is homotopic to
$\Gamma_{1,\varphi}$. Let 
\[\gamma  \colon [0,1] \rightarrow \mathcal A_\varphi; \; \gamma(s)=1- \varepsilon e^{2 \pi i s}
\]
 be the circle of radius $\varepsilon$ around $z=1.$ Then we can define
\begin{equation}\label{tildeF}
\wt F(\omega)=\chi_\gamma(\eta_\varphi)^{ - \frac{\omega}{2\pi i} }\chi_{\beta_\omega }(\eta_\varphi).
\end{equation}
\begin{theorem}[Theorem 5 in \cite{nodes}]

\label{theorem:philogphi}
With the notations introduced above
consider a family of DPW potentials $\eta_\varphi$ in $\mathcal A_{\varphi}$
depending holomorphically on $\varphi\in D(0,\varepsilon_0^2)$.
Let $\wh \eta_{\varphi} := \psi_\varphi^* \eta_{\varphi}$. Assume that there exists $\sl(2, \C)$-valued 1-forms $\eta_0$ and $\wh \eta_0$ holomorphic in the disk $D(1,\varepsilon_0)$ such that 
$$\lim_{\varphi \rightarrow 0} \eta_{\varphi} = \eta_{0} \quad \text{and} \quad  \lim_{\varphi \rightarrow 0} \wh \eta_{\varphi} = \wh \eta_0$$
on compact subsets of the punctured disk $D^*(1,\varepsilon_0).$
Then we have for $|\varphi|$ small enough:
\begin{enumerate}
\item The function $\wt F (\omega)$
satisfies $\wt F(\omega + 2\pi\ii) = \wt F(\omega)$ and therefore descends to a holomorphic function $F$ on $D(0,\varepsilon_0^2)$ by  $F(e^\omega):=\wt F(\omega)$.
\item The function $F$ extends holomorphically  to  $\varphi= 0$ with 
$$F(0) = \chi(\eta_0 ,1-\varepsilon,1)\chi(\wh \eta_0 ,1,1-\varepsilon),$$
where $\chi(\eta_0, 1-\varepsilon, 1 )$ denotes the principal solution of the extended frame of $\eta_0$ along 
the straight line from $1-\varepsilon$ to $1$.
\item If $\varphi> 0$, the function $\chi_{\beta_\omega }(\eta_\varphi)$ extends to a smooth function of $\varphi$ and $\varphi\log\varphi$ with value $F(0)$ at $\varphi = 0$. 
\end{enumerate}
\end{theorem}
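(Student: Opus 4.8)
We sketch the argument; the three assertions are proved in order, with (1) and (3) essentially formal and (2) the analytic core. Throughout set $P:=\chi_\gamma(\eta_\varphi)$. Since $\eta_\varphi\to\eta_0$ with $\eta_0$ holomorphic on the simply connected disk $D(1,\varepsilon_0)$, one has $\chi_\gamma(\eta_0)=\Id_2$, so for $|\varphi|$ small $P$ lies near $\Id_2$ and the principal branch $\log P$ and the powers $P^{z}:=\exp(z\log P)$ are well defined and holomorphic in $z$. For (1), a direct inspection of $\beta_\omega(s)=1-\varepsilon^{1-2s}e^{s\omega}$ shows that $\beta_{\omega+2\pi\ii}$ and the concatenation $\gamma*\beta_\omega$ (run the loop $\gamma$ around $z=1$ first, then $\beta_\omega$) have the same endpoints $1-\varepsilon$ and $1-\varphi/\varepsilon$ and the same winding number about $z=1$ in $\mathcal A_\varphi$; as $\pi_1(\mathcal A_\varphi)\cong\Z$ they are homotopic rel endpoints in $\mathcal A_\varphi$, so $\chi_{\beta_{\omega+2\pi\ii}}(\eta_\varphi)=P\,\chi_{\beta_\omega}(\eta_\varphi)$, and since powers of $P$ commute,
\[\wt F(\omega+2\pi\ii)=P^{-(\omega+2\pi\ii)/2\pi\ii}\,P\,\chi_{\beta_\omega}(\eta_\varphi)=P^{-\omega/2\pi\ii}\,\chi_{\beta_\omega}(\eta_\varphi)=\wt F(\omega).\]
Holomorphic dependence of principal solutions of linear ODEs on their parameters makes $\wt F$ holomorphic in $\omega$, so it descends to a holomorphic $F$ on $D(0,\varepsilon_0^2)\setminus\{0\}$ via $F(e^\omega)=\wt F(\omega)$.

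For (2) the decisive observation is that the midpoint $\beta_\omega(\tfrac12)=1-e^{\omega/2}=1-\sqrt\varphi$ is a fixed point of the involution $\psi_\varphi$, which moreover carries the second half $\beta_\omega^{+}:=\beta_\omega|_{[1/2,1]}$ onto the reversal of the first half $\beta_\omega^{-}:=\beta_\omega|_{[0,1/2]}$. Using $\psi_\varphi^{*}\eta_\varphi=\wh\eta_\varphi$ and $\psi_\varphi^{2}=\mathrm{id}$, this gives $\chi_{\beta_\omega^{+}}(\eta_\varphi)=\chi_{\beta_\omega^{-}}(\wh\eta_\varphi)^{-1}$, hence $\chi_{\beta_\omega}(\eta_\varphi)=\chi_{\beta_\omega^{-}}(\eta_\varphi)\,\chi_{\beta_\omega^{-}}(\wh\eta_\varphi)^{-1}$. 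I would then estimate the two half-spiral holonomies. From $|\beta_\omega(s)-1|=\varepsilon^{1-2s}|\varphi|^{s}$ and $|\beta_\omega'(s)|\le C\log(1/|\varphi|)\,|\beta_\omega(s)-1|$ one obtains that $\beta_\omega^{-}$ has length $\le C\varepsilon$, uniformly in $\varphi$. Decomposing $\eta_\varphi$ on $\mathcal A_\varphi$ as $\eta_\varphi^{\mathrm{hol}}+\eta_\varphi^{\mathrm{rem}}$, where $\eta_\varphi^{\mathrm{hol}}$ is holomorphic on $D(1,\varepsilon_0)$ and tends to $\eta_0$, and $\eta_\varphi^{\mathrm{rem}}=\psi_\varphi^{*}(g_\varphi\,dw)$ with $g_\varphi$ holomorphic on $D(1,\varepsilon_0)$ and tending to $\wh\eta_0$, one gets $|\eta_\varphi^{\mathrm{rem}}|=O(|\varphi|\,|z-1|^{-2})$ on the spiral, hence $\int_{\beta_\omega^{-}}|\eta_\varphi^{\mathrm{rem}}|=O(\sqrt{|\varphi|})$ while $\int_{\beta_\omega^{-}}|\eta_\varphi^{\mathrm{hol}}-\eta_0|=O(|\varphi|)$; both go to $0$. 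Since $\eta_0$ is holomorphic on the simply connected disk its holonomy depends only on endpoints, $\chi_{\beta_\omega^{-}}(\eta_0)=\chi(\eta_0,1-\varepsilon,1-\sqrt\varphi)\to\chi(\eta_0,1-\varepsilon,1)$, and a Gr\"onwall comparison of holonomies along the bounded-length path $\beta_\omega^{-}$ gives $\chi_{\beta_\omega^{-}}(\eta_\varphi)\to\chi(\eta_0,1-\varepsilon,1)$; likewise $\chi_{\beta_\omega^{-}}(\wh\eta_\varphi)\to\chi(\wh\eta_0,1-\varepsilon,1)$, uniformly in $\arg\varphi$. Finally, holomorphic dependence on $\varphi$ together with $\chi_\gamma(\eta_0)=\Id_2$ gives $\log P=O(\varphi)$, so $P^{-\omega/2\pi\ii}=\exp(-\tfrac{\omega}{2\pi\ii}\log P)\to\Id_2$ because $\omega\log P=O(|\varphi|\log(1/|\varphi|))$. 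Combining, as $\varphi\to0$,
\[F(\varphi)=\wt F(\omega)\ \longrightarrow\ \chi(\eta_0,1-\varepsilon,1)\,\chi(\wh\eta_0,1-\varepsilon,1)^{-1}=\chi(\eta_0,1-\varepsilon,1)\,\chi(\wh\eta_0,1,1-\varepsilon),\]
and a holomorphic function on a punctured disk with a limit at the puncture extends holomorphically, which proves (2) and identifies $F(0)$.

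For (3), for real $\varphi>0$ we have $\omega=\log\varphi$ and $\chi_{\beta_\omega}(\eta_\varphi)=P^{\,\omega/2\pi\ii}\,F(\varphi)$ by the definition of $\wt F$. Since $\eta_\varphi$ depends holomorphically on $\varphi$ and $\chi_\gamma(\eta_0)=\Id_2$, the matrix $\log P=\log\chi_\gamma(\eta_\varphi)$ is holomorphic in $\varphi$ near $0$ and vanishes at $0$, so $\log P=\varphi\,m(\varphi)$ with $m$ holomorphic; hence
\[P^{\,\log\varphi/2\pi\ii}=\exp\!\Big(\tfrac{\varphi\log\varphi}{2\pi\ii}\,m(\varphi)\Big)\]
is a smooth function of $\varphi$ and $\varphi\log\varphi$, equal to $\Id_2$ at $\varphi=0$. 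Multiplying by $F(\varphi)$, which is holomorphic in $\varphi$ near $0$ by (2), yields (3) with value $F(0)$ at $\varphi=0$.

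The formal parts (1) and (3) are routine once the branch of $P^{z}$ is pinned down. The hard part is the uniform estimate in (2): because $\beta_\omega$ winds $\sim\log(1/|\varphi|)$ times around $z=1$, the holonomy $\chi_{\beta_\omega}(\eta_\varphi)$ has no limit on its own, and one must show that after folding at the fixed point $1-\sqrt\varphi$ of $\psi_\varphi$ and dividing by $P^{\,\omega/2\pi\ii}$ the surviving half-spiral holonomies converge uniformly in $\arg\varphi$. This rests on the two geometric facts that $\beta_\omega^{-}$ has bounded length $O(\varepsilon)$ and that the ``$\wh\eta_0$-part'' $\eta_\varphi^{\mathrm{rem}}$ of $\eta_\varphi$ contributes only $O(\sqrt{|\varphi|})$ to the holonomy along it — precisely the estimates carried out in the proof of Theorem 5 of \cite{nodes}.
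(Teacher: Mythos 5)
The paper itself does not prove this statement: it is imported, with adjusted notation, as Theorem 5 of \cite{nodes}, so there is no in-paper proof to compare against. Your strategy --- periodicity of $\wt F$ from the fact that the extra winding of $\beta_{\omega+2\pi\ii}$ is the class of $\gamma$, folding the spiral at the $\psi_\varphi$-fixed midpoint $\beta_\omega(\tfrac12)=1-\sqrt{\varphi}$ via $\psi_\varphi\circ\beta_\omega(s)=\beta_\omega(1-s)$ to get $\chi_{\beta_\omega}(\eta_\varphi)=\chi_{\beta_\omega^-}(\eta_\varphi)\,\chi_{\beta_\omega^-}(\wh\eta_\varphi)^{-1}$, and then passing to the limit in each half-spiral holonomy and in $P^{-\omega/2\pi\ii}$ separately --- is the right mechanism, and points (1) and (3) are correct as written.

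There is one genuine gap in (2). The decomposition $\eta_\varphi=\eta_\varphi^{\mathrm{hol}}+\psi_\varphi^*(g_\varphi\,dw)$ with both pieces holomorphic on $D(1,\varepsilon_0)$ cannot hold in general: the pullback $\psi_\varphi^*(g_\varphi\,dw)$ of a holomorphic form contains only the Laurent modes $(z-1)^n$ with $n\le -2$, so the mode $a_{-1}(\varphi)\,\tfrac{dz}{z-1}$ lies in neither summand. This is not cosmetic, because it is exactly the mode whose absolute integral along $\beta_\omega^-$ equals $\tfrac12\,|a_{-1}(\varphi)|\,|\omega-2\log\varepsilon|\sim |a_{-1}(\varphi)|\log(1/|\varphi|)$; the hypothesis of convergence on compact subsets only gives $a_{-1}(\varphi)\to 0$, which is not enough to kill this term. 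You must invoke the holomorphic dependence on $\varphi$: $a_{-1}(\varphi)=\frac{1}{2\pi\ii}\oint_{|z-1|=\varepsilon}\eta_\varphi$ is holomorphic on the punctured disk $D^*(0,\varepsilon_0^2)$ with limit $0$, hence $O(\varphi)$ by the removable singularity theorem, so its contribution is $O(|\varphi|\log(1/|\varphi|))\to 0$. (The same holomorphy-plus-limit argument is what underlies your rates $O(|\varphi|)$ for $\eta_\varphi^{\mathrm{hol}}-\eta_0$ and $\log P=O(\varphi)$, which you use but do not derive; for the mere existence of the limit only the residue mode actually needs a rate.) Finally, your closing remark that $\beta_\omega$ winds roughly $\log(1/|\varphi|)$ times around $z=1$ is incorrect: the winding number is $\Im\omega/2\pi$, at most one turn in a fundamental strip and zero for real $\varphi$. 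The factor $P^{-\omega/2\pi\ii}$ is there to make $\wt F$ periodic in $\omega$ --- hence single-valued and holomorphic in $\varphi$ --- not to cancel a divergence of $\chi_{\beta_\omega}(\eta_\varphi)$, which by your own estimates converges to $F(0)$ as $\varphi\to 0$ with $\Im\omega$ bounded.
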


Returning to our problem, we have $|1-p_1|\simeq |\varphi|$ for small $\varphi$ so
$\mathcal A_{\varphi}$ does not contain $p_1$, nor $p_2$, $p_3$ or $p_4$.
Hence $\eta_{t,\varphi}^{\wt{\xx}}$ is holomorphic in $\mathcal A_{\varphi}$.
Moreover, by Equation \eqref{eq:eta-phi0} and recalling that $b\mid_{\varphi=0}=0$,
we have
$$\lim_{\varphi\to 0}\eta_{t,\varphi}^{\wt{\xx}}=0.$$
Since $\psi_{\varphi}$ is involutive, the pullback potential $\wh{\eta}_{t,\varphi}^{\wt{\xx}}=\psi_{\varphi}^*\eta_{t,\varphi}^{\wt{\xx}}$ has poles at
$\psi_{\varphi}(p_k)$ for $1\leq k\leq 4$. We have
$$\lim_{\varphi\to 0} \psi_{\varphi}(p_1(\varphi))=1-\ii,\quad
\lim_{\varphi\to 0} \psi_{\varphi}(p_4(\varphi))=1+\ii\quad\text{ and }\quad
\lim_{\varphi \to 0} \psi_{\varphi}(p_2(\varphi))=\lim_{\varphi\to 0} \psi_{\varphi}(p_3(\varphi))=1.$$
Hence, and because $b\mid_{\varphi=0}=0$ for all $t,$

\begin{eqnarray*}
\lim_{\varphi\to 0}\wh{\eta}_{t,\varphi}^{\wt{\xx}}
&=&rt A_1\frac{dz}{z-1+\ii}+rtA_4\frac{dz}{z-1-\ii}+rt(A_2+A_3)\frac{dz}{z-1}\\
&=&rt \matrix{\ii a& \ii c\\ -\ii c&-\ii a}\left(\frac{dz}{z-1+\ii}+\frac{dz}{z-1+\ii}\right).
\end{eqnarray*}
Consequently, the limit $\wh{\eta}_{t,0}^{\wt{\xx}}$
is holomorphic in $D(1,\varepsilon_0)$.
By Point (3) of Theorem \ref{theorem:philogphi}, $\chi_{\Gamma_{1,\varphi}}(\eta_{t,\varphi}^{\wt{\xx}})$ extends to a smooth function of $\varphi$ and $\varphi\log\varphi$
at $\varphi=0$.

\subsection*{Conclusion}
We have proved that $\mathcal P$, hence $\wh{\mathcal P}$, extends as a smooth function of $\varphi$
and $\varphi\log\varphi$ at $\varphi=0$. In other words,
$$\wh{\mathcal P}(t,\varphi,\wt{\xx})=g(t,\varphi,\varphi\log\varphi,\wt{\xx})$$
where $g$ is a smooth function of the variables $(t,\varphi,\phi,\wt{\xx})$.
Note that at $t=0$, $\wh{\mathcal P}$ does not depend on $\varphi$, so the value
of $g$ at $t=0$ is known.
The remaining parameters $\wt{\xx}=(\wt{a},\wt{c},\theta)$ are determined by solving the equations $\mathcal F=0,$ $\mathcal K= \mathcal K^0$ and $\mathcal H_1=0$.
Specifically, we use the implicit function theorem to determine $\wt{\xx}$ as a smooth function of
$(t,\varphi,\phi)$ in a neighhorhood of $(0,0,0)$ and then specialize to $\phi=\varphi\log\varphi$.
This proves that the solution $\wt{\xx}$ is a smooth function of $t$, $\varphi$
and $\varphi\log\varphi$. This yields a solution $\eta_{t,\varphi}^{\xx(t,\varphi)}$ of the Monodromy Problem \eqref{monodromy-problem3} for every $\varphi $ in the compact interval $[0, \tfrac{\pi}{2}]$ and $t$ in a uniform interval $(-\varepsilon,\varepsilon)$.
 For rational $t$ the DPW potential $\eta_{t,\varphi}^{\xx(t)}$ gives rise to a compact CMC surface in $\mathbb S^3$ of genus $g$ on the $(g+1)$-fold cover of $\Sigma_\varphi$ totally branched over $p_1, ..., p_4.$ Together with Proposition \ref{Fuchsianpotential}, Lemma \ref{lem:monodromyproblemequiv} and $b\mid_{\varphi=0}=0$ this shows the following theorem.

\begin{theorem}\label{surfaces}
There exist an $\varepsilon >0$ such that for every $t\in (-\varepsilon,\varepsilon)$ and  $\varphi \in [0, \tfrac{\pi}{2}]$  there exists a unique DPW potential $\eta_{t, \varphi}^{\xx(t, \varphi)}$ solving the Monodromy Problem \eqref{monodromy-problem3}, where $\xx(t, \varphi)$ is smooth in $t$ and smooth in $\varphi$ and $\varphi \log(\varphi)$.
In particular, there exist a $g_0 \in \N$ such that for all $g>g_0$
there exist a complete family of CMC surfaces $f^g_\varphi: M^g_\varphi \rightarrow \S^3$ parametrized by $\varphi \in [0, \tfrac{\pi}{2}]$ which converges for $\varphi \rightarrow 0, \tfrac{\pi}{2}$ uniformly on every compact set of $M_0^g\setminus \pi^{-1}\{\pm1\}$ to a geodesic $2$-sphere with $2g+2$ branch points at the preimages of $\pm1$.\end{theorem}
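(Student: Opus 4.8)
The plan is to combine three ingredients that are already in place: the implicit--function--theorem scheme of Proposition~\ref{Fuchsianpotential}, whose linearizations at the central value $(t,\xx)=(0,\cv\xx)$ do not depend on $\varphi$; the symmetry $\varphi\leftrightarrow\tfrac{\pi}{2}-\varphi$ from Remark~\ref{biholo} and Proposition~\ref{propo:symmetries}(2), which reduces everything to the behaviour near $\varphi=0^+$; and the degeneration analysis of Section~\ref{limitvarphi}, which shows that the data of the Monodromy Problem~\eqref{monodromy-problem3} extend across $\varphi=0$ as smooth functions of $\varphi$ and $\varphi\log\varphi$. First I would dispose of the $\mathcal Q$--side: by \eqref{eq:eta-phi0} the potential and $\mathcal Q$ are smooth in $\varphi$ up to $\varphi=0$, so the implicit function theorem solves $\mathcal G=\mathcal H_2=0$ for $\wt b\in\mathcal W^{\geq0}$ as a smooth function of $(t,\varphi)$ and the remaining parameters $\wt\xx=(\wt a,\wt c,\theta)$, with the explicit solution $b\equiv 0$ at $\varphi=0$.

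The heart of the matter is the $\varphi\to0$ behaviour of $\mathcal P$, hence of $\mathcal F$, $\mathcal K$, $\mathcal H_1$. Splitting the segment from $0$ to $1$ as $\Gamma=\Gamma_0\circ\Gamma_{1,\varphi}\circ\Gamma_{2,\varphi}$, the principal solution along the fixed path $\Gamma_0$ is smooth in $\varphi$; after the blow--up $\psi_\varphi$ the principal solution along $\Gamma_{2,\varphi}$ is that of a potential whose poles tend to $1\pm\tfrac{\varepsilon}{2}\ii$, hence is also smooth in $\varphi$; and the principal solution along $\Gamma_{1,\varphi}$ is, by Theorem~\ref{theorem:philogphi} applied to the holomorphic--in--$\varphi$ continuation of $\eta_{t,\varphi}^{\wt\xx}$ --- for which $\lim_{\varphi\to0}\eta_{t,\varphi}^{\wt\xx}=0$ since $b\mid_{\varphi=0}=0$, while the rescaled potential $\wh\eta_{t,\varphi}^{\wt\xx}$ has a holomorphic limit on $D(1,\varepsilon_0)$ --- a smooth function of $\varphi$ and $\varphi\log\varphi$ at $\varphi=0$, with explicitly computable value there. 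Multiplying the three factors, $\wh{\mathcal P}(t,\varphi,\wt\xx)=g(t,\varphi,\varphi\log\varphi,\wt\xx)$ for a genuinely smooth $g$, so $\mathcal F$, $\mathcal K$ and $\mathcal H_1$ are smooth functions of $(t,\varphi,\phi,\wt\xx)$ with $\phi:=\varphi\log\varphi$ treated as an independent variable near $0$.

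Next I would re-run the implicit function theorem exactly as in the proof of Proposition~\ref{Fuchsianpotential} --- solving $\mathcal F^+=0$, then $\mathcal K=-1$ (after the $(1,-1)$--decomposition of $\mathcal K$ via Proposition~\ref{Pro:decomposition}), then $\mathcal H_1=0$, for the remaining parameters --- the relevant partial differential at the central value being the same $\varphi$--independent isomorphism established there. This produces, for $(t,\varphi,\phi)$ in a neighbourhood of $(0,0,0)$, a unique $\wt\xx$ smooth in $(t,\varphi,\phi)$; specializing $\phi=\varphi\log\varphi$ gives $\xx(t,\varphi)$ smooth in $t$ and in $\varphi,\varphi\log\varphi$ on a fixed box $(-\varepsilon,\varepsilon)\times[0,\tfrac{\pi}{2}]$, the endpoint $\varphi=\tfrac{\pi}{2}$ being covered by Proposition~\ref{propo:symmetries}(2). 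This is the first assertion. Taking $g_0$ with $\tfrac{1}{2g_0+2}<\varepsilon$, for $g>g_0$ the value $t=\tfrac{1}{2g+2}$ lies in the existence interval and the proof of Theorem~\ref{thm:construction-fixed-varphi} --- which uses only the existence of $\eta_{t,\varphi}^{\xx(t,\varphi)}$ and the diagonal unitarizer of Lemma~\ref{lem:monodromyproblemequiv}, both now available for all $\varphi\in[0,\tfrac{\pi}{2}]$ --- yields the closed equivariant CMC immersions $f^g_\varphi\colon M^g_\varphi\to\S^3$ over the whole closed interval. Finally, by the area formula in Theorem~\ref{thm:construction-fixed-varphi}(2) together with $b\mid_{\varphi=0}=0$ (so $b_0\to0$ and $\sin(\varphi)c_0\to0$) we get $\operatorname{Area}(f^g_\varphi)\to 8\pi$ as $\varphi\to0$; since on every compact subset of $M^g_0\setminus\pi^{-1}\{\pm1\}$ the pulled-back potentials converge and the Iwasawa factorization and Sym--Bobenko formula are continuous, the immersions converge there, and the limit --- carrying the prescribed symmetries and branching over $\pm1$ and having area $8\pi=2\cdot4\pi$ --- is identified with a doubly covered geodesic $2$-sphere with $2g+2$ branch points exactly as in the proof of Theorem~\ref{MT}; completeness of the family is then immediate, the limit not being an immersion.

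I expect the main obstacle to be precisely the second paragraph: checking that the hypotheses of Theorem~\ref{theorem:philogphi} hold uniformly along the family and, above all, that the resulting dependence is \emph{smooth} in the pair $(\varphi,\varphi\log\varphi)$ rather than merely continuous --- this is what makes the linearization at $(t,\varphi,\phi)=(0,0,0)$ usable. It requires care with the holomorphic--in--$\varphi$ continuation of $\wt b$, with uniform control of the rescaled potentials $\wh\eta_{t,\varphi}^{\wt\xx}$ on $D(1,\varepsilon_0)$, and with the degenerating identification $\chi(\eta_0,1-\varepsilon,1)=\mathrm{Id}$ coming from $\eta_{t,\varphi}^{\wt\xx}\to0$. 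By comparison the degenerate--limit identification is soft once $\operatorname{Area}\to8\pi$ and the symmetries are in hand.
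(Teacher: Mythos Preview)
Your proposal is correct and follows essentially the same approach as the paper: you first solve $\mathcal G=\mathcal H_2=0$ for $\wt b$ smoothly up to $\varphi=0$ (with $b\mid_{\varphi=0}=0$), then decompose the path $\Gamma=\Gamma_0\circ\Gamma_{1,\varphi}\circ\Gamma_{2,\varphi}$, invoke Theorem~\ref{theorem:philogphi} for the middle piece to obtain $\wh{\mathcal P}=g(t,\varphi,\varphi\log\varphi,\wt\xx)$, and finally re-run the implicit function theorem of Proposition~\ref{Fuchsianpotential} with $\phi=\varphi\log\varphi$ as an independent variable --- exactly as in Section~\ref{limitvarphi}. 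Your discussion of the limit identification via the area formula and $b\mid_{\varphi=0}=0$ is a bit more explicit than the paper's, but is the intended mechanism.
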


\section{Area and Willmore energy estimates}\label{areaestimates}
 
In this section, we compute the first order expansion of the mean curvature, area and
Willmore energy of the immersion $f^g_{\varphi}$, as a function of $t= \tfrac{1}{2(g+1)}$.
To do so we need first to compute the time derivatives of the parameters $\xx(t, \varphi)$ at $t=0$ (denoted by $()'$) for which the family of DPW potentials $\eta_t = \eta_{t, \varphi}^{\xx(t, \varphi)}$ solves the Monodromy Problem \eqref{monodromy-problem3}. 

\subsection{First order derivatives of the parameters}$\;$\\
\begin{proposition}\label{Prop:1stderivatives}
\label{prop:derivatives} We have at $t=0$
\[a'=a'_0+a'_2\lambda^2,\quad
b'=b'_0+b'_2\lambda^2\quad\text{and}\quad
c'=c'_0+c'_2\lambda^2\]
with
\begin{equation}\label{proeq:secondordercoeff}
\begin{split}
a'_0&=a'_2=\sin(2\varphi)\log(\tan(\varphi))\\
b'_2&=-2\cos(\varphi)\log(\cos(\varphi))\\
c'_2&=2\sin(\varphi)\log(\sin(\varphi))\\
b'_0&=b'_2\cos(2\varphi)-c'_2\sin(2\varphi)\\
c'_0&=-b'_2\sin(2\varphi)-c'_2\cos(2\varphi).
\end{split}
\end{equation}
Moreover,
\[r'=0\quad\text{ and }\quad \theta'=2 \sin(2\varphi)\log(\tan(\varphi)).\]

\end{proposition}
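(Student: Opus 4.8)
The plan is to differentiate the Monodromy Problem in the form \eqref{monodromy-problem3} along the solution curve $t\mapsto\xx(t)$ of Proposition \ref{Fuchsianpotential} at $t=0$. Since the differentials with respect to $\xx$ of $\mathcal F^+,\mathcal G^+,\mathcal H_1,\mathcal H_2$ and $\mathcal K$ at $(0,\cv\xx)$ were already computed in the proof of Proposition \ref{Fuchsianpotential}, and since $\partial_t\mathcal K\mid_{(0,\cv\xx)}=0$ (because $\mathcal K=\det(rA_1)$ has no explicit $t$-dependence), the only genuinely new input is the pair of partial $t$-derivatives $\partial_t\wh{\mathfrak p}\mid_{(0,\cv\xx)}$ and $\partial_t\wh{\mathfrak q}\mid_{(0,\cv\xx)}$; once these are known, $\xx'(0)$ is determined by a finite-dimensional linear system dictated by the structure of that proof.

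To obtain these derivatives I would use that at the central value the potential $\eta_{t,\varphi}^{\cv\xx}=t\,\eta'$, with $\eta'=\cv a\,\omega_a\mathfrak m_a+\cv b\,\omega_b\mathfrak m_b+\cv c\,\omega_c\mathfrak m_c$, is \emph{exactly} linear in $t$, so the solution of $d\Phi=\Phi\eta$, $\Phi(0)=\Id$, has the convergent expansion $\Phi(z)=\Id+t\int_0^z\eta'+t^2\int\!\!\int_{0<z_1<z_2<z}\eta'(z_1)\eta'(z_2)+O(t^3)$ in iterated integrals along $[0,z]$. Evaluating at $z=1$ and $z=\ii$, the $t$-linear terms reproduce Proposition \ref{Pro:pq} (hence $\wh{\mathfrak p}\mid_{t=0}=2\pi\cv c$, $\wh{\mathfrak q}\mid_{t=0}=2\pi\cv b$, with their simple zeros at $\pm\ii$), while the $t^2$-terms give $\partial_t\wh{\mathfrak p}\mid_0$ and $\partial_t\wh{\mathfrak q}\mid_0$. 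Using the quaternionic multiplication table $\mathfrak m_a\mathfrak m_b=\mathfrak m_c$, $\mathfrak m_b\mathfrak m_c=-\mathfrak m_a$, $\mathfrak m_c\mathfrak m_a=\mathfrak m_b$, $\mathfrak m_a^2=-I$, $\mathfrak m_b^2=\mathfrak m_c^2=I$, only the antisymmetric parts of the double integrals feed into $\mathfrak p$ and $\mathfrak q$, and after the shuffle identity $\int_0^w\Omega_X\omega_Y+\int_0^w\Omega_Y\omega_X=\Omega_X(w)\Omega_Y(w)$ together with the values of $\Omega_a(1),\Omega_b(1)$ in \eqref{eq:Omega1} and $\Omega_a(\ii),\Omega_c(\ii)$ in \eqref{eq:OmegaI}, one arrives at the clean formulas
\[\partial_t\wh{\mathfrak p}\mid_{(0,\cv\xx)}=4\ii\,\cv a\,\cv b\int_0^1\Omega_b\,\omega_a,\qquad
\partial_t\wh{\mathfrak q}\mid_{(0,\cv\xx)}=4\ii\,\cv a\,\cv c\int_0^{\ii}\Omega_c\,\omega_a.\]

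The hard part will be evaluating the two iterated integrals $\int_0^1\Omega_b\,\omega_a$ and $\int_0^{\ii}\Omega_c\,\omega_a$. A priori these are dilogarithmic, but for the symmetric puncture configuration $p_k=\pm e^{\pm\ii\varphi}$ — where $\Omega_b$, $\Omega_c$ restrict to real logarithms and $\omega_a$ restricts to a purely imaginary logarithmic form along the segments $[0,1]$ and $[0,\ii]$ — the $\Li_2$-contributions cancel and the integrals are elementary; I expect to evaluate them by splitting $\omega_a,\Omega_b,\Omega_c$ into the pieces $\tfrac{dz}{z-p_j}$, $\log\tfrac{z-p_j}{z-p_k}$ and exploiting the dilogarithm functional equations forced by $p_3=-p_1$, $p_4=-p_2=\bar p_1$. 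Combined with $\cv a\cv b=-\tfrac14\sin\varphi(\lambda^{-2}-\lambda^2)$ and $\cv a\cv c=-\tfrac14\cos\varphi(\lambda^{-2}-\lambda^2)$, this will yield $\partial_t\wh{\mathfrak p}\mid_{(0,\cv\xx)}=2\pi\sin\varphi\log\sin\varphi\,(\lambda^{-2}-\lambda^2)$ and $\partial_t\wh{\mathfrak q}\mid_{(0,\cv\xx)}=-2\pi\cos\varphi\log\cos\varphi\,(\lambda^{-2}-\lambda^2)$.

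Finally I would feed this into the linearised equations. Differentiating $\mathcal F^+=0$ and $\mathcal G^+=0$ with $d\mathcal F^+=2\pi\,d\wt c^+$, $d\mathcal G^+=2\pi\,d\wt b^+$ gives the positive parts $\wt c'(0)^+=c'_2\lambda^2$, $\wt b'(0)^+=b'_2\lambda^2$ with $c'_2=2\sin\varphi\log\sin\varphi$, $b'_2=-2\cos\varphi\log\cos\varphi$; combining with $\mathcal H_1=0$, $\mathcal H_2=0$ and the relations $d\wt b=-\sin\varphi\,d\theta$, $d\wt c=-\cos\varphi\,d\theta$ from the proof of Proposition \ref{Fuchsianpotential} one finds that the stage-one $t$-derivatives are $\partial_t\wt b\mid_0=b'_2(1+\lambda^2)$, $\partial_t\wt c\mid_0=c'_2(1+\lambda^2)$ and that $c'_0=c'_2-\cos\varphi\,\theta'$, $b'_0=b'_2-\sin\varphi\,\theta'$, which are the stated identities after elementary trigonometry. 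Differentiating $\mathcal K=-1$ with $d\mathcal K=(\lambda^{-1}-\lambda)\,d\wt a-(\lambda^{-1}+\lambda)\,d\theta-2\,dr$ and $\partial_t\mathcal K\mid_0=-2\cv b\,\partial_t\wt b\mid_0-2\cv c\,\partial_t\wt c\mid_0=(\sin\varphi\,b'_2+\cos\varphi\,c'_2)(\lambda^{-1}+2\lambda+\lambda^3)$, and comparing Fourier coefficients, the resulting recursion together with convergence of $\wt a'(0)$ in $\mathcal W_\rho$ forces $\wt a'(0)$ to be a degree-two polynomial and $r'=0$, whence $\wt a'(0)=(\sin\varphi\,b'_2+\cos\varphi\,c'_2)(1+\lambda^2)=\sin(2\varphi)\log\tan\varphi\,(1+\lambda^2)$ and $\theta'=2\sin(2\varphi)\log\tan\varphi$, as claimed.
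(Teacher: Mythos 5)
Your proposal is correct in all of its stated formulas and follows essentially the same route as the paper: differentiate the monodromy data to second order in $t$ at the central value, reduce the genuinely new input to two weight-two iterated integrals, and then solve the resulting finite-dimensional linear system coming from the linearization already computed in the proof of Proposition \ref{Fuchsianpotential}. Your reorganization is clean and checks out against the paper's computation: splitting $\tfrac{d}{dt}\wh{\mathfrak p}$ into $\partial_t\wh{\mathfrak p}\mid_{(0,\cv\xx)}+\partial_{\xx}\wh{\mathfrak p}\cdot\xx'$ and using the shuffle identity $\int_0^w\Omega_X\omega_Y+\int_0^w\Omega_Y\omega_X=\Omega_X(w)\Omega_Y(w)$ converts the paper's antisymmetrized combinations $\int(\omega_a\Omega_b-\omega_b\Omega_a)+\Omega_a\Omega_b$ into the single integrals $2\int_0^1\Omega_b\,\omega_a$ and $2\int_0^{\ii}\Omega_c\,\omega_a$, and your formulas $\partial_t\wh{\mathfrak p}\mid_0=4\ii\,\cv a\,\cv b\int_0^1\Omega_b\,\omega_a$ and $\partial_t\wh{\mathfrak q}\mid_0=4\ii\,\cv a\,\cv c\int_0^{\ii}\Omega_c\,\omega_a$ agree with Equations \eqref{eq:p''}--\eqref{eq:q''}. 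Your endgame (positive-part projection of $\mathcal F$, $\mathcal G$ to get $b_2'$, $c_2'$; the Sym-point conditions to get $b_0'$, $c_0'$ in terms of $\theta'$; Fourier-coefficient comparison in $\mathcal K'=0$ plus decay in $\mathcal W_\rho$ to force $r'=0$ and pin down $a'$, $\theta'$) is a mild variant of the paper's use of $\mathcal K'-\mathcal K'^*=0$ and Proposition \ref{Pro:decomposition}, and gives the same answer.

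The one real gap is the evaluation of the two iterated integrals: you assert $\int_0^1\Omega_b\,\omega_a=2\pi\ii\log\sin\varphi$ and $\int_0^{\ii}\Omega_c\,\omega_a=-2\pi\ii\log\cos\varphi$ (equivalently the displayed values of $\partial_t\wh{\mathfrak p}$, $\partial_t\wh{\mathfrak q}$), but only sketch a strategy ("split into $\tfrac{dz}{z-p_j}$ pieces, the $\Li_2$ contributions cancel by the functional equations forced by the symmetric puncture configuration"). This is precisely the content the paper isolates as Proposition \ref{prop:integrals}, and it is not a triviality: the paper's proof occupies a full page and combines the $\tau$- and $\delta\circ\tau$-symmetries of $\omega_a,\omega_b,\omega_c$ with two applications of the Residue Theorem; the appendix's evaluation of the special case $\Omega_{2,1}(1)$ at $\varphi=\tfrac{\pi}{4}$ via the parity theorem for $\Li_{1,1}$ shows that your dilogarithm route is viable, but the cancellation of the genuinely dilogarithmic terms is exactly what has to be demonstrated, not assumed. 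Everything downstream of those two values is correct, so to complete the argument you should either carry out the dilogarithm computation for general $\varphi$ or replace it by the symmetry-plus-residue argument of Proposition \ref{prop:integrals} combined with the shuffle identity you already invoke.
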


\begin{proof}
We differentiate $d\Phi_t=\Phi_t\eta_t$ twice at $t=0$ and obtain
\begin{equation}\label{secondderivative}
d \Phi'' = \Phi'' \eta_0 + 2\Phi' \eta' + \eta'' = 2 \Phi' \eta' + \eta''.
\end{equation}
with
\begin{equation}\label{eta''}\eta'' = 
2\begin{pmatrix} \ii a' \omega_a & b' \omega_b + \ii c' \omega_c \\b' \omega_b - \ii c' \omega_c& -\ii a' \omega_a\end{pmatrix}
+2r'\begin{pmatrix}\ii \cv{a}\omega_a&\cv{b}\omega_b+\ii \cv{c}\omega_c\\
\cv{b}\omega_b-\ii\cv{c}\omega_c&-\ii\cv{a}\omega_a\end{pmatrix}.
\end{equation}

From the computations in the proof of Proposition \ref{Pro:pq} we have
\begin{equation}\label{phi'eta'}
\begin{split}
\Phi'\eta'&=\begin{pmatrix} \ii \cv{a}\Omega_a  &
\cv{b} \Omega_b +
\ii \cv{c} \Omega_c \\
\cv{b} \Omega_b -
\ii \cv{c} \Omega_c &-\ii \cv{a} \Omega_a
\end{pmatrix}
\begin{pmatrix} \ii \cv{a} \omega_a  &
\cv{b} \omega_b +
\ii \cv{c} \omega_c \\
\cv{b} \omega_b -
\ii \cv{c} \omega_c &-\ii \cv{a}  \omega_a
\end{pmatrix}\\
&=\begin{pmatrix}*& 
\ii \cv{a}\Omega_a(\cv{b}\omega_b+\ii \cv{c}\omega_c)-\ii \cv{a}\omega_a(\cv{b}\Omega_b+\ii \cv{c}\Omega_c)\\
\ii \cv{a}\omega_a(\cv{b}\Omega_b-\ii \cv{c}\Omega_c)- \ii \cv{a}\Omega_a(\cv{b}\omega_b-\ii \cv{c}\omega_c)
&*\end{pmatrix}.
\end{split}
\end{equation}
Differentiate $\mathfrak p$ and $\mathfrak q$, as defined in \eqref{pq}, twice at $t=0,$
keeping in mind that $\Phi_0=\Id,$ yields 
\begin{equation}
\begin{split}
\mathfrak p''&=\mathcal P''_{21}-\mathcal P''_{12}+2(\mathcal P'_{11}\mathcal P'_{21}-\mathcal P'_{12}\mathcal P'_{22})\\
\mathfrak q''&=\ii(\mathcal Q''_{21}+\mathcal Q''_{12})+2\ii(\mathcal Q'_{11}\mathcal Q'_{21}+\mathcal Q'_{12}\mathcal Q'_{22}).
\end{split}
\end{equation}
We compute each term separately.
Using Equations \eqref{central1}, \eqref{eq:Omega1} and \eqref{eq:OmegaI} we obtain
\begin{equation}
\begin{split}
\mathcal P'_{11}\mathcal P'_{21}-\mathcal P'_{12}\mathcal P'_{22}&=2\ii \cv{a}\Omega_a(1)\cv{b}\Omega_b(1)=\tfrac{1}{2}(\lambda^{-2}-\lambda^2)(\pi-2\varphi)\sin(\varphi)\log\left(\frac{1-\cos(\varphi)}{1+\cos(\varphi)}\right)\\
\mathcal Q'_{11}\mathcal Q'_{21}+\mathcal Q'_{12}\mathcal Q'_{22}
&=2\cv{a}\Omega_a(\ii)\cv{c}\Omega_c(\ii)
=\ii(\lambda^{-2}-\lambda^2)\varphi\cos(\varphi)\log\left(\frac{1-\sin(\varphi)}{1+\sin(\varphi)}\right).
\end{split}
\end{equation}
Using Equations \eqref{secondderivative}, \eqref{eta''} and \eqref{phi'eta'} we obtain
\begin{eqnarray*}
\mathcal P''_{21}-\mathcal P''_{12}&=&
2\int_0^1(\Phi'\eta')_{21}-(\Phi'\eta')_{12}+\int_0^1 \eta''_{21}-\eta''_{12}\\
&=&4\ii\cv{a}\,\cv{b}\int_0^1(\omega_a\Omega_b-\omega_b\Omega_a)
-4\ii (c'+\cv{c}r')\int_0^1\omega_c\\
&=&4\pi c'-2\pi(\lambda^{-1}+\lambda)\cos(\varphi)r'
-\ii(\lambda^{-2}-\lambda^2)\sin(\varphi)\int_0^1\omega_a\Omega_b-\omega_b\Omega_a
\end{eqnarray*}
and
\begin{eqnarray*}
\mathcal Q''_{21}+\mathcal Q''_{12}&=&
2\int_0^{\ii}(\Phi'\eta')_{21}+(\Phi'\eta')_{12}+\int_0^{\ii} \eta''_{21}+\eta''_{12}\\
&=&4\cv{a}\,\cv{c}\int_0^{\ii}(\omega_a\Omega_c-\omega_c\Omega_a)
+4(b'+\cv{b}r')\int_0^{\ii}\omega_b\\
&=&-4\pi\ii b' +2\pi\ii (\lambda^{-1}+\lambda)\sin(\varphi)r'
-(\lambda^{-2}-\lambda^2)\cos(\varphi)\int_0^{\ii}\omega_a\Omega_c-\omega_c\Omega_a.
\end{eqnarray*}
The required integrals are computed in Proposition \ref{prop:integrals} at the end of this section. After simplification, this gives
\begin{equation}
\begin{split}
\mathfrak p''&=4\pi c'-2\pi(\lambda^{-1}+\lambda)\cos(\varphi)r'
+4\pi(\lambda^{-2}-\lambda^2)\sin(\varphi)\log(\sin(\varphi))\\
\mathfrak q''&=4\pi b'-2\pi(\lambda^{-1}+\lambda)\sin(\varphi)r'
-4\pi(\lambda^{-2}-\lambda^2)\cos(\varphi)\log(\cos(\varphi)).
\end{split}
\end{equation}

Since $\eta_t$ solves the Monodromy Problem, we have $\mathfrak p_t=\mathfrak p_t^*$ and
$\mathfrak q_t=\mathfrak q_t^*$ for all $t$ by construction. Therefore, 
$\mathfrak p''=(\mathfrak p'')^*$ and $\mathfrak q''=(\mathfrak q'')^*$ and 
projecting onto $\mathcal W^{>0}$ gives
\begin{equation}
\begin{split}
(c')^+&=2\lambda^2\sin(\varphi)\log(\sin(\varphi))\\
(b')^+&=-2\lambda^2\cos(\varphi)\log(\cos(\varphi)).
\end{split}
\end{equation}
In other words, $b'=b'_0+b'_2\lambda^2$ and $c'=c'_0+c'_2\lambda^2$ where
$b'_2$, $c'_2$ are as in Proposition \ref{prop:derivatives} and
$b'_0$, $c'_0$ are yet to be determined. We now have
\begin{equation}
\label{eq:p''}
\mathfrak p''=4\pi c'_0+2\pi c'_2(\lambda^{-2}+\lambda^2)-2\pi(\lambda^{-1}+\lambda)\cos(\varphi)r'
\end{equation}
\begin{equation}
\label{eq:q''}
\mathfrak q''=4\pi b'_0+2\pi b'_2(\lambda^{-2}+\lambda^2)-2\pi(\lambda^{-1}+\lambda)\sin(\varphi)r'
\end{equation}
which are real on the unit circle as required.\\

Moreover, the Sym-point conditions yield $\mathfrak p_t(e^{\ii\theta_t})=0$ and $\mathfrak q_t(e^{\ii\theta_t})=0$ for all $t$.
We differentiate these equations  twice with respect to $t$ at $t=0$ and obtain, as
$\mathfrak p_0=\mathfrak q_0=0,$
\[\mathfrak p''(\ii)-2\frac{\partial\mathfrak p'}{\partial\lambda}(\ii)\theta'=0\quad\text{and}\quad
\mathfrak q''(\ii)-2\frac{\partial\mathfrak q'}{\partial\lambda}(\ii)=0.\]
By Equations \eqref{eq:p''} and \eqref{eq:q''}
\[\mathfrak p''(\ii)=4\pi(c'_0-c'_2)\quad\text{and}\quad
\mathfrak q''(\ii)=4\pi(b'_0-b'_2).\]
Moreover,  Proposition \ref{Pro:pq} shows
\[\frac{\partial\mathfrak p'}{\partial\lambda}(\ii)=\frac{\partial}{\partial\lambda}(2\pi\cv{c})\mid_{\lambda=\ii}=-2\pi\cos(\varphi)\]
\[\frac{\partial\mathfrak q'}{\partial\lambda}(\ii)=\frac{\partial}{\partial\lambda}(2\pi\cv{b})\mid_{\lambda=\ii}=-2\pi\sin(\varphi).\]
So we obtain the system of equations
\begin{equation}
\label{eq:b0c0}
\left\{\begin{split}
c'_0-c'_2+\cos(\varphi)\theta'=0\\
b'_0-b'_2+\sin(\varphi)\theta'=0\end{split}\right ..\end{equation}
Finally, we have
\[\mathcal K'=-2r'+2\cv{a}a'-2\cv{b}b'-2\cv{c}c'.\]
Observe that
\begin{equation*}
\begin{split}&b'-b'^*=-2\cos(\varphi)\log(\cos(\varphi))(\lambda^{2}-\lambda^{-2})=-8\,\cv{a}\cv{c}\log(\cos(\varphi))\\
&c'-c'^*=2\sin(\varphi)\log(\sin(\varphi))(\lambda^2-\lambda^{-2})=8\,\cv{a}\cv{b}\log(\sin\varphi).\end{split}\end{equation*}
Hence using $\cv{a}^*=-\cv{a}$, $\cv{b}=\cv{b}^*$ and $\cv{c}=\cv{c}^*$
$$0=\mathcal K'-\mathcal K'^*=2\,\cv{a}(a'+a'^*)-2\cv{b}(b'-b'^*)-2\cv{c}(c'-c'^*)=2\,\cv{a}(a'+a'^*)-16\,\cv{a}\cv{b}\cv{c}\log(\tan(\varphi)).$$
Simplifying by $\cv{a}$, we obtain
$$a'+a'^*=8\,\cv{b}\cv{c}\log(\tan(\varphi))=\sin(2\varphi)\log(\tan(\varphi))(\lambda^{-1}+\lambda)^2$$
which determines $a'$ as in Proposition \ref{prop:derivatives}.
Then we have
$$\mathcal K'_0=-2 r'=0$$
and
\begin{equation}
\label{eq:K'-1}
\mathcal K'_{-1}=a'_0+\sin(\varphi)b'_0+\cos(\varphi)c'_0=0.
\end{equation}
Solving the system \eqref{eq:b0c0} and \eqref{eq:K'-1} determines $b'_0$, $c'_0$
and $\theta'$ as in Proposition \ref{prop:derivatives}.
\end{proof}

\subsection{Mean curvature}
\begin{figure}[h]
\vspace{-2.cm}
\centering
\includegraphics[width=0.75\textwidth]{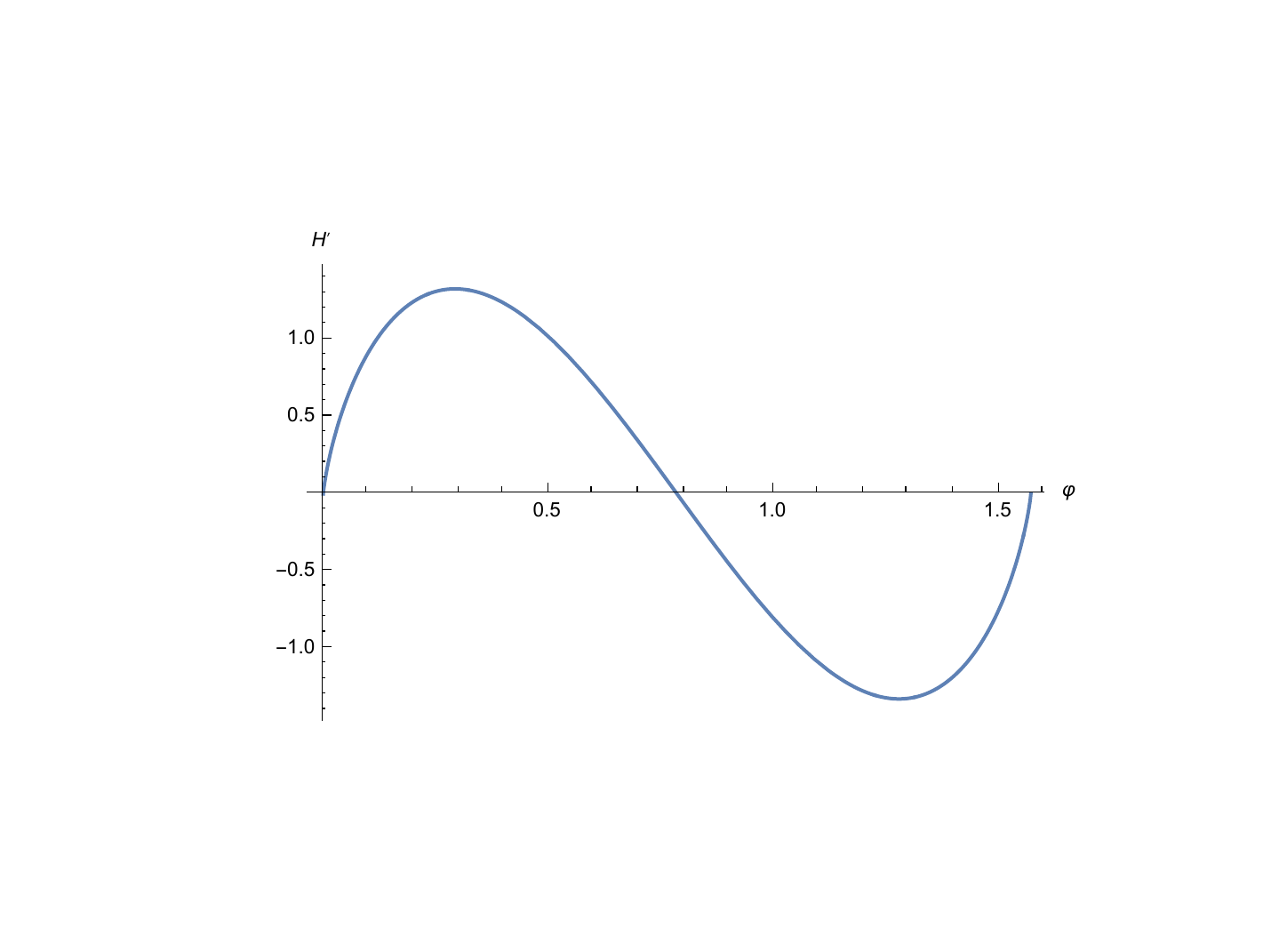}
\vspace{-1.75cm}
\caption{
\footnotesize The derivative $H'$ of the mean curvature $H(t,\varphi)$ at $t=0$ with respect to $t$ as a function of $\varphi $ for $\varphi \in [0, \tfrac{\pi}{2}].$ The zeros of $H'$ are at $\varphi = 0, \tfrac{\pi}{4}, \frac{\pi}{2}$.}
\label{Hprime}
\end{figure}

Let $H^g_{\varphi}$ be the mean curvature of $f^g_{\varphi}$. From Theorem \ref{thm:construction-fixed-varphi} (3) we have $H^g_{\varphi}=-H^g_{\tfrac{\pi}{2}-\varphi}$
and therefore, $f^g_{\varphi}$ is minimal for $\varphi\in\{0,\frac{\pi}{4},\frac{\pi}{2}\}$.
In fact we will show that these are the only minimal surfaces within the family for $g$ large enough.
\begin{proposition}
\label{prop:H}
For $g \gg1 $ fixed the mean curvature $H^g_{\varphi}$ is strictly positive for all 
$\varphi\in (0, \frac{\pi}{4})$.
\end{proposition}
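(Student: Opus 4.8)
The plan is to prove the statement by combining the first-order Taylor expansion of $H^g_\varphi$ in $t$ at $t=0$ with the explicit formula $H^g_\varphi=\operatorname{cotan}(\theta(t,\varphi))$ from Theorem \ref{thm:construction-fixed-varphi}(1). Since $\theta(0,\varphi)=\tfrac{\pi}{2}$ (the central value) we have $H^g_\varphi\to 0$ as $g\to\infty$, so $H^g_\varphi$ is small and its sign is governed by the leading term in $t$. Differentiating $H=\operatorname{cotan}(\theta)$ at $t=0$ and using $\theta(0,\varphi)=\tfrac\pi2$ gives
\[
H'(0,\varphi)=-\frac{\theta'(0,\varphi)}{\sin^2(\theta(0,\varphi))}=-\theta'(0,\varphi)=-2\sin(2\varphi)\log(\tan(\varphi)),
\]
where $\theta'=2\sin(2\varphi)\log(\tan(\varphi))$ comes from Proposition \ref{prop:derivatives}. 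For $\varphi\in(0,\tfrac\pi4)$ we have $\tan(\varphi)\in(0,1)$, hence $\log(\tan(\varphi))<0$, while $\sin(2\varphi)>0$; therefore $H'(0,\varphi)>0$ on this interval. Combined with $H^g_\varphi=t\,H'(0,\varphi)+O(t^2)$ and $t=\tfrac{1}{2g+2}>0$, this already gives $H^g_\varphi>0$ for $g$ large, pointwise in $\varphi$.

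**Upgrading to a uniform statement in $\varphi$.** The subtlety is that the Proposition is stated for fixed $g\gg 1$ but for \emph{all} $\varphi\in(0,\tfrac\pi4)$ simultaneously, whereas $H'(0,\varphi)\to 0$ as $\varphi\to 0$, so the $O(t^2)$ remainder could in principle dominate near the endpoint. I would handle this using the results of Section \ref{limitvarphi}: by Theorem \ref{surfaces}, $\xx(t,\varphi)$ — and hence $\theta(t,\varphi)$ — is smooth in $t$ and smooth in $\varphi$ and $\varphi\log\varphi$ on the compact interval $\varphi\in[0,\tfrac\pi2]$ and $t\in(-\varepsilon,\varepsilon)$. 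In particular $H(t,\varphi)=\operatorname{cotan}(\theta(t,\varphi))$ extends to a continuous (indeed smooth-in-the-$\varphi\log\varphi$-sense) function up to $\varphi=0$, and by the parity in $t$ from Proposition \ref{propo:symmetries} (which gives $\pi-\theta(-t)=\theta(t)$, so $H(t,\varphi)$ is odd in $t$) one can write $H(t,\varphi)=t\,\widehat H(t,\varphi)$ with $\widehat H$ continuous on $(-\varepsilon,\varepsilon)\times[0,\tfrac\pi2]$ and $\widehat H(0,\varphi)=H'(0,\varphi)=-2\sin(2\varphi)\log\tan\varphi$. Near $\varphi=0$ one checks that $\widehat H(0,\varphi)$ behaves like $-4\varphi\log\varphi>0$ and stays strictly positive on $(0,\tfrac\pi4]$, with a positive lower bound away from any neighborhood of $0$; near $0$ one needs the finer statement that the $\varphi\log\varphi$ correction to $\widehat H$ is lower order than $\varphi\log\varphi$ itself, which follows from the smooth dependence on the \emph{pair} $(\varphi,\varphi\log\varphi)$ since $\widehat H(t,\varphi)-\widehat H(0,\varphi)=O(t)$ uniformly.

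**Conclusion and the main obstacle.** Putting this together: for $\varphi$ in a fixed compact subinterval $[\delta,\tfrac\pi4]$, $\widehat H(0,\varphi)\ge c(\delta)>0$ and $|\widehat H(t,\varphi)-\widehat H(0,\varphi)|\le C|t|$, so $H^g_\varphi=t\,\widehat H(t,\varphi)>0$ once $t<c(\delta)/C$. For $\varphi\in(0,\delta)$ with $\delta$ small, $\widehat H(0,\varphi)=-2\sin(2\varphi)\log\tan\varphi$ is bounded below by a positive multiple of $\varphi|\log\varphi|$, which dominates the uniform $O(t)$ error provided $\delta$ and $t$ are small — here one uses that the error term, being $O(t)$ times a function smooth in $(\varphi,\varphi\log\varphi)$, is itself $O(t)$ uniformly and in particular does not blow up as $\varphi\to 0$. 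Hence for $g$ large enough (equivalently $t$ small enough) $H^g_\varphi>0$ on all of $(0,\tfrac\pi4)$. The main obstacle is the uniformity near $\varphi=0$: one must be careful that the implicit-function-theorem solution's $\varphi\log\varphi$ dependence does not spoil the sign, and this is exactly where invoking the smooth-in-$(\varphi,\varphi\log\varphi)$ structure from Theorem \ref{surfaces}, together with oddness of $H$ in $t$, is essential rather than cosmetic.
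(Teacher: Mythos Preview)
Your overall strategy matches the paper's, but there is a genuine gap at \emph{both} endpoints of $(0,\tfrac{\pi}{4})$, and in each case the missing ingredient is the same.

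At $\varphi=\tfrac{\pi}{4}$: you assert that $\widehat H(0,\varphi)$ ``stays strictly positive on $(0,\tfrac\pi4]$'' and then use a positive lower bound on $[\delta,\tfrac\pi4]$. But $\widehat H(0,\tfrac\pi4)=-2\sin(\tfrac\pi2)\log\tan(\tfrac\pi4)=0$, so no such lower bound exists, and a uniform $O(t)$ error can kill the sign for $\varphi$ near $\tfrac\pi4$.

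At $\varphi=0$: your domination argument is backwards. You correctly note that $\widehat H(t,\varphi)-\widehat H(0,\varphi)=O(t)$ uniformly in $\varphi$, and that $\widehat H(0,\varphi)\gtrsim \varphi|\log\varphi|$. But for any fixed $t>0$ one has $\varphi|\log\varphi|\to 0$ as $\varphi\to 0$, so $\varphi|\log\varphi|$ does \emph{not} dominate a constant-in-$\varphi$ error of size $Ct$. A uniform $O(t)$ bound on the error is simply not enough here.

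What is missing in both cases is the fact that $\widehat H(t,\cdot)$ vanishes at the endpoint for \emph{every} $t$, not just at $t=0$. At $\varphi=\tfrac\pi4$ this is $\widehat H(t,\tfrac\pi4)=0$, which follows from $\theta(t,\tfrac\pi4)\equiv\tfrac\pi2$ (Proposition~\ref{propo:symmetries}). At $\varphi=0$ one writes $\widehat H(t,\varphi)=\mathsf H(t,\varphi,\varphi\log\varphi)$ with $\mathsf H$ smooth (Theorem~\ref{surfaces}) and uses $\mathsf H(t,0,0)=0$ for all $t$. Once you have this, you Taylor-expand in the \emph{transverse} variable around the endpoint with $t$-dependent coefficients: near $\tfrac\pi4$ this gives $\widehat H(t,\varphi)=(-4+O(t))(\varphi-\tfrac\pi4)+O((\varphi-\tfrac\pi4)^2)$, and near $0$ it gives $\widehat H(t,\varphi)=(-4+O(t))\varphi\log\varphi+O(t)\varphi+O((\varphi\log\varphi)^2)$. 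Now the leading term controls the sign uniformly for small $t$. This is exactly how the paper closes the argument; your oddness-in-$t$ observation is correct but does not by itself supply these endpoint vanishing statements.
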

\begin{proof}
We have $H^g_{\varphi}=H(t(g),\varphi)$ with $t(g)=\frac{1}{2g+2}$ and $$H(t,\varphi)=\operatorname{cotan}(\theta(t,\varphi)).$$ 
We want to find a uniform $\varepsilon>0$ such that $H(t,\varphi)>0$ for
all $0<t<\varepsilon$ and $\varphi\in(0,\frac{\pi}{4})$.
By Proposition \ref{prop:derivatives}, we have
$$\frac{\partial H}{\partial t}(0,\varphi)=-\theta'=-2\sin(2\varphi)\log(\tan(\varphi))$$
which is positive for $0<\varphi<\frac{\pi}{4}$.
So the existence of $\varepsilon$ is ensured for $\varphi$ in any proper subinterval of $(0,\frac{\pi}{4})$.
To study the sign of $H$ near $\varphi=0$ and $\varphi=\frac{\pi}{4}$, 
define
$$\widehat{H}(t,\varphi)=\frac{1}{t}H(t,\varphi)$$
which extends smoothly at $t=0$ with
$$\widehat{H}(0,\varphi)=-2\sin(2\varphi)\log(\tan(\varphi)).$$
We have $\frac{\partial \widehat{H}}{\partial\varphi}(0,\frac{\pi}{4})=-4$.
Since $\widehat{H}(t,\frac{\pi}{4})=0$ for all $t$,
a first order Taylor expansion of $\widehat{H}$ at $(t,\frac{\pi}{4})$ gives
$$\widehat{H}(t,\varphi)=(-4+O(t))(\varphi-\tfrac{\pi}{4})+O((\varphi-\tfrac{\pi}{4})^2)$$
so $\widehat{H}(t,\varphi)>0$ for $t$ small enough and $\varphi<\frac{\pi}{4}$ close
enough to $\frac{\pi}{4}$.

In a neighborhood of $\varphi=0$, $\widehat{H}$ extends as a smooth function of $\varphi$ and
$\varphi\log(\varphi)$ so there exists a smooth function $\mathsf H(t,\varphi,\psi)$ such that
$$\widehat{H}(t,\varphi)=\mathsf H(t,\varphi,\varphi\log(\varphi)).$$
At $t=0$, we have explicitly
$$\mathsf H(0,\varphi,\psi)=-2\sin(2\varphi)\log\left(\frac{h(\varphi)}{\cos(\varphi)}\right)-4h(2\varphi)\psi$$
with $h(x)=\frac{\sin(x)}{x}$, which extends analytically at $x=0$.
We have
$\frac{\partial\mathsf H}{\partial\varphi}(0,0,0)=0$ and
$\frac{\partial\mathsf H}{\partial\psi}(0,0,0)=-4$.
Since $\mathsf H(t,0,0)=0$ for all $t$, a first order Taylor expansion at $(t,0,0)$ gives
$$\mathsf H(t,\varphi,\psi)=O(t)\varphi+(-4+O(t))\psi+O(\varphi^2+\psi^2).$$
We substitute $\psi=\varphi\log(\varphi)$ and obtain
$$\widehat H(t,\varphi)=(-4+O(t))\varphi\log(\varphi)+O((\varphi\log(\varphi))^2).$$
Hence $\widehat H(t,\varphi)>0$ for $t$ and $\varphi>0$ small enough.
\end{proof}

\subsection{Willmore energy and Area}$\;$\\

The Willmore energy of a surface $f\colon M\to\mathbb S^3$ of constant mean curvature $H$ is given by 
\[\mathcal W(f)=\int_M (H^2+1)dA=(H^2+1)\operatorname{Area}(f).\]
By Theorem \ref{thm:construction-fixed-varphi}, the Willmore energy of $f^g_{\varphi}$ is given by
$\mathcal W(f^g_{\varphi})=\mathcal W(t,\varphi)$ with $t=\frac{1}{2g+2}$ and
\begin{equation}\mathcal W(t,\varphi)=8\pi\left[1-r(t,\varphi)\left(\cos(\varphi)b_0(t,\varphi)-\sin(\varphi)c_0(t,\varphi)\right)\right]\left(H(t,\varphi)^2+1\right).
\end{equation}
At $t=0$, we have $b_0=c_0=0$ and $H=0,$ so $\mathcal W(0,\varphi)=8\pi$ as expected, since $f^g_{\varphi}$ converges to the union of two great spheres as $g\to\infty$.
By Proposition \ref{prop:derivatives}, we obtain after simplification
\begin{eqnarray*}\label{eq:Wstrich}
\frac{\partial \mathcal W}{\partial t}(0,\varphi)&=&
-8\pi\left(\cos(\varphi)b'_0(t,\varphi)-\sin(\varphi)c'_0(t,\varphi)\right)\\
&=&8\pi\left[2\cos(\varphi)^2\log(\cos(\varphi))+2\sin(\varphi)^2\log(\sin(\varphi))\right]
\end{eqnarray*}
see Figure \ref{Wder}.
This gives the following expansion for the Willmore energy of
$f^g_{\varphi}$:
\begin{equation}\label{eq:willfirstord}\mathcal W(f^g_{\varphi})=8\pi\left[1+2\left(\cos(\varphi)^2\log(\cos(\varphi))+\sin(\varphi)^2\log(\sin(\varphi))\right)t+O(t^2)\right]
\quad\mbox{ with }\quad t=\frac{1}{2g+2}.
\end{equation}
In particular, in the minimal case $\varphi=\frac{\pi}{4}$
\begin{equation}\label{eq:lawarea}\mathcal W(f^g_{\varphi})=8\pi\left[1-\ln(2) t+O(t^2)\right]\end{equation}
which we already obtained in \cite{HHT}. 
\begin{figure}[h]
\centering
\vspace{-1.75cm}
\includegraphics[width=0.75\textwidth]{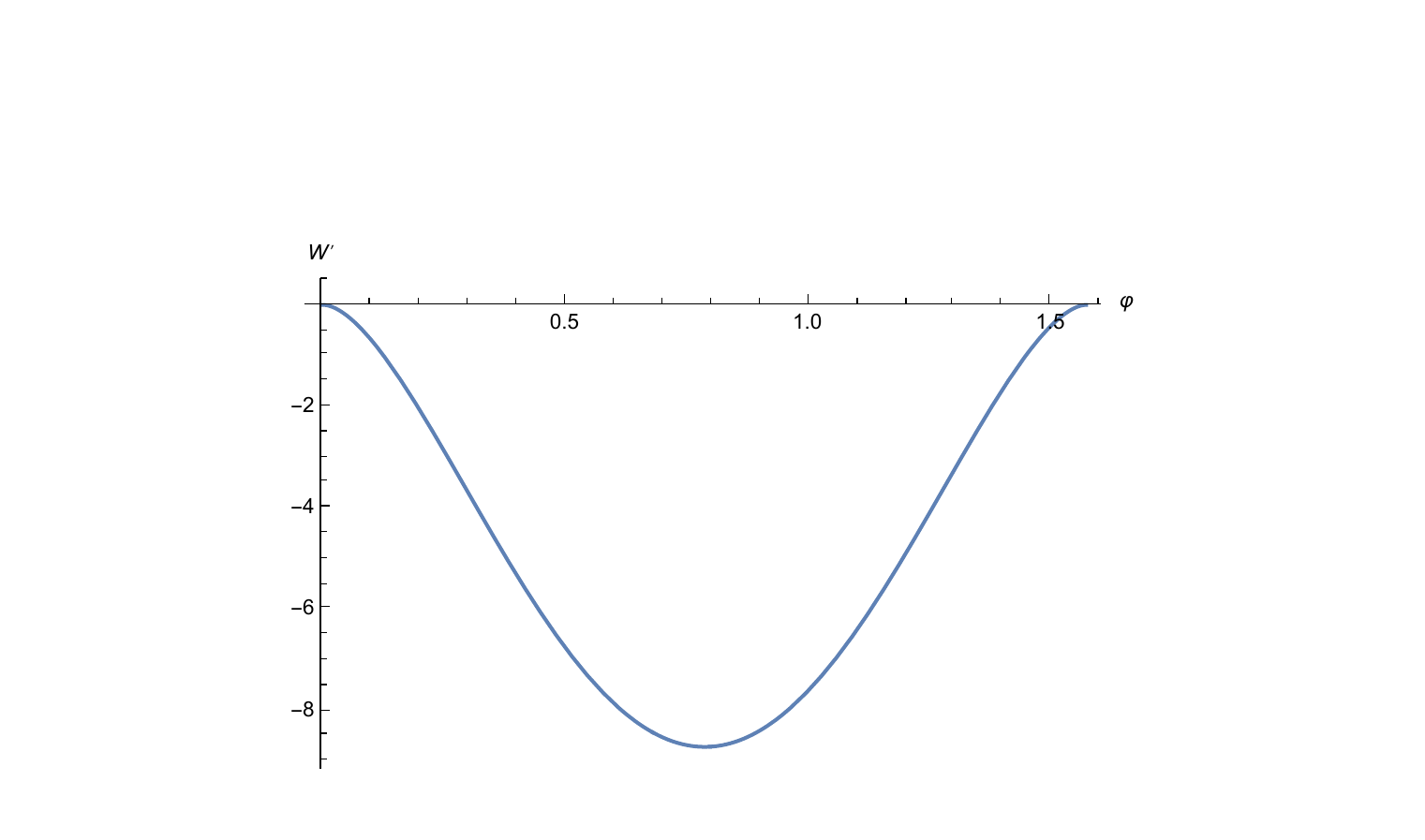}
\vspace{-0.5cm}
\caption{
\footnotesize The graph of $\mathcal W'(0,\varphi)$ for $\varphi \in [0, \tfrac{\pi}{2}].$}
\label{Wder}
\end{figure}
\begin{proposition}\label{prop:willmore}
For $g\gg1 $ fixed the immersions $f^g_\varphi$ satisfies  $\mathcal W(f^g_{\varphi})<8\pi$ for all $\varphi\in(0,\frac{\pi}{2})$.
As a consequence, the surfaces $f^g_{\varphi}$ are embedded by the
Li-Yau estimate \cite{LiYau}.
\end{proposition}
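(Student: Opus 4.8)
*For $g\gg1$ fixed the immersions $f^g_\varphi$ satisfies $\mathcal W(f^g_{\varphi})<8\pi$ for all $\varphi\in(0,\frac{\pi}{2})$. As a consequence, the surfaces $f^g_{\varphi}$ are embedded by the Li--Yau estimate \cite{LiYau}.*

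\emph{Proof strategy.} The plan is to prove that the rescaled Willmore deficit $\widehat{\mathcal W}(t,\varphi):=t^{-1}\bigl(8\pi-\mathcal W(t,\varphi)\bigr)$ is strictly positive for $t>0$ small and all $\varphi\in(0,\tfrac\pi2)$; taking $t=\tfrac1{2g+2}$ then gives $\mathcal W(f^g_\varphi)<8\pi$ for $g\gg1$, and the consequence is immediate from the Li--Yau inequality \cite{LiYau}. By Theorem \ref{thm:construction-fixed-varphi} one has $\mathcal W(t,\varphi)=8\pi\bigl[1-r(\cos\varphi\,b_0-\sin\varphi\,c_0)\bigr]\bigl(1+\cot^2\theta\bigr)$; since $\mathcal W(0,\varphi)\equiv 8\pi$ and, by Theorem \ref{surfaces}, $\xx(t,\varphi)$ is smooth in $t$ and in $(\varphi,\varphi\log\varphi)$, the function $\widehat{\mathcal W}$ extends smoothly in $t$ and in $(\varphi,\varphi\log\varphi)$, with $\widehat{\mathcal W}(0,\varphi)=-\partial_t\mathcal W(0,\varphi)=-16\pi\bigl(\cos^2\varphi\log\cos\varphi+\sin^2\varphi\log\sin\varphi\bigr)$ by Proposition \ref{prop:derivatives} (this is the computation behind \eqref{eq:willfirstord}). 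As $0<\sin\varphi,\cos\varphi<1$ on $(0,\tfrac\pi2)$, this is $>0$ there; and by Theorem \ref{thm:construction-fixed-varphi}(4) together with the isometry invariance of $\mathcal W$ it is symmetric under $\varphi\mapsto\tfrac\pi2-\varphi$, so it suffices to argue on $(0,\tfrac\pi4]$.

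First I would dispose of the $\varphi$ bounded away from $0$: on every compact subinterval $[\delta,\tfrac\pi4]$ the positive continuous function $\widehat{\mathcal W}(0,\cdot)$ has a positive lower bound, so by continuity $\widehat{\mathcal W}(t,\cdot)>0$ on $[\delta,\tfrac\pi4]$ for $|t|<\varepsilon(\delta)$. The hard part is the limit $\varphi\to 0$, where $\widehat{\mathcal W}(0,\varphi)\sim-16\pi\varphi^2\log\varphi\to 0$ and $\varepsilon(\delta)\to 0$, so this soft argument degenerates. I would treat it along the lines of the proof of Proposition \ref{prop:H}, using the $(\varphi,\varphi\log\varphi)$-expansion and two structural inputs. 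First, the limit immersion at $\varphi=0$ is a doubly covered geodesic $2$-sphere, so $\operatorname{Area}(f^g_0)=8\pi$ (equivalently $b|_{\varphi=0}=0$, Section \ref{limitvarphi}) and $H(t,0)=0$; hence $\mathcal W(t,0)=8\pi$ and $\widehat{\mathcal W}(t,0)=0$ for all small $t$. Second, $b_0$, $c_0$ and $\theta-\tfrac\pi2$ each vanish identically on $\{t=0\}\cup\{\varphi=0\}$, hence each equals $t\varphi$ times a smooth function of $(t,\varphi,\varphi\log\varphi)$; in particular $H=O(t\varphi\log\varphi)$, so in the splitting $8\pi-\mathcal W=(8\pi-\operatorname{Area})-\operatorname{Area}\cdot H^2$ the last term is $O\bigl(t^2\varphi^2(\log\varphi)^2\bigr)$, of strictly higher order in $\varphi$, while $8\pi-\operatorname{Area}=8\pi r(\cos\varphi\,b_0-\sin\varphi\,c_0)$ has, by Proposition \ref{prop:derivatives}, the positive leading contribution $-16\pi t\varphi^2\log\varphi$. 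A Taylor comparison in $(t,\varphi,\varphi\log\varphi)$ then yields $\widehat{\mathcal W}(t,\varphi)>0$ for $t,\varphi>0$ small, which with the previous paragraph covers $(0,\tfrac\pi4]$ and, by symmetry, all of $(0,\tfrac\pi2)$.

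The point I expect to be genuinely delicate inside that last step is uniformity when $\varphi$ is very small relative to $t$ (e.g.\ $\varphi\lesssim e^{-c/t}$): there the order-$t$ part of $8\pi-\operatorname{Area}$, of size $t\varphi^2|\log\varphi|$, no longer visibly dominates $\operatorname{Area}\cdot H^2\sim t^2\varphi^2(\log\varphi)^2$, and the argument instead rests on the order-$t^2$ behaviour of $b_0,c_0$ near $\varphi=0$. Concretely, one must check that the relevant second-order-in-$t$ coefficient of $\cos\varphi\,b_0-\sin\varphi\,c_0$ at $\varphi=0$ (equivalently, the coefficient of $\varphi\log\varphi$, respectively of $\varphi$, in $\partial_t\widehat{\mathcal W}(0,\cdot)$) has the sign that keeps $\widehat{\mathcal W}$ positive in this regime; this is a short computation of the type systematised in Section \ref{HOD}. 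Granting it, $\widehat{\mathcal W}>0$ follows on all of $(0,\tfrac\pi2)$, whence $\mathcal W(f^g_\varphi)=8\pi-t\,\widehat{\mathcal W}(t,\varphi)<8\pi$ for $g\gg1$, and $f^g_\varphi$ is embedded by \cite{LiYau}.
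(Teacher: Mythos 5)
Your strategy is the direct asymptotic route, and it is sound up to the point you yourself flag --- but that flagged point is a genuine gap rather than a routine verification, and as written the argument does not close. Concretely, near $\varphi=0$ the first-order area deficit behaves like $-16\pi\, t\,\varphi^2\log\varphi>0$, while $\operatorname{Area}\cdot H^2\approx 128\pi\, t^2\varphi^2(\log\varphi)^2$ (since $\widehat{H}(0,\varphi)\sim -4\varphi\log\varphi$), so the ratio of the negative term to the positive one is of order $8t|\log\varphi|$, which is not small once $\varphi\lesssim e^{-1/(8t)}$ --- exactly the regime you identify. In that regime the sign of $8\pi-\mathcal W$ is governed by the coefficient of $\varphi\log\varphi$ (and, subordinately, of $\varphi$) in $\tfrac12\partial_t^2\bigl[r(\cos\varphi\,b_0-\sin\varphi\,c_0)\bigr]\big|_{t=0}$ expanded near $\varphi=0$: for fixed small $t>0$ and $\varphi\to0$ this $O(t^2\varphi\log\varphi)$ contribution dominates \emph{both} of the competing terms above (divide everything by $\varphi|\log\varphi|$: it contributes $O(t^2)$ while the other two contribute $O(t\varphi)$ and $O(t^2\varphi|\log\varphi|)$), so unless that coefficient vanishes it alone decides positivity. ``Granting it'' therefore grants the theorem precisely in the only regime where it is at stake; to complete your proof you must actually carry out the second-order computation of $b_0,c_0$ near $\varphi=0$ in the $(t,\varphi,\varphi\log\varphi)$ framework and verify the sign. (This also shows that the analogy with Proposition \ref{prop:H} is not as direct as it may appear: there only the sign of a single first-order quantity matters, whereas here a first-order and a second-order term compete.)

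The paper avoids this analysis entirely with a soft, global monotonicity argument. CMC immersions are constrained Willmore critical points with Lagrange multiplier $\tfrac12 HQ$ \cite{BPP}, and along the one-parameter family $\varphi\mapsto f^g_\varphi$ the Hopf differential pairs non-degenerately with $\tfrac{\partial}{\partial\varphi}$ of the conformal structure; hence $\tfrac{d}{d\varphi}\mathcal W(f^g_\varphi)\neq0$ wherever $H\neq0$. By Proposition \ref{prop:H}, $H>0$ on all of $(0,\tfrac{\pi}{4})$ for $g\gg1$, so $\mathcal W(f^g_\varphi)$ is strictly monotone there; since it tends to $8\pi$ as $\varphi\to0$ and equals $\mathcal W(\xi_{1,g})<8\pi$ at $\varphi=\tfrac{\pi}{4}$, it is strictly decreasing and hence $<8\pi$ on $(0,\tfrac{\pi}{4}]$, and the symmetry $\varphi\mapsto\tfrac{\pi}{2}-\varphi$ from Theorem \ref{thm:construction-fixed-varphi} finishes the proof, with embeddedness from \cite{LiYau}. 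This pushes all delicate $\varphi\to0$ asymptotics into Proposition \ref{prop:H} and requires no second-order information; if you wish to keep a purely asymptotic proof, you must supply the missing coefficient computation.
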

\begin{proof} 
Fix $g\gg1$ such that the complete family of CMC surfaces $f^g_\varphi$ of genus $g$ exists.
We claim that the Willmore energy $\mathcal W(f^g_\varphi)$ is strictly monotonic for $\varphi\in(0,\tfrac{\pi}{4}).$
Note that $\mathcal W(f^g_\varphi)=\mathcal W\big(f^g_{\tfrac{\pi}{2}-\varphi}\big)$ by Theorem \ref{thm:construction-fixed-varphi} (3).
Then the proposition follows, since the Willmore energy for the Lawson surfaces $\xi_{1,g} = f^g_{\tfrac{\pi}{4}}$ is strictly below $8\pi$ and the Willmore energy at $\varphi=0$ is $8\pi$. \\

It is well-known that CMC surfaces are critical points of the 
Willmore functional under conformal variations \cite{BPP}.
The cotangent space to the Teichm\"uller space at the Riemann surface $M$ 
can be identified with the space of holomorphic quadratic differentials $H^0(M,K^2)$.
The Lagrange-multiplier (see \cite[Corollary 16 and Remark 4]{BPP}) for the Willmore functional is given by $\tfrac{1}{2}HQ$,
as the Hopf differential $Q$ is holomorphic for CMC surfaces. Let $\Pi$ denote the projection from the space of immersions to the Teichm\"uller space and consider the image of the map $\Pi(f^g_\varphi)$ which is a real $1-$dimensional submanifold, as $\varphi$ determines the conformal structure of the surface. For the symmetric surfaces we consider the space of  holomorphic and symmetric quadratic differentials is real 2 dimensional. Moreover, it is well known that $\Pi$  fails to be submersive at isothermic, and hence at CMC, surfaces. Therefore, the pairing between
$Q$ and $\frac{\partial}{\partial\varphi}$ is non-degenerate and  the Willmore functional is monotonic along the
family of CMC surfaces as long as $H\neq0.$ Thus, the result follows from Proposition \ref{prop:H}.
\end{proof}
\begin{remark}
Proposition \ref{prop:willmore} can also be proven   analogously to Proposition  \ref{prop:H} via Theorem \ref{thm:construction-fixed-varphi}.
\end{remark}

\begin{corollary}
For all $\varphi \in (0, \tfrac{\pi}{2})$ and $g \gg1$ fixed there exists a
 constrained Willmore minimizer in the conformal class of $f^g_\varphi$.\end{corollary}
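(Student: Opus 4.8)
The plan is to derive the corollary directly from the Willmore energy estimate of Proposition \ref{prop:willmore} together with the existence theory for minimizers of the Willmore functional in a fixed conformal class. First I would observe that $f^g_{\varphi}\colon M^g_{\varphi}\to\S^3$ is itself a conformal immersion of the Riemann surface $M^g_{\varphi}$, so by Proposition \ref{prop:willmore}, for $g\gg1$,
\[
\beta^g_{\varphi}:=\inf\bigl\{\mathcal W(f)\ \big|\ f\colon M^g_{\varphi}\to\S^3\ \text{conformal immersion}\bigr\}\ \le\ \mathcal W(f^g_{\varphi})\ <\ 8\pi .
\]
Hence the conformally constrained Willmore infimum of the conformal class of $f^g_{\varphi}$ lies strictly below $8\pi$.

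Next I would invoke the compactness and regularity results of Kuwert--Sch\"atzle \cite{KuwertSchatzle}, Kuwert--Li \cite{KuwertLi} and Rivi\`ere \cite{Riviere1, Riviere2}. Using the conformal invariance of the Willmore energy one may pass a minimizing sequence to $\R^3$ (or argue directly in $\S^3$); since $M^g_{\varphi}$ has genus $g\ge1$ and $\beta^g_{\varphi}<8\pi$, a minimizing sequence within the fixed conformal class cannot degenerate --- in particular no round sphere, which would contribute an extra $4\pi$, can bubble off, and the conformal type is held fixed --- so, after reparametrization and composition with ambient conformal transformations, it subconverges to a smooth conformal immersion $f_{\star}\colon M^g_{\varphi}\to\S^3$ with $\mathcal W(f_{\star})=\beta^g_{\varphi}$. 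By construction $f_{\star}$ is a critical point of $\mathcal W$ restricted to its conformal class realizing the infimum, i.e.\ a constrained Willmore minimizer. Since $\mathcal W(f_{\star})<8\pi$, the Li--Yau inequality \cite{LiYau} shows moreover that $f_{\star}$ is an embedding, which recovers the corollary stated in the introduction.

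The analytic work is carried entirely by the cited theorems, so the only step that needs attention is checking that their hypotheses hold in our situation: the genus is $\ge1$ for $g\gg1$, the conformal structure $M^g_{\varphi}$ is exactly the one held fixed along the minimizing sequence, the ambient space is $\S^3$ (covered by these results through conformal invariance), and the energy threshold below which attainment is guaranteed is precisely the $8\pi$ undercut by Proposition \ref{prop:willmore}. The one ingredient genuinely specific to the present family is the strict inequality $\mathcal W(f^g_{\varphi})<8\pi$; everything else is an appeal to the literature.
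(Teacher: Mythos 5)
Your proposal is correct and follows essentially the same route as the paper: the paper's proof is a one-line appeal to Proposition \ref{prop:willmore} (giving $\mathcal W(f^g_\varphi)<8\pi$, hence an infimum below $8\pi$ in the conformal class) combined with the attainment result of Kuwert--Sch\"atzle for a prescribed conformal class. Your additional remarks on non-degeneration of the minimizing sequence and on embeddedness via Li--Yau simply unpack the content of the cited theorems and match the discussion already given in the paper's introduction.
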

\begin{proof}
This follows from Proposition \ref{prop:willmore} and \cite{KuwertSchatzle}, which gives the existence of Willmore energy minimizer with prescribed conformal class 
provided that the infimum energy in the conformal class is below $8 \pi.$
\end{proof}

We end this section by computing some integrals needed to obtain the first order expansion term of the area.
\begin{proposition}
With the notations \eqref{omegaabc} and \eqref{Omegaabc}, we have
\[\int_0^1\omega_a\Omega_b-\omega_b\Omega_a=4\pi\ii\log(\sin(\varphi))-\ii(\pi-2\varphi)\log\left(\frac{1-\cos(\varphi)}{1+\cos(\varphi)}\right)\]
\[\int_0^{\ii}\omega_a\Omega_c-\omega_c\Omega_a=-4\pi\ii\log(\cos(\varphi))+2\ii\varphi\log\left(\frac{1-\sin(\varphi)}{1+\sin(\varphi)}\right).\]
\label{prop:integrals}
\end{proposition}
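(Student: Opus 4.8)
## Proof proposal

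The plan is to compute both integrals explicitly by reducing them to elementary primitives. The key observation is that the 1-forms $\omega_a, \omega_b, \omega_c$ are all of the form $\sum \pm \frac{dz}{z-p_k}$, hence their primitives $\Omega_a, \Omega_b, \Omega_c$ are sums of logarithms $\sum \pm \log(z-p_k)$ (with the branch fixed by integrating from $0$ along the relevant segment). Therefore each integrand $\omega_a\Omega_b - \omega_b\Omega_a$ is, after expansion, a sum of terms of the shape $\frac{\log(z-p_j)}{z-p_k}\,dz$, each of which integrates to a dilogarithm or, in the degenerate cases $j=k$, to $\tfrac12\log(z-p_k)^2$. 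The strategy is thus: first expand the integrand into these elementary pieces; second, integrate termwise using $\int \frac{\log(z-p_j)}{z-p_k}dz = \Li_2\!\big(\tfrac{z-p_k}{p_j-p_k}\big) + \log(z-p_j)\log\!\big(\tfrac{p_k-p_j}{z-p_j}\big) \cdot(\text{sign})$ type identities; third, evaluate at the endpoints $z=1$ (resp.\ $z=\ii$) and $z=0$ using the explicit values of $p_k = \pm e^{\pm\ii\varphi}$ together with the already-computed boundary values $\Omega_a(1), \Omega_b(1), \Omega_c(1)$ from \eqref{eq:Omega1} and $\Omega_a(\ii), \Omega_b(\ii), \Omega_c(\ii)$ from \eqref{eq:OmegaI}; fourth, collect the dilogarithm contributions and show that all the genuinely transcendental $\Li_2$-terms cancel in pairs (using reflection/inversion identities for $\Li_2$ and the symmetry of the configuration $p_1 = e^{\ii\varphi}, p_4 = e^{-\ii\varphi}$ being complex conjugates, and $p_2, p_3$ the negatives), leaving only the stated products of logarithms.

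An alternative and cleaner route, which I would actually prefer to carry out, is integration by parts combined with a symmetry trick. Writing $I_b = \int_0^1 (\omega_a\Omega_b - \omega_b\Omega_a)$, note that $d(\Omega_a\Omega_b) = \omega_a\Omega_b + \omega_b\Omega_a$, so
\[
\int_0^1 \omega_a\Omega_b - \omega_b\Omega_a = \Omega_a(1)\Omega_b(1) - 2\int_0^1 \omega_b\Omega_a.
\]
This reduces each identity to a single integral $\int_0^1 \omega_b\Omega_a$ (resp.\ $\int_0^\ii \omega_c\Omega_a$). For this remaining integral one again expands $\Omega_a = \log(z-p_1) - \log(z-p_2) + \log(z-p_3) - \log(z-p_4)$ and integrates each $\int \frac{\log(z-p_j)}{z-p_k}dz$ against the appropriate sign pattern from $\omega_b$. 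Here the reflection symmetry $z \mapsto \bar z$ (which fixes the real segment $[0,1]$ and swaps $p_1 \leftrightarrow p_4$, $p_2 \leftrightarrow p_3$) forces the dilogarithm terms to organize into $\Li_2(\zeta) + \Li_2(\bar\zeta)$ or $\Li_2(\zeta)+\Li_2(1-\zeta)$ combinations, which collapse to elementary closed forms by the standard five-term-free identities $\Li_2(\zeta)+\Li_2(1-\zeta) = \tfrac{\pi^2}{6} - \log\zeta\log(1-\zeta)$ and $\Li_2(\zeta)+\Li_2(\zeta^{-1}) = -\tfrac{\pi^2}{6} - \tfrac12\log^2(-\zeta)$. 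For the second integral the relevant symmetry is $z \mapsto \ii/z$ (the map $\tau$ composed appropriately), which exchanges the roles playing out along the imaginary segment from $0$ to $\ii$; the same dilogarithm reflection identities apply.

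The main obstacle I anticipate is bookkeeping of branches: the primitives $\Omega_a, \Omega_b, \Omega_c$ are defined by integration along a fixed segment, so the individual logarithms $\log(z-p_k)$ must be taken with consistent branch cuts not crossing $[0,1]$ (resp.\ $[0,\ii]$), and likewise the dilogarithm identities are only valid on specific domains. Getting the additive $2\pi\ii$ corrections right — in particular reconciling $\log\big(\tfrac{1-\cos\varphi}{1+\cos\varphi}\big)$ versus $\log(1-p_k)$ for the various $k$, and tracking that $\Omega_a(1) = \ii(\pi-2\varphi)$ is purely imaginary while $\Omega_b(1)$ is real — is where errors are most likely. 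A sanity check that the plan will use: differentiate both claimed identities in $\varphi$ and verify the much simpler resulting rational-function identity, which pins down the answer up to a $\varphi$-independent constant; that constant is then fixed by evaluating at $\varphi = \tfrac{\pi}{4}$ (or by taking $\varphi \to 0^+$ and matching the leading $\log\varphi$ asymptotics against the degenerate potential \eqref{eq:eta-phi0}). Once the constant is checked, no delicate dilogarithm manipulation is strictly needed for the writeup, though I would still record the dilogarithm cancellation as the conceptual reason the answer is elementary.
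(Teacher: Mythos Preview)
Your proposal outlines a brute-force route that could in principle be made to work, but it is genuinely different from the paper's argument, and the difference is worth noting.

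The paper avoids dilogarithms entirely. Its idea is to evaluate the contour integral $\int_{\gamma_1}(\omega_a\Omega_b-\omega_b\Omega_a)$ along the closed loop $\gamma_1$ consisting of the positive real axis followed by the positive imaginary axis traversed backwards, in two independent ways. First, the pull-back symmetries $\tau^*\omega_a=-\omega_a$, $\tau^*\omega_b=\omega_b$, $(\delta\circ\tau)^*\omega_c=\omega_c$, etc., relate $\int_1^{+\infty}$ to $\int_0^1$ (and $\int_{\ii}^{\ii\infty}$ to $\int_0^{\ii}$), so the full contour integral is expressed through $\int_0^1(\omega_a\Omega_b-\omega_b\Omega_a)$ plus explicit boundary products like $\Omega_a(1)\Omega_b(1)$. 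Second, the same contour integral is computed by the Residue Theorem: after subtracting $\frac{dz}{z-p_1}$ from each $\omega_\alpha$ to obtain forms $\widetilde\omega_\alpha$ holomorphic in the enclosed quadrant, the integral reduces to $4\pi\ii\big(\widetilde\Omega_\beta(p_1)-\widetilde\Omega_\alpha(p_1)\big)$, which is an elementary logarithm. Equating the two evaluations gives the result directly, with no $\Li_2$ ever appearing.

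What your approach buys is self-containment: you never leave the segment $[0,1]$ (or $[0,\ii]$), and the differentiation-in-$\varphi$ check is a robust way to pin down the constant without trusting delicate branch bookkeeping. What the paper's approach buys is conceptual clarity and brevity: the cancellation of transcendental terms that you would have to engineer by hand via $\Li_2$ reflection identities is automatic, because the Residue Theorem sees only the local data at $p_1$. Your anticipated ``main obstacle'' of branch bookkeeping is precisely what the paper's contour method sidesteps.
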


\begin{proof}
Let $\gamma_1$ be the closed loop given  as the real half-line from $0$ to $+\infty$ composed with the imaginary half-line from $+\ii\infty$ to $0$.
The proposition follows from comparing the results obtained from computing the integrals
\[\int_{\gamma_1}\omega_a\Omega_b-\omega_b\Omega_a
\quad\text{and}\quad 
\int_{\gamma_1}\omega_a\Omega_c-\omega_c\Omega_a,\]
in two different ways -- by applying the symmetries and by applying the Residue Theorem.\\

We have the symmetries
\[\tau^*\omega_a=-\omega_a,\quad 
\tau^*\omega_b=\omega_b\quad\text{and}\quad
\tau^*\omega_c=-\omega_c.\]
Since $\tau(1)=1$ we therefore get
\[\tau^*\Omega_a=-\Omega_a+2\Omega_a(1),\quad
\tau^*\Omega_b=\Omega_b\quad\text{and}\quad
\tau^*\Omega_c=-\Omega_c+2\Omega_c(1).\]
By the change of variable rule
\begin{eqnarray*}
\int_1^{+\infty}\omega_a\Omega_b-\omega_b\Omega_a&=&
-\int_0^1\tau^*(\omega_a\Omega_b-\omega_b\Omega_a)\\
&=&\int_0^1(\omega_a\Omega_b-\omega_b\Omega_a)+2\Omega_a(1)\Omega_b(1).
\end{eqnarray*}
Hence
\begin{equation}
\label{eq:integral1}
\int_0^{+\infty}\omega_a\Omega_b-\omega_b\Omega_a=
2\int_0^1(\omega_a\Omega_b-\omega_b\Omega_a)
+2\Omega_a(1)\Omega_b(1).
\end{equation}
In the same way
\begin{eqnarray*}
\int_1^{+\infty}\omega_a\Omega_c-\omega_c\Omega_a&=&
-\int_0^1 -\omega_a(-\Omega_c+2\Omega_c(1))+\omega_c(-\Omega_a+2\Omega_a(1))\\
&=&-\int_0^1(\omega_a\Omega_c-\omega_c\Omega_a)
+2\Omega_a(1)\Omega_c(1)-2\Omega_c(1)\Omega_a(1).
\end{eqnarray*}
Hence
\begin{equation}
\label{eq:integral2}
\int_0^{+\infty}\omega_a\Omega_c-\omega_c\Omega_a=0.
\end{equation}
Furthermore, we have the symmetries
\[(\delta\circ \tau)^*\omega_a=-\omega_a,\quad
(\delta\circ \tau)^*\omega_b=-\omega_b\quad\text{and}\quad
(\delta\circ \tau)^*\omega_c=\omega_c.\]
Since $\delta\circ \tau(\ii)=\ii$, we have by the same argument, with the
substitution $(a,b,c,1)\to (a,c,b,\ii)$
\begin{equation}
\label{eq:integral3}
\int_0^{+\ii\infty}\omega_a\Omega_c-\omega_c\Omega_a
=2\int_0^{\ii}(\omega_a\Omega_c-\omega_c\Omega_a)
+2\Omega_a(\ii)\Omega_c(\ii)
\end{equation}
\begin{equation}
\label{eq:integral4}
\int_0^{+\ii\infty}\omega_a\Omega_b-\omega_b\Omega_a=0.
\end{equation}
Note that in these computations, $\Omega_{\alpha}$ for $\alpha\in\{a,b,c\}$
denotes the primitive of $\omega_{\alpha}$ on a simply connected domain containing
the real and imaginary axes, respectively.
On the other hand, in the formulas of Proposition \ref{prop:integrals}, $\Omega_{\alpha}$
denotes the analytic continuation along $\gamma_1$. Hence for $\alpha,\beta\in\{a,b,c\}$
\[\int_{\gamma_1}\omega_{\alpha}\Omega_{\beta}=
\int_0^{+\infty}\omega_{\alpha}\Omega_{\beta}
-\int_0^{+\ii\infty}\omega_{\alpha}(\Omega_{\beta}+2\pi\ii),\]
because the analytic continuation of $\Omega_{\beta}$ along $\gamma_1$ coincides on $[0,\ii\infty]$
with $\Omega_{\beta}+\int_{\Gamma_1}\omega_{\beta}$, and $\int_{\Gamma_1}\omega_{\beta}= 2\pi i$ 
by the Residue Theorem.
By symmetry we have
\[\int_0^{+\ii\infty}\omega_a=2\Omega_a(\ii),\quad
\int_0^{+\ii\infty}\omega_b=2\Omega_b(\ii)\quad\text{and}\quad
\int_0^{+\ii\infty}\omega_c=0.\]
Hence using Equations \eqref{eq:integral1}, \eqref{eq:integral4} and then
\eqref{eq:Omega1} and \eqref{eq:OmegaI}
\begin{eqnarray}
\int_{\gamma_1}\omega_a\Omega_b-\omega_b\Omega_a
&=&2\int_0^1(\omega_a\Omega_b-\omega_b\Omega_a)
+2\Omega_a(1)\Omega_b(1)+4\pi\ii(\Omega_b(\ii)-\Omega_a(\ii))
\nonumber\\
&=&2\int_0^1(\omega_a\Omega_b-\omega_b\Omega_a)
+2\ii(\pi-2\varphi)\log\left(\frac{1-\cos(\varphi)}{1+\cos(\varphi)}\right)
+4\pi(\pi-2\varphi).
\label{eq:integral5}\end{eqnarray}
In the same way, using Equations \eqref{eq:integral2} and \eqref{eq:integral3}
\begin{eqnarray}
\int_{\gamma_1}\omega_a\Omega_c-\omega_c\Omega_a
&=&-2\int_0^{\ii}(\omega_a\Omega_c-\omega_c\Omega_a)
-2\Omega_a(\ii)\Omega_c(\ii)-4\pi\ii\,\Omega_a(\ii)
\nonumber\\
&=&-2\int_0^{\ii}(\omega_a\Omega_c-\omega_c\Omega_a)
+4\ii\varphi\log\left(\frac{1-\sin(\varphi)}{1+\sin(\varphi)}\right)
-8\pi\varphi.
\label{eq:integral6}\end{eqnarray}
Next we compute the contour integrals using the Residue Theorem.
For $\alpha\in\{a,b,c\}$, define
$$\widetilde\omega_{\alpha}=\omega_{\alpha}-\frac{dz}{z-p_1}.$$
Then $\widetilde\omega_{\alpha}$ is holomorphic in the disk $\Delta_1$ bounded
by $\gamma_1$ and
$$\widetilde\Omega_{\alpha}(z):=\int_0^z\widetilde\omega_{\alpha}
=\Omega_{\alpha}(z)-\log\left(1-\frac{z}{p_1}\right)$$
is well-defined and holomorphic in $\Delta_1$.
For $\alpha,\beta\in\{a,b,c\}$ we have
\begin{eqnarray*}
\lefteqn{\int_{\gamma_1}\omega_{\alpha}\Omega_{\beta}-\omega_{\beta}\Omega_{\alpha}
=\int_{\gamma_1}\omega_{\alpha}\widetilde\Omega_{\beta}-\omega_{\beta}\widetilde\Omega_{\alpha}+\int_{\gamma_1}(\widetilde\omega_{\alpha}-\widetilde\omega_{\beta})\log\left(1-\frac{z}{p_1}\right)}
\\
&=&2\pi\ii\left(\widetilde\Omega_{\beta}(p_1)-\widetilde\Omega_{\alpha}(p_1)\right)
+\left[\left(\widetilde\Omega_{\alpha}-\widetilde\Omega_{\beta}\right)\log\left(1-\frac{z}{p_1}\right)\right]_{\gamma_1(0)}^{\gamma_1(1)} -\int_{\gamma_1}(\widetilde\omega_{\alpha}-\widetilde\omega_{\beta})\frac{dz}{z-p_1}\\
&=&2\pi\ii\left(\widetilde\Omega_{\beta}(p_1)-\widetilde\Omega_{\alpha}(p_1)\right)
+0-2\pi\ii\left(\widetilde\Omega_{\alpha}(p_1)-\widetilde\Omega_{\beta}(p_1)\right)\\
&=&4\pi\ii\left(\widetilde\Omega_{\beta}(p_1)-\widetilde\Omega_{\alpha}(p_1)\right),
\end{eqnarray*}
where we have used the Residue Theorem, integration by parts, the fact that $\widetilde\Omega_{\alpha}(0)=0$ and the Residue Theorem again.
Note that $\log(1-z)$ is well-defined for $z\in\overline{D}(0,1)\setminus\{1\}.$
Then
\begin{eqnarray}
\int_{\gamma_1}\omega_a\Omega_b-\omega_b\Omega_a
&=&
4\pi\ii\left[2\log(1-p_1/p_4)-2\log(1-p_1/p_3)\right]\nonumber\\
&=&4\pi\ii\left(2\log(\sin(\varphi))+2\ii\varphi-\ii\pi\right)
\label{eq:integral7}
\end{eqnarray}
\begin{eqnarray}
\int_{\gamma_1}\omega_a\Omega_c-\omega_c\Omega_a&=&
4\pi\ii\left[2\log(1-p_1/p_2)-2\log(1-p_1/p_3)\right]\nonumber\\
&=&4\pi\ii\left(2\log(\cos(\varphi))+2\ii\varphi\right).
\label{eq:integral8}\end{eqnarray}
Comparing \eqref{eq:integral5} with \eqref{eq:integral7}
and \eqref{eq:integral6} with \eqref{eq:integral8} gives Proposition \ref{prop:integrals}.
\end{proof}

\section{Higher order derivatives}\label{HOD}
In this section, we present an iterative algorithm to compute the higher order derivatives of the parameters with respect to $t.$ The algorithm works for general $\varphi,$ but simplifies significantly for the most interesting minimal case $\varphi=\pi/4$ as in this case  $\theta=\pi/2$ is constant. We focus here on this minimal case to enhance the presentation. The algorithm has been implemented (for arbitrary $\varphi$) and gives the following estimate for the area of Lawson
minimal surfaces of genus $g\gg 1$ as a function of $t=\frac{1}{2g+2}$:
\def\ar{\alpha}
$$\mbox{Area}(t)=8\pi\big(
1-\ar_1\,t-\ar_3\,t^3
-\ar_5\,t^5-\ar_7\,t^7
+O(t^9)\big).$$
where 
\begin{align*}
&\ar_1=\log(2)\\
&\ar_3= \tfrac{9}{4}\zeta(3)\\
&\ar_5\simeq 3.69962699449761843989338013547104461773632954830910\ldots\\
&\ar_7\simeq -53.1688000602634657601186493744463143722221041377109\ldots.
\end{align*}


\begin{remark}
The value $\ar_1$ was computed in  \eqref{eq:lawarea}, see also \cite{HHT}. The third order term was first computed using numerical values for Multilogarithms and WolframAlpha suggested  $\ar_3= \tfrac{9}{4}\zeta(3).$ Steven Charlton
proved this equality in Appendix \ref{appendixC}. There he also found conjectural expressions for \( \ar_5 \) and \( \ar_7 \), which hold to at least 1000 decimal places.
\end{remark}

\newcommand{\low}[1]{{#1}_{\mbox{\scriptsize lower}}}
\newcommand{\lowind}[2]{{#1}_{\mbox{\scriptsize lower},#2}}
\newcommand{\indlow}[2]{{#1}_{#2,\mbox{\scriptsize lower}}}
\subsection{Higher order derivatives of $\Phi$.}\label{sec:higherderivatives}
Fix $\varphi\in (0, \tfrac{\pi}{2})$ and 
let $\Phi_t=\Phi_{t,\varphi}^{\xx(t,\varphi)}$.
We denote by  $\Phi^{(n)}$ and $\xx^{(n)}$ the $n$-th order derivatives of $\Phi_t$ and $\xx(t,\varphi)$ with respect to $t$ at $t=0$.
The goal in this section is to express $\Phi^{(n)}$ in terms of the lower order derivatives of the parameters and certain iterated integrals.
To do so, it will be convenient to relabel $\omega_1=\omega_a$, $\omega_2=\omega_b$ and
$\omega_3=\omega_c$ and similarly,
$\mathfrak m_1=\mathfrak m_a$, $\mathfrak m_2=\mathfrak m_b$ and
$\mathfrak m_3=\mathfrak m_c$.
Moreover, the coefficients $a$, $b$, $c$ will also be relabelled as $\xx_1$, $\xx_2$ and
$\xx_3$, respectively, whenever the latter notation is more convenient.
Finally we let $\yy_i=r\xx_i$ for $1\leq i\leq 3$. With these notations, we can write the potential as
$$\eta_t^{\xx}=\sum_{i=1}^3 t\yy_i \mathfrak{m}_i \omega_i.$$
We introduce the following notations, for $(i_1,\cdots,i_n)\in\{1,2,3\}^n :$
$$
\yy_{i_1,\cdots,i_n}=\prod_{j=1}^n \yy_{i_j}
\qquad\text{and}\qquad
\mathfrak{m}_{i_1,\cdots,i_n}=\prod_{j=1}^n \mathfrak{m}_{i_j}.$$
Recall that for $i\in\{1,2,3\}$
$$\Omega_i(z)=\int_0^z\omega_i(z).$$
For $n\geq 2$ and $(i_1,\cdots,i_n)\in\{1,2,3\}^n$, we define the iterated integral $\Omega_{i_1,\cdots,i_n}$ recursively by
\begin{equation}\label{eqn:def:omegaintegral}
\Omega_{i_1,\cdots,i_n}(z)=\int_0^z\Omega_{i_1,\cdots,i_{n-1}}\omega_{i_n}.
\end{equation}
We need the value of $\Omega_{i_1,\cdots,i_n}(1)$
which can be expressed in terms of multipolylogs explicitly, see Appendix \ref{section:polylog}.  

\begin{proposition}
For $n\geq 1$,
\begin{equation}
\label{eq:higher1}
\Phi^{(n)}(z)=\sum_{\ell=1}^n\frac{n!}{(n-\ell)!}\sum_{i_1,\cdots,i_{\ell}}
\yy_{i_1,\cdots,i_{\ell}}^{(n-\ell)}
\mathfrak m_{i_1,\cdots,i_{\ell}}
\Omega_{i_1,\cdots,i_{\ell}}(z).
\end{equation}
\end{proposition}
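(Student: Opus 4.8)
The plan is to prove the formula \eqref{eq:higher1} by induction on $n$, differentiating the ODE $d_\Sigma\Phi_t=\Phi_t\eta_t^{\xx}$ repeatedly and integrating from the base point $z=0$, where $\Phi_0=\Id$. The key observation is that $\eta_t^{\xx}=\sum_{i=1}^3 t\yy_i\mathfrak m_i\omega_i$ depends on $t$ both explicitly through the prefactor $t$ and implicitly through $\yy_i=\yy_i(t,\varphi)=r(t,\varphi)\xx_i(t,\varphi)$, so that $\partial_t^k(t\yy_i)|_{t=0}=k\,\yy_i^{(k-1)}$ by the Leibniz rule. I would first record this elementary fact, then set up the induction.

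For the base case $n=1$, differentiating $d_\Sigma\Phi_t=\Phi_t\eta_t$ at $t=0$ and using $\Phi_0=\Id$ gives $d_\Sigma\Phi'=\eta'|_{t=0}=\sum_i\yy_i^{(0)}\mathfrak m_i\omega_i$, which integrates to $\Phi^{(1)}(z)=\sum_i\yy_i^{(0)}\mathfrak m_i\Omega_i(z)$; this matches \eqref{eq:higher1} with $\ell=1$ (the $n!/(n-\ell)!$ factor is $1$). For the inductive step, I would apply the higher Leibniz rule to $d_\Sigma\Phi^{(n+1)}=\partial_t^{n+1}(\Phi_t\eta_t)|_{t=0}=\sum_{j=0}^{n}\binom{n+1}{j}\Phi^{(j)}\cdot\partial_t^{n+1-j}\eta_t|_{t=0}$. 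Writing $\partial_t^{n+1-j}\eta_t|_{t=0}=(n+1-j)\sum_i\yy_i^{(n-j)}\mathfrak m_i\omega_i$ and substituting the inductive expression for $\Phi^{(j)}$, one gets a sum over $j$, over a multi-index $(i_1,\dots,i_\ell)$ of length $\ell\le j$, and over the new index $i=i_{\ell+1}$. The form $\Phi^{(j)}\cdot(\dots)\omega_i$ then integrates to produce $\Omega_{i_1,\dots,i_\ell,i}$ by the very definition \eqref{eqn:def:omegaintegral} of the iterated integral, while the matrix factors multiply to $\mathfrak m_{i_1,\dots,i_\ell,i}$ and the coefficients to $\yy_{i_1,\dots,i_\ell}^{(j-\ell)}\yy_i^{(n-j)}$.

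The main bookkeeping obstacle — and the heart of the proof — is to check that after reindexing ($\ell'=\ell+1$, so new multi-indices have length $\ell'$ from $1$ to $n+1$) the combinatorial coefficients collapse correctly. Concretely, for a fixed multi-index of length $\ell'$ the contributions come from all ways of splitting off the last letter, i.e.\ from all $j$ with $\ell'-1\le j\le n$, and one must verify the identity
\begin{equation*}
\sum_{j=\ell'-1}^{n}\binom{n+1}{j}\,\frac{j!}{(j-\ell'+1)!}\,(n+1-j)\,\binom{n-j}{\,\cdot\,}^{-1}\!\!\text{-type terms}
=\frac{(n+1)!}{(n+1-\ell')!}
\end{equation*}
once one also accounts for the Leibniz distribution of the $n-j$ remaining $t$-derivatives among the factors $\yy_{i_1},\dots,\yy_{i_{\ell'-1}},\yy_{i_{\ell'}}$ so that they reassemble into the single symbol $\yy^{(n+1-\ell')}_{i_1,\dots,i_{\ell'}}$; here the Vandermonde/Chu identity $\sum_k\binom{a}{k}\binom{b}{m-k}=\binom{a+b}{m}$ does the job. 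I would present this as: the derivative $\yy^{(n+1-\ell')}_{i_1,\dots,i_{\ell'}}=\partial_t^{n+1-\ell'}\prod_{p}\yy_{i_p}|_{t=0}$ is itself a Leibniz sum, and all the multinomial coefficients, together with the $\binom{n+1}{j}$ and the residual $j!/(j-\ell'+1)!$, telescope to $(n+1)!/(n+1-\ell')!$. Once the coefficient identity is verified, the inductive step closes and the proposition follows. The reference to Appendix~\ref{section:polylog} for evaluating $\Omega_{i_1,\dots,i_n}(1)$ in terms of multiple polylogarithms is only needed for the subsequent applications, not for the proof of \eqref{eq:higher1} itself.
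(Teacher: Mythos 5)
Your proposal is correct and follows essentially the same route as the paper: induction on $n$, the higher Leibniz rule applied to $d\Phi_t=\Phi_t\eta_t$ with $\eta^{(k)}=k\sum_i\yy_i^{(k-1)}\mathfrak m_i\omega_i$, integration against the recursive definition of $\Omega_{i_1,\dots,i_\ell}$, and reassembly of the coefficient products via the binomial Leibniz rule. The only cosmetic difference is that the paper needs no Vandermonde-type summation: since the inductive hypothesis already carries the packaged symbol $\yy_{i_1,\dots,i_\ell}^{(k-\ell)}$, a single binomial split $\yy_{i_1,\dots,i_{\ell+1}}^{(n-\ell)}=\sum_s\binom{n-\ell}{s}\yy_{i_1,\dots,i_\ell}^{(s)}\yy_{i_{\ell+1}}^{(n-\ell-s)}$ together with the elementary identity $\binom{n+1}{k}\frac{k!}{(k-\ell)!}(n+1-k)=\frac{(n+1)!}{(n-k)!\,(k-\ell)!}$ closes the induction.
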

\begin{proof} 
We prove the formula by induction over $n.$
For $n=1$, the formula gives
$\Phi'=\sum_i \cv{\xx}_i\mathfrak{m}_i\Omega_i$
which was already shown in \eqref{eq:dphiprime}.
Assume that Equation \eqref{eq:higher1} holds for all  $k \leq n$.
We have for $k\geq 1$
$$\eta^{(k)}=k\sum_i\yy_i^{(k-1)}\mathfrak{m}_i\omega_i.$$
Differentiating $d\Phi_t=\Phi_t\eta_t$ with respect to $t$ at $t=0$ we obtain by using the Leibnitz rule
\begin{eqnarray*}
d\Phi^{(n+1)}&=&\sum_{k=0}^{n}{n+1\choose k}\Phi^{(k)}\eta^{(n+1-k)}\\
&=&(n+1)\sum_i \yy_i^{(n)}\mathfrak{m}_i\omega_i+\sum_{k=1}^n\sum_{\ell=1}^k{n+1\choose k}\frac{k!}{(k-\ell)!}(n+1-k)\\
&&\times\sum_{i_1,\cdots,i_{\ell+1}}\yy_{i_1,\cdots,i_{\ell}}^{(k-\ell)}
\yy_{i_{\ell+1}}^{(n-k)}\mathfrak{m}_{i_1,\cdots,i_{\ell+1}}\Omega_{i_1,\cdots,i_{\ell}}\omega_{i_{\ell+1}}.
\end{eqnarray*}
Integrating the equation with respect to $z$ and exchange the sums using 
$${n+1\choose k}\frac{k!}{(k-\ell)!}(n+1-k)=\frac{(n+1)!}{(n-k)!(k-\ell)!}$$
gives
\begin{eqnarray*}
\Phi^{(n+1)}&=&(n+1)\sum_i \yy_i^{(n)}\mathfrak{m}_i\Omega_i
+\sum_{\ell=1}^n
\sum_{i_1,\cdots,i_{\ell+1}}
\sum_{k=\ell}^n 
\frac{(n+1)!}{(n-k)!(k-\ell)!}
\yy_{i_1,\cdots,i_{\ell}}^{(k-\ell)}
x_{i_{\ell+1}}^{(n-k)}\mathfrak{m}_{i_1,\cdots,i_{\ell+1}}\Omega_{i_1,\cdots,i_{\ell+1}}\\
&=&(n+1)\sum_i \yy_i^{(n)}\mathfrak{m}_i\Omega_i
+\sum_{\ell=1}^n
\sum_{i_1,\cdots,i_{\ell+1}}\sum_{s=0}^{n-\ell}\frac{(n+1)!}{(n-\ell-s)!s!}
\yy_{i_1,\cdots,i_{\ell}}^{(s)}
x_{i_{\ell+1}}^{(n-\ell-s)}\mathfrak{m}_{i_1,\cdots,i_{\ell+1}}\Omega_{i_1,\cdots,i_{\ell+1}}
\end{eqnarray*}
where we made an index change  $k=\ell+s$.
Using 
$$\frac{(n+1)!}{(n-\ell-s)!s!}=\frac{(n+1)!}{(n-\ell)!}{n-\ell\choose s}$$
and the Leibniz rule this yields
\begin{eqnarray*}
\Phi^{(n+1)}&=&(n+1)\sum_i \yy_i^{(n)}\mathfrak{m}_i\Omega_i
+\sum_{\ell=1}^n\sum_{i_1,\cdots,i_{\ell+1}}\frac{(n+1)!}{(n-\ell)!}
\yy_{i_1,\cdots,i_{\ell+1}}^{(n-\ell)}
\mathfrak{m}_{i_1,\cdots,i_{\ell+1}}\Omega_{i_1,\cdots,i_{\ell+1}}\\
&=&\sum_{m=1}^{n+1}\frac{(n+1)!}{(n+1-m)!}\sum_{i_1,\cdots,i_{m}}
\yy_{i_1,\cdots,i_m}^{(n+1-m)}\mathfrak{m}_{i_1,\cdots,i_m}
\Omega_{i_1,\cdots,i_m}\end{eqnarray*}
with the change of indices $m=\ell+1$.
\end{proof}
\subsection{The iterative algorithm}\label{sec:algorithm}
To enhance presentation we now restrict to the case $\varphi=\tfrac{\pi}{4}$. The general case works analogously, with the same conclusion, but has significantly more terms dealing with the derivatives of $\theta$.

\begin{proposition}
\label{ansatz:higher-order-derivatives}
For $n\geq1$ the quantities $a^{(n)}$, $b^{(n)}$ and $c^{(n)}$ are polynomials in $\lambda$ of degree at most $(n+1)$  and $\mathcal P^{(n+1)}$ is a Laurent polynomial in $\lambda$ of degree at most $(n+1)$.
\end{proposition}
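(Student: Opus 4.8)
The plan is to prove by simultaneous strong induction on $n$ the two assertions
$(X_n)$: ``$a^{(n)},b^{(n)},c^{(n)}$ are polynomials in $\lambda$, with non-negative powers only, of degree $\le n+1$'', and
$(P_m)$: ``$\mathcal P^{(m)}$ and $\mathcal Q^{(m)}$ are Laurent polynomials in $\lambda$ with powers in $[-m,m]$''. The non-negativity of powers in $(X_n)$ is automatic, since $\cv a,\cv b,\cv c$ do not depend on $t$, so $a^{(n)}=\wt a^{(n)}\in\mathcal W^{\geq 0}_{\R}$ for $n\ge1$, and likewise for $b,c$, while every $r^{(k)}$ is a $\lambda$-independent constant. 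The base cases $(P_0)$, $(P_1)$, $(X_1)$ are immediate: the first two from $\mathcal P^{(0)}=\Id$ and from \eqref{eq:dphiprime}, recalling that $\cv a,\cv b,\cv c$ carry powers $\lambda^{\pm1}$ only; the third is Proposition \ref{prop:derivatives}. I restrict throughout to $\varphi=\tfrac\pi4$, so that $\theta\equiv\tfrac\pi2$ and no derivatives of $\theta$ enter. Assume now $(X_k)$ for all $k<n$ and $(P_m)$ for all $m\le n$.

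The first observation is that \eqref{eq:higher1} exhibits $\Phi^{(m)}$, hence $\mathcal P^{(m)}=\Phi^{(m)}(1)$ and $\mathcal Q^{(m)}=\Phi^{(m)}(\ii)$, as a polynomial expression in the derivatives $a^{(k)},b^{(k)},c^{(k)},r^{(k)}$ of order $k\le m-1$ only, with coefficients built from the constants $\Omega_{i_1,\dots,i_\ell}(1),\Omega_{i_1,\dots,i_\ell}(\ii)$ of \eqref{eqn:def:omegaintegral} and from the central values. Consequently $a^{(n)},b^{(n)},c^{(n)}$ are not visible until order $n+1$: expanding $\mathfrak p,\mathfrak q$ of \eqref{pq} by Leibniz, using $\mathcal P|_{t=0}=\mathcal Q|_{t=0}=\Id$, that $\mathfrak m_a$ is diagonal, that $(\mathfrak m_b)_{21}-(\mathfrak m_b)_{12}=0=(\mathfrak m_c)_{21}+(\mathfrak m_c)_{12}$, and the values $\Omega_c(1)=\ii\pi$, $\Omega_b(\ii)=-\ii\pi$ from \eqref{eq:Omega1}--\eqref{eq:OmegaI}, I obtain
\[\mathfrak p^{(n+1)}=2\pi(n+1)!\,c^{(n)}+R_{\mathfrak p},\qquad
\mathfrak q^{(n+1)}=2\pi(n+1)!\,b^{(n)}+R_{\mathfrak q},\]
where $R_{\mathfrak p},R_{\mathfrak q}$ involve only derivatives of $a,b,c$ of order $\le n-1$ together with $r^{(\le n)}$; by $(X_{\le n-1})$, $(P_{\le n})$ and the elementary bound $\deg(x^{(q_1)}y^{(q_2)})\le(q_1+1)+(q_2+1)$ (each of $\cv a,\cv b,\cv c$ having degree $1$) with $q_1+q_2\le n+1$ split so that no single index equals $n+1$, both $R_{\mathfrak p}$ and $R_{\mathfrak q}$ are Laurent polynomials with powers in $[-(n+1),n+1]$.

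Next I invoke the monodromy equations. Since $\eta_t^{\xx(t)}$ solves \eqref{monodromy-problem3} for all $t$, we have $\mathcal F\equiv\mathcal G\equiv0$, hence $\mathfrak p=\mathfrak p^*$, $\mathfrak q=\mathfrak q^*$, so $\mathfrak p^{(n+1)}=(\mathfrak p^{(n+1)})^*$ and $\mathfrak q^{(n+1)}=(\mathfrak q^{(n+1)})^*$. Projecting the first identity onto $\mathcal W^{>0}$ and using that $c^{(n)}$ has non-negative powers gives $2\pi(n+1)!\,(c^{(n)})^{+}=\bigl((R_{\mathfrak p})^-\bigr)^*-(R_{\mathfrak p})^+$, a Laurent polynomial of degree $\le n+1$; hence $c^{(n)}_j=0$ for $j\ge n+2$, and symmetrically for $b^{(n)}$. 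For $a^{(n)}$ I differentiate $\mathcal K=r^2(a^2-b^2-c^2)=-1$ exactly $n$ times and take $\mathcal K^{(n)}-(\mathcal K^{(n)})^*=0$; isolating the top-order terms (as in Lemma \ref{Lem:realFGK} and the proof of Proposition \ref{prop:derivatives}) this reads $2\,\cv a\,(a^{(n)}+(a^{(n)})^*)=2\,\cv b\,(b^{(n)}-(b^{(n)})^*)+2\,\cv c\,(c^{(n)}-(c^{(n)})^*)-(S-S^*)$, where $S$ collects products of two derivatives of $a,b,c$ whose orders sum to $\le n$ with no index equal to $n$, so $\deg S\le n+2$; by the bound just obtained on $b^{(n)},c^{(n)}$ the whole right-hand side has degree $\le n+2$. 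Since $\cv a$ has top power $\lambda$ and $a^{(n)}+(a^{(n)})^*$ is palindromic with non-negative-power part $a^{(n)}$, no cancellation occurs at the top, so $\deg a^{(n)}\le n+1$. The remaining scalar unknowns $a^{(n)}_0,b^{(n)}_0,c^{(n)}_0,r^{(n)}$ are then determined by the low-order coefficients of $\mathcal K^{(n)}=0$ and by $\mathcal H_1^{(n)}=\mathcal H_2^{(n)}=0$, which does not affect degrees. This establishes $(X_n)$.

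Finally, with $(X_{\le n})$ now available, $(P_{n+1})$ follows directly from \eqref{eq:higher1}: each term of $\Phi^{(n+1)}$ is a constant $\Omega_{\cdots}(z)$ times a $t$-derivative, of order $n+1-\ell$, of a monomial in $r,a,b,c$; by Leibniz this derivative is a finite sum of products of scalars $r^{(\cdot)}$ with factors $a^{(m_j)},b^{(m_j)},c^{(m_j)}$, each a Laurent polynomial of degree $\le m_j+1$ and most negative power $\ge-1$, subject to $\sum m_j\le n+1-\ell$. Each such product therefore has degree $\le\sum_j(m_j+1)\le n+1$ and most negative power $\ge-\ell\ge-(n+1)$; summing the finitely many terms and setting $z=1$ (resp.\ $z=\ii$) yields $(P_{n+1})$. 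I expect the only genuine work to be the bookkeeping in the two middle steps --- keeping precise track that $\cv a,\cv b,\cv c$ carry powers exactly $\lambda^{\pm1}$, that all $t$-derivatives of $\wt a,\wt b,\wt c$ carry only non-negative powers, and that each monodromy equation splits cleanly into a leading term linear in the sought $n$-th derivative plus a remainder of controlled degree built from strictly lower orders; granted this, the degree bound propagates essentially for free.
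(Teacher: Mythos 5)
Your proof is correct and takes essentially the same route as the paper's: induction using \eqref{eq:higher1} to split $\mathcal P^{(n+1)}$ (and $\mathfrak p^{(n+1)}$, $\mathfrak q^{(n+1)}$, $\mathcal K^{(n)}$) into a leading part linear in the $n$-th order derivatives plus a lower-order remainder of controlled degree, then using the reality conditions $\mathfrak p=\mathfrak p^*$, $\mathfrak q=\mathfrak q^*$, the Sym-point conditions, and $\mathcal K^{(n)}=0$ to pin down $c^{(n)}$, $b^{(n)}$, $a^{(n)}$, $r^{(n)}$ with the stated degree bounds. The only deviations are cosmetic: the leading coefficient is $2\pi(n+1)$, not $2\pi(n+1)!$ (immaterial for the degree count); the paper obtains $b^{(n)}=(-1)^n c^{(n)}$ from Proposition \ref{propo:symmetries} instead of running the $\mathfrak q$-argument; and it extracts $a^{(n)}$ and $r^{(n)}$ from the full equation $\mathcal K^{(n)}=0$ by dividing $\lambda\low{\mathcal K}^{(n)}-2\lambda r^{(n)}$ by $\lambda^2-1$, which is equivalent to your division of the antisymmetrized equation by $2\cv{a}$ (in both cases the exactness of the division is forced by regularity of $a^{(n)}$ at $\lambda=\pm1$).
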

The proof will show how to compute $\xx^{(n)}$ from the lower order derivatives $\xx^{(k)}$ for $k<n$ yielding an iterative algorithm to compute the derivatives of any order.
\begin{proof}
We prove this proposition by induction over $n\geq1$. For $n=1$ we have already explicitly computed $a',$ $b'$ and $c'$ as well as $\mathcal P^{(2)}$ 
in the proof of Proposition \ref{Prop:1stderivatives} and they satisfy the desired properties. Therefore, fix $n\geq 2$ and assume that for $1\leq k<n$, $a^{(k)}$, $b^{(k)}$ and $c^{(k)}$ are polynomials of degree at most $(k+1)$ and $\mathcal P^{(k+1)}$ is a Laurent polynomial of degree at most $(k+1)$.

By Leibnitz rule
\begin{equation}
\label{eq:yk}
\yy_i^{(k)}=\sum_{\ell=0}^k{k\choose \ell}r^{(k-\ell)} \xx_i^{(\ell)}\quad \mbox{ for }\quad 1\leq i\leq 3
\end{equation}
so $\yy_i^{(k)}$ is a polynomial of degree at most $(k+1)$ for $1\leq k<n$. Since $\yy_i^{(0)}=\cv{\xx}_i$,
an easy induction on $\ell$ shows that for $0\leq k<n$, $\yy_{i_1,\cdots,i_{\ell}}^{(k)}$ is a Laurent polynomial of degree at most $k+\ell$.

In the following we use a subscript ``lower'' to denote a quantity which only depends on the lower order derivatives $a^{(k)}$, $b^{(k)}$, $c^{(k)}$, $r^{(k)}$ and $\mathcal P^{(k+1)}$ for $k< n$. Using Equation \eqref{eq:higher1} at $z=1$ and Equation \eqref{eq:yk} with $k=n$, we write
$$\mathcal P^{(n+1)}=(n+1)\sum_{i=1}^3 \left(\xx_i^{(n)}+r^{(n)}\cv{\xx}_i\right)\mathfrak{m}_i\Omega_i(1)
+\low{\mathcal P}^{(n+1)}$$
where
\begin{eqnarray*}
\low{\mathcal P}^{(n+1)}&=&(n+1)\sum_{i=1}^3\sum_{\ell=1}^{n-1}{n\choose \ell}r^{(n-\ell)}\xx_i^{(\ell)}\mathfrak{m}_i\Omega_i(1)\\
&&+\sum_{\ell=2}^{n+1}\frac{(n+1)!}{(n+1-\ell)!}
\sum_{i_1,\cdots,i_{\ell}}\yy_{i_1,\cdots,i_{\ell}}^{(n+1-\ell)}\mathfrak{m}_{i_1,\cdots,i_{\ell}}\Omega_{i_1,\cdots,i_{\ell}}(1).
\end{eqnarray*}

Therefore, $\low{\mathcal P}^{(n+1)}$ is a Laurent polynomial in $\lambda$ of degree at most $(n+1)$.\\

By Leibnitz rule, we have
\begin{eqnarray}
\mathfrak p^{(n+1)}&=&\mathcal P_{21}^{(n+1)}-\mathcal P_{12}^{(n+1)}
+\sum_{k=1}^n{n+1\choose k}\left(\mathcal P_{11}^{(k)}\mathcal P_{21}^{(n+1-k)}
-\mathcal P_{12}^{(k)}\mathcal P_{22}^{(n+1-k)}\right)\nonumber\\
&=&2\pi(n+1)\left(c^{(n)}+r^{(n)}\cv{c}\right)+\low{\mathfrak p}^{(n+1)}
\label{eq:higher-p}
\end{eqnarray}
where 
$$\low{\mathfrak p}^{(n+1)}=\lowind{\mathcal P}{21}^{(n+1)}-\lowind{\mathcal P}{12}^{(n+1)}+\sum_{k=1}^n {n+1\choose k}\left(\mathcal P_{11}^{(k)}\mathcal P_{21}^{(n+1-k)}
-\mathcal P_{12}^{(k)}\mathcal P_{22}^{(n+1-k)}\right).$$
It follows that $\low{\mathfrak p}^{(n+1)}$ is a Laurent polynomial of degree at most $(n+1)$.
From
$\mathfrak p^{(n+1)}=(\mathfrak p^{(n+1)})^*$ and $\cv{c}=\cv{c}^*$
 we obtain
$$(c^{(n)})^+=\frac{1}{2\pi(n+1)}\left(
(\low{\mathfrak p}^{(n+1)})^{-*}-(\low{\mathfrak p}^{(n+1)})^+\right).$$
So $c^{(n)}$ is a polynomial of degree at most $n+1$.
Since $\theta=\tfrac{\pi}{2}$ is constant and $\cv{c}(\ii)=0$, we have
\[\mathcal H_1^{(n)}=\widehat{\mathfrak p}^{(n)}(\ii)=\frac{1}{n+1}\mathfrak p^{(n+1)}(\ii)
=\frac{1}{n+1}\low{\mathfrak p}^{(n+1)}(\ii)+2\pi c^{(n)}(\ii).\]
Then $\mathcal H_1^{(n)}=0$ gives
\[c_0^{(n)}=-(c^{(n)})\mbox{}^+(\ii)-\frac{1}{2\pi(n+1)}\low{\mathfrak p}^{(n+1)}(\ii)\]
so $c^{(n)}$ is completely determined by $\low{\mathfrak p}^{(n+1)}$.
By Proposition 
\ref{propo:symmetries}
for $\varphi=\tfrac{\pi}{4}$ we have
$$b^{(n)}=(-1)^nc^{(n)}.$$

It remains to determine $a^{(n)}$ and $r^{(n)}.$
By Leibniz rule
\begin{eqnarray*}\mathcal K^{(n)}&=&\left(\yy_1^2-\yy_2^2-\yy_3^2\right)^{(n)}\\
&=&2\left(\yy_1^{(0)}\yy_1^{(n)}-\yy_2^{(0)}\yy_2^{(n)}-\yy_3^{(0)}\yy_3^{(n)}\right)+\sum_{k=1}^{n-1} {n\choose k}\left(\yy_1^{(k)}\yy_1^{(n-k)}-\yy_2^{(k)}\yy_2^{(n-k)}-\yy_3^{(k)}\yy_3^{(n-k)}\right)\end{eqnarray*}
$$2\yy_i^{(0)}\yy_i^{(n)}=2\cv{\xx}_i\left(r\xx_i\right)^{(n)}=2\cv{\xx}_i^2 r^{(n)}+2\cv{\xx}_i\xx_i^{(n)}+
2\cv{\xx}_i\sum_{k=1}^{n-1}{n\choose k}r^{(k)}\xx_i^{(n-k)}.$$
Hence
$$\mathcal K^{(n)}=2\left(\cv{a}^2-\cv{b}^2-\cv{c}^2\right)r^{(n)}
+2\cv{a}a^{(n)}+\low{\mathcal K}^{(n)}=
-2r^{(n)}+(\lambda^{-1}-\lambda)a^{(n)}+\low{\mathcal K}^{(n)}$$
with
\begin{eqnarray*}
\low{\mathcal K}^{(n)}&=&
-2\cv{b}b^{(n)}-2\cv{c}c^{(n)}+
2\sum_{k=1}^{n-1}{n\choose k}r^{(k)}\left(\cv{\xx_1}\xx_1^{(n-k)}-\cv{\xx_2}\xx_2^{(n-k)}-\cv{\xx_3}\xx_3^{(n-k)}\right)\\
&&+\sum_{k=1}^{n-1}{n\choose k}\left(\yy_1^{(k)}\yy_1^{(n-k)}-\yy_2^{(k)}\yy_2^{(n-k)}-\yy_3^{(k)}\yy_3^{(n-k)}\right)\end{eqnarray*}
which is a Laurent polynomial  in $\lambda$ of degree at most
$n+2$.
(We have put $b^{(n)}$ and $c^{(n)}$ in $\low{\mathcal K}^{(n)}$
because they have already been determined.)
From $\mathcal K^{(n)}=0$ we obtain
\[\lambda\low{\mathcal K}^{(n)}=(\lambda^2-1)a^{(n)}+2\lambda r^{(n)}.\]
Since $a^{(n)}$ is holomorphic at $\lambda=0$, $\lambda\low{\mathcal K}^{(n)}$
is a polynomial and has degree at most $n+3$. Then $(\lambda^2-1)$ must divide
the polynomial $\lambda\low{\mathcal K}^{(n)}-2\lambda r^{(n)}$, otherwise
$a^{(n)}$ would have poles at $\lambda=\pm 1$. Hence $a^{(n)}$ is a polynomial of
degree at most $n+1$ and can be computed as the quotient of $\lambda\low{\mathcal K}^{(n)}$
divided by  $\lambda^2-1$ with $2\lambda r^{(n)}$ as the remainder.
\end{proof}

\appendix
	{
		\renewcommand\appauthor{\\ \small Steven Charlton}
		\section[Third order coefficient of the area expansion via \( \zeta(3) \)]{Third order coefficient of the area expansion via \( \zeta(3) \)\protect\appauthor}
		\label{appendixC}
		}
	
	The goal of this appendix is to establish the following claim on the value of \( \ar_3 \), the third order coefficient of the area expansion, given in Section \ref{HOD}.
	\begin{theorem}\label{thm:ar3eval}
		The following evaluation holds
		\[
			\ar_3 = \frac{9}{4} \zeta(3) \,,
		\]
	where \( \zeta(3) \) denotes the corresponding value of the Riemann \( \zeta \) function.
	\end{theorem}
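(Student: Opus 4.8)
The plan is to trace the coefficient $\alpha_3$ through the iterative algorithm of Section \ref{HOD}, identify the resulting iterated integrals with level-$8$ multiple polylogarithm values, and then reduce the combination to $\zeta(3)$ using the algebra of such values. First I would make the algorithm explicit at third order for $\varphi=\tfrac\pi4$. Combining the area formula of Theorem \ref{thm:construction-fixed-varphi}(2) with Proposition \ref{propo:symmetries} (which at $\varphi=\tfrac\pi4$ gives $b^{(n)}=(-1)^nc^{(n)}$ and, since $r(-t)=r(t)$, kills the odd $t$-derivatives of $r$) and with the recursion in the proof of Proposition \ref{ansatz:higher-order-derivatives}, one writes $\alpha_3$ as an explicit $\mathbb{Q}$-linear combination of the values $\Omega_{i_1,\dots,i_\ell}(1)$, of products of such values, and of such values weighted by powers of $\pi$, all specialised to $\varphi=\tfrac\pi4$. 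At $\varphi=\tfrac\pi4$ the four poles $p_1,\dots,p_4$ of the forms $\omega_1,\omega_2,\omega_3$ are exactly the primitive eighth roots of unity $\zeta_8^{\pm1},\zeta_8^{\pm3}$, so each $\Omega_{i_1,\dots,i_\ell}(1)$ is a $\mathbb{Z}$-linear combination of Goncharov hyperlogarithm values $G(b_1,\dots,b_\ell;1)$ with $b_j\in\mu_8$, i.e.\ of cyclotomic multiple polylogarithm values of level $8$, the conversion being the content of Appendix \ref{section:polylog}. Since $\alpha_3$ is the (weight-three) coefficient of $t^3$, these contributions assemble into a $\mathbb{Q}$-linear combination of weight-three level-$8$ multiple polylogarithm values.

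Next I would carry out the reduction of this weight-three, level-$8$ combination. Using the shuffle product one linearises all products, leaving a $\mathbb{Q}$-combination of $G(b_1,b_2,b_3;1)$ together with weight-$\le 2$ terms times powers of $\log 2$ and $i\pi$. Then one applies the standard toolkit for $G$-values at roots of unity: path reversal and inversion formulas, the stuffle relations, and the weight-three depth-reduction identities. By the results of Goncharov and Zhao every weight-three cyclotomic multiple polylogarithm value reduces to $\Li_3$ of a root of unity plus products of $\Li_2$ of a root of unity with logarithms and of three logarithms; equivalently, one may work with the motivic coproduct and the symbol, which makes the following step transparent. The outcome is an expression of $\alpha_3$ as a $\mathbb{Q}$-linear combination of $\zeta(3)$, $\Li_3(i)$, $\Li_3(\zeta_8)$, $\Li_3(\zeta_8^3)$, $\pi^2\log 2$, $\log^3 2$, and purely imaginary weight-three periods.

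Finally I would collapse everything down to $\zeta(3)$ using the symmetries. Reality of $\alpha_3$, a consequence of the $\sigma$-symmetry together with the reality of the central value and of the first derivatives, forces all purely imaginary contributions to vanish. The $\tau$- and $\delta$-symmetries of the potential act at $\varphi=\tfrac\pi4$ on the pole set $\{\zeta_8^{\pm1},\zeta_8^{\pm3}\}$ through a subgroup of $(\mathbb{Z}/8)^\times$, and this makes the surviving real combination invariant under $\zeta_8\mapsto\zeta_8^{\pm1},\zeta_8^{\pm3}$. Using the distribution relation $\sum_{\zeta^2=\omega}\Li_3(\zeta)=\tfrac14\Li_3(\omega)$ together with complex conjugation, this folds the $\Li_3$ of eighth roots of unity down to $\Li_3(1)=\zeta(3)$ and $\Li_3(-1)=-\tfrac34\zeta(3)$, and simultaneously kills the $\pi^2\log 2$ and $\log^3 2$ contributions. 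What remains is to pin down the rational constant, which comes out equal to $\tfrac94$, and this last step together with the cancellations can be cross-checked against the high-precision numerical value of $\alpha_3$ recorded in Section \ref{HOD}.

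The main obstacle is exactly the weight-three reduction together with the vanishing of the genuinely level-$8$ periods $\Li_3(\zeta_8),\Li_3(\zeta_8^3)$ and of $\pi^2\log 2$, $\log^3 2$: here the symmetry of the construction must be exploited carefully, and organising the computation through the motivic coproduct, reducing first to the symbol and then fixing the finitely many lower-depth integration constants by one numerical evaluation, appears to be the cleanest route.
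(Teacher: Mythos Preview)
Your proposal is a reasonable outline, and the first step---running the algorithm of Section~\ref{HOD} to express $\alpha_3$ as a $\mathbb{Q}[\pi^{-1}]$-combination of iterated integrals $\Omega_{i_1,\dots,i_\ell}(1)$ and then rewriting these as level-$8$ MPL values---is exactly how the paper begins. But the remainder of your plan has genuine gaps.

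First, your weight bookkeeping is off. The raw output of the algorithm (see \eqref{eqn:ar3:formula}) contains $\Omega$-values of length up to~$4$ and products of total length up to~$6$; after shuffle-simplification one is still left with the weight-$4$ quantities $\Omega_{2,1,1,1},\Omega_{2,1,3,3},\Omega_{2,2,2,1},\Omega_{3,1,2,3},\Omega_{3,3,2,1}$ (divided by $\pi$). So the reduction step is not a weight-$3$ depth reduction of the Goncharov--Zhao type; one really has to evaluate weight-$4$ MPL's at eighth roots of unity. The paper does this by repeated application of Panzer's parity theorem, which eliminates all $\Li_{1,1,1,1}$ terms and leaves explicit combinations in $\zeta(3)$, $\pi$, and $\log 2$. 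You do not invoke any tool capable of handling the weight-$4$ terms.

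Second, the symmetry argument you propose for the cancellation of $\Li_3(\zeta_8)$, $\Li_3(\zeta_8^3)$, $\pi^2\log 2$, $\log^3 2$ is not a proof. The involutions $\delta,\tau$ permute the poles $p_k$ but do not fix the integration path $[0,1]$, so they do not act on individual $\Omega$-values by a Galois-type substitution $\zeta_8\mapsto\zeta_8^j$; rather, they give functional relations mixing different $\Omega$'s, which were already consumed in setting up the problem. Asserting that ``the surviving real combination is invariant'' and then folding via distribution relations is exactly the step that requires proof, and it is where all the work lies. Indeed, looking at the paper's explicit evaluations, each of the five weight-$4$ $\Omega$-values individually contains $\log^3 2$ and $\pi^2\log 2$ contributions; their cancellation in the specific linear combination for $\alpha_3$ is a non-obvious numerical fact, not a consequence of an a priori symmetry.

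Finally, ``the rational constant comes out equal to $\tfrac94$, and this last step\ldots can be cross-checked against the high-precision numerical value'' is not a proof of the theorem. The paper determines the coefficient by summing the nine explicit evaluations above; your sketch gives no mechanism for computing it other than numerics. If you want a conceptual route, you would need to prove a structural statement (e.g.\ that a certain motivic avatar of $\alpha_3$ lies in the $\mathbb{Q}$-span of $\zeta(3)$), and then a single numerical check would pin down the rational factor---but that structural statement is itself the hard part and is not supplied.
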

	
	To prove this claim, we shall convert the value into an expression involving multiple polylogarithms, and calculate therefrom.
	
	\subsection{Overview of multiple polylogarithms}
	
	For  positive integers \( n_1,\ldots,n_d \in \mathbb{Z}_{>0} \), the multiple polylogarithm (MPL) function \( \Li_{n_1,\ldots,n_d} \) is defined by
	\begin{equation*}
		\Li_{n_1,\ldots,n_d}(z_1,\ldots,z_d)
	=\sum_{0<k_1<k_2<\cdots<k_d}\frac{z_1^{k_1}\cdots z_d^{k_d}}{k_1^{n_1}\cdots k_d^{n_d}} \,.
	\end{equation*}
	This converges for \( z_i \in \mathbb{C}^d \) in the region given by \( \abs{z_i \ldots z_d} < 1 \), where \( 1 \leq i \leq d \).  Two common terms related to MPL's are: the \emph{depth} \( d \) which counts the number of indices \( n_1,\ldots,n_d \), and the \emph{weight} given by the sum \( n_1 + \cdots + n_d \) of the indices.
	
	\begin{remark}
		It should be noted already that two different conventions exist with regard to the summation index of the multiple polylogarithms; one might instead take \( k_1 > k_2 > \cdots > k_d > 0 \) which has the effect of reversing the argument string compared with the definition above.  Different authors (and sometimes the same author in different papers) use each of these different conventions with roughly equal frequency, and so the reader is advised to be aware of which convention is employed in the references herein.
	\end{remark}
	
	An important special case occurs when \( d = 1 \), wherein we reduce to the depth 1, or so-called \emph{classical}, polylogarithm functions
	\[
		\Li_n(z) = \sum_{k=1}^\infty \frac{z^k}{k^n} \,.
	\]
	Moreover, the case \( n = 1 \) evaluates explicitly as \( \Li_1(z) = -\log(1-z) \) in terms of elementary functions.  This hence clarifies the name (multiple) polylogarithm as a (multi-variable) higher order generalisation of the usual logarithm function.  For \( n  > 1\), one has \( \Li_n(1) = \zeta(n) \) in terms of the Riemann zeta function \( \zeta(s) = \sum_{n=1}^\infty n^{-s} \).
	
	Study of the dilogarithm \( \Li_2 \) goes back to at least the works of Leibnitz and of Euler.  Multivariable versions, under the name hyperlogarithms, were also considered by Kummer, and Lappo-Danilevsky.  For extensive overviews, including much of the historical context, see the classical book by Lewin \cite{lewin} and the articles by Kirillov \cite{kirillov} and by Zagier \cite{zagier}.  For much of their history, the polylogarithms remained somewhat obscure functions only known to a few experts.  However in recent years and decades the MPL's have attained a prominent place at the intersection of many fields of mathematics and mathematical physics, in particular appearing in volume computations in hyperbolic geometry, in connection with presentations for (higher) algebraic \( K \)-groups of number fields, as special values of Dedekind \( \zeta \) functions, and in the computation of Feynman integrals and scattering amplitudes in high energy physics.
	
	One of the most curious and important features of MPL's is the plethora of identities and functional relations they satisfy.  Most famously of which is the so-called five-term relation satisfied by the dilogarithm \( \Li_2 \) (discovered, and re-discovered by the likes of Abel, Spence, Kummer, and others in one of many equivalent forms, see Section 1.5 in \cite{lewin}), which states
	\[
		\Li_2(x) + \Li_2(y)
		 - \Li_2\big(\tfrac{x}{1-y} \big)
		 - \Li_2\big(\tfrac{x}{1-y} \big)
		 + \Li_2\big(\tfrac{x y}{(1-x)(1-y)}\big) = -\log(1-x)\log(1-y) \,,
	\]
	for \( \abs{x} + \abs{y} < 1 \).  One also has relations between (some) depth 2 functions and lower depth ones, such as the reduction, given by Zagier \cite{zagier}, of \( \Li_{1,1}(x,y) \) to \( \Li_2 \), namely
	\[
		\Li_{1,1}(x,y) = \Li_1(x) \Li_1(y) + \Li_2\big(\tfrac{-x}{1-x}\big) - \Li_2\big(\tfrac{x(y-1)}{1-x}\big) \,,
	\]
	for \( \abs{xy} < 1, \abs{y} < 1 \).  Both of the above identities can be proven on the level of the power-series expansion, or by differentiation and fixing the constant of integration.  A family of identities, which can also be proven on the level of power-series, and which will be useful later, are the so-called distribution relations, for \( n \in \mathbb{Z}_{>0} \):
	\begin{equation}\label{eqn:dist}
		n^{1-s} \Li_s(z^n) = \sum\nolimits_{\lambda^n = 1} \Li_s(\lambda z)
	\end{equation}	
	Understanding the structure of multiple polylogarithms (and the closely related iterated integrals, below), including the existence and forms of relations like those above, has been the subject of extensive and on-going research since the seminal works of Goncharov \cite{Go-ams,Go-mpl} on the motivic theory of polylogarithms and his conjectures on the structure of the so-called motivic Lie coalgebra.
	
	\subsection{Definition and properties of iterated integrals}
	\label{sec:app:iteratedintegral}
	The connection between the \( \Omega \)-values and MPL's proceeds best through the iterated integral representation of MPL's.  To this end, we recall the general setup of iterated integrals of a family of differential forms \( f_1, \ldots, f_k \) on a manifold \( M \).  See \cite{Chen} for the original foundation, \cite[\Sec2.7]{Go-mpl} for the case of multiple polylogarithms, and \cite[\Sec2.1]{Br-mzv} for the key points in a general context.
	
	Let \( M \) be a smooth \( C^\infty \) manifold over \( \mathbb{R} \), and \( \gamma \colon [0:1] \to M \) a (piecewise) smooth path.  Let \( f_1,\ldots,f_k \) be smooth \( \C \) or \( \R \)-valued 1-forms on \( M \).  Suppose the pullback of \( f_i \) under \( \gamma \) to \( [0,1] \) is given by
	\[
		\gamma^\star(f_i) = F_i(t) d t \,.
	\]
	Then the iterated integral of \( f_1,\ldots,f_k \) along \( \gamma \) is defined as 
	\begin{equation}\label{eqn:def:iteratedintegral}
		\int_\gamma f_{1} \ldots f_{k} \coloneqq \int_{0 < t_1 < \cdots < t_{k}<1} F_1(t_1) d t_1 \cdots F_k(t_k) dt_k \,.
	\end{equation}
	One may extend this by linearity to (formal) linear combinations of differential forms. \medskip
	
	In the setup, a key properties of iterated integrals which we will imply is the so-called shuffle product property.  This says the product of two iterated integrals along the same path \( \gamma \) can be reduced to a linear combination of iterated integrals over certain permutations of all the differential forms, namely:
	\begin{equation}\label{eqn:shuffle}
		\int_{\gamma} f_1 \cdots f_r \int_{\gamma} f_{r+1} \cdots f_{r+s} = \sum_{\sigma \in \Sigma(r,s)} \int_{\gamma} f_{\sigma(1)} \cdots f_{\sigma(r+s)}
	\end{equation}
	where \( \Sigma(r,s) \) is the set of \( (r,s) \)-shuffles, defined as the following subset of \( \Sigma(r+s) \), the permutations on \( (r+s) \) points,
	\begin{align*}
	& \Sigma(r,s) =  \bigg\{ \sigma \in \Sigma(r+s) \:\bigg|\: \begin{aligned}[c] & \sigma(1) < \sigma(2) < \cdots < \sigma(r) \text{ and } \\
	& \sigma(r+1) < \sigma(r+2) < \cdots < \sigma(r+s) \end{aligned}\bigg\} \,.
	\end{align*}
	The name shuffle-product here is justified as the set \( \Sigma(n,m) \) is analogous to the (possible) interleavings undergone by the two sets of cards during a riffle-shuffle.  
	
	Alternatively, one can more algebraically describe the above construction.  Let \( {A} = \{ f_1,\ldots,f_k \} \) be a set of all relevant differential forms, which will constitute our alphabet, and let \( \mathcal{A} = \mathbb{Q}\langle A \rangle \) be (formal) linear combinations of words over \( A \).  Let \( w_1, w_2 \in \mathcal{A} \) be words, and \( u, v \in A \) be letters of the alphabet.  Then recursively define the \emph{shuffle} product \( \shuffle \) by
	\begin{align*}
	(a w_1) \shuffle ( b w_2) &= a (w_1 \shuffle (b w_2)) + b ((a w_1) \shuffle w_2) \\
	w_1 \shuffle \mathbbm{1} &= \mathbbm{1} \shuffle w_1 = w_1
	\end{align*}
	where \( \mathbbm{1} \) denotes the empty word (with 0 letters).  Then \( \shuffle \) is extended by linearity to  all (formal) linear combinations of words \( \mathbb{Q}\langle A \rangle\).  For example (with \( A = \{ a, b, c, d, e \} \))
	\[
	ab \shuffle cde = \begin{aligned}[t]
	&abcde + acbde + acdbe + acdeb + cabde \\
	& + cadbe + cadeb + cdabe + cdaeb + cdeab
	\end{aligned}
	\]
	One can check that 
	\(
		f_1 \cdots f_r \shuffle f_{r+1} \cdots f_{r+s} = \sum_{\sigma \in \Sigma(r,s)} f_{\sigma(1)} \cdots  f_{\sigma(r+s)} \,,
	\)
	so that one may formally write
	\begin{equation}\label{eqn:shuffle2}
		\int_\gamma f_1 \cdots f_r \int_\gamma f_{r+1} \cdots f_{r+s} = \int_\gamma f_1 \cdots f_r \shuffle f_{r+1} \cdots f_{r+s} \,,
	\end{equation}
	if one extends the iterated integral on the right hand side is by linearity to the resulting formal linear combination of differential forms.  One can say that the iterated integral \( \int_\gamma \) is a homomorphism from the algebra of words, with a shuffle product, to the \( \mathbb{C} \) with the usual product of complex numbers.
	
	The claim that \eqref{eqn:shuffle} gives the product of two iterated integrals can be easily verified by virtue of how the integration limits (after pulling back)
	\begin{align*}
	0 < t_1 < t_2 < \cdots < t_k < 1  \\
	0 < t'_1 < t'_2 < \cdots < t'_\ell < 1 
	\end{align*}
	can be interleaved in a variety of compatible ways, via such riffle-shuffles.  (Cases where \( t_i = t'_j \) define an integration domain of measure 0, and so contribute nothing to the product.)  However, as each \( t_i \) and \( t'_j \) appears only in one particular pullback \( \gamma^\star(f_m) \), one must reorder the forms (or pullbacks thereof) in the resulting sum of integrals.  From this one obtains the claimed shuffle product formula above.
	
	\subsection{\( \Omega \)-values and MPL's}
	\label{section:polylog}
	
	Theorem 2.2 in \cite{Go-mpl} provides the following representation of the multiple polylogarithm \( \Li_{n_1,\ldots,n_d} \) in terms of iterated integrals.  Write \( \{x\}^n = x, \ldots, x \) to denote the string \( x \) repeated \( n \) times.  For \( \abs{z_i} < 1 \), we have
	\[
		\Li_{n_1,\ldots,n_d}(z_1,\ldots,z_d) = (-1)^d \int_L \frac{dt}{t - a_1} \Big\{ \frac{dt}{t}  \Big\}^{n_1 - 1} \frac{dt}{t - a_2} \Big\{ \frac{dt}{t} \Big\}^{n_2 - 1}  \cdots \frac{dt}{t - a_d} \Big\{ \frac{dt}{t} \Big\}^{n_d - 1}
	\]
	where \( L : [0,1] \to \mathbb{C} \), \( L(t) = t \) denotes the straight line path from 0 to 1, and \( a_i = \prod_{k=i}^d z_k^{-1} \).	This follows by integration of the geometric series expansion of \( (t-a_i)^{-1} \).
	
	As noted therein, this iterated integral representation provides a means of analytically continuing the multiple polylogarithm \( \Li_{n_1,\ldots,n_d} \) to \( \mathbb{C}^d \) as a multi-valued analytic function. \medskip
	
	We recall now the differential forms \( \omega_1,\omega_2,\omega_3 \) from \eqref{omegaabc} used to define the \( \Omega \)-values.  (Recall, the relabelling \( \omega_1 \leftrightarrow \omega_a, \omega_2 \leftrightarrow \omega_b, \omega_3 \leftrightarrow \omega_c, \) introduced in Section \ref{sec:higherderivatives}.)  The forms are given by
	\begin{equation*}
	\begin{aligned}
	\omega_1 & = \left (\frac{1}{z-p_1} - \frac{1}{z-p_2}+\frac{1}{z-p_3}- \frac{1}{z-p_4}\right)dz 	\\
	\omega_2 & = \left (\frac{1}{z-p_1} - \frac{1}{z-p_2}-\frac{1}{z-p_3} + \frac{1}{z-p_4}\right)dz 
	\\
	\omega_3 & = \left (\frac{1}{z-p_1} +\frac{1}{z-p_2}-\frac{1}{z-p_3}- \frac{1}{z-p_4}\right)dz  \,,
	\end{aligned} 
	\end{equation*}
	where the points \( p_i \) are 
	\[
	p_1 = \exp(\ii \varphi), p_2 = -\exp(-\ii \varphi), p_3 = -p_1, p_4 = -p_2,
	\]
	for \( \varphi = \frac{\pi}{4} \).
	
	From \eqref{eqn:def:omegaintegral}, we see that
	\[
		\Omega_{i_1,\ldots,i_k} = \int_{L} \omega_{i_1} \cdots \omega_{i_k} \,,
	\]
	along the straight line path \( L \) from 0 to 1, as an iterated integral in the sense of Section \ref{sec:app:iteratedintegral}.  From the linearity of iterated integrals, we see that
	\begin{equation*}
	\Omega_{i_1,\ldots,i_n} =\sum_{1\leq j_1,\ldots,j_n\leq 4}\!\! \varepsilon_{i_1,j_1}\cdots\varepsilon_{i_n,j_n} 	\int_L \frac{dz}{z-p_{j_1}} \cdots \frac{dz}{z-p_{j_n}} \,,
	\text{ where } (\varepsilon_{\ell, k}) = \left(\begin{smallmatrix}1&-1&1&-1\\1&-1&-1&1\\1&1&-1&-1\end{smallmatrix}\right) \,, \end{equation*} 
	here \( \varepsilon_{k,\ell} \) is encoding the coefficient of \( \frac{dz}{z-p_k} \) in \( \omega_{\ell} \).
	
	From the iterated integral representation of \( \Li_{1,\ldots,1} \) given above, we see
	\[
		\int_L \frac{dz}{z-p_{j_1}} \cdots \frac{dz}{z-p_{j_n}} = (-1)^n \Li_{1,\ldots,1}\Big(\frac{p_{j_2}}{p_{j_1}}, \frac{p_{j_3}}{p_{j_2}}, \ldots, \frac{p_{j_n}}{p_{j_{n-1}}} , \frac{1}{p_{j_n}}\Big) \,,	
	\]
	which leads to the following expression for \( \Omega_{i_1,\ldots,i_n} \)  in terms of \( 4^n \) MPL's
	\[
	\Omega_{i_1,\ldots,i_n} =	\sum_{1\leq j_1,\ldots,j_n\leq 4}\!\! \varepsilon_{i_1,j_1}\cdots\varepsilon_{i_n,j_n} \cdot (-1)^n \Li_{1,\ldots,1}\Big(\frac{p_{j_2}}{p_{j_1}}, \frac{p_{j_3}}{p_{j_2}}, \ldots, \frac{p_{j_n}}{p_{j_{n-1}}} , \frac{1}{p_{j_n}}\Big) 
	\]
	
	\begin{remark} It can be shown that the convergence region of \( \Li_{n_1,\ldots,n_d}(z_1,\ldots,z_d) \) extends to the boundary of the above region \( \abs{z_i \ldots z_d} \leq 1 \), where \( 1 \leq i \leq d \), if and only if \( (n_d,z_d) \neq (1,1) \).  See Corollary 2.3.10 in \cite{zhao} for the details.  In particular, \( \Li_{n_1,\ldots,n_d}(z_1,\ldots,z_d) \) converges for any choice of indices \( n_1,\ldots,n_d \) and any choice of roots of unity \( z_1,\ldots,z_d \in \mathbb{C}^\times \), except for the case \( (n_d,z_d) = (1,1) \).  Since each \( p_{j_1},\ldots,p_{j_n} \) above is a root of unity, it follows that the ratios are also.  Moreover \( \frac{1}{p_{j_n}} \neq 1 \), so the series definition of the MPL's in the above expression are in fact all convergent.
	\end{remark}
	
	For the purposes of numerical experimentation, one can now delegate to any computer algebra system which implements multiple polylogarithms.  It is implemented in Maple as \texttt{MultiPolylog} and gp/pari as \texttt{polylogmult}, in each case with the opposite summation convention.  The \texttt{GiNaC} library also implements numerical evaluation of multiple polylogarithms via \texttt{Li}.  Although standard numerical integration methods could be used to evaluate \( \Omega_{i_1,\ldots,i_k} \) directly, these specially optimised multiple polylogarithm routines seem to have significantly better efficiency at high precision, making their usage worthwhile.
	
	\subsection{Shuffle and stuffle products} 
	
	By virtue of their iterated integral expression, the MPL functions inherit a shuffle product structure, as discussed above.  For example
	\[
		\Li_1(z_1) \Li_{1,2}(z_2,z_3) = (-1)^{1+2} \int_L \frac{dt}{t-z_1^{-1}} \int_L \frac{dt}{t-(z_2z_3)^{-1}} \frac{dt}{t-z_3^{-1}} \frac{dt}{t}
	\]
	Taking the shuffle product of the forms gives
	\begin{align*}
		& = - \int_L \tfrac{dt}{t-z_1^{-1}} \shuffle \tfrac{dt}{t-(z_2z_3)^{-1}} \tfrac{dt}{t-z_3^{-1}} \tfrac{dt}{t}  = - \int_L \begin{aligned}[t] 
		& \tfrac{dt}{t-z_1^{-1}} \tfrac{dt}{t-(z_2z_3)^{-1}} \tfrac{dt}{t-z_3^{-1}} \tfrac{dt}{t} +
		\tfrac{dt}{t-(z_2z_3)^{-1}} \tfrac{dt}{t-z_1^{-1}} \tfrac{dt}{t-z_3^{-1}} \tfrac{dt}{t} \\
		& + \tfrac{dt}{t-(z_2z_3)^{-1}} \tfrac{dt}{t-z_3^{-1}} \tfrac{dt}{t-z_1^{-1}} \tfrac{dt}{t} +
		\tfrac{dt}{t-(z_2z_3)^{-1}} \tfrac{dt}{t-z_3^{-1}} \tfrac{dt}{t} \tfrac{dt}{t-z_1^{-1}} \end{aligned}
	\end{align*}
	Upon converting back to MPL's and keeping the terms in the same order, we see this is 
	\[
		= \Li_{1,1,2}\Big(\frac{z_1}{z_2 z_3}, z_2,z_3\Big) +
		\Li_{1,1,2}\Big(\frac{z_2 z_3}{z1}, \frac{z_1}{z_3}, z_3\Big) +
		 \Li_{1,1,2}\Big(z_2, \frac{z_3}{z_1}, z_1\Big) + \Li_{1,2,1}\Big(z_2, \frac{z_3}{z_1}, z_1\Big) \,.
	\]
	
	Likewise, the \( \Omega \)-values also inherit a shuffle-product structure, but this one is easier to calculate: the indices of \( \Omega_{i_1,\ldots,i_n} \) correspond directly to the forms \( \omega_{i_1}, \ldots, \omega_{i_n} \) inside the iterated integral.  Therefore the product of two \( \Omega \)-values is given directly by the shuffle product of their indices.  For example
	\[
		\Omega_{2,1} \Omega_{3,1} = \Omega_{2,1,3,1} + 2 \Omega_{2,3,1,1} + \Omega_{3,1,2,1} + 2 \Omega_{3,2,1,1} \,,
	\]	
	since
	\[
		\omega_{2}\omega_{1} \shuffle \omega_{3}\omega_{1} = \omega_2 \omega_1 \omega_3 \omega_1 + 2 \cdot \omega_2 \omega_3 \omega_1 \omega_1 + \omega_3 \omega_1 \omega_2 \omega_1 + 2 \cdot \omega_3 \omega_2 \omega_1 \omega_1 \,.
	\]
	
	By virtue of their definition as the following nested sum
	\begin{equation}\label{eqn:mpl:series}
		\Li_{n_1,\ldots,n_d}(z_1,\ldots,z_d) = \sum_{0 < k_1 < \cdots < k_d} \frac{z_1^{k_1} \cdots z_d^{n_d}}{k_1^{n_1} \cdots k_d^{n_d}} \,,
	\end{equation}
	the multiple polylogarithms more naturally fulfil a so-called stuffle-product (also called harmonic, or quasi-shuffle) product structure.  (See \cite[\Sec2.5]{Go-mpl}, wherein this product is called the first-shuffle product, in contrast to the second-shuffle product given by the iterated integral representation.  See also \cite[\Sec5]{bbbl}, and for an account of the general algebraic framework see \cite{hoff}.)  This is obtained by interleaving two sets summation indices
	\[
		0 < k_1 < k_2 < \cdots < k_d \, \quad \text{ and } \quad 
		0 < \ell_1 < \ell_2 < \cdots < \ell_{d'} \, 
	\]
	in all possible compatible ways.  This time one must include the cases where \( k_i = \ell_j \), in order to obtain a correct expression; while the corresponding \( z_i = z'_j \) case of integrals is of measure zero, and so does not contribute, here the contribution is non-zero and therefore essential to keep.  As a simple example
	\begin{align*}
		& \Li_{n_1,n_2}(z_1,z_2) \Li_{n_3}(z_3)\\
		 &= \sum_{\substack{0 < k_1 < k_2 \,, \\ 0 < \ell_1}} \frac{z_1^{k_1} z_2^{k_2} z_3^{\ell_1}}{k_1^{n_1} k_2^{n_2} k_3^{n_3}} \\
		& = \bigg( \sum_{0 < \ell_1 < k_1 < k_2} + \sum_{0 < k_1 <  \ell_1 < k_2}  + \sum_{0 < k_1 < k_2 < \ell_1 }  + \sum_{0 < \ell_1 = k_1 < k_2}  + \sum_{0 < k_1 < \ell_1 = k_2} \bigg) \frac{z_1^{k_1} z_2^{k_2} z_3^{\ell_1}}{k_1^{n_1} k_2^{n_2} k_3^{n_3}} \\
		&= \begin{aligned}[t] \Li_{n_1,n_2,n_3}\left(z_1,z_2,z_3\right)+\Li_{n_1,n_3,n_2}\left(z_1,z_3,z_2\right)+\Li_{n_3,n_1,n_2}\left(z_3,z_1,z_2\right) & \\
		{} + \Li_{n_1,n_2+n_3}\left(z_1,z_2 z_3\right)+\Li_{n_1+n_3,n_2}\left(z_1 z_3,z_2\right) \hspace{2em} &\end{aligned}
	\end{align*}
	
	More formally this can be encoded as an operation defined on words over the alphabet \( B = \{ Z_{n,z} \mid z \in \mathbb{C} , n \in \mathbb{Z}_{>0} \} \).  We define the linear map \( \Li \) on words in \( \mathcal{B} = \mathbb{Q}\langle B \rangle\) by
	\[
		\Li(Z_{n_1,z_1} \cdots Z_{n_d,z_d}) = \Li_{n_1,\cdots,n_d}(z_1,\ldots,z_d) \,,
	\]
	and extend by linearity to the whole of \( \mathcal{B} \).  Then the stuffle product \( \ast \) is defined recursively via
	\begin{align*}
		& Z_{n_1,z_1} W \ast  Z_{m_1,y_1} W' \\ 
		& = Z_{n_1,z_1}  (W  \ast Z_{m_1,y_1} W') + Z_{m_1,y_1}  (Z_{n_1,z_1}  W \ast W') + Z_{n_1+m+1, z_1 y_1} (W \ast W') \,,
	\end{align*}
	for letters \( Z_{n_1,z_1}, Z_{m_1,y_1} \in B \), and \( W, W' \) words in \( \mathcal{B} \) over the alphabet \( B \).  One takes the initial conditions to be
	\(
		W \ast \mathbbm{1} = \mathbbm{1} \ast W = W ,
	\)
	where \( \mathbbm{1} \) denotes the empty-word containing 0 letters.  Note that the stuffle product recursion contains an extra term \( Z_{n_1+m+1, z_1 y_1} \), compared to the shuffle product recursion, which \emph{stuffs} two letters into one position.  Then \( \Li \) is a homomorphism from the algebra \( \mathcal{B} \) with the stuffle product,  to \( \mathbb{C} \) with the usual product:
	\[
		\Li(W \ast W') = \Li(W) \Li(W') \,.
	\]
	
	\subsection{Parity theorem for multiple polylogarithms}
	
	A useful ingredient for the simplification of \( \ar_3 \) is the so-called parity theorem for multiple polylogarithms, established by Panzer \cite{panzer}.  (A related result, valid on the unit \( m \)-torus is already given by Goncharov in \cite[\Sec2.6]{Go-mpl}, under the title the inversion formula.)  Roughly, the parity theorem states that
	\[
		\Li_{n_1,\ldots,n_d}(z_1,\ldots,z_d) - (-1)^{n_1 + \cdots + n_d - d} \Li_{n_1,\ldots,n_d}(z_1^{-1}, \ldots, z_d^{-1})
	\]
	reduces to a combination of lower depth MPL's \( \Li_{n_1,\ldots,n_{d'}} \), \( d' < d \) and products of lower weight functions.  Here $\Li_{n_!,\ldots,n_d}(z_1,\ldots,z_d) $, denotes some suitably analytically continued version of the multiple polylogarithms, extended by the iterated integral definition, along a straight-line path starting near \( (0, \ldots, 0) \in \mathbb{C}^d \).  This can be identified with the value given by the series definition in that case that \( z_1,\ldots,z_d \in \mathbb{C}^\ast \) lie on the unit circle, and \( (n_d,z_d) \neq (1,1) \) for convergence reasons.
	
	This parity theorem generalises the Jonqui\`ere inversion relation \cite{jonq} of the classical polylogarithms \( \Li_n(z) \), namely 
	\[
		\Li_n(z) + (-1)^n \Li_n(z^{-1}) = -\frac{(2\pi \ii)^n}{n!} B_n \Big( \frac{1}{2} + \frac{\log(-z)}{2 \ii \pi } \Big) \,, z \in \mathbb{C}\setminus[0,\infty) \,,
	\]
	to the higher depth multiple polylogarithms.  Here \( B_n(x) \) denotes the Bernoulli polynomials generated by \( \tfrac{t e^{x t}}{e^t - 1} = \sum_{n=0}^\infty B_n(x) \frac{t^n}{n!} \).  In the case of roots of unity, the parity theorem re-establishes and extends depth-weight parity reduction result for multiple zeta values to higher order (so-called coloured) multiple zeta values. \medskip
	
	The parity theorem in \cite{panzer} is given in an explicit and algorithmic way, allowing one to generate the parity theorem identity for \( \Li_{n_1,\ldots,n_d}(z_1,\ldots,z_d) \) in a systematic way.  Panzer provides a Maple program for this purpose, and explicitly gives all identities up to weight 4 in (computer-readable) Maple format, as supplementary material to the paper \cite{panzer}.  The simplest examples of which (beyond depth 1) include:
	\begin{align}
		& \begin{aligned}[c] \label{eqn:mpl:li11inv} & \Li_{1,1}\left(z_1,z_2\right) - \Li_{1,1}\left({z_1}^{-1},{z_2}^{-1}\right) \\ 
		& = -\Li_2\left(z_1\right)+\Li_2\left(z_2 \right)-\Li_2\left(z_1 z_2\right)-\Li_1\left(z_1\right) \log \left(-z_2\right)+\Li_1\left(z_1\right) \log \left(-z_1	z_2\right) \\ 
		& \quad -\Li_1\left(z_2\right) \log \left(-z_1 z_2\right)+\tfrac{1}{2} \log ^2\left(-z_2\right)-\log \left(-z_1 z_2\right) \log\left(-z_2\right)+\tfrac{\pi ^2}{6} \end{aligned} \\[2ex]
		& \begin{aligned}[c] & \Li_{1,2}\left(z_1,z_2\right) + \Li_{1,2}\left({z_1}^{-1},{z_2}^{-1}\right) \\
		& = 
		\Li_3\left(z_1\right)
		+2	\Li_3\left(z_2\right)
		-\Li_3\left(z_1 z_2\right)
		-\tfrac{1}{2} \Li_1\left(z_1\right) \log ^2\left(-z_2\right)
		  +\tfrac{1}{2}\Li_1\left(z_1\right) \log ^2\left(-z_1 z_2\right)
		  \\ &\quad {}
		-\Li_2\left(z_1\right) \log \left(-z_1 z_2\right)
		-\Li_2\left(z_2\right) \log \left(-z_1 z_2\right)
		+\tfrac{1}{3} \log ^3\left(-z_2\right)
		 \\ &\quad {}
	  	-\tfrac{1}{2} \log \left(-z_1	z_2\right) \log ^2\left(-z_2\right)
		+\tfrac{1}{3} \pi ^2 \log \left(-z_2\right)
		-\tfrac{1}{6} \pi ^2 \log \left(-z_1 z_2\right) \\
		\end{aligned}  \\[2ex]
		& \begin{aligned}[c] & \Li_{2,1}\left(z_1,z_2\right) + \Li_{2,1}\left({z_1}^{-1},{z_2}^{-1}\right) \\ 
		& = 
		-\tfrac{1}{6} \pi ^2 \Li_1\left(z_2\right)
		-2	\Li_3\left(z_1\right)
		-\Li_3\left(z_2\right)
		-\Li_3\left(z_1 z_2\right)
		-\tfrac{1}{2} \Li_1\left(z_2\right) \log^2\left(-z_1 z_2\right)
		\\ &\quad {}
		-\Li_2\left(z_1\right) \log \left(-z_2\right) 
		 +\Li_2\left(z_1\right) \log \left(-z_1	z_2\right)
		 +\Li_2\left(z_2\right) \log \left(-z_1 z_2\right)
		 \\ &\quad {}
		 -\tfrac{1}{6} \log ^3\left(-z_2\right)
		   +\tfrac{1}{2} \log \left(-z_1 z_2\right) \log ^2\left(-z_2\right)
		 -\tfrac{1}{2} \log ^2\left(-z_1 z_2\right) \log \left(-z_2\right)
		 \\ &\quad {}
		 -\tfrac{1}{3} \pi ^2 \log\left(-z_2\right)
		 +\tfrac{1}{6} \pi ^2 \log \left(-z_1 z_2\right)
		\end{aligned}
	\end{align}
	
	The explicit statement of the parity theorem in \cite{panzer} shows that only arguments which occur in \( \Li \) are those given by consecutive products \( z_\mu z_{\mu+1} \cdots z_{\nu} \).  When we apply this parity theorem, each \( z_i \) will be a root of unity, and so (after any necessary regularisation), the resulting MPL's will be convergent, avoiding any need for analytic continuation or multi-valuedness considerations in our evaluations.
				
	\subsection{Simplification and evaluation of \( \ar_3 \)}

	In terms of \( \Omega_{i_1,\ldots,i_k} \), the implemented algorithm from Section \ref{sec:algorithm} produces the following fully expanded output for \( \ar_3 \), namely:
	\begin{equation}\label{eqn:ar3:formula}
	\ar_3 = -\frac{\ii}{8 \pi ^3}  A -  \frac{1}{32 \pi ^2} B + \frac{\ii}{96 \pi } C \,,
	\end{equation}
	where
	\begin{align*}
	& A = 
	\begin{aligned}[t]
	& -3 \Omega _1^2 \Omega _2^2 \Omega _{1,2}
	+3 \Omega _1^2 \Omega _2^2 \Omega _{2,1}
	+3 \Omega _1 \Omega _2 \Omega _{1,2}^2
	+3 \Omega _1 \Omega _2 \Omega _{2,1}^2
	-6 \Omega _1 \Omega _2 \Omega _{1,2} \Omega _{2,1} \\
	& -\Omega _{1,2}^3
	+\Omega _{2,1}^3
	-3 \Omega _{1,2} \Omega _{2,1}^2
	+3 \Omega _{1,2}^2 \Omega _{2,1}
	+\Omega _1^3 \Omega _2^3
	\end{aligned} \\[1ex]
	& B = \begin{aligned}[t]
	& 
	-6 \Omega _3 \Omega _1^2 \Omega _{1,2}
	-12 \Omega _2 \Omega _1^2 \Omega _{1,3}
	+6 \Omega _3 \Omega _1^2 \Omega _{2,1}
	+12 \Omega _2 \Omega _1^2 \Omega _{3,1}
	+12 \Omega _1 \Omega _{1,2} \Omega _{1,3}
	\\ &
	-12 \Omega _1 \Omega _{1,3} \Omega _{2,1}
	-2 \Omega _2^2 \Omega _1 \Omega _{2,3}
	-12 \Omega _1 \Omega _{1,2}	\Omega _{3,1}
	+12 \Omega _1 \Omega _{2,1} \Omega _{3,1}
	+2 \Omega _2^2 \Omega _1 \Omega _{3,2}
	\\ &
	+12 \Omega _2 \Omega _1 \Omega _{1,1,3}
	-12 \Omega _2 \Omega _1 \Omega _{1,3,1}
	-2 \Omega _2 \Omega _1 \Omega _{2,2,3}
	+2 \Omega _2 \Omega _1 \Omega _{2,3,2}
	+12 \Omega _2 \Omega _1 \Omega _{3,1,1}
	\\ &
	-2 \Omega _2 \Omega	_1 \Omega _{3,2,2}
	-4 \Omega _3^3 \Omega _{1,2}
	+\Omega _2^2 \Omega _3 \Omega _{1,2}
	+4 \Omega _3^3 \Omega _{2,1}
	-\Omega _2^2 \Omega _3 \Omega _{2,1}
	+2 \Omega _2 \Omega _{1,2} \Omega _{2,3}
	\\ &
	-2 \Omega _2 \Omega _{2,1} \Omega _{2,3}
	-2 \Omega _2 \Omega _{1,2} \Omega _{3,2}
	+2 \Omega _2 \Omega _{2,1} \Omega_{3,2}
	-12 \Omega _{1,2} \Omega _{1,1,3}
	+12 \Omega _{2,1} \Omega _{1,1,3}
	\\ &
	+12 \Omega _{1,2} \Omega _{1,3,1}
	-12 \Omega _{2,1} \Omega _{1,3,1}
	+2 \Omega _{1,2} \Omega _{2,2,3}
	-2 \Omega _{2,1} \Omega _{2,2,3}
	-2 \Omega _{1,2} \Omega _{2,3,2}
	\\ &
	+2 \Omega _{2,1} \Omega _{2,3,2}
	-12 \Omega _{1,2} \Omega _{3,1,1}
	+12	\Omega _{2,1} \Omega _{3,1,1}
	+2 \Omega _{1,2} \Omega _{3,2,2}
	-2 \Omega _{2,1} \Omega _{3,2,2}
	\\ &
	+6 \Omega _2 \Omega _3 \Omega _1^3
	+4 \Omega _2 \Omega _3^3 \Omega _1
	-\Omega _2^3 \Omega _3 \Omega _1 
	\end{aligned} \\[1ex]
	& {} C = \begin{aligned}[t]
	& 
	-18 \Omega _1^2 \Omega _{1,2}
	+18 \Omega _1^2 \Omega _{2,1}
	+36 \Omega _1 \Omega _{1,1,2}
	-36 \Omega _1 \Omega _{1,2,1}
	+36 \Omega _1 \Omega _{2,1,1}
	+6 \Omega _1 \Omega _{2,3,3}
	\\ &
	-6 \Omega _1 \Omega _{3,2,3}
	+6 \Omega _1 \Omega _{3,3,2}
	-3 \Omega _2^2 \Omega _{1,2}
	-3 \Omega _3^2 \Omega _{1,2}
	+3\Omega _2^2 \Omega _{2,1}
	+3 \Omega _3^2 \Omega _{2,1}
	\\ &
	+6 \Omega _{1,3} \Omega _{2,3}
	-6 \Omega _{2,3} \Omega _{3,1}
	-6 \Omega _{1,3} \Omega _{3,2}
	+6 \Omega _{3,1} \Omega _{3,2}
	+6 \Omega _2 \Omega _{1,2,2}
	-6 \Omega _3 \Omega _{1,2,3}
	\\ &
	+6 \Omega _3 \Omega _{1,3,2}
	+6 \Omega _2 \Omega _{1,3,3}
	-6 \Omega _2 \Omega	_{2,1,2}
	+6 \Omega _3 \Omega _{2,1,3}
	+6 \Omega _2 \Omega _{2,2,1}
	-6 \Omega _3 \Omega _{2,3,1}
	\\ &
	-6 \Omega _3 \Omega _{3,1,2}
	-6 \Omega _2 \Omega _{3,1,3}
	+6 \Omega _3 \Omega _{3,2,1}
	+6 \Omega _2 \Omega _{3,3,1}
	-36 \Omega _{1,1,1,2}
	+36 \Omega _{1,1,2,1}
	\\ &
	-36 \Omega _{1,2,1,1}
	-6 \Omega _{1,2,2,2}
	-6 \Omega_{1,2,3,3}
	+6 \Omega _{1,3,2,3}
	-6 \Omega _{1,3,3,2}
	+36 \Omega _{2,1,1,1}
	\\ &
	+6 \Omega _{2,1,2,2}
	+6 \Omega _{2,1,3,3}
	-6 \Omega _{2,2,1,2}
	+6 \Omega _{2,2,2,1}
	-6 \Omega _{2,3,1,3}
	+6 \Omega _{2,3,3,1}
	\\ &
	-6 \Omega _{3,1,2,3}
	+6 \Omega _{3,1,3,2}
	+6 \Omega _{3,2,1,3}
	-6 \Omega _{3,2,3,1}
	-6 \Omega _{3,3,1,2}
	+6 \Omega_{3,3,2,1}
	\\ &
	+6 \Omega _2 \Omega _1^3
	+\Omega _2^3 \Omega _1
	\end{aligned}
	\end{align*}
	
	We begin by noticing the following simplifications for the various terms \( A, B, C \) in the formula above.  Firstly
	\[
		A = (-\Omega _{1,2}+\Omega _{2,1}+\Omega _1 \Omega _2)^3 = (2\Omega _{2,1})^3 \,.
	\]
	The first equality comes simply by factoring the expression for \( A \) formally, the second equality comes from applying the shuffle product to \( \Omega_1 \Omega_2 = \Omega_{2,1} + \Omega_{1,2} \).  Likewise, \( B \) can be formally factored as 
	\begin{align*}
		B ={}  & \big(-\Omega _{1,2}+\Omega _{2,1}+\Omega _1 \Omega _2 \big) \cdot \\
			&  \big(
			\begin{aligned}[t] 
			& -12 \Omega _1 \Omega _{1,3}
			-2 \Omega _2 \Omega _{2,3}
			+12 \Omega _1 \Omega _{3,1}
			+2 \Omega _2 \Omega _{3,2}
			+12 \Omega _{1,1,3}
			-12 \Omega _{1,3,1} \\
			& \quad {} -2 \Omega _{2,2,3}
			+2 \Omega _{2,3,2}
			+12 \Omega _{3,1,1}
			-2 \Omega _{3,2,2}
			+4 \Omega _3^3
			+6	\Omega _1^2 \Omega _3
			-\Omega _2^2 \Omega _3 \big) \end{aligned}
	\end{align*}
	Now apply the shuffle product to each bracket individually, and we find
	\[
		B = \big( 2 \Omega _{2,1} \big) \cdot \big( -8 \Omega _{2,2,3}+48 \Omega _{3,1,1}+ 4 \Omega _{3}^3 \big) \,.
	\]
	The term \( C \) does not factor formally; nevertheless on application of the shuffle product, we find
	\[
		C = 288 \Omega _{2,1,1,1}+48 \Omega _{2,1,3,3}+48 \Omega _{2,2,2,1}-48 \Omega _{3,1,2,3}+48 \Omega _{3,3,2,1} \,.
	\]
	
	The upside of this is that we have reduced the expression for \( \ar_3 \) to the following:
	\begin{align*}
		\ar_3 = {} & - \frac{\ii}{\pi ^3} \Omega _{2,1}^3 - \frac{1}{16 \pi ^2}\Omega _{2,1} \big(-8 \Omega _{2,2,3}+48 \Omega _{3,1,1}+ 4 \Omega _{3}^3 \big)  \\
		& {} + \frac{\ii}{2 \pi } \big(6 \Omega _{2,1,1,1}+\Omega _{2,1,3,3}+\Omega _{2,2,2,1}-\Omega _{3,1,2,3}+\Omega _{3,3,2,1}\big)
	\end{align*}
	
	We now make the following claims, on how each of the above \( \Omega \)-values evaluates in terms of \( \zeta(3), \pi \) and \( \log(2) \).
	\begin{alignat*}{3}
		\Omega_3 &= \ii \pi 
		&\Omega_{2,1} &= -\ii \pi \log(2) \\
		\Omega_{2,2,3} &= \frac{\ii \pi^3}{12} 
		&\Omega_{3,1,1} &= \frac{ \ii \pi}{2} \log(2)^2 - \frac{\ii \pi^3}{12} \\	
		\Omega_{2,1,1,1} &= \frac{\ii \pi^3}{12} \log(2) - \frac{\ii\pi}{6} \log(2)^3 - \frac{\ii \pi}{4} \zeta(3) \quad\quad
		&\Omega_{2,1,3,3} &= \frac{\ii \pi^3}{4} \log(2) - \ii \pi \zeta(3) \\
		\Omega_{2,2,2,1} &= -\frac{\ii \pi^3}{12} \log(2) + \frac{\ii \pi}{4} \zeta(3) 
		&\Omega_{3,1,2,3} &= - \frac{\ii \pi^3}{4} \log(2) + \frac{13}{8} \ii \pi \zeta(3) \\
		\Omega_{3,3,2,1} &= \frac{\ii \pi^3}{6} \log(2) - \frac{5}{8} \ii \pi \zeta(3)			
	\end{alignat*}
	It is then straightforward to substitute these values into \( \ar_3 \) in order to obtain the claimed evaluation
	\[
		\ar_3 = \frac{9}{4} \zeta(3) \,.
	\]
	This will complete the proof of Theorem \ref{thm:ar3eval}, once we show the above claim on the \( \Omega \)-values.
	
	\subsection{Evaluation of the required \( \Omega \)-values}
	
	We treat each of the above \( \Omega \)-values in turn, in a rather naive way for the moment .  
	
	\paragraph{Weight 1:} The values
	\[
		\Omega_1 = \frac{\ii \pi}{2} \,, \quad \Omega_2 = -2 \log(1 + \sqrt{2}) \,, \quad \Omega_3 = \ii \pi
	\]
		follow directly from the calculations given in \eqref{eq:Omega1}, when \( \varphi = \frac{\pi}{4} \).
	
	\paragraph{Weight 2:} Directly expanding out gives, writing \( \eta = \frac{1 + \ii}{\sqrt{2}} \) for notational ease, the following expression for \( \Omega_{2,1}\)
	\begin{align}\label{eqn:omega21:asmpl}
		\begin{aligned}[c]
		\Omega_{2,1} = 	& \Li_{1,1}\left(-1,\eta ^3\right)-\Li_{1,1}\left(-1,{\eta ^{-3}}\right)
			\quad {} + \Li_{1,1}(-1,\eta )-\Li_{1,1}\left(-1,\eta ^{-1}\right) 
			\\ & 
			{} + \Li_{1,1}\left(-i,\eta ^{-3}\right)-\Li_{1,1}\left(i,\eta ^3\right) 
			\quad {} + \Li_{1,1}\left(-i,\eta^3\right)-\Li_{1,1}\left(i,\eta ^{-3}\right)
			\\ & 
			{} + \Li_{1,1}(i,\eta )-\Li_{1,1}\left(-i,\eta ^{-1}\right)
			\quad {} + \Li_{1,1}\left(i,\eta ^{-1}\right)-\Li_{1,1}(-i,\eta )
			\\ &
			{} +  \Li_{1,1}\left(1,\eta ^{-3}\right)-\Li_{1,1}\left(1,\eta^3\right)
			\quad {} + \Li_{1,1}\left(1,\eta ^{-1}\right)-\Li_{1,1}(1,\eta )
		\end{aligned}
	\end{align}
	We have deliberately grouped the terms here in such a way that they already simplify by application of the parity theorem \cite{panzer}.  That is to say, by substituting
	\( z_1 = -1, z_2 = \eta^3 \) in \eqref{eqn:mpl:li11inv}, we find 
	\begin{align*}
		& \Li_{1,1}\left(-1,\eta ^3\right) - \Li_{1,1}\left({-1}^{-1},{\eta ^{-3}}\right) \\ 
		& = -\Li_2\left(-1\right)+\Li_2\left(\eta ^3 \right)-\Li_2\left(-\eta ^3\right)-\Li_1\left(-1\right) \log \left(-\eta ^3\right)+\Li_1\left(-1\right) \log \left(\eta ^3\right) \\ 
		& \quad -\Li_1\left(\eta ^3\right) \log \left(\eta ^3\right)+\frac{1}{2} \log ^2\left(-\eta ^3\right)-\log \left(\eta ^3\right) \log\left(-\eta ^3\right)+\frac{\pi ^2}{6}
	\end{align*}
	Using various standard evaluations, such as \( \Li_1(-1) = \log(2) \), or \( \Li_2(-1) = -\frac{1}{2} \Li_2(1) = - \frac{\pi^2}{12} \), which follows from the distribution relation \( \Li_2(x) + \Li_2(-x) = \frac{1}{2} \Li_2(x^2) \) in the case \( x = 1 \) together with \( \Li_2(1) = \sum_{n=0}^\infty \frac{1}{n^2} = \zeta(2) \), we find:
	\begin{align*}
	& \Li_{1,1}\left(-1,\eta ^3\right) - \Li_{1,1}\left(-1,{\eta ^{-3}}\right) \\ 
	& = \frac{\pi^2}{32} - \ii \pi \log(2) - \frac{3}{4} \ii \pi \Li_1(\eta^3) + \Li_2\left(\eta ^3 \right)-\Li_2\left(-\eta ^3\right) 
	\end{align*}
	Since \( -\eta^3 = \eta^{-1} \), one can also apply parity to \( \Li_2(-\eta^3) = \Li_2(\eta^{-1}) \) to write
	\[	
		\Li_2(-\eta^3)  + \Li_{2}(\eta) = \frac{11\pi^2}{96} \,,
	\]
	and hence
	\begin{align*}
	& \Li_{1,1}\left(-1,\eta ^3\right) - \Li_{1,1}\left(-1,{\eta ^{-3}}\right) \\ 
	& = -\frac{\pi^2}{12} - \ii \pi \log(2) - \frac{3}{4} \ii \pi \Li_1(\eta^3) + \Li_2\left(\eta \right) + \Li_2\left(\eta ^3\right) \,.
	\end{align*}
 	Similar calculations for the remainder of \eqref{eqn:omega21:asmpl} produce the evaluation
	\begin{align*}
		\Omega_{2,1} = {} & 
		- \frac{7\pi^2}{4} + 2 \ii \pi \Li_1(-1) - 2 \ii \pi \Li_1(\ii) + \frac{3}{4} \ii \pi \Li_1(\eta^{-3}) + \frac{3}{4} \ii \pi \Li_1(\eta^{-1}) \\ &
		- \frac{3}{4} \ii \pi \Li_1(\eta) - \frac{3}{4} \ii \pi \Li_1(\eta^3) - 2 \Li_2(-1) + 2 \Li_2(1) \,,
	\end{align*}
	whereupon evaluating via \( \Li_1(x) = -\log(1-x) \) and substituting the evaluations \( \Li_2(-1) = -\frac{\pi^2}{12} \), \( \Li_2(1) = \zeta(2) \) already discussed, we find
	\[
		\Omega_{2,1} = -\ii \pi \log(2) \,,
	\]
	as claimed.
	
	\begin{remark}
		From Proposition \ref{prop:integrals}, at \( \varphi = \frac{\pi}{4} \), it follows that
		\[
			\Omega_{2,1}- \Omega_{1,2} = -2 \ii \pi \log(2) +  \ii \pi \log(1 + \sqrt{2})
		\]
		Since
		\[
			\Omega_{2,1} + \Omega_{1,2} = \Omega_{1} \Omega_{2} = -\ii \pi \log(1 + \sqrt{2})
		\]
		using the shuffle product, one can solve these simultaneously for \( \Omega_{2,1} \) and \( \Omega_{1,2} \) to obtain directly that
		\[
			\Omega_{2,1} = -\ii \pi \log(2) \,, \quad \Omega_{1,2}= \ii \pi ( \log(2) - \log(1  +\sqrt{2}) ) \,.
		\]
	\end{remark}
	
	\paragraph{Weight 3:} 
	
	Application of the parity theorem (in weight 3, then weight 2) to \( \Omega_{2,2,3} \) eliminates all pure weight 3 terms from \( \Omega_{2,2,3}\) and directly produces the following, where as before \( \eta = \frac{1+\ii}{\sqrt{2}} \) for notational simplicity:
	\begin{align*}
		\Omega_{2,2,3} = {} & 
		\tfrac{179 \ii \pi ^3}{96}
		-2 \ii \pi  \Li_{1,1}(-1,-\ii)
		-2 \ii \pi  \Li_{1,1}(-\ii,-1)
		+4 \ii \pi  \Li_{1,1}(1,-1)
		+2 \ii \pi  \Li_2(-\ii)		
		\\ &
		+\ii \pi  \Li_1(-1) \Li_1\left(\eta ^3\right)
		-\ii \pi  \Li_1(-1) \Li_1\left(\eta ^5\right)
		+\ii \pi  \Li_1(-1) \Li_1\left(\eta ^7\right)
		\\ &
		-2 \ii \pi  \Li_1(\ii) \Li_1\left(\eta^3\right)
		+2 \ii \pi  \Li_1(\ii) \Li_1\left(\eta ^5\right)
		+2 \ii \pi  \Li_1(\ii) \Li_1\left(\eta ^7\right)
		\\ &
		-\ii \pi  \Li_1(-1) \Li_1(\eta )
		-2 \ii \pi  \Li_1(\ii) \Li_1(\eta )
		-2 \ii \pi  \Li_1(-1)^2
		+2 \ii \pi  \Li_1(i) \Li_1(-1)
		\\ &
		-2 \ii \pi  \Li_1(\ii)^2
		+2 \ii \pi  \Li_1(-\ii) \Li_1(\ii)
		-4 \pi ^2 \Li_1(i)
		-\tfrac{55}{96} \pi ^2 \Li_1\left(\eta ^3\right)
		+\tfrac{55}{96} \pi ^2 \Li_1\left(\eta ^5\right)
		\\ &
		+\tfrac{45}{32} \pi ^2 \Li_1\left(\eta ^7\right)
		-\tfrac{45}{32} \pi ^2 \Li_1(\eta )
		+\tfrac{1}{2} \pi ^2 \Li_1(-1)
		+\pi ^2 \Li_1(-\ii) \,.
	\end{align*}
	Simplifying using the straightforward evaluations of \( \Li_1(\eta^j) = -\log(1-\eta^j) \) reduces this to
	\begin{align}\label{eqn:omega223:step2}
		\begin{aligned}
		\Omega_{2,2,3} = {}
		&
		\frac{\ii \pi ^3}{6}
		-2 \ii \pi  \Li_{1,1}(-1,-\ii)
		-2 \ii \pi  \Li_{1,1}(-\ii,-1)
		+4 \ii \pi  \Li_{1,1}(1,-1) 
		\\ &
		{} + \ii \pi  \Li_2(-1)
		-2 \ii \pi \Li_2(\ii)
		- \ii \pi  \log ^2(2)
		-\frac{1}{2} \pi ^2 \log (2)
		\end{aligned}
	\end{align}
	We now utilize two simple identities which follow from the shuffle and stuffle product of multiple polylogarithms (abusing notation to write the specific product explicitly for clarity):
	\begin{align*}
		\Li_1(-1) \shuffle \Li_1(-1) &= 2 \Li_{1,1}(1, -1) \\
		\Li_1(-1) \ast \Li_1(-\ii) &= \Li_{1,1}(-1,-\ii) +  \Li_{1,1}(-\ii,-1) + \Li_2(\ii)
	\end{align*}
	Hence, when we substitute these into \eqref{eqn:omega223:step2}, and evaluate with \( \Li_1(\eta^j) = -\log(1 - \eta^j) \) and \( \Li_2(-1) = -\frac{\pi^2}{12} \), we find
	\[
		\Omega_{2,2,3} = \frac{\ii \pi^3}{12} \,,
	\]
	as claimed.
	
	The evaluation for \( \Omega_{3,1,1} \) follows in largely the same manner; we need to use some further simple identities to relate and/or evaluate some weight 2 terms in the process.  { This process is given in complete detail in the supplementary \texttt{Mathematica} file}
	
	\paragraph{Weight 4} For the weight 4 integrals, application of the parity theorem eliminates all \( \Li_{1,1,1,1} \) terms; indeed it eliminates all pure weight 4 terms except for the combination \( 2 \Li_{3,1}(1,-1) + 2 \Li_{1,3}(-1,1) \) which survives in only \( \Omega_{2,2,2,1} \).  However this remaining combination can be evaluated explicitly since 
	\begin{align*}
		2 \Li_{3,1}(1,-1) + 2 \Li_{1,3}(-1,1) = {} & 2 \Li_1(-1) \ast \Li_3(1) - 2 \Li_4(-1) \\
		= {} & -2\log(2) \zeta(3) + \frac{7}{4} \zeta(4) \,.
	\end{align*}
	This follows via the stuffle product and the known evaluation of \( \Li_4(-1) \) (via the distribution \( \Li_4(x) + \Li_4(-x) = \frac{1}{8} \Li_4(x^2) \), at \( x = 1 \), as above).
	
	To continue the evaluation in weight 4, we apply brute force in a naive way: in each case the remaining \( \ii \pi \cdot \text{weight 3} \) combination can be written as an explicit, albeit complicated, sum of shuffle products, of stuffle products, and of parity identities.  In particular, the \( \ii \pi \cdot \text{weight 3} \) combination is reduced purely to products of lower weight (i.e. weight \( \leq 2 \)) MPL's.  In each case, the resulting weight 2 MPL's can be simplified and/or evaluated via straightforward shuffle, stuffle or parity identities like those utilized for the evaluation of weight 2 and weight 3 values of \( \Omega \) given above.  { This process is given in complete detail in the supplementary \texttt{Mathematica} file.}

	\begin{remark}
			We conjecture the following formulae for the higher order coefficients of the area expansionin \ref{HOD}, based on numerical evaluation and lattice reduction of a suitably surmised set of candidates.  The identities hold numerically to 1,000 decimal places.
			\begin{align*}
			\ar_5 \overset{?}{=} & -8 \zeta(1,1,\overline{3})+\tfrac{121}{16}\zeta (5)+\tfrac{2 \pi ^2}{3} \zeta (3) -21 \zeta (3) \log ^2(2) \\
			\ar_7 \overset{?}{=} & 
			-256 \zeta (1,1,1,1,\overline{3})
			+\tfrac{1392}{17} \zeta (1,1,\overline{5})
			+\tfrac{720}{17} \zeta (1,3,\overline{3})
			+128 \log ^2(2) \zeta (1,1,\overline{3})
			\\ &
			+28 \zeta (3) \zeta (1,\overline{3}) 
			{} +\tfrac{296921}{1088}\zeta (7)
			-\tfrac{418 \pi ^2}{51} \zeta (5)
			-\tfrac{473 \pi ^4}{765} \zeta (3)
			-\tfrac{109}{2} \zeta (5) \log ^2(2)
			\\ &
			+\tfrac{280}{3} \zeta (3) \log ^4(2)
			-\tfrac{32 \pi ^2}{3} \zeta (3) \log ^2(2)
			-112 \zeta (3)^2 \log (2)
			\end{align*}
			Here \( \zeta(n_1,\ldots,n_d) \) denotes the (alternating)  multiple zeta value (MZV), defined for \( n_i \in \mathbb{Z}_{>0} \cup \overline{\mathbb{Z}_{>0}} \) (with \( \overline{\mathllap{\phantom{t}}\bullet} \) a formal decoration) via
			\[
			\zeta(n_1,\ldots,n_d) = \sum_{0 < k_1 < \cdots < k_d} \frac{(\sgn n_1)^{k_1} \cdots (\sgn n_d)^{k_d}}{k_1^{n_1} \cdots k_d^{n_d}} \,,
			\]
			where \( \sgn(n) = 1 \) and \( \sgn(\overline{n}) = -1 \), for \( n \in \mathbb{Z}_{>0} \) a positive integer.  This can also be represented as a special value of the multiple polylogarithm function at \( \pm 1 \) 
			\[
			\zeta(n_1, \ldots, n_d) = \Li_{\abs{n_1},\ldots,\abs{n_d}}(\sgn{n_1}, \ldots, \sgn{n_d}) \,,
			\]
			where \( \abs{n} = \abs{\overline{n}} = n \), for \( n \in \mathbb{Z}_{>0} \). \medskip
			
			The particular presentation of \( \ar_5 \) and \( \ar_7 \) above is in terms of the basis of the so-called MZV Data Mine \cite{mzvDM}.  Because of the plethora of relations that (alternating) MZV's satisfy, there could conceivably exist presentations which highlight much better the geometric structure and origins of \( \ar_5 \), and \( \ar_7 \).
	\end{remark}




\begin{thebibliography}{9}


\bibitem{Bobenko} A. I. Bobenko, {\em  All constant mean curvature tori in $\R^3$, $\S^3$, $H^3$ in terms of theta-functions,} Math. Ann. 290 (1991), no. 2, 209--245.
\bibitem{Bobenko_1991}
A.~I.~Bobenko, \emph{Surfaces of constant mean curvature and integrable equations},
Uspekhi Mat. Nauk 46 (1991), no.~4 (280), 3--42; translation in
Russian Math. Surveys 46 (1991), no.~4, 1--45. 

\bibitem{BoHeSch} A. I. Bobenko, S. Heller, N. Schmitt, {\em Constant mean curvature surfaces based on fundamental quadrilaterals}, to appear in: Mathematical Physics, Analysis and Geometry; arXiv:2102.03153.

\bibitem{BPP} Ch. Bohle, P. Peters, U. Pinkall, {\em Constrained Willmore Surfaces},
Calc. Var. Partial Differential Equations 32 (2008), 263--277.



\bibitem{DPW}
J.~Dorfmeister, F.~Pedit, and H.~Wu, \emph{Weierstrass type representation of
  harmonic maps into symmetric spaces}, Comm. Anal. Geom. \textbf{6} (1998),
  no.~4, 633--668.
  
 \bibitem{Gold97} W.~Goldman, \emph{Ergodic Theory on Moduli Spaces}, Ann. of Math. (2), 146, pp. 475--507 (1997).
  
   \bibitem{Gold}
W.~Goldman, \emph{An exposition of results of Fricke}, https://arxiv.org/abs/math/0402103.
 
 

  
  \bibitem{HH}
L.~Heller, S.~Heller, \emph{Higher solutions of Hitchin's self-duality equations}, J. Integrable Systems 5 (2020).

  \bibitem{HH3} L.~Heller, S.~Heller. {\em Fuchsian DPW potentials for Lawson surfaces.} Preprint:  arXiv:2202.05184.
  
  \bibitem{HHNd}  L.~Heller, S.~Heller, Ch.~B.~Ndiaye. {\em Stability properties of 2-lobed Delaunay tori in the 3-sphere.}
  Differential Geometry and its Applications, 79 (2021), Paper No. 101805, 14 pp.
  
  \bibitem{HHS}
L.~Heller, S.~Heller, and N.~Schmitt, \emph{Navigating the space of symmetric
  {CMC} surfaces}, J. Differential Geom. \textbf{110} (2018), no.~3, 413--455.
  
    \bibitem{HHT}
L.~Heller, S.~Heller, and M.~Traizet, \emph{Area estimates for high genus Lawson surfaces via DPW}, to appear in J. Diff. Geom., arxiv:1907.07139.


\bibitem{He1}
 S.~Heller, {\em Lawson's genus two minimal surface and meromorphic connections},
 Math. Z., Volume 274 (2013), pp 745--760.
 
 \bibitem{He2}
 S.~Heller,  {\em A spectral curve approach to Lawson symmetric CMC surfaces of genus 2}, Math. Annalen, Volume 360, Issue 3 (2014), pp 607--652.  
 
\bibitem{Hitchin}
N. Hitchin:
{\em Harmonic maps from a 2-torus to the 3-sphere.}
J. Differential Geom. 31 (1990), no. 3, 627--710.

\bibitem{KapYan}
N. Kapouleas,  S. D. Yang:
{\em Minimal surfaces in the three-sphere by doubling the clifford torus},
Amer. J. of Math. 132 (2010), no. 2, 257--295. 

\bibitem{KapWiy}
N. Kapouleas, D. Wiygul:
{\em The index and nullity of the Lawson surfaces $\xi_{g,1}$.}, Camb. J. Math. 8 (2020), no. 2, 363--405.

\bibitem{KPS}
H. Karcher, U. Pinkall, I. Sterling, {\em New minimal surfaces in $\mathbb S^3$}, J. Differential Geom., Volume 28, Number 2 (1988), 169--185.

\bibitem{KMP} R. Kusner, R. Mazzeo, D. Pollack, {\em The moduli space of complete embedded constant mean curvature surfaces}, Geometric and Functional Analysis 6, 120--137 (1996).

\bibitem{KuwertSchatzle}  E. Kuwert, R. Sch\"atzle; {\em Minimizers of the Willmore functional under fixed conformal class}, J. Differential Geom.,  Volume 93, Number 3 (2013), 471--530.

  \bibitem{KuwertLi}E.~Kuwert, Y.~Li, $W^{2,2}$-conformal immersions of a closed Riemann surface into $\R^n$, {\em Comm. Anal. Geom.}, {\bf 20} (2012), no.2, pp 313--340.

\bibitem{Lawson}
H. B. Lawson, {\em  Complete minimal surfaces in $S\sp{3},$}  Ann. of Math. (2) 92 (1970), 335--374.

\bibitem{LiYau} P. Li, S.-T. Yau, {\em A new conformal invariant and its applications to the Willmore conjecture and the first eigenvalue of compact surfaces},
Invent. Math. 69, 269--291 (1982).




\bibitem{PS} U. Pinkall, I. Sterling, {\em  On the classification of constant mean curvature tori,} Ann. of Math. (2) 130 (1989), no. 2, 407--451.


\bibitem{Riviere1} {T.~Rivi\`ere.} {Variational Principles for immersed surfaces with $L^2-$ bounded second fundamental form,}{  {\em J. reine angw. Math. }, {\bf {695}}, {(2014)}, pp 41--98.}

\bibitem{Riviere2} {T.~Rivi\`ere.} {Critical weak immersed Surfaces within Sub-manifolds of the
Teichm\"uller Space,} {\em Adv. Math.} {\bf 283} (2015), pp 232--274.


\bibitem{SKKR}
N.~Schmitt, M.~Kilian, S.~Kobayashi, and W.~Rossman, \emph{Unitarization of
  monodromy representations and constant mean curvature trinoids in
  3-dimensional space forms}, J. Lond. Math. Soc. (2) \textbf{75} (2007),
  no.~3, 563--581.
  

\bibitem{nnoids}
M. Traizet:
{\em Construction of constant mean curvature $n$-noids using
the DPW method,}
J. Reine Angew. Math. 763, 223--249 (2020).

\bibitem{minoids}
M. Traizet:
{\em Gluing Delaunay ends to minimal n-noids using the DPW method,}
Mathematische Annalen 377(3), 1481--1508 (2020).

\bibitem{nodes}
M. Traizet:
{\em Opening nodes in the DPW method: co-planar case,} to appear in Comment. Math. Helv.,  arXiv 2008.07461.pdf  (2020).



	\medskip
	{\noindent \bf Appendix A}
	
	
	\medskip


\bibitem{mzvDM}
J.~Bl\"{u}mlein, D.~J.~Broadhurst, and J.~A.~M.~Vermaseren.
\newblock The multiple {z}eta {v}alue {d}ata {m}ine.
\newblock {\em Comput. Phys. Comm.}, 181(3):582--625, 2010.

	\bibitem{bbbl}
		J.~Borwein, D.~Bradley, D.~Broadhurst, and P.~Lison\v{e}k. 
		\newblock \emph{Special values of multiple polylogarithms.}
		\newblock  Transactions of the American Mathematical Society 353, no. 3 (2001): 907-941.
		
		\bibitem{Chen}
		K.~T.~Chen.
		\newblock {\em Iterated path integrals.}
		\newblock {Bull. Amer. Math. Soc.}, 83(5):831--879, 1977.
		
		\bibitem{Br-mzv}
		F.~C.~S.~Brown.
		\newblock {\em On the decomposition of motivic multiple zeta values.}
		\newblock In {\em Galois-{T}eichm\"{u}ller theory and arithmetic geometry},
		volume~63 of {\em Adv. Stud. Pure Math.}, 31--58. Math. Soc. Japan,
		Tokyo, 2012.

\bibitem{Go-ams}
A.~B.~Goncharov.
\newblock Polylogarithms and motivic {G}alois groups.
\newblock In {\em Motives ({S}eattle, {WA}, 1991)}, volume~55 of {\em Proc.
	Sympos. Pure Math.}, 43--96. Amer. Math. Soc., Providence, RI, 1994.

\bibitem{Go1}
A.~B.~Goncharov.
\newblock Geometry of configurations, polylogarithms, and motivic cohomology.
\newblock {\em Adv. Math.}, 114(2):197--318, 1995.

\bibitem{Go-mpl}
A.~B.~Goncharov.
\newblock Multiple polylogarithms and mixed {T}ate motives,
\newblock arXiv:math.AG/0103059v4.

		
		\bibitem{hoff}
		M.~E.~Hoffman, and K.~Ihara.
		\newblock {\em Quasi-shuffle products revisited.}
		\newblock Journal of Algebra 481 (2017): 293-326.
		
		\bibitem{jonq}
		A.~Jonqui\`ere
		\newblock {\em Ueber einige Transcendente, welche bei der wiederholten Integration rationaler Funktionen auftreten}
		\newblock PhD thesis, Universit\"at Bern, Stockholm (1889)
		
			\bibitem{kirillov}
		A.~N.~Kirillov.
		\newblock Dilogarithm identities.
		\newblock {\em Progr. Theoret. Phys. Suppl.}, 118:61--142, 1995.
		\newblock Quantum field theory, integrable models and beyond, Kyoto, 1994. 
		

		\bibitem{lewin}
	L.~Lewin.
	\newblock {\em Polylogarithms and associated functions}.
	\newblock North-Holland Publishing Co., New York-Amsterdam, 1981.
	
		
		\bibitem{panzer}
		E.~Panzer.
		\newblock {\em The parity theorem for multiple polylogarithms.}
		\newblock Journal of Number Theory 172 (2017): 93-113.
		
			\bibitem{zagier}
		D.~Zagier.
		\newblock The dilogarithm function.
		\newblock In {\em Frontiers in number theory, physics, and geometry. {II}},
		3--65. Springer, Berlin, 2007.
		
		\bibitem{zhao} 
		J.~Zhao. 
		\newblock {\em Multiple zeta functions, multiple polylogarithms and their special values.}
		 Vol. 12. World Scientific, 2016.


\end{thebibliography}
\end{document}